\documentclass{amsart}
\usepackage[utf8]{inputenc}
\usepackage[T1]{fontenc}
\usepackage{amsaddr}
\usepackage{amssymb}
\usepackage{bbm}
\usepackage{mathtools}
\usepackage{amsmath}
\usepackage{subfigure}
\usepackage{amssymb}
\usepackage{mathrsfs}

%\setlength{\textwidth}{\paperwidth}
%\addtolength{\textwidth}{-1.5in}
%\setlength{\textheight}{\paperheight}
%\addtolength{\textheight}{-2in}
%\calclayout

\usepackage{lipsum}
\usepackage{amsfonts}
\usepackage{graphicx}
\usepackage{epstopdf}
\usepackage{algorithmic}
\usepackage{tikz}

\newtheorem{theorem}{Theorem}
\newtheorem{definition}{Definition}
\newtheorem{corollary}{Corollary}
\newtheorem{lemma}{Lemma}
\newtheorem{remark}{Remark}

\newtheorem{example}{Example}

\usepackage[backend=biber]{biblatex}
%\bibliographystyle{amsplain} 
%\bibliography{}
\bibliography{bibliography.bib}
\usepackage{tikz}
\usetikzlibrary{shapes,arrows,positioning,calc,external}
\tikzset{
	block/.style = {draw, fill=white, rectangle, minimum height=3em, minimum width=3em},
	tmp/.style  = {coordinate}, 
	sum/.style= {draw, fill=white, circle, node distance=1cm, inner sep=0pt},
	input/.style = {coordinate},
	output/.style= {coordinate},
	pinstyle/.style = {pin edge={to-,thin,black}
	}
}

\newcommand{\mat}[1]{\left(\begin{matrix}#1\end{matrix}\right)}
\newcommand{\smat}[1]{\left(\begin{smallmatrix}#1\end{smallmatrix}\right)}

\newcommand{\convw}{\bar{\operatorname{conv}}^{\operatorname{w}}}

\renewcommand{\c}[1]{\mathcal{#1}}
\renewcommand{\b}[1]{\mathbb{#1}}
%\mynewcommand{\bf}[1]{\bm{#1}}
\newcommand{\s}[1]{\mathscr{#1}}
\newcommand{\bs}[1]{\boldsymbol{#1}}
\newcommand{\intg}[3]{\int_{#1} #2 \,\text{d}#3}

\DeclarePairedDelimiter{\abs}{\lvert}{\rvert}
\DeclarePairedDelimiter{\norm}{\lVert}{\rVert}
\newcommand{\<}{\left\langle}
\renewcommand{\>}{\right\rangle}

\newcommand{\1}{\mathbbm{1}}
\newcommand{\argmin}{\operatorname{argmin}}
\renewcommand{\bar}[1]{\overline{#1}}
\renewcommand{\hat}[1]{\widehat{#1}}
\renewcommand{\tilde}[1]{\widetilde{#1}}
\DeclareMathOperator{\supp}{supp}

\title[Distributionally Robust Optimization with i.i.d. Structure]{Distributionally Robust Optimization over Wasserstein Balls with i.i.d. Structure}
%\author{Andrey Kharitenko}
%\address{Department of Computer Science, ETH Zurich, 8092 Zurich, Switzerland}
%\email{\texttt{andrey.kharitenko@inf.ethz.ch}}
%\author{Marta Fochesato}
%%\address{Automatic Control Laboratory, ETH Zurich, 8092 Zurich, Switzerland}
%\email{\texttt{mfochesato@ethz.ch}}
%\author{Anastasios Tsiamis}
%%\address{Automatic Control Laboratory, ETH Zurich, 8092 Zurich, Switzerland}
%\email{\texttt{atsiamis@control.ee.ethz.ch}}
%\author{Niklas Schmid}
%%\address{Automatic Control Laboratory, ETH Zurich, 8092 Zurich, Switzerland}
%\email{\texttt{nikschmid@ethz.ch}}
%\author{John Lygeros}
%\address{Automatic Control Laboratory, ETH Zurich, 8092 Zurich, Switzerland}
%\email{\texttt{jlygeros@ethz.ch}}
\email{\texttt{\{akharitenko,mfochesato,atsiamis,nikschmid,jlygeros\}@ethz.ch}}
\date{\today}
\keywords{Data-driven optimization, Wasserstein distributionally robust optimization, structured ambiguity set, i.i.d. structure.}

\begin{document}
	\maketitle
	\begin{center}
		\small
		Andrey Kharitenko\textsuperscript{1}, Marta Fochesato\textsuperscript{2},
		Anastasios Tsiamis\textsuperscript{2}, Niklas Schmid\textsuperscript{2} \par and
		John Lygeros\textsuperscript{2} \par \bigskip
		
		\textsuperscript{1}Department of Computer Science, ETH Z\"urich, 8092 Z\"urich, Switzerland \par
		\textsuperscript{2}Automatic Control Laboratory, ETH Z\"urich, 8092 Z\"urich, Switzerland\par \bigskip
\end{center}

\maketitle
% REQUIRED
\begin{abstract}
	We consider distributionally robust optimization problems where the uncertainty is modeled via a structured Wasserstein ambiguity set. 
	Specifically, the ambiguity is restricted to product measures $P^{\otimes N}$, where $P$ lies within a Wasserstein ball centered at a distribution $\hat{P}$. 
	This structure reflects the assumption of independent and identically distributed (i.i.d.) uncertainty components and yields a non-convex ambiguity set that is strictly contained in its unstructured counterpart, thereby reducing conservatism. 
	The resulting optimization problem is generally intractable due to the loss of convexity. 
	We address this by introducing a sequence of tractable convex relaxations, each admitting strong duality, and prove that this sequence converges to the original problem value under suitable conditions. 
	Numerical examples are provided to illustrate the effectiveness of the proposed approach.
	As a byproduct of our proofs, we establish a novel formula, of independent interest, relating the Wasserstein distance of a mixture of product distributions to the Wasserstein distance between its constituent measures. 
\end{abstract}

% REQUIRED
% \begin{MSCcodes}
% ?, ?, ?
% \end{MSCcodes}

\section{Introduction}
\label{sec:introduction}
In stochastic optimization a common goal is to minimize an objective $\Psi$ over a set of feasible decisions $\Theta$, where the objective $\Psi$ is defined as an average of a family of individual uncertainty-affected loss functions $\ell:\Theta \times X \to \mathbb{R}$, with $X$  being a random vector of uncertain parameters defined on a probability space $(X,\Sigma,P)$.
In mathematical terms, a stochastic optimization method evaluates
\begin{align}
	\label{eq:stochasticOptimization}
	\inf_{\theta \in \Theta} \Psi(\theta) \text{\ \ with\ \ } \Psi(\theta) = \mathbb{E}_{x \sim P} \ell(\theta,x)\,.
\end{align}
To avoid trivialities, we assume throughout that the feasible set $\Theta \subseteq \mathbb{R}^m$ and the support set $X \subseteq \mathbb{R}^d$ are non-empty and closed.

Problem \eqref{eq:stochasticOptimization} is ubiquitous in the areas of machine learning, operation research, economics, and automatic control \cite{shapiro2021lectures}.
Unfortunately, the practical deployment of \eqref{eq:stochasticOptimization} is complicated by the fact that the precise form of the underlying distribution $P$ is often unknown and can only be inferred indirectly from past data in the form of a finite number of samples $x_1,\ldots,x_n \in X$ \cite{bertsimas2006robust}. In this case, one can employ statistical methods to infer an estimated (parametric or non-parametric) distribution $\hat{P}$ from the available data. However, solving \eqref{eq:stochasticOptimization} with the estimated $P = \hat{P}$ may yield solutions that display poor out-of-sample performance due to the unavoidable mismatch between the true underlying distribution and $\hat{P}$.

In distributionally robust (stochastic) optimization (DRO), the decision-maker hedges against this mismatch by minimizing the worst-case expected loss $\Psi_{\operatorname{WC}}$ with respect to all distributions in a neighborhood of $\hat{P}$; namely the (unstructured) DRO problem is formulated as
\begin{align}
	\label{eq:stochasticOptimizationRobust}
	\inf_{\theta \in \Theta} \Psi_{\operatorname{WC}}(\theta) \text{\ \ with\ \ } \Psi_{\operatorname{WC}}(\theta) = \sup_{P \in \mathcal{W}}\mathbb{E}_{x \sim P} \ell(\theta,x)\,,
\end{align}
where $\mathcal{W} \subseteq \mathcal{P}(X)$ is a set of distributions, called ambiguity set, on the space $\mathcal{P}(X)$ of all distributions supported on $X$.
If appropriately chosen, the set $\mathcal{W}$ contains the true underlying distribution, which implies that any solution $\theta_*$ to \eqref{eq:stochasticOptimizationRobust} will provide an objective value of $\Psi(\theta_*)$ lower or equal to $\Psi_{\operatorname{WC}}(\theta_*)$ \cite{mohajerin2018data}.
Consequently, the quality of this robust approach heavily depends on the form of the ambiguity set $\mathcal{W}$: while it should be ``large enough'' to include the true distribution (with high confidence), it should not contain superfluous distributions that cannot realistically appear in the problem at hand as this would lead to unnecessarily conservative solutions. 
Typical ambiguity sets that appear in the literature include support-, moment-, or distance-based sets of distributions or mixtures thereof \cite{rahimian2019distributionally}.
While the first two types of ambiguity sets contain all distributions complying with a specified support and moment information (generally, first and second moments) \cite{delage2010distributionally, popescu2007robust}, the distance-based sets include all distributions that are within a certain given ``distance'' of a fixed nominal distribution. 
In the latter setting, the nominal distribution is often obtained through statistical techniques from available empirical data, while the distance, commonly expressed in terms of e.g. the $\phi-$divergence \cite{bayraksan2015data, ben2013robust}, the total variation norm \cite{tzortzis2015dynamic}, the kernel mean embedding \cite{hannah2010nonparametric, bertsimas2020predictive}, or optimal transport based-distances \cite{shafieezadeh2023new} including the celebrated Wasserstein distance \cite{gao2023distributionally, mohajerin2018data}, signifies the ``trust'' in the statistical methods used as well as the obtained data at hand. 
Due to the favorable properties of Wasserstein distance in terms of expressivity and statistical properties allowing for finite-sample guarantees \cite{fournier2015rate}, a significant proportion of the recent literature has focused on so-called Wasserstein balls $\mathcal{W} = \mathbb{B}_\rho(\hat{P})$, i.e., sets containing all distributions that are within some Wasserstein-distance $\rho > 0$ from a nominal distribution $\hat{P}$. 
For finitely supported nominal distributions $\hat{P}$, the optimization of an expectation over a Wasserstein ball, which is convex, can often be reformulated into a finite-dimensional optimization program by means of Lagrange duality \cite{gao2023distributionally,shafieezadeh2023new}, and solved efficiently via off-the-shelf solvers. 

However, in several applications, the distributional uncertainty does not enter the problem at hand in an arbitrary fashion, but rather in a \emph{structured} manner.
One common structure consists of a set $x = (x_1,\ldots,x_N)$ of identically and independently distributed (i.i.d.) random variables $x_1,\ldots,x_N \sim P$, where $P$ is again assumed to belong to a Wasserstein ball $\mathbb{B}_\rho(\hat{P})$.
This results in the \emph{structured Wasserstein distributionally robust optimization} problem of the form
\begin{align}
	\label{eq:stochasticOptimizationRobustStructured}
	\inf_{\theta \in \Theta} \Psi_{\operatorname{S}}(\theta) \text{\ \ with\ \ } \Psi_{\operatorname{S}}(\theta) = \sup_{\bar{P} \in \mathcal{W}} \mathbb{E}_{x \sim \bar{P}} \ell(\theta,x)\,,
\end{align}
where 
\begin{align}
	\label{eq:stochasticOptimizationRobustStructuredSet}
	\mathcal{W} = \{P^{\otimes N} \mid P \in \mathbb{B}_\rho(\hat{P})\}
\end{align} 
is a \emph{structured Wasserstein ambiguity set} containing product distributions of the form $\bar{P} = P^{\otimes N} = P \otimes \cdots \otimes P$ only. 
Problem \eqref{eq:stochasticOptimizationRobustStructured} arises across several domains, such as (i) control of uncertain dynamical systems, which has traditionally assumed stationarity and independence of the additive noise affecting the system dynamics; (ii) strategically robust game theory, where irrationality in the opponents' actions is captured via a distributionally robust ``best response map'' that, in case of disjoint agents action sets, takes the form of \eqref{eq:stochasticOptimizationRobustStructured}; (iii) supply chain optimization and/or inventory management of goods with uniform popularity across buyers and shared demand drivers may be approximated as \eqref{eq:stochasticOptimizationRobustStructured}. 

% Fourth, energy bidding in forward electricity markets of renewable energy producers may be approximated as \eqref{eq:stochasticOptimizationRobustStructured} upon removing seasonality trends in the (uncertain) weather conditions. 

Despite the popularity of this structure in stochastic optimization, handling Problem \eqref{eq:stochasticOptimizationRobustStructured} is computationally challenging. In fact, contrary to the problem of evaluating $\Psi_{\operatorname{WC}}(\theta)$, the evaluation of $\Psi_{\operatorname{S}}(\theta)$ now involves optimizing over the non-convex set $\mathcal{W}$ (the non-convexity arises from the nonlinearity in the expectation operator due to the product structure). In turn, this prohibits the use of standard (strong) duality tools to reformulate \eqref{eq:stochasticOptimizationRobustStructured} as an equivalent finite-dimensional optimization program.

\subsection{Related work}
While the literature concerning the (unstructured) Wasserstein DRO problem in \eqref{eq:stochasticOptimizationRobust} is abundant, little attention has been devoted to the case of its structured counterpart. The closest works to ours are \cite{chaouach2022tightening,chaouach2023structured}, where non-convex ambiguity sets of the form 
\begin{align*}
	\c{W}_{\operatorname{rect}} = \{P_1 \otimes \cdots \otimes P_l \mid P_i \in \b{B}_{\rho_i}(\hat{P}_i) \text{\ for all\ } i=1,\ldots,l\}\,.
\end{align*}
are considered.
For losses $\ell$ that are either additively or multiplicatively separable, a strong duality result is proven, which allows, as in the unstructured case, to compute the worst-case expectation by solving a finite-dimensional convex program.
Moreover, \cite{chaouach2022tightening,chaouach2023structured} also proposes convex overestimations of the above ambiguity set for which strong duality can be shown without requiring the restrictive separability condition.
We will compare our proposed approach in more detail to the latter in Section \ref{sec:structured_sets_comparison}.
On a different direction, \cite{gao2017distributionally} considers the problem of maximizing an expectation functional over a Wasserstein ambiguity set under an additional moment constraints of the form
\begin{align*}
	\c{W}_{\operatorname{mom}} = \b{B}_\rho(\hat{P}) \cap \{P \in \c{P}(\b{R}^d) \mid \b{E}_{x \sim P} (x-\mu)(x-\mu)^\top \leq \Sigma\}\,,
\end{align*}
for a-priori given mean vector $\mu \in \mathbb{R}^d$ and covariance matrix $\Sigma \in \mathbb{S}^{d \times d} \succeq 0$. 
By choosing $\Sigma$ to be diagonal, independence among the lower-dimensional components of the uncertain vector can be enforced. 
The resulting ambiguity set $\c{W}$ retains convexity as it is described as the intersection of convex sets, allowing again to compute the worst-case expectation by means of convex programming for certain classes of loss functions, such as e.g. quadratic or piecewise affine $\ell$.

\subsection{Outline and contributions of the paper}
We consider data-driven DRO problems with Wasserstein ambiguity sets, where the
uncertainty affects the problem in an i.i.d. fashion, as in \eqref{eq:stochasticOptimizationRobustStructured}. 
Specifically, we first examine the problem of evaluating the inner supremum of \eqref{eq:stochasticOptimizationRobustStructured} for a fixed, given $\theta \in \Theta$, i.e. computing the so-called \emph{(primal) uncertainty quantification (UQ) problem}
\begin{align}
	\label{eq:structuredDRO}
	\sup_{\bar{P} \in \mathcal{W}} \mathbb{E}_{x \sim \bar{P}} \ell(x)\,,
\end{align}
where $\c{W}$ is defined in \eqref{eq:stochasticOptimizationRobustStructuredSet}.
After introducing the necessary background in Section \ref{sec:preliminaries}, we establish conditions under which \eqref{eq:structuredDRO} is finite and attains its optimum in Section \ref{sec:structuredDROExistenceFiniteness}.
Moreover, there we also show that \eqref{eq:structuredDRO} can be upper-bounded by a standard Wasserstein uncertainty quantification problem.
Next, in Section \ref{sec:structuredDROConvexUpperBound} we show that the latter upper bound is in general conservative and propose a potentially tighter upper bound based on symmetrization of the corresponding loss $\ell$ that allows for strong duality.
Further, in Section \ref{sec:structuredDROSequenceOfConvexRelaxations}, using the concept of lifting and based on the previous bound, we introduce a nonincreasing sequence of upper bounds (relaxations) on \eqref{eq:structuredDRO} that admit strong duality. 
We investigate the properties of this sequence of relaxations in Section \ref{sec:structuredDRORelaxationGap}.
As a main result, in Section \ref{sec:structuredDROTightnessOfRelaxationSequence}, we show in Theorem \ref{thm:relaxationExactConcave} that the gap between the lower bound of the relaxation sequence and \eqref{eq:structuredDRO} vanishes if $\ell$ is concave. 
Additionally, in Section \ref{sec:structured_sets_comparison} we show that for sets of the form $\c{W}$ our upper bound established in Section \ref{sec:structuredDROConvexUpperBound} is not more conservative than an upper bound provided in \cite{chaouach2023structured}.
Furthermore, in Section \ref{sec:structuredDROOuter} we turn back to Problem \ref{eq:stochasticOptimizationRobustStructured} and show that if $\ell = \ell(\theta,x)$ is convex-concave, then the minimizers of the sequence of relaxed problems, if existent, converge to the set of minimizers of Problem \ref{eq:stochasticOptimizationRobustStructured}.
Finally, in Section \ref{sec:reformulationNumerical} we formulate the strong duality for the relaxation sequence as a second order cone program whenever $\ell$ is a polyhedral loss function and provide some numerical examples.

\section{Notation and Preliminaries}
\label{sec:preliminaries}

In this section we recapitulate some well-known results about the Wasserstein distance and Wasserstein distributionally robust optimization.
For a more throughout introduction we refer to \cite{villani2009optimal,kuhn2019wasserstein}. 
\subsection{Notation} 
For any measurable space $(X,\c{X})$ we denote by $\c{P}(X)$ the set of probability measures on $(X,\c{X})$.
Given two measurable spaces $(X,\c{X})$, $(Y,\c{Y})$, a measurable map $f:X \to Y$ and $P \in \c{P}(X)$, then $f \# P \in \c{P}(Y)$ denotes the pushforward distribution defined by $f \# P(A) = P(f^{-1}(A))$ for all $A \in \c{Y}$.
We denote the product space $X^N = X \times \cdots \times X$ for $N \in \b{N}$ and the projections $\operatorname{pr}_i:X^N \to X$ onto the $i$-th coordinate.
If $M \geq N$ is an integer, then by $\operatorname{pr}_{1:N}^M:X^M \to X^N$ we denote the projection onto the first $N$ factors of $X^M$.
Finally, if $M$ is clear from the context, then it is omitted: $\operatorname{pr}_{1:N} = \operatorname{pr}_{1:N}^M$.
If $X$ is a Polish space, then we set $\c{X} = \c{B}(X)$ to be the Borel $\sigma$-algebra and omit $\c{X}$ from the notation altogether.
For two distributions $P_1, P_2 \in \c{P}(X)$ we denote by $\Gamma(P_1,P_2)$ the set of all couplings of $P_1$ and $P_2$, i.e. all $\Lambda \in \c{P}(X \times X)$ such that $\operatorname{pr}_i \# \Lambda = P_i$ for $i=1,2$.
A (transportation) cost $c:X \times X \to [0,\infty)$ is a lower semi-continuous and symmetric function such that $c(x,x) = 0$ for all $x \in X$.
Then the Wasserstein distance\footnote{Traditionally the term ``Wasserstein distance'' is reserved for the case of $c = d^p$ for some metric $d$, but we extend this terminology for more general costs $c$ throughout this paper.} between two probability measures $P_1, P_2 \in \c{P}(X)$ is defined as 
\begin{align}
	\label{eq:wassersteinDistanceDefinition}
	W_c(P_1,P_2)
	= \inf_{\Lambda \in \Gamma(P_1,P_2)} \intg{}{c(x,y)}{\Lambda(x,y)}\,.
\end{align}
Intuitively the value $W_c(P_1,P_2)$ measures the total cost of moving a ``pile'' of mass distributed according to $P_1$ to a pile distributed according to $P_2$ if the cost of moving a unit of mass located in $x \in X$ to $y \in X$ is $c(x,y)$ \cite{kantorovitch1958translocation, bogachev2012monge, vershik2006kantorovich, villani2009optimal}.
If finite, the infimum in \eqref{eq:wassersteinDistanceDefinition} is attained \cite[Theorem 4.1]{villani2009optimal} and $\Gamma_c(P_1,P_2)$ will denote the set of minimizers.
We say that the cost $c$ satisfies the weak triangle inequality (other terminology: quasi-triangle-inequality or $K$-relaxed triangle inequality) if there exists a constant $K > 0$ such that $c(x,y) \leq K(c(x,z) + c(z,y))$ for all $x,y,z \in X$.
In this case the set $\c{P}_c(X) = \{P \in \c{P}(X) \mid \intg{}{c(x_0,\cdot)}{P} < \infty\}$ is independent of $x_0 \in X$ and convex.
The cost $c$ is called proper if it satisfies the weak triangle inequality and the sublevel sets $\{x \in X \mid c(x_0,x)\leq r\}$ are compact for all $r > 0$ and $x_0 \in X$.
If $d$ is a metric on $X$, then $W_p$ defined by $W_p(P_1,P_2) = W_c(P_1,P_2)^{1/p}$ with $c(x,y) = d(x,y)^p$ is a metric on the set $\c{P}_p(X) := \c{P}_c(X)$ of all distributions with finite $p$-th moment \cite{villani2009optimal,panaretos2020invitation}.
Further for $\hat{P} \in \c{P}_c(X)$ and $\rho > 0$ we will denote $\b{B}_\rho^c(\hat{P}) = \{P \in \c{P}_c(X) \mid W_c(P,\hat{P}) \leq \rho\}$ for the $W_c$-Wasserstein ball of radius $\rho$ around $\hat{P}$.
The Lebesgue spaces on the measure space $(X,P) = (X,\c{B}(X),P)$ are denoted by $L^p(X,P)$ and the bounded, real-valued continuous functions on a topological space $X$ by $C_b(X)$.
For a set $\c{A} \subseteq \c{P}(X)$ of distributions, $\convw \c{A}$ denotes the closed convex hull in the weak topology on $\c{P}(X)$.
The symbol $\1$ denotes the all-ones vector of suitable dimension and $\1_A$ for some set $A \subseteq X$ the indicator function of $A$.
The symbol $\c{S}_N$ denotes symmetric group on $N$ letters, while $\c{S}_\infty$ denotes the set of all bijections $\pi:\b{N} \to \b{N}$ that leave all but finitely many indices invariant.
Moreover, we always identify a $\pi \in \c{S}_N$ with the induced coordinate permutation $\pi:X^N \to X^N:x \mapsto (x_{\pi(k)})_{k=1}^N$.
Finally, for $X = \b{R}^d$ being the Euclidean space and $P \in \c{P}(X)$ we denote the mean, second moment and variance of $P$ by $\b{E}P$, $\b{E}(P^2)$ and $\operatorname{Var}(P)$ respectively.

\subsection{Wasserstein distributionally robust optimization} 
Consider now an uncertainty quantification problem \cite{kuhn2019wasserstein,shafieezadeh2023new} of the type
\begin{align}
	\label{eq:droGeneral}
	\sup_{P \in \b{B}_\rho^c(\hat{P})} \b{E}_{x \sim P} \ell(x)
\end{align}
where $\ell:X \to \b{R}$ is a Borel measurable loss functional and \eqref{eq:droGeneral} corresponds to the inner problem of \eqref{eq:stochasticOptimizationRobust} with $\ell = \ell(\theta,\cdot)$ for a fixed $\theta$.
To study the finiteness and existence of optimizers, define the set of functions with sublinear growth as
\begin{align*}
	\c{D} = \left\{\delta:[0,\infty) \to [0,\infty) \mid \forall r > 0:\, \delta(r) \leq r\,,\; \lim_{r \to \infty} \frac{\delta(r)}{r} = 0\right\}	
\end{align*} 
and consider the following function growth classes:
\begin{align*}
	\c{G}_c(X) &= \{\ell:X \to \b{R} \mid \exists C > 0\,, x_0 \in X\,:\; \ell \leq C(1+c(x_0,\cdot))\}\,,\\
	\c{G}_c^<(X) &= \{\ell:X \to \b{R} \mid \exists \delta \in \c{D}\,, x_0 \in X\,:\; \ell \leq C(1+\delta(c(x_0,\cdot)))\}\,.
\end{align*}
The following result establishes the well-posedness of the problem formulation in  \eqref{eq:droGeneral}.
\begin{theorem}
	\label{thm:droGeneralSolvability}
	For any Borel $\ell:X \to \b{R}$ the value of \eqref{eq:droGeneral} is finite if $c$ satisfies the weak triangle inequality, $\hat{P} \in \c{P}_c(X)$ and $\ell \in \c{G}_c(X)$.
	Moreover, if $c$ is proper, $\ell$ is upper semi-continuous and $\ell \in \c{G}_c^<(X)$, then the optimal value of \eqref{eq:droGeneral} is attained.
\end{theorem}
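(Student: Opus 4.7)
The plan is to handle the two claims in sequence, both by reducing to uniform control of the first $c$-moment $\intg{}{c(x_0,\cdot)}{P}$ over the Wasserstein ball.

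For the finiteness claim, I would first note that for any $P \in \b{B}_\rho^c(\hat{P})$, since $W_c(P,\hat{P}) \leq \rho < \infty$, an optimal coupling $\Lambda \in \Gamma_c(P,\hat{P})$ exists. Applying the weak triangle inequality $c(x_0,x) \leq K(c(x_0,y) + c(y,x))$ pointwise and integrating against $\Lambda$ yields $\intg{}{c(x_0,\cdot)}{P} \leq K(m_0 + \rho)$, where $m_0 := \intg{}{c(x_0,\cdot)}{\hat{P}} < \infty$ since $\hat{P} \in \c{P}_c(X)$. Combined with the growth bound $\ell \leq C(1+c(x_0,\cdot))$, this uniformly upper-bounds $\b{E}_P \ell$ over the ball, proving finiteness.

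For the attainment claim, I would apply the direct method of the calculus of variations. Given a maximizing sequence $(P_n) \subset \b{B}_\rho^c(\hat{P})$, the uniform $c$-moment bound above together with properness of $c$ (compact sublevel sets of $c(x_0,\cdot)$) gives tightness via Markov's inequality, and Prokhorov's theorem extracts a weakly convergent subsequence $P_{n_k} \to P^*$. Lower semi-continuity of $c$ implies weak lower semi-continuity of $W_c(\cdot,\hat{P})$, so $W_c(P^*,\hat{P}) \leq \rho$ and $P^* \in \b{B}_\rho^c(\hat{P})$. It remains to establish $\b{E}_{P^*} \ell \geq \limsup_k \b{E}_{P_{n_k}} \ell$, for which I would split $\ell = \ell^+ - \ell^-$. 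Since $\ell$ is upper semi-continuous, $\ell^+$ is upper semi-continuous and $\ell^-$ is nonnegative lower semi-continuous, so the Portmanteau theorem yields $\b{E}_{P^*} \ell^- \leq \liminf_k \b{E}_{P_{n_k}} \ell^-$; truncation of $\ell^+$ at level $M$ gives a bounded upper semi-continuous function for which $\limsup_k \b{E}_{P_{n_k}}(\ell^+ \wedge M) \leq \b{E}_{P^*}(\ell^+ \wedge M)$. Combining via $\limsup(a_n-b_n) \leq \limsup a_n - \liminf b_n$ and sending $M \to \infty$ then concludes.

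The hard part will be showing that the tail $\b{E}_{P_n}(\ell^+ - M)^+$ vanishes uniformly in $n$ as $M \to \infty$. The growth class $\c{G}_c^<(X)$ is tailored precisely for this: since $\delta(r)/r \to 0$, outside a sufficiently large $c$-sublevel set we may replace $\delta\circ c(x_0,\cdot)$ by $\varepsilon \cdot c(x_0,\cdot)$, and the uniform $c$-moment bound then controls the tail by $O(\varepsilon)$. Once this uniform integrability is secured, the direct-method argument closes the proof.
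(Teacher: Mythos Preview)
Your proposal is correct and rests on the same two pillars as the paper's proof: the uniform bound $\sup_{P\in\b{B}_\rho^c(\hat{P})}\intg{}{c(x_0,\cdot)}{P}<\infty$ via the weak triangle inequality (for finiteness), and uniform integrability of the positive part of $\ell$ coming from $\delta(r)/r\to 0$ (for attainment).

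The packaging of the attainment argument differs somewhat. The paper cites weak compactness of $\b{B}_\rho^c(\hat{P})$ for proper $c$ as a known fact and then shows that $P\mapsto\b{E}_P\ell$ is weakly upper semi-continuous on the ball, by uniformly approximating it with $P\mapsto\b{E}_P\ell_r$ where $\ell_r=\min\{\ell,C(1+r)\}$. Since $\ell_r$ is upper semi-continuous and bounded \emph{above} (no lower bound needed), each such map is weakly u.s.c., and the uniform convergence $\sup_{P\in\b{B}_\rho^c(\hat{P})}|\b{E}_P\ell-\b{E}_P\ell_r|\to 0$ follows from precisely the tail estimate you describe; the u.s.c.\ limit then attains its maximum on the compact ball. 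This single truncation of the whole $\ell$ absorbs your $\ell^+/\ell^-$ split: because ``u.s.c.\ and bounded above'' already yields $\limsup_n\b{E}_{P_n}\ell_r\le\b{E}_P\ell_r$ along weak limits, no separate Portmanteau step for $\ell^-$ is needed. Your direct-method route is equally valid and more self-contained (you re-derive tightness and weak closedness of the ball rather than citing them), but the paper's version is a touch more economical.
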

For the case of $c = d^p$ this result has been established in \cite{yue2022linear}, where the authors considered $\delta(r) = \min\{r^{\tilde{p}/p},r\}$ for some $\tilde{p} \in [1,p)$.
Now, even though the optimization problem \eqref{eq:droGeneral} is infinite-dimensional, it is convex, since the objective is linear and the constraint set $\b{B}_\rho^c(\hat{P})$ convex in the ambiguous distribution $P$.
The following theorem provides a strong duality result \cite{gao2023distributionally,blanchet2019quantifying,shafieezadeh2023new,zhang2024short}.
\begin{theorem}[{\cite[Theorem 1, Example 2]{zhang2024short}, \cite[Section 4.2]{blanchet2019quantifying}}]
	\label{thm:droGeneralDuality}
	Let $\ell \in L^1(X,\hat{P})$ and $\rho > 0$.
	Then the value of \eqref{eq:droGeneral} is equal to
	\begin{align*}
		\sup_{\bar{P} \in \b{B}_\rho^c(\hat{P})} \b{E}_{x \sim \bar{P}} \ell(x) = \inf_{\mu \geq 0} \mu \rho + \b{E}_{z \sim \hat{P}} \phi_\mu(z) 
	\end{align*}
	where the infimum is attained and $\phi_\mu:X \to \bar{\b{R}}: z \mapsto \sup_{x \in X} (\ell(x) - \mu c(x,z))$ is universally measurable\footnote{While this notion is not needed further in this paper, it is required to make sense of the expression $\b{E}_{z \sim \hat{P}} \phi_\mu(z)$, which is defined as the Lebesgue integral w.r.t. the unique extension of $P$ to the universal $\sigma$-algebra on $X$ \cite{bertsekas1996stochastic,bertsekas1978mathematical}. In general $\phi_\mu$ is not Borel-measurable.}.
\end{theorem}

The main importance of this theorem is that it reduces the infinite-dimensional problem \eqref{eq:droGeneral} to a finite-dimensional problem over a single scalar variable $\mu \geq 0$, provided that the expectation of $\phi_\mu$ w.r.t. $\hat{P}$ can be computed explicitly.

\section{Structured Uncertainty Quantification}
\label{sec:structuredDRO}

In this section, we are interested in the inner optimization problems of \eqref{eq:stochasticOptimizationRobustStructured}, namely
\begin{align}
	\label{eq:droStructured}
	\sup_{\bar{P} \in \c{W}} \b{E}_{x \sim \bar{P}} \ell(x)
	= \sup_{P \in \b{B}_\rho^c(\hat{P})} \intg{}{\ell(x)}{P^{\otimes N}(x)}
	= \sup_{\bar{P} \in \c{W}} \intg{}{\ell(x)}{\bar{P}(x)}\,, 
\end{align}
for some $N \in \b{N}$, Borel measurable $\ell:X^N \to \b{R}$ and 
\begin{align}
	\label{eq:structuredWassersteinAmbiguitySet}
	\mathcal{W} = \{P^{\otimes N} \mid P \in \mathbb{B}_\rho^c(\hat{P})\}\,.
\end{align}
In its stated form problem \eqref{eq:droStructured} is non-convex, since the set $\c{W}$ is non-convex.
The following subsections investigate different aspects of this problem in more detail.
We confine the proofs of the subsequent results to the appendix for expositional clarity.
Unless stated otherwise, we assume that the transportation cost $c$ is proper and $\hat{P} \in \c{P}_c(X)$ throughout. 

% \begin{figure}    
%     \resizebox{\textwidth}{!}{ 
%     % \input{Mathematics of Data Science/graphics.tikz}
% }
% \caption{Overview of the main results of Section 3.}
% \end{figure}
% \todo[inline]{MF: Fix figure at the end}

% \todo[inline]{AK: Do we need this list? Didn't we mention this in the introduction or contributions?  MF: it is not necessary, it was an attempt to give a preview of the structure of the section cause it is very long. But we can 1) remove it; 2) adapt it and move it to the contributions (to have more detailed contributions)}

\subsection{Finiteness and existence of optimizers}
\label{sec:structuredDROExistenceFiniteness}

First we delve into when \eqref{eq:droStructured} is well-defined, finite and attains its solution on $\c{W}$.
For $N \in \b{N}$ we write $c^N:X^N \times X^N \to [0,\infty):(x,y) \mapsto \sum_{i=1}^N c(x_i,y_i)$ for the $N$-lift of the transport cost $c$.
In some cases we will omit the superscript $N$ and write again $c$ instead of $c^N$ for brevity. 
%Note that the $N$-lift of $c = d^p$ is of the same form as well, namely $c^N = (d_N)^p$, where $d_N(x,y) = (\sum_{i=1}^N d(x_i,y_i)^p)^{1/p}$.
%Here $d_N$ is a metric which induces the product topology on $X^N$.

To start, we note that for $N \in \b{N}$ the set $\c{W}$ can be overestimated by an ordinary Wasserstein ball w.r.t. the lifted transportation cost.

\begin{lemma}
	\label{lem:wassersteinBallProductInclusion}
	For any transportation cost $c:X \times X \to [0,\infty]$, $\hat{P} \in \c{P}(X)$, $\rho \geq 0$ and $N \in \b{N}$ it holds that $P^{\otimes N} \in \b{B}_{N \rho}^{c^N}(\hat{P}^{\otimes N})$ for all $P \in \b{B}_\rho^c(\hat{P})$.
\end{lemma}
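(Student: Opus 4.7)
The natural strategy is to construct an explicit coupling of $P^{\otimes N}$ and $\hat{P}^{\otimes N}$ on $X^N \times X^N$ whose expected $c^N$-cost is at most $N\rho$, and use it as a feasible point in the definition of $W_{c^N}(P^{\otimes N}, \hat{P}^{\otimes N})$.

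First, I would fix an arbitrary $\varepsilon > 0$ and choose an $\varepsilon$-optimal coupling $\Lambda_\varepsilon \in \Gamma(P, \hat{P})$ on $X \times X$, that is, one for which
\begin{align*}
\intg{X \times X}{c(u,v)}{\Lambda_\varepsilon(u,v)} \;\leq\; W_c(P, \hat{P}) + \varepsilon \;\leq\; \rho + \varepsilon.
\end{align*}
(The step through an $\varepsilon$-approximation rather than an exact minimizer is only there to handle the case in which the infimum defining $W_c(P, \hat{P})$ is not attained; if one assumes lower semi-continuity of $c$ and uses Theorem 4.1 of \cite{villani2009optimal}, one can take $\varepsilon = 0$ directly.)

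Next, I would consider the $N$-fold product coupling $\Lambda_\varepsilon^{\otimes N}$ on $(X \times X)^N$, which after the obvious reordering of coordinates becomes a measure on $X^N \times X^N$. A direct verification using the definition of the product measure shows that its two marginals on $X^N$ are exactly $P^{\otimes N}$ and $\hat{P}^{\otimes N}$; hence $\Lambda_\varepsilon^{\otimes N} \in \Gamma(P^{\otimes N}, \hat{P}^{\otimes N})$. Computing the expected $c^N$-cost under this coupling yields, by Fubini-Tonelli and the definition of $c^N$,
\begin{align*}
\intg{X^N \times X^N}{c^N(x,y)}{\Lambda_\varepsilon^{\otimes N}(x,y)}
\;=\; \sum_{i=1}^N \intg{X \times X}{c(u,v)}{\Lambda_\varepsilon(u,v)}
\;\leq\; N(\rho + \varepsilon).
\end{align*}
By the definition of the Wasserstein distance as an infimum over couplings, this gives $W_{c^N}(P^{\otimes N}, \hat{P}^{\otimes N}) \leq N(\rho + \varepsilon)$, and sending $\varepsilon \to 0^+$ proves that $P^{\otimes N} \in \b{B}_{N\rho}^{c^N}(\hat{P}^{\otimes N})$.

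\textbf{Anticipated obstacles.} The only subtle point is handling the non-attainment of the infimum for a general cost $c$ not assumed to be lower semi-continuous or finite-valued in this lemma; the $\varepsilon$-optimal coupling device above is the standard workaround. Beyond that, everything is an application of Fubini-Tonelli and the additive definition of $c^N$, so no genuine technical difficulty is expected.
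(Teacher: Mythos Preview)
Your argument is correct and is essentially the same as the paper's: the paper invokes an auxiliary lemma (Lemma~\ref{lem:wassersteinDistanceProduct}) stating the exact equality $W_{c^N}(P_1^{\otimes N},P_2^{\otimes N}) = N\,W_c(P_1,P_2)$, whose ``$\leq$'' direction is proved precisely via the $\varepsilon$-optimal product coupling you construct. You have simply inlined the relevant half of that lemma.
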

Thus 
\begin{align}
	\label{eq:structuredAmbigutySetOverestimateBasic}
	\c{W} \subseteq \b{B}_{N\rho}^{c^N}(\hat{P}^{\otimes N})\,. 
\end{align} 
and it always holds that
\begin{align}
	\label{eq:nonlinearDROUpperBoundBasic}
	\sup_{\bar{P} \in \c{W}} \b{E}_{x \sim \bar{P}} \ell(x)
	\leq \sup_{\bar{P} \in \b{B}_{N\rho}^{c^N}(\hat{P}^{\otimes N})} \b{E}_{x \sim \bar{P}} \ell(x)\,,
\end{align}
and we can use a Theorem \ref{thm:droGeneralSolvability} to conclude that the value of \eqref{eq:droStructured} is finite if $\ell \in \c{G}_{c^N}(X^N)$, or equivalently, 
\begin{align}
	\label{eq:productSpaceGrowthBoundSum}
	\ell(x) &\leq C\left(1+\sum_{k=1}^N c(x_0,x_k)\right) \text{\ \ for all\ \ } x \in X^N \text{\ for some\ } C > 0\,,\; x_0 \in X\,. 
\end{align}
However, because of the product structure of $\c{W}$ it is actually enough to require argument-wise growth conditions.
For this purpose define the following function growth classes
\begin{align*}
	&\hat{\c{G}}_{c,N}(X^N) \\
	&\quad = \left\{\ell:X^N \to \b{R} \mid \exists C > 0\,, x_0 \in X\,, \forall x \in X^N:\; \ell(x) \leq C\prod_{k=1}^N (1+c(x_0,x_k))\right\}\,,\\
	&\hat{\c{G}}_{c,N}^<(X^N) \\ 
	&\quad = \left\{\ell:X^N \to \b{R} \mid  \exists \delta \in \c{D}\,, x_0 \in X\,, \forall x \in X^N:\; \ell(x) \leq C\prod_{k=1}^N (1+\delta(c(x_0,x_k)))\right\}\,.
\end{align*}
Note that in general $\c{G}_{c^N}(X^N) \subsetneq \hat{\c{G}}_{c,N}(X^N)$ and $\c{G}_{c^N}^<(X^N) \subsetneq \hat{\c{G}}_{c,N}^<(X^N)$.
Then we have the following analogon of Theorem \ref{thm:droGeneralSolvability}.
\begin{theorem}
	\label{thm:droStructuredSolvability}
	For any Borel $\ell:X^N \to \b{R}$ the value of \eqref{eq:droGeneral} is finite if $c$ satisfies the weak triangle inequality, $\hat{P} \in \c{P}_c(X)$ and $\ell \in \hat{\c{G}}_{c,N}(X^N)$.
	Moreover, if $c$ is proper, $\ell$ is upper semi-continuous and $\ell \in \hat{\c{G}}_{c,N}^<(X^N)$, then the optimal value of \eqref{eq:droGeneral} is attained.
\end{theorem}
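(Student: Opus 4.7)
The plan is to mirror the two-part structure of Theorem \ref{thm:droGeneralSolvability} but to exploit the product structure of $\c{W}$ directly, rather than reducing to the unstructured case on $X^N$ via Lemma \ref{lem:wassersteinBallProductInclusion} (such a reduction would require $\ell \in \c{G}_{c^N}(X^N)$, which is strictly stronger than $\ell \in \hat{\c{G}}_{c,N}(X^N)$). For finiteness, fix $P \in \b{B}_\rho^c(\hat{P})$; Fubini's theorem and the independence of the marginals of $P^{\otimes N}$, together with the hypothesis $\ell(x) \leq C\prod_k(1+c(x_0,x_k))$, yield
\begin{align*}
\b{E}_{x \sim P^{\otimes N}}\ell(x) \leq C\bigl(1+\b{E}_P c(x_0,\cdot)\bigr)^N.
\end{align*}
Applying the weak triangle inequality pointwise along any coupling $\Lambda \in \Gamma_c(P,\hat{P})$ realizing $W_c(P,\hat{P}) \leq \rho$ gives $\b{E}_P c(x_0,\cdot) \leq K(\b{E}_{\hat{P}} c(x_0,\cdot) + \rho)$, which is finite (as $\hat{P} \in \c{P}_c(X)$) and uniform in $P \in \b{B}_\rho^c(\hat{P})$. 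Taking the supremum over $P$ establishes finiteness.

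For the attainment part I would proceed in three steps. First, $\b{B}_\rho^c(\hat{P})$ is weakly compact: properness of $c$ (compact sublevel sets of $c(x_0,\cdot)$) combined with the uniform first-moment bound above yields tightness via Markov and Prokhorov, while lower semi-continuity of $W_c$ gives weak closedness. Hence a maximizing sequence $P_n$ admits a subsequence $P_n \to P^* \in \b{B}_\rho^c(\hat{P})$, and the weak continuity of $P \mapsto P^{\otimes N}$ lifts this to $P_n^{\otimes N} \to (P^*)^{\otimes N}$. Second, invoking Skorokhod's representation on the Polish space $X^N$, realize these laws as almost-surely convergent random variables $Y_n \to Y^*$ and apply upper semi-continuity of $\ell$ to get $\limsup_n \ell(Y_n) \leq \ell(Y^*)$ a.s. Third, dominate $\ell$ from above by the envelope $h(x) = C\prod_k(1+\delta(c(x_0,x_k)))$ and apply reverse Fatou to conclude $\limsup_n \b{E}_{P_n^{\otimes N}}\ell \leq \b{E}_{(P^*)^{\otimes N}}\ell$, so that $P^*$ attains the supremum.

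The non-routine step is the uniform integrability of the envelope $h$ under the family $\{P_n^{\otimes N}\}$. The sublinear growth $\delta(t)/t \to 0$ combined with the uniform bound on $\b{E}_{P_n} c(x_0,\cdot)$ first implies that each factor $f = 1+\delta(c(x_0,\cdot))$ is uniformly integrable with respect to $\{P_n\}$; however, uniform integrability does not automatically pass to products. The key trick is to use independence under $P_n^{\otimes N}$: since $f \geq 1$, the event $\{\prod_k f_k > M\}$ forces some factor to exceed $M^{1/N}$, so $\mathbbm{1}_{\prod_k f_k > M} \leq \sum_k \mathbbm{1}_{f_k > M^{1/N}}$, and factoring the resulting expectation by independence reduces $\b{E}_{P_n^{\otimes N}}[h\,\mathbbm{1}_{h>M}]$ to $CN(\b{E}_{P_n} f)^{N-1}\,\b{E}_{P_n}[f\,\mathbbm{1}_{f>M^{1/N}}]$, which vanishes uniformly in $n$ as $M \to \infty$ by the factor-level uniform integrability just secured. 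Controlling this product-level tail, rather than the proof of the individual ingredients, is where essentially all the novelty of the argument lies.
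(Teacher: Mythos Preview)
Your argument is correct and your finiteness proof is essentially identical to the paper's. For attainment, however, you take a genuinely different route. The paper argues by truncation: it sets $\ell_r = \min\{\ell, C(1+r)^N\}$ and shows that $P \mapsto \b{E}_{P^{\otimes N}}\ell_r$ approximates $P \mapsto \b{E}_{P^{\otimes N}}\ell$ \emph{uniformly} on $\b{B}_\rho^c(\hat{P})$; since each truncated map is weakly upper semi-continuous (bounded-above u.s.c.\ integrand), the uniform limit is too, and attains its maximum on the weakly compact ball. To control $\b{E}_{P^{\otimes N}}(\ell - \ell_r)$ the paper partitions $X^N$ into the $2^N$ cells determined by which coordinates satisfy $c(x_0,x_k) > r$, factors each cell by independence, and bounds the contribution of each nonempty index set via $\sup_{t>r}\delta(t)/t$.

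Your Skorokhod/reverse-Fatou packaging is morally the same---uniform integrability of the envelope is exactly the uniform smallness of $\b{E}_{P^{\otimes N}}(\ell - \ell_r)$---but your pigeonhole step $\{\prod_k f_k > M\} \subseteq \bigcup_k\{f_k > M^{1/N}\}$ is noticeably cleaner than the paper's $2^N$-cell decomposition, and yields the same conclusion with less bookkeeping. One small caveat: the clean form of reverse Fatou you invoke really needs the positive parts $\{\ell(Y_n)^+\}$ uniformly integrable together with $\ell(Y^*)^+$ integrable, which you have; but the standard textbook statement assumes a fixed integrable majorant, so when writing this out you should either truncate at level $M$ first (using that $\bar{P} \mapsto \b{E}_{\bar{P}}[\ell \wedge M]$ is weakly u.s.c., which makes Skorokhod optional) and then let $M \to \infty$, or cite the UI version explicitly. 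Either way, the argument goes through.
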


\subsection{A convex upper bound}
\label{sec:structuredDROConvexUpperBound}

In the previous section we have seen that \eqref{eq:nonlinearDROUpperBoundBasic} states an upper bound on \eqref{eq:droStructured}.
In general this bound will be conservative as the following example shows.
\begin{example}
	Let $X = \b{R}$, $N = 2$, $c(x_1,x_2) = \abs{x_1-x_2}^2$, $\hat{P} = \delta_0$ and $\ell(x_1,x_2) = \frac{1}{2}(x_1-x_2)^2$.
	Then the value of the structured problem is 
	\begin{align*}
		\sup_{P \in \b{B}_\rho^c(\hat{P})} \b{E}_{x \sim P^{\otimes 2}} \ell(x)
		= \sup_{P \in \b{B}_\rho^c(\delta_0)} \operatorname{Var}(P)
		= \sup_{\b{E}(P^2) \leq \rho} \b{E}(P^2) - (\b{E}P)^2
		\leq \rho\,,
	\end{align*}
	with $P = \frac{1}{2}\delta_{-\sqrt{\rho}} + \frac{1}{2}\delta_{\sqrt{\rho}}$ showing that in fact the latter bound is attained. 
	On the other hand, the value of the right hand side of \eqref{eq:nonlinearDROUpperBoundBasic} can be computed using Theorem \ref{thm:droGeneralDuality} and is given by
	\begin{align*}
		\sup_{\bar{P} \in \b{B}_{2\rho}^{c^2}(\delta_0^{\otimes 2})} \b{E}_{x \sim \bar{P}} \ell(x)
		= \inf_{\mu \geq 0} \left[ 2\rho \mu + \sup_{x \in \b{R}^2} \left(\frac{1}{2}(x_1-x_2)^2 - \mu\norm{x}^2\right)\right]
		= 2\rho\,.
	\end{align*}
\end{example}
The next example shows that the conservatism of this bound can be arbitrary large in a relative sense.
\begin{example}
	\label{ex:conservatismUnstructuredBound}
	Let $X = \b{R}$, $N = 2$, $c(x_1,x_2) = \abs{x_1-x_2}^2$, $\hat{P} = \frac{1}{2}\delta_1 + \frac{1}{2} \delta_{-1}$ and $\ell(x_1,x_2) = -x_1 x_2$.
	Then the value of the structured problem is 
	\begin{align*}
		\sup_{P \in \b{B}_\rho^c(\hat{P})} \b{E}_{x \sim P^{\otimes 2}} \ell(x)
		= \sup_{P \in \b{B}_\rho^c(\hat{P})} -(\b{E}P)^2 
		= 0\,.
	\end{align*}
	The value of the right hand side of \eqref{eq:nonlinearDROUpperBoundBasic} is, after some elementary calculations, given by
	\begin{align*}
		\sup_{\bar{P} \in \b{B}_{2\rho}^{c^2}(\hat{P}^{\otimes 2})} \b{E}_{x \sim \bar{P}} \ell(x)
		&= \inf_{\mu \geq 0} \left[ 2\rho \mu + \frac{1}{4} \sum_{\hat{\xi} \in \{\pm 1\}^2}\sup_{x \in \b{R}^2} \left(-x_1 x_2 - \mu\norm{x - \hat{\xi}}^2\right)\right] \\
		&= \inf_{\mu > \frac{1}{2}} \left[ 2 \rho \mu - \frac{2\mu}{1-4\mu^2}\right] \\
		%&= \rho \sqrt{1+\frac{1+\sqrt{1+8\rho}}{2\rho}} \left(1+\frac{2}{1+\sqrt{1+8\rho}}\right)
		&\geq \rho\,.
	\end{align*}
	Hence, while the value of the structured problem is constant, the value of the latter bound grows asymptotically linearly as the ball radius increases.
\end{example}

In view of the last two examples we turn to the question whether we can obtain better bounds on \eqref{eq:droStructured} than \eqref{eq:nonlinearDROUpperBoundBasic}.
This corresponds to obtaining tighter convex overestimations of the set $\c{W}$ than the one given by \eqref{eq:structuredAmbigutySetOverestimateBasic}.
Clearly, the best such overestimation is the convex hull $\operatorname{conv} \c{W}$ and indeed even more is true. 
\begin{lemma}
	\label{lem:convexHullClosedDRO}
	If $\ell \in L^1(X^N,P^{\otimes N})$ for all $P \in \b{B}_\rho^c(\hat{P})$, then
	\begin{align*}
		\sup_{\bar{P} \in \c{W}} \b{E}_{x \sim \bar{P}} \ell(x)
		= \sup_{\bar{P} \in \operatorname{conv} \c{W}} \b{E}_{x \sim \bar{P}} \ell(x)
		= \sup_{\bar{P} \in \convw \c{W}} \b{E}_{x \sim \bar{P}} \ell(x)\,,
	\end{align*}
	where $\convw \c{W}$ is the closure of $\operatorname{conv} \c{W}$ in the weak topology of $\c{P}(X)$.
\end{lemma}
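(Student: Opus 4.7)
The proof naturally splits according to the two equalities. For the first one, $\sup_{\bar{P} \in \mathcal{W}} = \sup_{\bar{P} \in \operatorname{conv} \mathcal{W}}$, the plan is to exploit pure linearity of the objective. Any $\bar{Q} \in \operatorname{conv} \mathcal{W}$ is a finite convex combination $\bar{Q} = \sum_{i=1}^{k} \alpha_i P_i^{\otimes N}$ with $P_i \in \mathbb{B}_\rho^c(\hat{P})$; the $L^1$-hypothesis makes each $\mathbb{E}_{P_i^{\otimes N}} \ell$ finite, and linearity of integration gives
\[
	\mathbb{E}_{\bar{Q}} \ell = \sum_i \alpha_i \mathbb{E}_{P_i^{\otimes N}} \ell \leq \max_i \mathbb{E}_{P_i^{\otimes N}} \ell \leq \sup_{\bar{P} \in \mathcal{W}} \mathbb{E}_{\bar{P}} \ell .
\]
The reverse inequality is trivial since $\mathcal{W} \subseteq \operatorname{conv} \mathcal{W}$.

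For the second equality, $\sup_{\operatorname{conv} \mathcal{W}} = \sup_{\convw \mathcal{W}}$, the $\leq$ direction is immediate. For $\geq$, my plan is to show a barycenter (disintegration) representation: for every $\bar{P} \in \convw \mathcal{W}$ there exists a probability measure $\nu$ on $\mathbb{B}_\rho^c(\hat{P})$ such that $\bar{P} = \int P^{\otimes N} \, d\nu(P)$ in the barycenter sense. I would obtain such a $\nu$ by taking a net $\bar{Q}_\alpha \in \operatorname{conv} \mathcal{W}$ with $\bar{Q}_\alpha \to \bar{P}$ weakly, writing each $\bar{Q}_\alpha = \int P^{\otimes N} \, d\nu_\alpha(P)$ for a finitely-supported $\nu_\alpha$ on $\mathbb{B}_\rho^c(\hat{P})$, proving tightness of $(\nu_\alpha)$, and passing to a subsequential weak limit $\nu$ via Prokhorov. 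Since $P \mapsto P^{\otimes N}$ is continuous in the weak topology, the barycenter of $\nu$ equals $\bar{P}$. Fubini's theorem then yields
\[
	\mathbb{E}_{\bar{P}} \ell = \int \mathbb{E}_{P^{\otimes N}} \ell \, d\nu(P) \leq \sup_{P \in \mathbb{B}_\rho^c(\hat{P})} \mathbb{E}_{P^{\otimes N}} \ell ,
\]
which combined with the first equality closes the chain.

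The main obstacle will be establishing the barycenter representation. Tightness of $\mathbb{B}_\rho^c(\hat{P})$ follows from proper (or even merely triangle-like) growth properties of $c$, giving relative weak compactness and thus the Prokhorov step. However, one must still justify that $\ell$ is integrable with respect to $\nu$ as a function $P \mapsto \mathbb{E}_{P^{\otimes N}} \ell$; under the bare $L^1$ assumption this is delicate and may require a truncation argument (replacing $\ell$ by $\ell \wedge M$ and then invoking monotone convergence) together with uniform integrability controlled by the Wasserstein distance. A possible shortcut, which I would try first, is to prove weak upper semicontinuity of $\bar{P} \mapsto \mathbb{E}_{\bar{P}} \ell$ directly on $\convw \mathcal{W}$ by approximating $\ell$ by bounded continuous test functions, using Lemma \ref{lem:wassersteinBallProductInclusion} to embed $\convw \mathcal{W}$ in a single Wasserstein ball and transferring uniform integrability from there; this would circumvent the explicit Choquet-type step entirely.
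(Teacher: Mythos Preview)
Your proposal is correct and matches the paper's (largely implicit) argument. The paper does not spell out a proof of this lemma beyond the remark that the first equality is ``simply a consequence of the expectation being linear in the distribution''; the second equality is handled by the auxiliary Lemma~\ref{lem:weakClosureIntegralRepresentation}, which establishes exactly the mixture representation
\[
	\convw \c{W} \;=\; \Bigl\{\,\intg{}{P^{\otimes N}}{\nu(P)} \;\Bigm|\; \nu \in \c{P}(\c{P}(X)),\ \nu(\b{B}_\rho^c(\hat{P})) = 1 \,\Bigr\}
\]
via precisely your Prokhorov argument (compactness of $\b{B}_\rho^c(\hat{P})$ under a proper cost, weak continuity of $P \mapsto P^{\otimes N}$ from Lemma~\ref{lem:powerMeasureWeaklyContinuous}, approximation by finitely supported $\nu_n$). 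The Fubini step you invoke is the paper's Theorem~\ref{thm:disintegrationMixture}. So your ``barycenter'' route is not a shortcut around the paper's proof---it \emph{is} the paper's proof, reconstructed.

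One small comment: the integrability concern you raise (whether $P \mapsto \b{E}_{P^{\otimes N}}\ell$ is $\nu$-integrable) is real under the bare $L^1$ hypothesis, and the paper does not address it explicitly either. In the regime where the lemma is actually used (namely $\ell \in \hat{\c{G}}_{c,N}(X^N)$, cf.\ the remark immediately following the lemma), the function $P \mapsto \b{E}_{P^{\otimes N}}\ell$ is bounded above on $\b{B}_\rho^c(\hat{P})$ by Theorem~\ref{thm:droStructuredSolvability}, which is enough for the one-sided inequality you need; your truncation idea would also work. Your alternative ``upper-semicontinuity'' route is viable too but is not what the paper does.
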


The nontrivial part is the second equality, since the first equality is simply a consequence of the expectation being linear in the distribution.
Note that by Theorem \ref{thm:droStructuredSolvability} the condition $\ell \in L^1(X^N,P^{\otimes N})$ for all $P \in \b{B}_\rho^c(\hat{P})$ holds if e.g. $\ell \in \hat{\c{G}}_{c,N}(X^N)$.

While this shows that our original non-convex UQ problem can be reformulated as a convex problem, it is very difficult to exhibit an explicit description of the sets $\operatorname{conv} \c{W}$ or $\convw \c{W}$ that would make the latter computationally tractable.
Hence we proceed to search for convex sets that overestimate $\convw \c{W}$, while still being a subset of $\b{B}_{N\rho}^{c^N}(\hat{P}^{\otimes N})$ and allowing for strong duality in the sense of Theorem \ref{thm:droGeneralDuality}.
For this we explore \eqref{eq:droStructured} further and define
\begin{align}
	\label{eq:droStructuredBasicUpperBound}
	\operatorname{S}(\ell)
	= \sup_{\bar{P} \in \c{W}} \b{E}_{x \sim \bar{P}} \ell(x)\,, \quad
	\operatorname{U}(\ell)
	= \sup_{\bar{P} \in \b{B}_{N\rho}^{c^N}(\hat{P}^{\otimes N})} \b{E}_{x \sim \bar{P}} \ell(x)\,.
\end{align}
Then, \eqref{eq:nonlinearDROUpperBoundBasic} translates simply to $\operatorname{S}(\ell) \leq \operatorname{U}(\ell)$ for any Borel $\ell:X^N \to \b{R}$.
Note that $\operatorname{U}(\ell)$ is a UQ problem of the type \eqref{eq:droGeneral} and hence can be computed by using Theorem \ref{thm:droGeneralDuality}.

\subsection{A tighter convex upper bound}
\label{sec:structuredDROTighterConvexUpperBound}
While the bound \eqref{eq:nonlinearDROUpperBoundBasic} can be quite conservative, we can establish tighter bounds by using the following, general, principle:
Suppose that there exists some class $\c{F}$ of transformations $F:\b{R}^{X^N} \to \b{R}^{X^N}$ such that $\operatorname{id} \in \c{F}$, $F(\ell)$ is Borel measurable for any Borel measurable $\ell:X^N \to \b{R}$ and 
\begin{align}
	\label{eq:invariantTransformations}
	\operatorname{S}(F(\ell))
	= \operatorname{S}(\ell) \text{\ \ for all\ \ } F \in \c{F} \text{\ and Borel\ } \ell:X^N \to \b{R}\,.
\end{align}
Then, for any Borel $\ell:X^N \to \b{R}$, it holds that 
\begin{align*}
	\operatorname{S}(\ell) 
	= \operatorname{S}(F(\ell))
	\leq \operatorname{U}(F(\ell))
\end{align*}
and hence
\begin{align}
	\label{eq:nonlinearDROUpperBoundOptimization}
	\operatorname{S}(\ell) \leq \inf_{F \in \c{F}} \operatorname{U}(F(\ell))
\end{align}
is a potentially tighter upper bound on $\operatorname{S}(\ell)$ than $\operatorname{U}(\ell)$.
Similar arguments have been used in \cite{packard1993complex} to obtain convex upper bounds on the structured singular value in the domain of control theory.
To obtain such a class $\c{F}$ for our problem, we make the following, crucial observation: If $\pi \in \c{S}_N$ is a permutation, then the transformation $F_\pi:\ell \mapsto \ell_\pi$ with $\ell_\pi(x) = \ell(\pi(x))$, where $\pi(x) = (x_{\pi(1)},\ldots,x_{\pi(N)})$ for $x \in X^N$, satisfies
\begin{align*}
	\operatorname{S}(F_\pi(\ell)) 
	= \sup_{\bar{P} \in \c{W}} \b{E}_{x \sim \bar{P}} \ell_\pi(x)
	= \sup_{P \in \b{B}_\rho^c(\hat{P})} \b{E}_{x \sim P^{\otimes N}} \ell_\pi(x)
	= \sup_{P \in \b{B}_\rho^c(\hat{P})} \b{E}_{x \sim P^{\otimes N}} \ell(x)
	= \operatorname{S}(\ell)\,,
\end{align*}
since $\pi \# P^{\otimes N} = P^{\otimes N}$ for any permutation $\pi$.
Moreover, since $\b{E}_{x \sim \bar{P}} \ell$ is linear in $\ell$, it follows that \eqref{eq:invariantTransformations} holds for
\begin{align}
	\label{eq:permutationConvexSet}
	\c{F}
	= \operatorname{conv} \{F_\pi \mid \pi \in \c{S}_N\}
	= \left\{F_\alpha = \sum_{\pi \in \c{S}_N} \alpha_\pi F_\pi \mid \alpha \in \Delta^{\c{S}_N} \right\}\,,
\end{align}
where $\Delta^{\c{S}_N} = \{\alpha = (\alpha_\pi)_{\pi \in \c{S}_N} \in [0,1]^{\c{S}_N} \mid \sum_{\pi \in \c{S}_N} \alpha_\pi = 1\}$ is the standard simplex in the $N!$-dimensional Euclidean space. 
This implies that \eqref{eq:nonlinearDROUpperBoundOptimization} holds for the above class $\c{F}$ of transformations.
In this specific case we can actually establish the following 
\begin{lemma}
	\label{lem:nonlinearDROUpperBoundOptimization}
	For any Borel measurable $\ell:X^N \to \b{R}$ it holds that 
	\begin{align*}
		\inf_{F \in \c{F}} \operatorname{U}(F(\ell))
		= \min_{\alpha  \in \Delta^{\c{S}_N}} \operatorname{U}(F_\alpha(\ell))
		= \operatorname{U}(\ell_{\operatorname{sym}})\,,
	\end{align*}
	where the \emph{symmetrization} of a function is defined by
	\begin{align}
		\label{eq:symmetrizationFunction}
		\ell_{\operatorname{sym}}(x) = \frac{1}{N!}\sum_{\pi \in \c{S}_N} \ell(\pi(x))\,, 
	\end{align}
	i.e. the infimum in \eqref{eq:nonlinearDROUpperBoundOptimization} is attained in the uniform coefficient $\alpha = (1/N!,\ldots,1/N!) \in \Delta^{\c{S}_N}$.
\end{lemma}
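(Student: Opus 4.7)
The first equality in the lemma is a notational tautology: by the definition $\c{F} = \operatorname{conv}\{F_\pi \mid \pi \in \c{S}_N\} = \{F_\alpha \mid \alpha \in \Delta^{\c{S}_N}\}$, so the real content is the second equality, together with the claim that the infimum is attained at the uniform $\alpha$. My plan is to prove it via two ingredients plus a one-line convexity argument.

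The first ingredient is that $\operatorname{U}$ is convex in its functional argument: as the pointwise supremum over $\bar{P} \in \b{B}_{N\rho}^{c^N}(\hat{P}^{\otimes N})$ of the linear functionals $\ell \mapsto \b{E}_{x \sim \bar{P}} \ell(x)$, it inherits convexity (in fact sublinearity) for free. The second ingredient, which is where the structure of the unstructured ball enters, is the permutation invariance $\operatorname{U}(F_\pi(\ell)) = \operatorname{U}(\ell)$ for every $\pi \in \c{S}_N$. To establish this I plan to show that $\bar{P} \mapsto \pi \# \bar{P}$ preserves $\b{B}_{N\rho}^{c^N}(\hat{P}^{\otimes N})$: the nominal $\hat{P}^{\otimes N}$ is $\pi$-invariant because it is a product of identical marginals, and $c^N(x,y) = \sum_i c(x_i,y_i)$ is invariant under the diagonal action $(x,y) \mapsto (\pi(x),\pi(y))$; hence pushing an optimal coupling forward by $(\pi,\pi)$ gives $W_{c^N}(\pi \# \bar{P}, \hat{P}^{\otimes N}) = W_{c^N}(\bar{P}, \hat{P}^{\otimes N})$. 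A change of variables $y = \pi(x)$ in the expectation then yields the claimed invariance of $\operatorname{U}$.

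With these in place, the lemma follows from a single averaging trick. For any $\alpha \in \Delta^{\c{S}_N}$ I would compute
\begin{align*}
	\frac{1}{N!}\sum_{\pi \in \c{S}_N} F_\pi\bigl(F_\alpha(\ell)\bigr)(x)
	&= \frac{1}{N!}\sum_{\pi,\sigma \in \c{S}_N} \alpha_\sigma\, \ell(\sigma\pi(x)) \\
	&= \Bigl(\sum_{\sigma} \alpha_\sigma\Bigr) \ell_{\operatorname{sym}}(x) \;=\; \ell_{\operatorname{sym}}(x),
\end{align*}
where the middle step reindexes the inner sum via $\tau = \sigma\pi$, decoupling $\alpha$ from the permutation average. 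Applying convexity of $\operatorname{U}$ and then the invariance $\operatorname{U} \circ F_\pi = \operatorname{U}$ then gives
\[
	\operatorname{U}(\ell_{\operatorname{sym}}) \;\leq\; \frac{1}{N!}\sum_{\pi \in \c{S}_N} \operatorname{U}\bigl(F_\pi(F_\alpha(\ell))\bigr) \;=\; \operatorname{U}(F_\alpha(\ell)),
\]
valid for every $\alpha \in \Delta^{\c{S}_N}$. Since the uniform $\alpha_\pi = 1/N!$ satisfies $F_\alpha(\ell) = \ell_{\operatorname{sym}}$ and hence saturates this bound, the minimum is attained precisely there and equals $\operatorname{U}(\ell_{\operatorname{sym}})$.

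The main obstacle, if there is one, is conceptual rather than technical: one must notice that although the unstructured ball $\b{B}_{N\rho}^{c^N}(\hat{P}^{\otimes N})$ is strictly larger than $\c{W}$, it inherits the same $\c{S}_N$-symmetry from the product nominal, and this symmetry is exactly what allows $\ell_{\operatorname{sym}}$ to achieve the infimum over $\c{F}$. Once this is spotted, the remaining verifications—convexity of a supremum, invariance of $c^N$ under the diagonal action, and the reindexing of the double sum—are all routine.
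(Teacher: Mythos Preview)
Your argument is correct. The paper does not actually supply a proof of this lemma in the appendix (only Theorem~\ref{thm:symmetrizationUpperBound} from that subsection is proved there), so there is nothing to compare against directly; but your approach---convexity of $\operatorname{U}$ as a supremum of linear functionals, $\c{S}_N$-invariance of the unstructured ball $\b{B}_{N\rho}^{c^N}(\hat{P}^{\otimes N})$ via the diagonal action on couplings, and the group-averaging identity $\frac{1}{N!}\sum_\pi F_\pi(F_\alpha(\ell)) = \ell_{\operatorname{sym}}$---is exactly the natural one, and each step is sound. The invariance you establish is in fact the same mechanism the paper later exploits in Lemma~\ref{lem:symmetricCoupling} and Theorem~\ref{thm:symmetrizationUpperBound}, so your proof fits the surrounding machinery.
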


Thus instead of optimizing over the class $\c{F}$ as in \eqref{eq:nonlinearDROUpperBoundOptimization}, we can replace the latter by the single upper bound
\begin{align}
	\label{eq:nonlinearDROUpperBoundSymmetrization}
	\operatorname{S}(\ell) \leq \operatorname{U}(\ell_{\operatorname{sym}})\,.
\end{align}

\begin{example}
	\label{ex:symmetrization}
	Let $X = \b{R}$, $N = 2$, $c(x_1,x_2) = \abs{x_1-x_2}^2$, $\hat{P} = \frac{1}{2}\delta_1 + \frac{1}{2} \delta_{-1}$ and $\ell(x_1,x_2) = -2x_1^2 - 2x_1 x_2$.
	Then the value of the structured problem is given by
	\begin{align*}
		\operatorname{S}(\ell)
		= \operatorname{S}(\ell_{\operatorname{sym}})
		= \sup_{P \in \b{B}_\rho^c(\hat{P})} -2(\b{E}(P^2) + (\b{E}P)^2)
		= -2(1-\min(\sqrt{\rho},1))^2
		\leq 0\,.
	\end{align*}
	The value of the unstructured bound is, after some elementary calculations,
	\begin{align*}
		\operatorname{U}(\ell)
		= \inf_{\mu > \sqrt{2}-1} 2\rho \mu + \frac{2\mu(1-\mu)}{\mu(\mu+2)-1} 
		\geq (\sqrt{2}-1)2\rho - 2\,, 
	\end{align*}
	while the value of the symmetrized bound is
	\begin{align*}
		\operatorname{U}(\ell_{\operatorname{sym}})
		= 2(2\sqrt{2\min(\rho,1/2)} - 2\min(\rho,1/2)-1) \leq 0\,.
	\end{align*}
	See Figure \ref{fig:exampleSymmetrizationAndLifting1} for a graphical illustration.
\end{example}

\begin{figure}
	\centering
	\subfigure[Objective values for Example \ref{ex:symmetrization}]{%
		\includegraphics[width=0.46\textwidth]{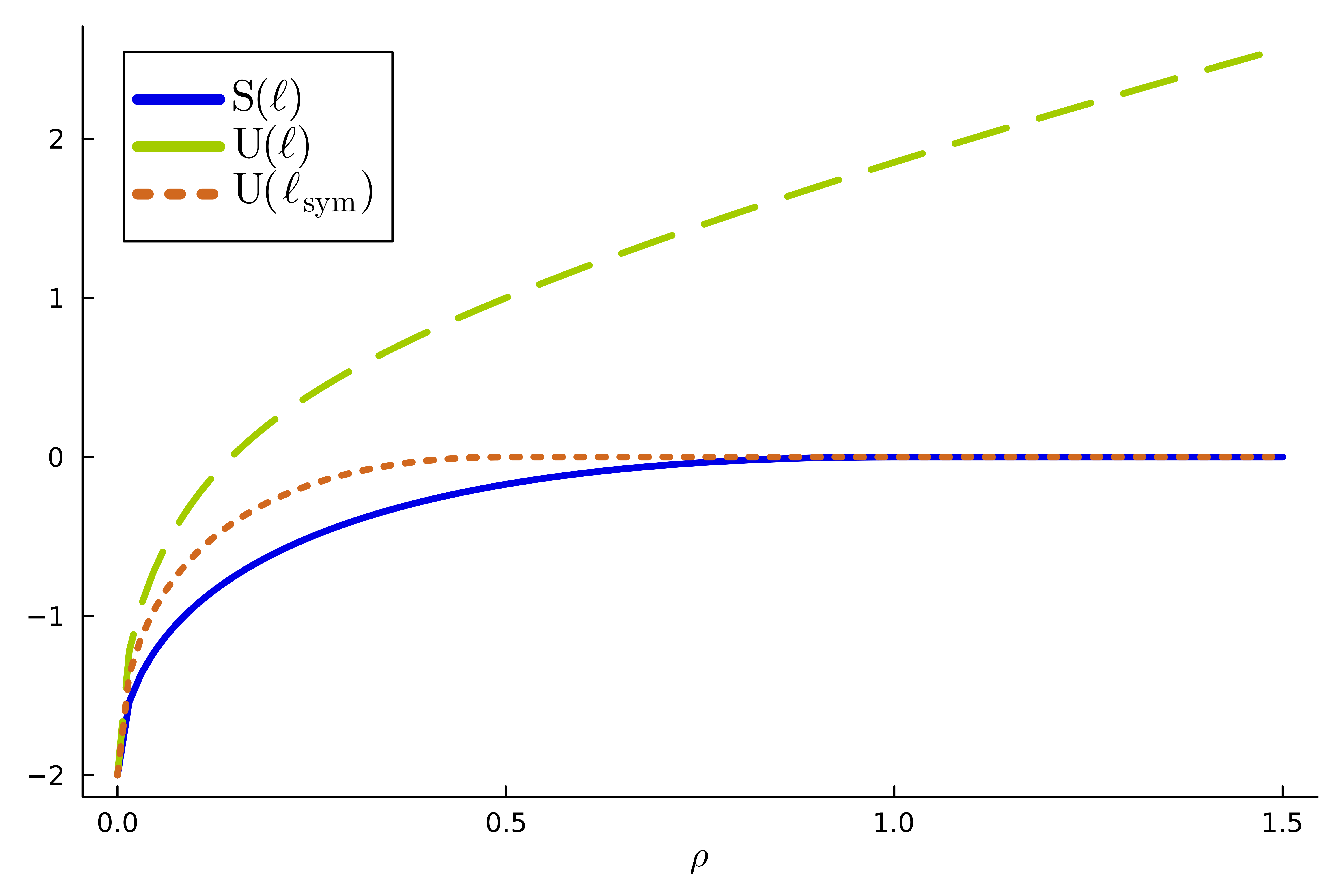}
		\label{fig:exampleSymmetrizationAndLifting1}
	}
	\subfigure[Objective values for Example \ref{ex:liftedRelaxation}]{%
		\includegraphics[width=0.46\textwidth]{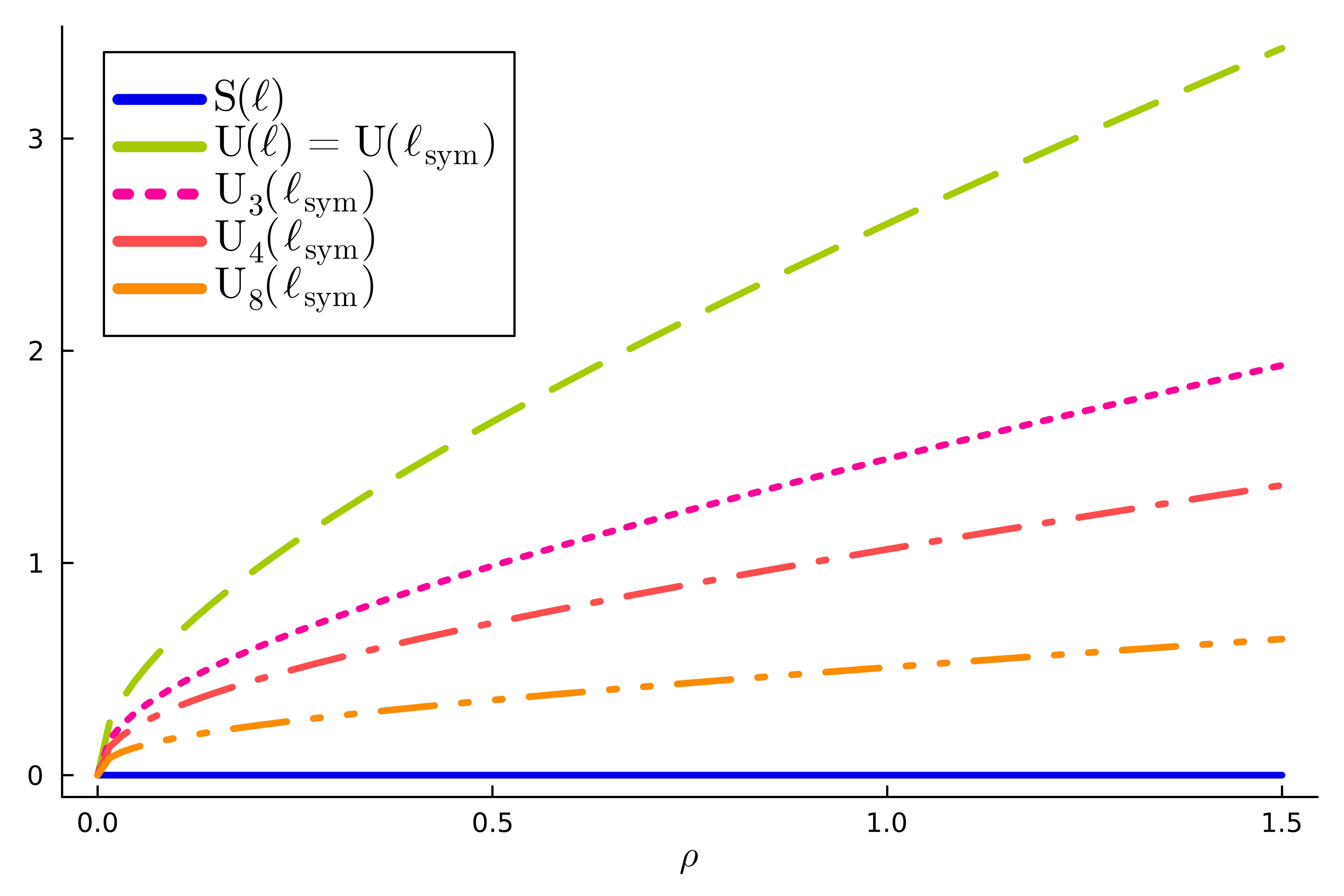}
		\label{fig:exampleSymmetrizationAndLifting2}
	}
	\caption{Objective values for Examples \ref{ex:symmetrization} and \ref{ex:liftedRelaxation}}
	\label{fig:exampleSymmetrizationAndLifting}
\end{figure}
For the purpose of interpreting the upper bound $\operatorname{U}(\ell_{\operatorname{sym}})$ in terms of an overestimation of the set $\convw \c{W}$ 
we need the following
\begin{definition}
	A distribution $\bar{P} \in \c{P}(X^N)$ is called \emph{symmetric} (other terminology: exchangeable or permutation-invariant) if for any permutation $\pi \in \c{S}_N$ it holds that $\pi \# \bar{P} = \bar{P}$.
	The set of symmetric distributions on $X^N$ is denoted by $\c{P}_{\operatorname{sym}}(X^N)$.
\end{definition} 
One important example are powers of distributions $\bar{P} = P^{\otimes N}$, which are always symmetric.
Additionally and in analogy to \eqref{eq:symmetrizationFunction} let us write
\begin{align}
	\label{eq:symmetrizationMeasure}
	\bar{P}_{\operatorname{sym}} := \frac{1}{N!}\sum_{\pi \in \c{S}_N} \pi\#\bar{P}
\end{align}
for the \emph{symmetrization} of the distribution $\bar{P}$.
This yields the following
\begin{theorem}
	\label{thm:symmetrizationUpperBound}
	Suppose that $\ell \in \c{G}_{c^N}(X^N)$ is Borel measurable.
	Then
	\begin{align}
		\label{eq:symmetrizationUpperBound}
		\begin{aligned}
			\operatorname{U}(\ell_{\operatorname{sym}})
			= \sup_{\bar{P} \in \b{B}_{N\rho}^{c^N}(\hat{P}^{\otimes N})_{\operatorname{sym}}} \intg{}{\ell}{\bar{P}}\,,
		\end{aligned}
	\end{align}
	where 
	\begin{align}
		\label{eq:symmetricSet}
		\b{B}_{N\rho}^{c^N}(\hat{P}^{\otimes N})_{\operatorname{sym}} 
		:= \{\bar{P}_{\operatorname{sym}} \mid \bar{P} \in \b{B}_{N\rho}^{c^N}(\hat{P}^{\otimes N})\}
		= \b{B}_{N\rho}^{c^N}(\hat{P}^{\otimes N}) \cap \c{P}_{\operatorname{sym}}(X^N)\,,
	\end{align}
\end{theorem}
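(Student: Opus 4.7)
The plan is to split the claim into two pieces: first establish the set identity in \eqref{eq:symmetricSet}, then use linearity of the pushforward to rewrite the integral of $\ell_{\operatorname{sym}}$ as the integral of $\ell$ against a symmetrized measure, and finally match the suprema.

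\textbf{Step 1: The set identity.} For the inclusion $\{\bar{P}_{\operatorname{sym}} \mid \bar{P} \in \b{B}_{N\rho}^{c^N}(\hat{P}^{\otimes N})\} \subseteq \b{B}_{N\rho}^{c^N}(\hat{P}^{\otimes N}) \cap \c{P}_{\operatorname{sym}}(X^N)$, symmetry of $\bar{P}_{\operatorname{sym}}$ is immediate from \eqref{eq:symmetrizationMeasure} because $\tau \# \bar{P}_{\operatorname{sym}} = \tfrac{1}{N!}\sum_{\pi} (\tau\pi)\#\bar{P} = \bar{P}_{\operatorname{sym}}$ for every $\tau \in \c{S}_N$. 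To see that $\bar{P}_{\operatorname{sym}}$ remains in the Wasserstein ball, I would take an optimal coupling $\Lambda \in \Gamma_{c^N}(\bar{P},\hat{P}^{\otimes N})$ and set $\Lambda_{\operatorname{sym}} = \tfrac{1}{N!}\sum_{\pi \in \c{S}_N} (\pi \times \pi)\# \Lambda$. Since $\hat{P}^{\otimes N}$ is itself invariant under coordinate permutations, $\Lambda_{\operatorname{sym}}$ couples $\bar{P}_{\operatorname{sym}}$ with $\hat{P}^{\otimes N}$. The crucial observation is that the lifted cost $c^N(x,y) = \sum_i c(x_i,y_i)$ is invariant under the diagonal action $(x,y) \mapsto (\pi(x),\pi(y))$, so $\int c^N\, d\Lambda_{\operatorname{sym}} = \int c^N\, d\Lambda \leq N\rho$. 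The reverse inclusion is trivial: any symmetric $\bar{P}$ in the ball satisfies $\bar{P}_{\operatorname{sym}} = \bar{P}$.

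\textbf{Step 2: Rewriting the objective.} By linearity of integration, by the change of variables formula, and by the growth assumption $\ell \in \c{G}_{c^N}(X^N)$ (which together with $\hat{P}^{\otimes N} \in \c{P}_{c^N}(X^N)$ yields $\ell \in L^1(X^N,\bar{P})$ for every $\bar{P} \in \b{B}_{N\rho}^{c^N}(\hat{P}^{\otimes N})$ by Theorem~\ref{thm:droStructuredSolvability}, so all manipulations are justified), I get
\begin{align*}
    \intg{}{\ell_{\operatorname{sym}}}{\bar{P}}
    = \frac{1}{N!}\sum_{\pi \in \c{S}_N} \intg{}{\ell \circ \pi}{\bar{P}}
    = \frac{1}{N!}\sum_{\pi \in \c{S}_N} \intg{}{\ell}{(\pi\#\bar{P})}
    = \intg{}{\ell}{\bar{P}_{\operatorname{sym}}}\,.
\end{align*}

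\textbf{Step 3: Matching suprema.} Combining Steps 1 and 2,
\begin{align*}
    \operatorname{U}(\ell_{\operatorname{sym}})
    = \sup_{\bar{P} \in \b{B}_{N\rho}^{c^N}(\hat{P}^{\otimes N})} \intg{}{\ell}{\bar{P}_{\operatorname{sym}}}
    = \sup_{\bar{Q} \in \b{B}_{N\rho}^{c^N}(\hat{P}^{\otimes N})_{\operatorname{sym}}} \intg{}{\ell}{\bar{Q}}\,,
\end{align*}
where the second equality uses that the map $\bar{P}\mapsto \bar{P}_{\operatorname{sym}}$ is surjective from $\b{B}_{N\rho}^{c^N}(\hat{P}^{\otimes N})$ onto the symmetrized ball (trivially, since every symmetric $\bar{Q}$ is its own symmetrization), as established in Step 1.

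The only mildly delicate step is the first inclusion of Step 1, where I need the diagonal permutation-invariance of $c^N$ combined with the invariance of $\hat{P}^{\otimes N}$ to argue that the symmetrized coupling has the same total cost as the original one; everything else is routine bookkeeping. The integrability condition from $\c{G}_{c^N}(X^N)$ is used exclusively to justify the Fubini-type interchange in Step 2 and ensure the supremum in $\operatorname{U}(\ell_{\operatorname{sym}})$ is well-defined.
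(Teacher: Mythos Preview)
Your proposal is correct and follows essentially the same route as the paper: rewrite $\int \ell_{\operatorname{sym}}\,d\bar{P}$ as $\int \ell\,d\bar{P}_{\operatorname{sym}}$ via the pushforward formula, and prove the set identity by symmetrizing a coupling using the diagonal $\c{S}_N$-invariance of both $c^N$ and $\hat{P}^{\otimes N}$ (this is exactly the content of the paper's Lemma~\ref{lem:symmetricCoupling}). The only cosmetic difference is that the paper uses an $\epsilon$-approximate coupling rather than an optimal one, but since the preliminaries already guarantee attainment of the infimum in \eqref{eq:wassersteinDistanceDefinition} when finite, your direct use of an optimal $\Lambda$ is perfectly valid.
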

From \eqref{eq:symmetrizationUpperBound} we make two observations:
Firstly, the upper bound $\operatorname{U}(\ell_{\operatorname{sym}})$ corresponds to optimizing over the set \eqref{eq:symmetricSet}, which contains $\convw \c{W}$, as it is weakly closed, convex and contains $\c{W}$.
Secondly, we see that even though the set $\b{B}_{N\rho}^{c^N}(P^{\otimes N})_{\operatorname{sym}}$ is not a Wasserstein ball itself (it is the intersection of the Wasserstein ball $\b{B}_{N\rho}^{c^N}(P^{\otimes N})$ with $\c{P}_{\operatorname{sym}}(X^N)$), \emph{one can still evaluate} the corresponding UQ problem
\begin{align}
	\label{eq:droSymmetric}
	\sup_{\bar{P} \in \b{B}_{N\rho}^{c^N}(P^{\otimes N})_{\operatorname{sym}}} \intg{}{\ell}{\bar{P}}
\end{align}
using Theorem \ref{thm:droGeneralDuality} by evaluating $\operatorname{U}(\ell_{\operatorname{sym}})$, i.e. by substituting $\ell$ with $\ell_{\operatorname{sym}}$ and replacing $\b{B}_{N\rho}^{c^N}(P^{\otimes N})_{\operatorname{sym}}$ by $\b{B}_{N\rho}^{c^N}(P^{\otimes N})$ in \eqref{eq:droGeneral}.

\subsection{Sequence of convex relaxations}
\label{sec:structuredDROSequenceOfConvexRelaxations}

It is interesting to ask now whether we can find even tighter overestimations of $\convw \c{W}$ than $\b{B}_{N\rho}^{c^N}(P^{\otimes N})_{\operatorname{sym}}$.
In view of \eqref{eq:symmetricSet} the latter question is equivalent to determining a property that distinguishes powers of measures in the set of all symmetric measures.
The crucial observation is that powers of powers of measures are again powers of measures and hence symmetric, while the same does not have to be true for general symmetric measures.
This latter viewpoint hints us to lift our problem and consider instead of \eqref{eq:droStructured} the problem
\begin{align}
	\label{eq:droGeneralLifted}
	\sup_{P \in \b{B}_\rho^c(\hat{P})} \intg{}{\ell\circ \operatorname{pr}_{1:N}^M(x)}{P^{\otimes M}(x)}\,,
\end{align}
where $N \leq M \leq \infty$ is a ``lifting parameter'' and $\operatorname{pr}_{1:N}^M:X^M \to X^N$ is the projection onto the first $N$ components, i.e. in \eqref{eq:droGeneralLifted} we see the function $\ell$ on $X^N$ as a function $\hat{\ell} = \ell \circ \operatorname{pr}_{1:N}^M$ on $X^M$.
Analogously we define the quantities
\begin{align*}
	\operatorname{S}_M(\hat{\ell})
	:= \sup_{P \in \b{B}_\rho^c(\hat{P})} \b{E}_{x \sim P^{\otimes M}} \hat{\ell}(x)
	\leq 
	\operatorname{U}_M(\hat{\ell})
	:= \sup_{\bar{P} \in \b{B}_{M\rho}^{c^M}(\hat{P}^{\otimes M})} \b{E}_{x \sim \bar{P}} \hat{\ell}(x)\,.
\end{align*}
Note that \eqref{eq:droGeneralLifted} has the same optimal value as \eqref{eq:droStructured}, since integrating w.r.t. extra factors $P$ does not change the value of the integral, i.e. $\operatorname{S}_N(\ell) = \operatorname{S}_M(\ell \circ \operatorname{pr}_{1:N}^M)$ for any $M \geq N$.
In view of \eqref{eq:nonlinearDROUpperBoundSymmetrization} it then follows that
\begin{align*}
	\operatorname{S}(\ell)
	= \operatorname{S}_N(\ell)
	= \operatorname{S}_M(\hat{\ell})
	\leq \operatorname{U}_M(\hat{\ell}_{\operatorname{sym}}) \text{\ \ for\ \ } N \leq M < \infty\,.
\end{align*}
Note that here $\hat{\ell}_{\operatorname{sym}}$ denotes the symmetrization of $\ell$ viewed as a function $X^M \to \b{R}$ and thus itself depends on $M$.
To summarize, for each $M \geq N$ we obtain an upper bound
\begin{align}
	\label{eq:droStructuredRelaxationSequence}
	\operatorname{U}_M^{\operatorname{sym}}(\ell) := \operatorname{U}_M((\ell\circ \operatorname{pr}_{1:N}^M)_{\operatorname{sym}}) 
\end{align}
on $\operatorname{S}(\ell)$.
It then holds that\footnote{the use of the notation $\operatorname{U}_\infty^{\operatorname{sym}}(\ell)$ is justified by Lemma \ref{lem:nestedRelaxationSets} stated below.}
\begin{align}
	\label{eq:droStructuredRelaxationSequenceInfimum}
	\operatorname{S}(\ell) 
	\leq \operatorname{U}_\infty^{\operatorname{sym}}(\ell) := \inf_{M \geq N} \operatorname{U}_M^{\operatorname{sym}}(\ell)\,.
\end{align} 
We stress again that, contrary to the original problem $\operatorname{S}(\ell)$, \emph{we can evaluate each} $\operatorname{U}_M^{\operatorname{sym}}(\ell)$ via Theorem \ref{thm:droGeneralDuality} by using the relation \eqref{eq:droStructuredRelaxationSequence} (see also Section \ref{sec:structuredDROPolyhedralLoss}).
For notational brevity we also set $\operatorname{U}_0^{\operatorname{sym}}(\ell) := \operatorname{U}(\ell)$ for the non-symmetrized bound defined as the right side of \eqref{eq:nonlinearDROUpperBoundBasic}.
The following theorem establishes when $\operatorname{U}_M^{\operatorname{sym}}(\ell)$ is finite and attained.
\begin{theorem}
	\label{thm:droRelaxedSolvability}
	The value $\operatorname{U}_M^{\operatorname{sym}}(\ell)$ is finite if $\ell \in \c{G}_{c^N}(X^N)$. 
	Additionally, if $\ell \in \c{G}_{c^N}^<(X^N)$, then $\operatorname{U}_M^{\operatorname{sym}}(\ell)$ is attained, i.e. there exists some $\bar{P} \in \b{B}_{M\rho}^{c^M}(\hat{P}^{\otimes M})_{\operatorname{sym}}$ such that $\intg{}{\ell\circ\operatorname{pr}_{1:N}^M}{\bar{P}} = \operatorname{U}_M^{\operatorname{sym}}(\ell)$.
\end{theorem}

\begin{example}
	\label{ex:liftedRelaxation}
	Let $X$, $N$, $c$, $\hat{P}$ and $\ell$ be as in Example \ref{ex:conservatismUnstructuredBound}.
	After some elementary calculations (see Appendix) we obtain
	\begin{align*}
		\operatorname{U}_M^{\operatorname{sym}}(\ell)
		&= \inf_{\mu > \frac{1}{M(M-1)}} M(\rho-1) \mu + \frac{(M-1)M^2\mu^2}{(M-1)M\mu-1}\left(1 - \frac{1}{(M-1)(1+M\mu)}\right) \\
		&\leq \inf_{\mu > \frac{1}{M(M-1)}} M(\rho-1) \mu + \frac{(M-1)M^2\mu^2}{(M-1)M\mu-1} \\
		&= \frac{(\sqrt{\rho}+1)^2}{M-1}\,,
	\end{align*}
	i.e. for a fixed $\rho > 0$ it follows $\lim_{M \to \infty} \operatorname{U}_M^{\operatorname{sym}}(\ell) = 0 = \operatorname{S}(\ell)$. 
	See Figure \ref{fig:exampleSymmetrizationAndLifting2} for a graphical illustration.
\end{example}

In view of Example \ref{ex:liftedRelaxation} it is now natural to ask under which conditions the inequality in \eqref{eq:droStructuredRelaxationSequence} is an equality, i.e. when the sequence of \emph{relaxations} is tight and there is no relaxation gap.
For this we will investigate this gap in the next section in more detail.

\subsection{Relaxation gap}
\label{sec:structuredDRORelaxationGap}

To understand the gap between $\operatorname{S}(\ell)$ and $\operatorname{U}_\infty^{\operatorname{sym}}(\ell)$ it is helpful to first understand the lifting relaxation $\operatorname{U}_M^{\operatorname{sym}}(\ell)$ in terms of overestimations of $\convw \c{W}$.
For this purpose we unfold the definitions to obtain
\begin{align*}
	\begin{gathered}
		\operatorname{U}_M^{\operatorname{sym}}(\ell)
		= \sup_{\bar{P} \in \b{B}_{M\rho}^{c^M}(\hat{P}^{\otimes M})} \intg{}{(\ell\circ \operatorname{pr}_{1:N}^M)_{\operatorname{sym}}}{\bar{P}}
		= \sup_{\bar{P} \in \b{B}_{M\rho}^{c^M}(\hat{P}^{\otimes M})} \intg{}{\ell\circ \operatorname{pr}_{1:N}^M}{\bar{P}_{\operatorname{sym}}} \\
		= \sup_{\bar{P} \in \b{B}_{M\rho}^{c^M}(\hat{P}^{\otimes M})} \intg{}{\ell}{(\operatorname{pr}_{1:N}^M \# \bar{P}_{\operatorname{sym}})} 
		= \sup_{\bar{P} \in \c{U}_M} \intg{}{\ell}{\bar{P}}\,,
	\end{gathered}
\end{align*}
with
\begin{align}
	\label{eq:relaxationSetFiniteIndex}
	\c{U}_M := \{\operatorname{pr}_{1:N}^M \# \bar{P} \mid \bar{P} \in \b{B}_{M\rho}^{c^M}(\hat{P}^{\otimes M})_{\operatorname{sym}}\} \subseteq \c{P}(X^N)\,,
\end{align}
i.e. we overestimate $\convw \c{W}$ by sets of the form \eqref{eq:relaxationSetFiniteIndex}, which consist of all marginal distributions onto the first $N$ factors of symmetric distributions in $\b{B}_{M\rho}^{c^M}(\hat{P}^{\otimes M})$.
The following lemma establishes some properties of the sets \eqref{eq:relaxationSetFiniteIndex}. 
\begin{lemma}
	\label{lem:nestedRelaxationSets}
	For each $M \geq N$
	\begin{enumerate}
		\item[(i)] the set $\c{U}_M$ is convex and weakly compact
		
		\item[(ii)] it holds that $\convw\c{W} \subseteq \c{U}_M$
		
		\item[(iii)] it holds that $\c{U}_{M+1} \subseteq \c{U}_M$
	\end{enumerate}
\end{lemma}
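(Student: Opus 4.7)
The plan is to exploit that all three operations behind $\c{U}_M$ — the Wasserstein-ball constraint, symmetrization $\bar P\mapsto\bar P_{\operatorname{sym}}$, and pushforward by the projection $\operatorname{pr}_{1:N}^M$ — are convex/affine and weakly continuous, and that the identity $\operatorname{pr}_{1:N}^M\circ\operatorname{pr}_{1:M}^{M+1}=\operatorname{pr}_{1:N}^{M+1}$ will drive the nesting property.

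For (i), convexity is immediate: the Wasserstein ball $\b{B}_{M\rho}^{c^M}(\hat{P}^{\otimes M})$ is convex (the transport cost functional is jointly convex in its marginals), symmetrization $\bar P\mapsto \frac{1}{M!}\sum_{\pi\in\c{S}_M}\pi\#\bar P$ is affine, and the pushforward is linear; thus $\c{U}_M$ is the image of a convex set under a composition of affine maps. For weak compactness, I would invoke that $\b{B}_{M\rho}^{c^M}(\hat{P}^{\otimes M})$ is weakly compact — tightness follows from the weak triangle inequality and the uniform first-moment bound $\int c^M(x_0,\cdot)\,\mathrm{d}\bar P\leq K\bigl(M\rho+\int c^M(x_0,\cdot)\,\mathrm{d}\hat{P}^{\otimes M}\bigr)$ combined with compactness of the sublevel sets of $c^M(x_0,\cdot)$ (which is inherited from properness of $c$), and weak closedness follows from lower semi-continuity of $W_{c^M}(\cdot,\hat{P}^{\otimes M})$. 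Both the symmetrization map and the projection-pushforward are weakly continuous (the underlying maps $\pi$ and $\operatorname{pr}_{1:N}^M$ are continuous), so $\c{U}_M$ is the continuous image of a weakly compact set, hence weakly compact.

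For (ii), since (i) gives that $\c{U}_M$ is convex and weakly closed, it suffices to show $\c{W}\subseteq\c{U}_M$, whence $\convw\c{W}\subseteq\c{U}_M$ follows. Given $P\in\b{B}_\rho^c(\hat{P})$, Lemma \ref{lem:wassersteinBallProductInclusion} yields $P^{\otimes M}\in\b{B}_{M\rho}^{c^M}(\hat{P}^{\otimes M})$; moreover $P^{\otimes M}$ is invariant under coordinate permutations, so it coincides with its own symmetrization and lies in $\b{B}_{M\rho}^{c^M}(\hat{P}^{\otimes M})_{\operatorname{sym}}$. Its projection onto the first $N$ factors equals $P^{\otimes N}\in\c{W}$, completing the inclusion.

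For (iii), let $Q\in\c{U}_{M+1}$, say $Q=\operatorname{pr}_{1:N}^{M+1}\#\bar P$ with $\bar P\in\b{B}_{(M+1)\rho}^{c^{M+1}}(\hat{P}^{\otimes(M+1)})_{\operatorname{sym}}$, and set $\tilde P:=\operatorname{pr}_{1:M}^{M+1}\#\bar P$. Extending any $\sigma\in\c{S}_M$ to a permutation of $\{1,\ldots,M+1\}$ fixing the last index immediately gives $\sigma\#\tilde P=\tilde P$, so $\tilde P$ is symmetric. The crux is to prove $W_{c^M}(\tilde P,\hat{P}^{\otimes M})\leq M\rho$. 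I would pick an optimal coupling $\Lambda$ for $W_{c^{M+1}}(\bar P,\hat{P}^{\otimes(M+1)})$ and form its diagonal symmetrization $\Lambda_{\operatorname{sym}}=\frac{1}{(M+1)!}\sum_{\pi\in\c{S}_{M+1}}(\pi,\pi)\#\Lambda$. Because both $\bar P$ and $\hat{P}^{\otimes(M+1)}$ are symmetric, $\Lambda_{\operatorname{sym}}$ remains a coupling between them, and because $c^{M+1}$ is invariant under the diagonal permutation action, its total cost equals that of $\Lambda$ and is at most $(M+1)\rho$. The payoff of this symmetrization is that the per-coordinate costs $\int c(x_i,y_i)\,\mathrm{d}\Lambda_{\operatorname{sym}}$ are now equal across all $i\in\{1,\ldots,M+1\}$, so each is at most $\rho$. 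Pushing $\Lambda_{\operatorname{sym}}$ forward by $(\operatorname{pr}_{1:M}^{M+1},\operatorname{pr}_{1:M}^{M+1})$ produces a coupling of $\tilde P$ and $\hat{P}^{\otimes M}$ of total cost at most $M\rho$, so $\tilde P\in\b{B}_{M\rho}^{c^M}(\hat{P}^{\otimes M})_{\operatorname{sym}}$, and $\operatorname{pr}_{1:N}^M\#\tilde P=\operatorname{pr}_{1:N}^{M+1}\#\bar P=Q\in\c{U}_M$. The only delicate step is the diagonal symmetrization of the coupling and the resulting per-coordinate equidistribution of cost; everything else reduces to functorial bookkeeping of pushforwards.
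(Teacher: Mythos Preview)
Your proposal is correct and follows essentially the same route as the paper. In particular, your key step in (iii) --- diagonally symmetrizing an optimal coupling so that the per-coordinate costs $\int c(x_i,y_i)\,\mathrm{d}\Lambda_{\operatorname{sym}}$ are all equal and hence each bounded by $\rho$, then projecting to the first $M$ coordinates --- is exactly what the paper does (there packaged as Lemma~\ref{lem:symmetricCoupling}), and your treatment of (i) and (ii) matches the paper's as well.
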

%
%\todo[inline]{Add projection of a 2D-Gaussian distribution: What covariance matrices are allowed for Gaussians inside $\c{U}_M$?}
%
%\begin{example}
%	To obtain a better intuition about the sets $\c{U}_M$, let us look at its intersections with $$	
%\end{example}

%\begin{figure}
%	\centering
%	\begin{subfigure}[t]{0.49\textwidth}
%		\centering	\includegraphics[width=1.3\textwidth]{../graphics/amb_set_comparison.png}
%		\caption{Sets $\c{U}_M$ for $M=2,3,4$ as well as $\c{W}$ and $\operatorname{conv}\c{W}$}	\label{fig:amb_set_comparison1}
%	\end{subfigure}
%	\begin{subfigure}[t]{0.49\textwidth}
%		\centering		\includegraphics[width=1.3\textwidth]{../graphics/amb_set_comparison_infty_less.png}
%		\caption{Sets $\c{U}_M$ for $M=2,3,4$ and $M = \infty$}		\label{fig:amb_set_comparison2}
%	\end{subfigure}
%	\caption{Space of probabilities $p_{11}, p_{12}, p_{21}$ for which the distribution $\bar{P} = p_{11} \delta_{(1,1)} + p_{12} \delta_{(1,-1)} + p_{21} \delta_{(-1,1)} + p_{22} \delta_{(-1,-1)} \in \c{P}(\b{R}^2)$ with $p_{22} = 1-p_{11} - p_{12} - p_{21}$ belongs to certain subsets sets of $\b{B}_{2\rho}^c(P^{\otimes 2})$.
%		Here we have set $X = \b{R}$, $\rho = 1$, $c(x,y) = \abs{x-y}^2$ and $P = \frac{1}{2}\delta_1 + \frac{1}{2}\delta_{-1}$ and $N = 2$.
%		Note that because every distribution in the above sets is symmetric, it follows that $p_{21} = p_{12}$ and hence we only plot $p_{12} + p_{21} = 2p_{12}$.}
%	\label{fig:amb_set_comparison}
%\end{figure}

As an immediate corollary, we obtain that the sequence of upper bounds $\operatorname{U}_M^{\operatorname{sym}}(\ell)$ is \emph{nonincreasing}.
\begin{corollary}
	\label{cor:decreasingRelaxationSequence}
	For all $M \in \b{N}$ with $M \geq N$ it holds that $\operatorname{U}_{M+1}^{\operatorname{sym}}(\ell) \leq \operatorname{U}_M^{\operatorname{sym}}(\ell)$. 
	In particular $\operatorname{U}_\infty^{\operatorname{sym}}(\ell) = \lim_{M \to \infty} \operatorname{U}_M^{\operatorname{sym}}(\ell)$.
\end{corollary}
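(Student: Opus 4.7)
The plan is to read off the corollary essentially directly from the representation of $\operatorname{U}_M^{\operatorname{sym}}(\ell)$ as a supremum over the set $\c{U}_M$ combined with the nesting property established in Lemma \ref{lem:nestedRelaxationSets}(iii). The key identity, derived in the paragraph immediately preceding Lemma \ref{lem:nestedRelaxationSets}, is
\begin{equation*}
    \operatorname{U}_M^{\operatorname{sym}}(\ell) = \sup_{\bar{P} \in \c{U}_M} \intg{}{\ell}{\bar{P}}\,,
\end{equation*}
and this holds for every $M \geq N$. Hence the monotonicity statement reduces to a purely set-theoretic comparison of the feasible sets $\c{U}_M$ and $\c{U}_{M+1}$.

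First I would invoke Lemma \ref{lem:nestedRelaxationSets}(iii) to note that $\c{U}_{M+1} \subseteq \c{U}_M$ for every $M \geq N$. The supremum of a fixed functional $\bar{P} \mapsto \intg{}{\ell}{\bar{P}}$ over a smaller set is no larger than over a larger set, so
\begin{equation*}
    \operatorname{U}_{M+1}^{\operatorname{sym}}(\ell) = \sup_{\bar{P} \in \c{U}_{M+1}} \intg{}{\ell}{\bar{P}} \leq \sup_{\bar{P} \in \c{U}_M} \intg{}{\ell}{\bar{P}} = \operatorname{U}_M^{\operatorname{sym}}(\ell)\,,
\end{equation*}
which is exactly the claimed inequality.

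For the ``in particular'' part, the sequence $(\operatorname{U}_M^{\operatorname{sym}}(\ell))_{M \geq N}$ is a nonincreasing sequence of elements of $\bar{\b{R}}$, and every such sequence converges in $\bar{\b{R}}$ to its infimum. By the definition \eqref{eq:droStructuredRelaxationSequenceInfimum} of $\operatorname{U}_\infty^{\operatorname{sym}}(\ell)$ as the infimum over $M \geq N$, this yields $\operatorname{U}_\infty^{\operatorname{sym}}(\ell) = \lim_{M \to \infty} \operatorname{U}_M^{\operatorname{sym}}(\ell)$.

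There is no real obstacle here; the only subtlety is the justification of the supremum representation of $\operatorname{U}_M^{\operatorname{sym}}(\ell)$, but that has been carried out explicitly just before Lemma \ref{lem:nestedRelaxationSets}, so the corollary is an immediate consequence once Lemma \ref{lem:nestedRelaxationSets}(iii) is in hand.
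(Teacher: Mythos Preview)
Your proof is correct and follows exactly the approach intended in the paper: the corollary is stated as an immediate consequence of Lemma \ref{lem:nestedRelaxationSets}(iii) together with the representation $\operatorname{U}_M^{\operatorname{sym}}(\ell) = \sup_{\bar{P} \in \c{U}_M} \intg{}{\ell}{\bar{P}}$, and you have spelled this out precisely.
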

Since we are interested in the latter limit behavior, we consider the following set
\begin{align*}
	\c{U}_{\infty}
	= \bigcap_{M \geq N} \c{U}_M\,,
\end{align*}
which is convex, weakly compact and contains $\convw \c{W}$, as well as the corresponding optimization problem
\begin{align}
	\label{eq:droStructuredLimitSet}
	\hat{\operatorname{U}}_\infty^{\operatorname{sym}}(\ell) := \sup_{\bar{P} \in \c{U}_\infty} \b{E}_{x \sim \bar{P}} \ell(x)\,.
\end{align}
%Despite having a similar interpretation, $\hat{\operatorname{U}}_\infty^{\operatorname{sym}}(f)$ and $\operatorname{U}_\infty^{\operatorname{sym}}(f)$ are \emph{potentially different}, since the latter is the defined as the limit of the relaxation sequence that optimizes over the set $\c{U}_M$ at each lifting parameter $M$, while the former is defined as an optimization problem over the intersection of all $\c{U}_M$ \emph{simultaneously}.
%Now, the question is how both of the latter quantities relate to our relaxation sequence.
Since $\convw \c{W} \subseteq \c{U}_\infty \subseteq \c{U}_M$ for all $M \geq N$, it follows that
\begin{align*}
	\operatorname{S}(\ell)
	\leq \hat{\operatorname{U}}_\infty^{\operatorname{sym}}(\ell) 
	\leq \sup_{\bar{P} \in \c{U}_M} \b{E}_{x \sim \bar{P}} \ell(x)
	= \operatorname{U}_M^{\operatorname{sym}}(\ell) 
	\text{\ \ for all\ } M \geq N\,,
\end{align*}
and thus always $\hat{\operatorname{U}}_\infty^{\operatorname{sym}}(\ell) \leq \operatorname{U}_\infty^{\operatorname{sym}}(\ell)$.
The next theorem gives conditions when the latter inequality is an equality.
\begin{theorem}
	\label{thm:limitSetContinuous}
	If $\ell:X^N \to \b{R}$ is upper semi-continuous and $\ell \in \c{G}_{c^N}^<(X^N)$, then
	\begin{align*}
		\hat{\operatorname{U}}_\infty^{\operatorname{sym}}(\ell) = \operatorname{U}_\infty^{\operatorname{sym}}(\ell)\,.
	\end{align*}
\end{theorem}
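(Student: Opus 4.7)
The plan is to prove the non-trivial inequality $\operatorname{U}_\infty^{\operatorname{sym}}(\ell) \leq \hat{\operatorname{U}}_\infty^{\operatorname{sym}}(\ell)$ through a weak compactness argument on the nested sequence $\{\c{U}_M\}_{M \geq N}$. First, I would pick for each $M \geq N$ a near-maximizer $\bar{P}_M \in \c{U}_M$ satisfying $\int \ell\, d\bar{P}_M \geq \operatorname{U}_M^{\operatorname{sym}}(\ell) - 1/M$. By the nestedness $\c{U}_{M+1} \subseteq \c{U}_M$ from Lemma \ref{lem:nestedRelaxationSets}(iii) and the weak compactness of $\c{U}_N$ from (i), the whole tail of the sequence lies in $\c{U}_N$ and admits a weakly convergent subsequence $\bar{P}_{M_k} \rightharpoonup \bar{P}_\infty$. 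Since each $\c{U}_{M_0}$ is weakly closed and eventually contains the subsequence, the limit satisfies $\bar{P}_\infty \in \bigcap_{M_0 \geq N} \c{U}_{M_0} = \c{U}_\infty$; in particular $\int \ell\, d\bar{P}_\infty \leq \hat{\operatorname{U}}_\infty^{\operatorname{sym}}(\ell)$. The proof then reduces to the limit passage $\limsup_k \int \ell\, d\bar{P}_{M_k} \leq \int \ell\, d\bar{P}_\infty$.

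Next, because $\ell$ is only upper semi-continuous and bounded above sub-linearly (via $\c{G}_{c^N}^<(X^N)$), I would truncate $\ell_R := \ell \wedge R$ for $R > 0$. Each $\ell_R$ is u.s.c. and bounded above by $R$, so the standard Portmanteau theorem yields $\limsup_k \int \ell_R\, d\bar{P}_{M_k} \leq \int \ell_R\, d\bar{P}_\infty \leq \int \ell\, d\bar{P}_\infty$. Writing $\int \ell\, d\bar{P}_{M_k} \leq \int \ell_R\, d\bar{P}_{M_k} + \int (\ell - R)^+\, d\bar{P}_{M_k}$ and using the growth bound $\ell \leq C(1 + \delta(c^N(x_0, \cdot)))$ with $\delta(r)/r \to 0$, it remains to show that $\sup_k \int (\ell - R)^+\, d\bar{P}_{M_k} \to 0$ as $R \to \infty$. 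By a Markov-type tail estimate, this reduces to a uniform first-moment bound $\sup_M \sup_{\bar{Q} \in \c{U}_M} \int c^N(x_0, \cdot)\, d\bar{Q} < \infty$.

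I expect this uniform moment bound to be the main obstacle, and I would handle it by exploiting the symmetry built into $\c{U}_M$. Specifically, for $\bar{R} \in \b{B}_{M\rho}^{c^M}(\hat{P}^{\otimes M})_{\operatorname{sym}}$ and a base point of the form $x_0 = (u, \ldots, u) \in X^N$ with $u \in X$, the exchangeability of $\bar{R}$ makes $\int c(u, z_i)\, d\bar{R}(z)$ independent of $i$, so the push-forward onto the first $N$ coordinates satisfies $\int c^N(x_0, \cdot)\, d(\operatorname{pr}_{1:N}^M \# \bar{R}) = (N/M) \int \sum_{i=1}^M c(u, z_i)\, d\bar{R}(z)$. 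The right-hand side is in turn bounded by $(N/M)\cdot KM(\rho + \int c(u, \cdot)\, d\hat{P})$ via the weak triangle inequality and the ball constraint, so the factor $M$ cancels and leaves an $M$-independent bound. Without this symmetry trick the moment bound would scale linearly in $M$ and the truncation error would not vanish uniformly; combined with the strictly sublinear growth $\delta \in \c{D}$, the $M$-independent moment bound forces $\int (\ell - R)^+\, d\bar{P}_{M_k}$ to vanish uniformly in $k$ as $R \to \infty$, which together with the Portmanteau step closes the argument.
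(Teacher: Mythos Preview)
Your proposal is correct and follows essentially the same strategy as the paper: pick near-maximizers $\bar{P}_M \in \c{U}_M$, use nestedness and weak compactness of $\c{U}_N$ to extract a subsequence converging to some $\bar{P}_\infty \in \c{U}_\infty$, and then pass to the limit via weak upper semi-continuity of $\bar{P} \mapsto \int \ell\, d\bar{P}$. The paper simply invokes this upper semi-continuity on $\b{B}_{N\rho}^{c^N}(\hat{P}^{\otimes N})$, which was already established (by exactly your truncation argument) in the proof of Theorem~\ref{thm:droGeneralSolvability}.

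One remark: your symmetry trick for the uniform moment bound is correct but unnecessary. You have already used nestedness to place every $\bar{P}_{M_k}$ inside $\c{U}_N \subseteq \b{B}_{N\rho}^{c^N}(\hat{P}^{\otimes N})$, and Lemma~\ref{lem:wassersteinBallUniformlyBounded} applied to this single fixed ball immediately gives $\sup_{\bar{Q} \in \c{U}_N} \int c^N(x_0,\cdot)\, d\bar{Q} < \infty$, independently of $M$. There is no danger of the bound scaling with $M$ once you work in $\c{U}_N$; the detour through $\b{B}_{M\rho}^{c^M}(\hat{P}^{\otimes M})$ and the cancellation of the factor $M$ via exchangeability can be dropped entirely.
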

In view of Theorem \ref{thm:limitSetContinuous} it is interesting to investigate $\c{U}_\infty$ and the optimization problem \eqref{eq:droStructuredLimitSet} more in detail. 
We have the following alternative description of the latter set:
\begin{lemma}
	\label{lem:relaxationSetAlternativeDescription}
	It holds that
	\begin{align*}
		\c{U}_{\infty}
		= \{\operatorname{pr}_{1:N} \# \bar{P} \mid \bar{P} \in \hat{\c{W}}_\infty\}\,,
	\end{align*}
	where
	\begin{align*}
		\hat{\c{W}}_\infty
		= \{\bar{P} \in \c{P}_{\operatorname{sym}}(X^\infty) \mid \forall M \in \b{N}\,:\; \operatorname{pr}_{1:M}\#\bar{P} \in \b{B}_{M\rho}^{c^M}(\hat{P}^{\otimes M})\}\,.
	\end{align*}
\end{lemma}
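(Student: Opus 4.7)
The plan is to verify the two inclusions. The direction $\supseteq$ is immediate: if $\bar{P} \in \hat{\c{W}}_\infty$, then for each $M \geq N$ the marginal $\bar{P}_M := \operatorname{pr}_{1:M}\#\bar{P}$ is symmetric on $X^M$ (any $\pi \in \c{S}_M$ extends to an element of $\c{S}_\infty$ by the identity on $\{M+1,M+2,\dots\}$) and lies in $\b{B}_{M\rho}^{c^M}(\hat{P}^{\otimes M})$ by definition of $\hat{\c{W}}_\infty$. Composing projections gives $\operatorname{pr}_{1:N}\#\bar{P} = \operatorname{pr}_{1:N}^M\#\bar{P}_M \in \c{U}_M$ for every $M \geq N$, so $\operatorname{pr}_{1:N}\#\bar{P} \in \c{U}_\infty$.

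For $\subseteq$, fix $Q \in \c{U}_\infty$ and for $M \geq N$ set
\begin{align*}
\c{V}_M := \{\bar{R} \in \c{P}_{\operatorname{sym}}(X^M) \cap \b{B}_{M\rho}^{c^M}(\hat{P}^{\otimes M}) \mid \operatorname{pr}_{1:N}^M\#\bar{R} = Q\}\,,
\end{align*}
which is nonempty precisely because $Q \in \c{U}_M$. The crucial preparatory step is a \emph{projection compatibility}: for $M' \geq M$, the projection $\operatorname{pr}_{1:M}^{M'}$ maps $\c{V}_{M'}$ into $\c{V}_M$. Given $\bar{R} \in \c{V}_{M'}$ and an optimal $\Lambda \in \Gamma_{c^{M'}}(\bar{R},\hat{P}^{\otimes M'})$, the symmetrized coupling $\Lambda_{\operatorname{sym}} = \frac{1}{M'!}\sum_{\pi \in \c{S}_{M'}}(\pi,\pi)\#\Lambda$ is still a coupling of $\bar{R}$ and $\hat{P}^{\otimes M'}$ (both are symmetric) with the same $c^{M'}$-cost (permutation-invariance of $c^{M'}$); a direct count shows that its pushforward under $(\operatorname{pr}_{1:M}^{M'},\operatorname{pr}_{1:M}^{M'})$ couples $\operatorname{pr}_{1:M}^{M'}\#\bar{R}$ with $\hat{P}^{\otimes M}$ at cost exactly $\tfrac{M}{M'}W_{c^{M'}}(\bar{R},\hat{P}^{\otimes M'}) \leq M\rho$, placing $\operatorname{pr}_{1:M}^{M'}\#\bar{R}$ in $\c{V}_M$.

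Each $\c{V}_M$ is weakly closed and tight (as a subset of a Wasserstein ball about $\hat{P}^{\otimes M}$), hence weakly compact. Define the nested family $\c{V}_{M,M'} := \operatorname{pr}_{1:M}^{M'}\#\c{V}_{M'} \subseteq \c{V}_M$ for $M' \geq M$; by the finite intersection property, $\c{V}_{M,\infty} := \bigcap_{M'\geq M}\c{V}_{M,M'}$ is nonempty, and applying the same compactness on the slice $\{\bar{R} \in \c{V}_{M+1,M'} \mid \operatorname{pr}_{1:M}^{M+1}\#\bar{R} = \bar{P}_M\}$ yields $\operatorname{pr}_{1:M}^{M+1}\#\c{V}_{M+1,\infty} = \c{V}_{M,\infty}$. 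Iteratively selecting $\bar{P}_M \in \c{V}_{M,\infty}$ with $\operatorname{pr}_{1:M}^{M+1}\#\bar{P}_{M+1} = \bar{P}_M$ produces a projective family, and Kolmogorov's extension theorem on the Polish product $X^\infty$ yields a unique $\bar{P} \in \c{P}(X^\infty)$ with these marginals. Symmetry of $\bar{P}$ under $\c{S}_\infty$ follows from symmetry of each $\bar{P}_M$ (every element of $\c{S}_\infty$ acts on only finitely many coordinates), and $\bar{P} \in \hat{\c{W}}_\infty$ with $\operatorname{pr}_{1:N}\#\bar{P} = \bar{P}_N = Q$ by construction.

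The main obstacle is the projection-compatibility step: without the symmetrization of $\Lambda$ one only obtains the trivial bound $W_{c^M}(\operatorname{pr}_{1:M}^{M'}\#\bar{R},\hat{P}^{\otimes M}) \leq M'\rho$, which is too loose to stay inside the scaled ball $\b{B}_{M\rho}^{c^M}(\hat{P}^{\otimes M})$ demanded by $\hat{\c{W}}_\infty$. Exchangeability is what allows the $M'\rho$ transport budget to be spread evenly across coordinates, so that any $M$ of them carry only $M\rho$ of cost. The remaining inverse-limit/Kolmogorov extension is a standard diagonal-compactness argument on a Polish product space.
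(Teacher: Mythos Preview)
Your proof is correct and follows essentially the same route as the paper's: both directions use the symmetrized-coupling trick to show that projecting $\b{B}_{M'\rho}^{c^{M'}}(\hat{P}^{\otimes M'})_{\operatorname{sym}}$ down to $X^M$ lands in $\b{B}_{M\rho}^{c^M}(\hat{P}^{\otimes M})_{\operatorname{sym}}$, then combine weak compactness of these fibers with Kolmogorov extension to build the element of $\hat{\c{W}}_\infty$. The only cosmetic difference is that the paper packages the inverse-limit step by citing the standard nonemptyness theorem for projective limits of nonempty compact Hausdorff spaces, whereas you carry out the equivalent nested-intersection/diagonal selection by hand.
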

%Figure \ref{fig:amb_set_comparison2} provides a visualization of a projection of the set $\c{U}_{\infty}$.
Hence we can characterize the limiting behavior of the overestimation
\begin{align*}
	\convw \c{W}
	\subseteq \c{U}_{\infty}
	= \bigcap_{M \geq N} \c{U}_M
\end{align*}
by the first $N$ marginal distributions of all symmetric distributions on the infinite product space $X^\infty$ with the corresponding first $M$ marginals belonging to a Wasserstein ball with radius growing linearly in $M$ and radius $\rho$.
The question is now of how the optimization over the sets $\convw \c{W}$ and $\c{U}_{\infty}$ can be related.
For this purpose note that
\begin{align}
	\label{eq:relaxationOriginalSetConvexHull}
	\convw \c{W} = \operatorname{pr}_{1:N} \# \c{W}_\infty\,, \qquad 
	\c{U}_{\infty} = \operatorname{pr}_{1:N} \# \hat{\c{W}}_\infty\,,
\end{align}
with $\c{W}_\infty = \convw\{P^{\otimes \infty} \mid P \in \b{B}_\rho^c(\hat{P})\}$.
As a main result of this paper we show the following Theorem \ref{thm:mainRelaxedSetRepresentation} that relates the sets $\c{W}_\infty$ and $\hat{\c{W}}_\infty$.\footnote{In the following theorem we use the concept of mixtures of probability measures: If $X$ is a Polish space, then so is $\c{P}(X)$ \cite[15.15 Theorem]{guide2006infinite}.
	Thus it is reasonable to consider $\c{P}(\c{P}(X))$, the set of probability measures $\nu$ defined on the set of Borel sets of $\c{P}(X)$ with the weak topology.
	If we have any such measure $\nu$ and $F:\c{P}(X) \to \c{P}(Y)$ is a Borel measurable map, with $Y$ being another Polish space, then one can define a measure $\bar{P}$ on $Y$ by
	\begin{align*}
		\bar{P}(A) 
		= \intg{\c{P}(X)}{F(\tilde{P})(A)}{\nu(\tilde{P})} \text{\ \ for Borel\ } A \subseteq Y\,.
	\end{align*}
	We call this measure the $\nu$-mixture of $F$ and write $\intg{}{F(\tilde{P})}{\nu(\tilde{P})} = \bar{P}$.
	In the result that follows we pick the Polish space $Y = X^\infty$ and $F(P) = P^{\otimes \infty}$ and point the reader to the appendix for further technical considerations.}
\begin{theorem}[Main result]
	\label{thm:mainRelaxedSetRepresentation}
	Let $c$ be a proper transportation cost, $\rho > 0$ and $\hat{P} \in \c{P}_c(X)$.
	Then
	\begin{align*}
		\c{W}_\infty &= \left\{\intg{}{P^{\otimes \infty}}{\nu(P)} \mid \nu \in \c{P}(\c{P}(X))\,,\; W_c(\hat{P},\cdot) \leq \rho \text{\ $\nu$-almost-surely}  \right\}\,, \\
		\hat{\c{W}}_\infty &= \left\{\intg{}{P^{\otimes \infty}}{\nu(P)} \mid \nu \in \c{P}(\c{P}(X))\,,\; \b{E}_{P \sim \nu} W_c(\hat{P},P) \leq \rho \right\}\,.
	\end{align*}
\end{theorem}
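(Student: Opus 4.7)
The plan is built around the Hewitt--Savage--Ryll-Nardzewski extension of de Finetti's theorem to Polish spaces: every symmetric measure $\bar{P} \in \c{P}_{\operatorname{sym}}(X^\infty)$ admits a unique representation $\bar{P} = \int P^{\otimes \infty} \, d\nu(P)$ with $\nu \in \c{P}(\c{P}(X))$. Both characterizations then reduce to translating the defining constraints on $\c{W}_\infty$ and $\hat{\c{W}}_\infty$ into support and integrability properties of this mixing measure $\nu$.

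For $\hat{\c{W}}_\infty$, the inclusion $\supseteq$ is direct: given $\nu$ with $\b{E}_\nu W_c(P, \hat{P}) \leq \rho$, a measurable selection supplies optimal couplings $\pi_P \in \Gamma_c(P, \hat{P})$, and the mixture $\int \pi_P^{\otimes M} \, d\nu(P)$ couples $\bar{P}_M = \int P^{\otimes M} d\nu(P)$ with $\hat{P}^{\otimes M}$ at total $c^M$-cost $M \, \b{E}_\nu W_c(P, \hat{P}) \leq M\rho$. The converse $\subseteq$ is the heart of the theorem, for which I would lift finite-dimensional optimal couplings to the infinite product. Given $\bar{P} \in \hat{\c{W}}_\infty$, pick $\Lambda_M \in \Gamma_{c^M}(\bar{P}_M, \hat{P}^{\otimes M})$; averaging under the diagonal action of $\c{S}_M$ leaves both marginals and cost invariant, so we may assume $\Lambda_M$ is symmetric. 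Then $\int c(x_1, y_1)\,d\Lambda_M = \frac{1}{M}\int c^M\,d\Lambda_M \leq \rho$. The finite-dimensional projections of $\Lambda_M$ have fixed marginals $\bar{P}_N$ and $\hat{P}^{\otimes N}$, hence are tight, and a diagonal argument extracts a subsequence $\Lambda_{M_k}$ whose projections converge weakly to those of some $\bar{\Lambda} \in \c{P}((X \times X)^\infty)$. The limit couples $\bar{P}$ with $\hat{P}^{\otimes \infty}$, is symmetric under the diagonal action of $\c{S}_\infty$, and satisfies $\int c(x_1, y_1)\,d\bar{\Lambda} \leq \rho$ by lower semi-continuity of $c$. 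De Finetti on $X \times X$ then writes $\bar{\Lambda} = \int \gamma^{\otimes \infty}\,d\eta(\gamma)$; matching marginals and invoking uniqueness forces $\operatorname{pr}_2 \# \gamma = \hat{P}$ for $\eta$-a.e.\ $\gamma$, while the pushforward of $\eta$ by $\gamma \mapsto \operatorname{pr}_1 \# \gamma$ equals $\nu$. Thus each $\gamma$ is a coupling of some $P \in \supp(\nu)$ with $\hat{P}$, and $\b{E}_\nu W_c(P, \hat{P}) \leq \b{E}_\eta \int c\,d\gamma = \int c(x_1, y_1)\,d\bar{\Lambda} \leq \rho$.

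The characterization of $\c{W}_\infty$ is handled by a density/uniqueness argument. For $\supseteq$, any $\nu$ concentrated on $\b{B}_\rho^c(\hat{P})$ is weakly approximated by finitely supported $\nu_n$ on the same ball, which is weakly compact because $c$ is proper; weak continuity of the mixture map $\nu \mapsto \int P^{\otimes \infty} d\nu$, verified on cylindrical test functions $\prod_i f_i$ with $f_i$ bounded continuous, then renders $\int P^{\otimes \infty} d\nu$ a weak limit of convex combinations $\sum_j \lambda_j^{(n)} (P_j^{(n)})^{\otimes \infty}$ of elements of $\{P^{\otimes \infty} \mid P \in \b{B}_\rho^c(\hat{P})\}$. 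Conversely, any $\bar{P} \in \c{W}_\infty$ is symmetric as a weak limit of symmetric measures; de Finetti gives a unique $\nu$, and extracting a weakly convergent subsequence of the approximating discrete mixing measures (which live in the weakly compact $\c{P}(\b{B}_\rho^c(\hat{P}))$) together with uniqueness of the representation transfers $\supp(\nu_n) \subseteq \b{B}_\rho^c(\hat{P})$ to the limit.

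The principal obstacle is the coupling-lifting step for $\hat{\c{W}}_\infty$: the $\Lambda_M$ live on spaces of different dimension, so the $M \to \infty$ limit must be taken via compatible finite-dimensional projections on $(X \times X)^\infty$, and one needs the right symmetrization to apply de Finetti on the product space $X \times X$ at the end. Properness of $c$ is essential throughout --- both for weak compactness of $\b{B}_\rho^c(\hat{P})$ and for the lower semi-continuity needed to pass the cost bound to the limit.
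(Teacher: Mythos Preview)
Your proposal is correct and, for the $\hat{\c{W}}_\infty$ characterization, takes a genuinely different and arguably cleaner route than the paper.

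For $\c{W}_\infty$ your argument coincides with the paper's (Lemma~\ref{lem:weakClosureIntegralRepresentation}): density of finitely supported mixing measures, weak continuity of $\nu \mapsto \int P^{\otimes \infty}\,d\nu$, and weak compactness of $\c{P}(\b{B}_\rho^c(\hat{P}))$.

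For the hard inclusion $\hat{\c{W}}_\infty \subseteq \{\cdot\}$, the paper first lifts the symmetric couplings $\Lambda_M$ to a single $\Lambda \in \Gamma_{\operatorname{sym}}(\bar{P},\hat{P}^{\otimes \infty})$ via a projective-limit/Kolmogorov argument (your diagonal extraction achieves the same end), but then proceeds quite differently: it \emph{disintegrates} $\Lambda$ with respect to the first marginal $\bar{P}$, establishes permutation-equivariance of the kernel $\Lambda(\cdot\mid\eta)$, integrates the kernel over each $P^{\otimes\infty}$ to obtain $\Lambda(\cdot\mid P^{\otimes\infty})$, and finally uses an extreme-point lemma (the integral of symmetric measures equals the extreme point $\hat{P}^{\otimes\infty}$, hence each piece is $\hat{P}^{\otimes\infty}$) to identify the second marginal. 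You bypass this entire disintegration machinery by applying de Finetti directly on the product space $X\times X$: writing $\bar{\Lambda} = \int \gamma^{\otimes\infty}\,d\eta(\gamma)$ and reading off both marginal constraints from the \emph{uniqueness} of the de Finetti representation (in particular, $\hat{P}^{\otimes\infty}$ has mixing measure $\delta_{\hat{P}}$, forcing $\operatorname{pr}_2\#\gamma = \hat{P}$ $\eta$-a.s.). This is shorter and conceptually more transparent; the paper's route has the side benefit of isolating the standalone identity $\sup_M \tfrac{1}{M} W_{c^M}\bigl(\int P^{\otimes M}d\nu,\hat{P}^{\otimes M}\bigr) = \int W_c(P,\hat{P})\,d\nu$ (Theorem~\ref{thm:wasserseinProductIntegral}), which may be of independent interest but is not needed for Theorem~\ref{thm:mainRelaxedSetRepresentation} itself.
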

Let us reformulate in words what Theorem \ref{thm:mainRelaxedSetRepresentation} is saying:
First, it states that both sets that appear in \eqref{eq:relaxationOriginalSetConvexHull} can be expressed as mixtures of i.i.d. product distributions.
This mixture corresponds to first drawing randomly the distribution $P$ according to $\nu$ and then to randomly draw countably many elements from $X$ independently according to $P$.
%For instance, suppose we have a box $B$ of fair and unfair coins, i.e. distributions on $X = \{H,T\}$ with $H$ being heads and $T$ being tails. 
%Then $\bar{P}$ corresponds to first picking a random coin from $B$ according to $\nu$ and then throwing this coin infinitely many times and writing down the results. 
Furthermore, Theorem \ref{thm:mainRelaxedSetRepresentation} says that the difference between $\c{W}_\infty$ and $\hat{\c{W}}_\infty$ is in the mixture distribution $\nu$ only.
For $\c{W}_\infty$ the mixture is required to be concentrated in the Wasserstein ball $\b{B}_\rho^c(\hat{P})$ around $\hat{P}$ with radius $\rho$, while distributions drawn according to the mixture in $\hat{\c{W}}_\infty$ are allowed to be outside of $\b{B}_\rho^c(\hat{P})$ as long as in \emph{expectation} the Wasserstein distance to $\hat{P}$ is less than $\rho$. 
The proof of Theorem \ref{thm:mainRelaxedSetRepresentation} hinges on Theorem \ref{thm:wasserseinProductIntegral} given in the appendix, which establishes an interesting relationship between the Wasserstein distance of a mixture distribution and the mixture of the Wasserstein distances of its constituents.\\
%Going back to our coin example, this would correspond to selecting a fixed coin $P$ (say the fair coin with $P(H) = P(T) = \frac{1}{2}$) in the box $B$ and require that we randomly draw coins $\tilde{P}$ that are not too far from $P$ in a certain sense. 
%For $\c{W}_\infty$ this ``sense'' would the hard constraint on the Wasserstein distance to $P$ being less than $\rho$, while for $\hat{\c{W}}_\infty$ this ``sense'' would mean that on \emph{average} the Wasserstein distance to $P$ is less than $\rho$.\\
\\
Let us look at Theorem \ref{thm:mainRelaxedSetRepresentation} from the angle of the so-called de Finetti's theorem \cite[Theorem~10.10.19]{bogachev2007measure}:
\begin{theorem}[De Finetti]
	\label{thm:deFinetti}
	Let $X$ be a Polish space. 
	Then
	\begin{align*}
		\c{P}_{\operatorname{sym}}(X^\infty) = \left\{\intg{}{P^{\otimes \infty}}{\nu(P)} \mid \nu \in \c{P}(\c{P}(X))\right\}\,.
	\end{align*}
	Moreover, for each $\bar{P} \in \c{P}_{\operatorname{sym}}(X^\infty)$ the representing mixture $\nu$ is unique. 
\end{theorem}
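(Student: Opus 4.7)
The plan is to verify the two inclusions in the set equality and then uniqueness; the proof is classical, attributed to de Finetti for the Bernoulli case and extended by Hewitt--Savage to the Polish setting. The easy inclusion, that every mixture is symmetric, follows from Fubini: for any $P \in \c{P}(X)$ the product measure $P^{\otimes \infty}$ satisfies $\pi \# P^{\otimes \infty} = P^{\otimes \infty}$ for every $\pi \in \c{S}_\infty$, because its finite-dimensional marginals $P^{\otimes n}$ are permutation-invariant by construction. Integrating this identity against $\nu$ yields $\pi \# \bar{P} = \bar{P}$ for any $\bar{P} = \intg{}{P^{\otimes \infty}}{\nu(P)}$.

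For the non-trivial inclusion I would construct $\nu$ through the empirical measure process. Define the Borel-measurable map $L_n: X^\infty \to \c{P}(X)$ by $L_n(x) = \frac{1}{n} \sum_{i=1}^n \delta_{x_i}$. For each bounded continuous test function $f:X \to \b{R}$, the sequence $\intg{}{f}{L_n}$ is a reverse martingale with respect to the decreasing filtration of exchangeable $\sigma$-algebras, since the symmetry of $\bar{P}$ forces the conditional expectation of $f(x_i)$ given that filtration to equal the empirical average. Doob's reverse martingale convergence theorem then yields $\bar{P}$-almost-sure convergence, and a standard separability argument over a countable convergence-determining family of test functions lifts this to almost-sure weak convergence $L_n \to L_\infty$ for some measurable $L_\infty:X^\infty \to \c{P}(X)$. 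Set $\nu := L_\infty \# \bar{P}$.

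The delicate step, which I expect to be the main obstacle, is identifying the $\bar{P}$-conditional distribution given $L_\infty$ as a countable product of $L_\infty$ with itself. It suffices to check, for every $k$ and every bounded continuous $g_1,\ldots,g_k:X \to \b{R}$, that
\begin{align*}
    \b{E}_{x \sim \bar{P}}\Bigl[\prod_{i=1}^k g_i(x_i) \,\Big|\, L_\infty\Bigr]
    = \prod_{i=1}^k \intg{}{g_i}{L_\infty} \quad \bar{P}\text{-a.s.}
\end{align*}
Using symmetry of $\bar{P}$, one replaces $\prod_{i=1}^k g_i(x_i)$ by its average over all ordered $k$-tuples of distinct indices in $\{1,\ldots,n\}$; this symmetrized average differs from $\prod_{i=1}^k \intg{}{g_i}{L_n}$ only by an error of order $1/n$ accounting for repeated-index terms. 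Passing to $n \to \infty$ and invoking the weak convergence $L_n \to L_\infty$ delivers the identity. Disintegrating $\bar{P}$ along $L_\infty$, which is valid on the Polish space $X^\infty$, produces the representation $\bar{P} = \intg{}{P^{\otimes \infty}}{\nu(P)}$.

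Uniqueness is an immediate by-product of this construction. By the strong law of large numbers applied to each bounded continuous test function, $L_n \to P$ weakly, $P^{\otimes \infty}$-almost-surely, for every $P \in \c{P}(X)$. Hence in any representation $\bar{P} = \intg{}{P^{\otimes \infty}}{\nu(P)}$ one necessarily has $\nu = L_\infty \# \bar{P}$, a quantity determined entirely by $\bar{P}$.
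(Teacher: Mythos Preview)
The paper does not supply its own proof of this statement; it quotes de Finetti's theorem as a classical result and cites \cite[Theorem~10.10.19]{bogachev2007measure}. Your sketch via the empirical measure process $L_n$, reverse martingale convergence along the exchangeable filtration, and identification of the conditional law given $L_\infty$ is the standard Hewitt--Savage argument and is correct in outline. Since there is no in-paper proof to compare against, the only relevant remark is that your proposal is a self-contained (and essentially complete) derivation of a theorem the authors simply import.
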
 
This theorem shows that the set of all symmetric measures on the infinite product space $X^\infty$ is precisely the set of mixtures of product distributions.
In other words, any sequence of random variables that is \emph{exchangeable}, i.e. its distribution is unchanged under permutations of its elements, is the mixture of i.i.d. sequences of random variables.
%Observe that in the above theorem the only nontrivial inclusion is $\subseteq$, since clearly mixtures of i.i.d. sequences are exchangeable.
%Also note that the above theorem is false if one replaces $\infty$ by a finite integer $N \in \b{N}$.
%For instance, given an urn containing balls of different color, the distribution of colors when drawing $N$ balls without replacement is exchangeable, but not a mixture of i.i.d. random variables. 
%This is the main reason why we have worked with the infinite product space $X^\infty$ rather than $X^N$.\\
Equipped with Theorem \ref{thm:deFinetti} we can rewrite the statement of Theorem \ref{thm:mainRelaxedSetRepresentation} as
\begin{align*}
	\c{W}_\infty &= \left\{\bar{P} \in \c{P}_{\operatorname{sym}}(X^\infty) \mid W_c(\hat{P},\cdot) \leq \rho \text{\ $\nu_{\bar{P}}$-almost-surely}  \right\}\,, \\
	\hat{\c{W}}_\infty &= \left\{\bar{P} \in \c{P}_{\operatorname{sym}}(X^\infty) \mid \b{E}_{P \sim \nu_{\bar{P}}} W_c(\hat{P},P) \leq \rho \right\}\,,
\end{align*}
where $\nu_{\bar{P}}$ denotes the unique mixture distribution of the symmetric measure $\bar{P}$.\\
\\
Now, \eqref{eq:relaxationOriginalSetConvexHull} and Theorem \ref{thm:mainRelaxedSetRepresentation} together imply:
\begin{align}
	\label{eq:relaxationComparison}
	\operatorname{S}(\ell)
	= \sup_{\bar{P} \in \c{U}} \b{E}_{x \sim \bar{P}} \ell(x)
	\text{\ \ and\ \ }
	\hat{\operatorname{U}}_\infty^{\operatorname{sym}}(\ell)
	= \sup_{\bar{P} \in \c{U}_\infty} \b{E}_{x \sim \bar{P}} \ell(x)
\end{align}
with
\begin{align*}
	\c{U} &:= \left\{\intg{}{P^{\otimes N}}{\nu(P)} \mid \nu \in \c{P}(\c{P}(X))\,,\; W_c(\hat{P},\cdot) \leq \rho \text{\ $\nu$-almost-surely}  \right\}\,, \\
	\c{U}_\infty &= \left\{\intg{}{P^{\otimes N}}{\nu(P)} \mid \nu \in \c{P}(\c{P}(X))\,,\; \b{E}_{P \sim \nu} W_c(\hat{P},P) \leq \rho \right\}\,.
\end{align*}
Having obtained a good understanding the nature on the limit of the sequence of relaxations we can ask now which conditions on $\ell$ would suffice to imply that there is no relaxation gap, i.e. that $\operatorname{S}(f) = \hat{\operatorname{U}}_\infty^{\operatorname{sym}}(\ell)$.
Let us one last time rewrite \eqref{eq:relaxationComparison}, this time in terms of the mixing measures $\nu$ directly.
We have
\begin{align}
	\label{eq:relaxationComparisonIntegral}
	\operatorname{S}(\ell)
	= \sup_{\substack{\nu \in \c{P}(\c{P}(X)) \\ W_c(\hat{P},\cdot) \leq \rho \text{\ $\nu$-a.s.}}} \intg{}{F_\ell(P)}{\nu(P)}\,,\quad
	\hat{\operatorname{U}}_\infty^{\operatorname{sym}}(\ell)
	= \sup_{\substack{\nu \in \c{P}(\c{P}(X)) \\ \b{E}_{P \sim \nu} W_c(\hat{P},P) \leq \rho }} \intg{}{F_\ell(P)}{\nu(P)}\,,
\end{align}
where $F_\ell(P) = \intg{}{\ell}{P^{\otimes N}}$.
Thus, the question on the absence of a relaxation gap is equivalent to asking when a constraint can be replaced by its expectation.
%In general the expected version of a constraint is weaker than the constraint itself as the following example shows.
%\begin{example}
%	Consider the case of maximizing the ReLU function $F:\b{R} \to \b{R}: x \mapsto \max\{x,0\}$ over the set $T = [-2,0]$.
%	Clearly $\max_{x \in T} F(x) = 0$.
%	Moreover, this set can be represented by e.g. the following constraint: $T = \{x \in \b{R} \mid (x+1)^2 \leq 1\}$.
%	Then averaged optimization set is $\hat{T} = \{\nu \in \c{P}(\b{R}) \mid \b{E}_{x \sim \nu} (x+1)^2 \leq 1\}$.
%	Then by taking $\nu_t = (1-\alpha) \delta_{-1} + \alpha \delta_t$ for $t \geq 0$ such that $\alpha = \frac{1}{(1+t)^2}$, we obtain $\nu_t \in \hat{T}$ and hence
%	\begin{align*}
%		\sup_{\nu \in \hat{T}} \b{E}_{x \sim \nu} F(x)
%		\geq \sup_{t \geq 0} \b{E}_{x \sim \nu_t} F(x) 
%		= \sup_{t \geq 0} \frac{t}{(t+1)^2}
%		= \frac{1}{4}
%		> 0 
%		= \max_{x \in T} F(x)\,.
%	\end{align*} 
%\end{example}
%While the last example exposes the main issue of replacing a constraint by its average, namely that it might be worth violating the constraint to obtain a strictly better objective value, it is not completely analogous to the situation in this section.
%Indeed, instead of dealing with functions $F$ defined over the real numbers $\b{R}$, we deal with functions defined over probability measures $\c{P}(X)$. 
%However, even in this case the gap can indeed be non-zero, as the following example shows.
The following example shows that in general the relaxation gap is not zero.
\begin{example}
	\label{ex:infiniteGap}
	Let us consider $X = \b{R}$ and $N = 2$ with $\hat{P} = \delta_0$ and $\ell:X^2 \to \b{R}: (x_1,x_2) \mapsto x_1 x_2^2$ and $\rho > 0$.
	Then we have for the structured ambiguity set $\c{W} = \{P^{\otimes 2} \mid P \in \b{B}_\rho^c(\hat{P})\}$ that 
	\begin{align*}
		\operatorname{S}(f)
		= \sup_{P \in \b{B}_\rho^c(\hat{P})} \intg{}{\ell(x_1,x_2)}{P^{\otimes 2}(x_1,x_2)}
		= \sup_{P \in \b{B}_\rho^c(\hat{P})} (\b{E}P)(\b{E}(P^2))
		= \rho^{3/2}\,,
	\end{align*}
	where we have exploited Theorem \ref{thm:droGeneralDuality} to obtain that 
	\begin{align*}
		\sup_{P \in \b{B}_\rho^c(\hat{P})} (\b{E}P) = \rho^{1/2} \text{\ \ and\ \ } \sup_{P \in \b{B}_\rho^c(\hat{P})} \b{E}(P^2) = \rho
	\end{align*}
	and noted that both attained at $P = \delta_{\sqrt{\rho}}$.
	We claim that $\hat{\operatorname{U}}_\infty^{\operatorname{sym}}(f) = \infty$ and thus $\operatorname{U}_M^{\operatorname{sym}}(f) = \infty$ for all $M \geq 2$.
	Indeed, the sequence of distributions $\bar{P}_n = (1-\frac{\rho}{n^2}) \delta_0^{\otimes \infty} + \frac{\rho}{n^2} \delta_n^{\otimes \infty}$ satisfies $\operatorname{pr}_{1:M}\# \bar{P}_n = (1-\frac{\rho}{n^2}) \delta_0^{\otimes M} + \frac{\rho}{n^2} \delta_n^{\otimes M}  \in \b{B}_{M\rho}^{c^M}(P^{\otimes M})$ for all $M \geq 1$, since 
	\begin{align*}
		\intg{}{\intg{}{c^M(x,y)}{\delta_0^{\otimes M}(y)}}{\operatorname{pr}_{1:M}\# \bar{P}_n(x)} 
		= \intg{}{\sum_{i=1}^M x_i^2}{\operatorname{pr}_{1:M}\# \bar{P}_n(x)} 
		= \frac{\rho}{n^2} M n^2 
		= M\rho	\,.
	\end{align*}
	Moreover, it holds that 
	\begin{align*}
		\intg{}{\hat{\ell}_{\operatorname{sym}}(x)}{\bar{P}_n(x)} 
		= \frac{\rho}{n^2} \ell(n,n) = \rho n \to \infty \text{\ for\ } n \to \infty\,.
	\end{align*}
	Thus $\operatorname{U}_M^{\operatorname{sym}}(\ell) = \infty$ for all $M \geq N = 2$ and the relaxation gap is infinite.
\end{example}

Note that in this example $\ell$ does not satisfy the growth assumption $\ell \in \c{G}_{c^2}(X^2)$, which is sufficient for the finiteness of the unstructured UQ problem \eqref{eq:droGeneral}, but not necessary for the finiteness of the structured version \eqref{eq:droStructured}.
The question of whether there exists an example with a non-zero relaxation gap such that \eqref{eq:droGeneral} is finite, remains open.

\subsection{Tightness of the relaxation sequence}
\label{sec:structuredDROTightnessOfRelaxationSequence}

Intuitively, replacing a constraint by its average will not increase the optimal value if any ``average feasible'' point can be replaced by a single point that satisfies the original constraint without decreasing its objective value.
If the latter ``replacement'' is done by taking the average, i.e. taking the mean $\b{E}_{x \sim \nu} x$ of $\nu$ itself, then Jensen's inequality
\begin{align*}
	F(\b{E}_{x \sim \nu} x) \geq \b{E}_{x \sim \nu} F(x) \text{\ \ for all concave\ } F\,,
\end{align*}    
shows that concavity of the objective function $F$ is sufficient for the absence of a relaxation gap.
The following theorem gives a sufficient condition for the function $F_\ell(P) = \intg{}{\ell}{P^{\otimes N}}$ to be convex in the sense of the usual linear structure on $\c{P}(X)$.
For this we need to introduce the following notion \cite[Definition 8.1]{wendland2004scattered}: A function $k:X \times X \to \b{R}$ is said to be conditionally negative definite (c.n.d.) if for any $n \in \b{N}$, $\gamma \in \b{R}^n$ and $x \in X^n$ the matrix $(k(x_i,x_j))_{i,j=1}^n$ is negative semi-definite on the subspace $\1^\top \gamma = 0$, i.e. if 
\begin{align*}
	\sum_{i=1}^n \gamma_i = 0 \Longrightarrow \sum_{i,j=1}^n \gamma_i \gamma_j k(x_i,x_j) \leq 0\,.
\end{align*}
The result reads now as follows.
\begin{theorem}
	\label{thm:concaveLinearStructureExact}
	Let $N \geq 2$ and let $\ell:X^N \to \b{R}$ be such that $\abs{\ell} \in \c{G}_{c^N}(X^N)$ and the symmetrization $\ell_{\operatorname{sym}}$ is c.n.d. in the first two (and hence any two) of its variables, i.e. for all $\bar{x} \in X^{N-2}$ the map $X\times X \to \b{R}: (x_1,x_2) \mapsto \ell_{\operatorname{sym}}(x_1,x_2,\bar{x})$ is c.n.d.
	Then $F_\ell$ is concave on $\c{P}_c(X)$, i.e.
	\begin{align*}
		F_\ell(\alpha P_1 + (1-\alpha) P_2) \geq \alpha F_\ell(P_1) + (1-\alpha) F_\ell(P_2) \text{\ \ for\ } P_1, P_2 \in \c{P}_c(X)\,,\; \alpha \in [0,1]\,.
	\end{align*}
	Moreover, in this case
	\begin{align*}
		\operatorname{S}(\ell)
		= \hat{\operatorname{U}}_\infty^{\operatorname{sym}}(\ell)\,,
	\end{align*}
	and if $\ell \in \c{G}_{c^N}^<(X^N)$, then
	\begin{align*}
		\operatorname{S}(\ell)
		= \lim_{M \to \infty} \operatorname{U}_M^{\operatorname{sym}}(\ell)\,.
	\end{align*}
\end{theorem}
%In our case the objective function is $F_\ell(P) = \intg{}{\ell}{P^{\otimes N}}$ is unfortunately not concave in the sense of the usual linear structure of $\c{P}_c(X)$, i.e. in general 
%\begin{align*}
%	F_\ell(\alpha P_1 + (1-\alpha) P_2) \not \geq \alpha F_\ell(P_1) + (1-\alpha) F_\ell(P_2) \text{\ \ for\ } P_1, P_2 \in \c{P}_c(X)\,,\; \alpha \in [0,1]\,.
%\end{align*}
\begin{example}
	\label{ex:conditionallyNegativeDefinite}
	Let $X$, $N$, $c$ and $\ell$ be as in Example \ref{ex:liftedRelaxation}, $\ell(x_1,x_2) = -x_1 x_2$.
	Then $\ell$ is (conditionally) negative definite, since if $x \in X^n$ and $\gamma \in \b{R}^n$ are such that $\1^\top \gamma = 0$, then
	\begin{align*}
		\sum_{i,j=1}^n \gamma_i \gamma_j \ell(x_i,x_j)
		= - \left(\sum_{i=1}^n \gamma_i x_i \right)^2 
		\leq 0\,.
	\end{align*}
	Thus, the conclusion of Theorem \ref{thm:concaveLinearStructureExact} is consistent with Example \ref{ex:liftedRelaxation}.
\end{example}
Unfortunately, verifying conditional negative definiteness in practice can be challenging.
It turns out, however, that if $X$ is additionally a vector space and $\ell:X^N \to \b{R}$ is concave, then $F_\ell$ is \emph{geodesically concave} in the sense of the Wasserstein space on $\c{P}_c(X)$.
%For this we first need to introduce the concept of a generalized geodesic \cite{ambrosio2008gradient}.
%\begin{definition}
%	Let $X$ be a vector space and $P_1,P_2,\hat{P} \in \c{P}_c(X)$.
%	Then a generalized geodesic (from $P_1$ to $P_2$, with base point $\hat{P}$) is a map $\gamma:[0,1] \to \c{P}_c(X)$ of the form $\gamma(t) = ((1-t)\operatorname{pr}_1 + t \operatorname{pr}_2) \# \Lambda$, where $\Lambda \in \Gamma(P_1,P_2,\hat{P})$ is such that $\operatorname{pr}_{1,3} \# \Lambda \in \Gamma_c(P_1,\hat{P})$ and $\operatorname{pr}_{2,3} \# \Lambda \in \Gamma_c(P_2,\hat{P})$.
%\end{definition}
%Concavity is now understood as follows \cite{ambrosio2008gradient}. 
%\begin{definition}
%	A function $F:\c{P}_c(X) \to \b{R}$ is said to be concave w.r.t. generalized geodesics with base point $\hat{P} \in \c{P}_c(X)$ if for any two $P_1,P_2 \in \c{P}_c(X)$ there exists a generalized geodesic from $P_1$ to $P_2$ with base point $\hat{P}$ such that
%	\begin{align*}
%		F(\gamma(t)) \geq (1-t) F(P_1) + t F(P_2) \text{\ for all\ } t \in [0,1]\,.
%	\end{align*}
%\end{definition}
We will not introduce the concepts of geodesic convexity or concavity here, since they will not be needed any further in this paper, but just note that the so-called \emph{Wasserstein geodesics} \cite{ambrosio2008gradient} provide a way of interpolating between two distributions $P_1$ and $P_2$ in $\c{P}_c(X)$ that is \emph{different} from $\alpha P_1 + (1-\alpha) P_2$.
Thus, if the average w.r.t. $\nu$ is understood in a different, namely \emph{geodesic} sense, then we can indeed show the relaxation gap again to be zero.
The price to pay, however, is that the underlying feasible set $\b{B}_\rho^c(\hat{P})$ is convex w.r.t. this new notion of convexity as well, which requires an additional assumption on the transportation cost $c$.
The following theorem summarizes these observations and is the main result of this section.
\begin{theorem}
	\label{thm:relaxationExactConcave}
	Suppose that $X$ is a convex subset of a vector space and $c$ is proper such that $c(x,\cdot):X \to [0,\infty)$ is convex for all $x \in X$.
	If $\ell \in \c{G}_{c^N}(X^N)$ is upper semi-continuous and concave, then
	\begin{align*}
		\operatorname{S}(\ell)
		= \hat{\operatorname{U}}_\infty^{\operatorname{sym}}(\ell)\,.
	\end{align*}
	If additionally $\ell \in \c{G}_{c^N}^<(X^N)$, then
	\begin{align*}
		\operatorname{S}(\ell)
		= \lim_{M \to \infty} \operatorname{U}_M^{\operatorname{sym}}(\ell)\,.
	\end{align*}
\end{theorem}

Note that the convexity assumption on the transportation cost is naturally satisfied if e.g. $X = \b{R}^n$, $c = d^p$ with $d(x,y) = \norm{x-y}$ for some $p \in [1,\infty)$ and norm $\norm{\cdot}$ on $X$.
\begin{remark}
	Note that Theorem \ref{thm:concaveLinearStructureExact}	and Theorem \ref{thm:relaxationExactConcave} are not subcases of each other and rely on different notions of convex interpolation. 
	The problem of determining the largest class of functions $\ell$ with a zero relaxation gap remains open.
\end{remark}

\subsection{Comparison to \cite{chaouach2022tightening,chaouach2023structured}}
\label{sec:structured_sets_comparison}

As already noted in the introduction to this section, Wasserstein ambiguity sets with additional structure have not been considered much in the literature with one notable exception given by \cite{chaouach2022tightening,chaouach2023structured}, where non-convex uncertainty sets of the form 
\begin{align}
	\label{eq:wassersteinHyperrectangle}
	\begin{aligned}
		\c{W}_{\operatorname{rect}}
		&= \b{B}_{\rho_1}^{c_1}(\hat{P}_1) \otimes \cdots \otimes  \b{B}_{\rho_N}^{c_N}(\hat{P}_N) \\
		&= \{P_1 \otimes \cdots \otimes P_N \mid P_i \in \b{B}_{\rho_i}^{c_i}(\hat{P}_i)\,,\; i=1,\ldots,N\}
	\end{aligned}
\end{align}
are considered and termed Wasserstein hyperrectangles\footnote{in fact, \cite{chaouach2023structured} allows for different spaces $X_k$ in each coordinate, but we will focus on the case $X_k = X$ for all $k=1,\ldots,N$}.
Note that even in the case of $\rho_i = \rho$, $c_i = c$ and $\hat{P}_i = \hat{P}$ for all $i=1,\ldots,N$ sets of the form \eqref{eq:wassersteinHyperrectangle} are different from sets \eqref{eq:structuredWassersteinAmbiguitySet} in that they allow each factor $P_i \in \b{B}_\rho^c(\hat{P})$ to depend on the index $i$.
Thus clearly in this case
\begin{align*}
	\c{W} = \{P^{\otimes N} \mid P \in \b{B}_\rho^c(\hat{P})\} \subsetneq \c{W}_{\operatorname{rect}} = \b{B}_\rho^c(\hat{P}) \otimes \cdots \otimes \b{B}_\rho^c(\hat{P})\,.
\end{align*}
Now, in \cite{chaouach2023structured} a duality result is established for UQ problems w.r.t. sets \eqref{eq:wassersteinHyperrectangle} and functions $\ell:X^N \to \b{R}$ that are additively and multiplicatively \emph{separable}, i.e. are of the form
\begin{align*}
	\ell(x)
	= \sum_{k=1}^N \ell_k(x_k) \text{\ \ or\ \ } \ell(x)
	= \prod_{k=1}^N \ell_k(x_k)\,, \qquad x \in X^N
\end{align*}
for some $\ell_k:X \to \b{R}$, $k=1,\ldots,N$, respectively.
However, this requirement of the loss $\ell$ is very restrictive and in this case
\begin{align*}
	\intg{}{\ell(x)}{(P_1 \otimes \ldots \otimes P_N)(x)}
	=
	\begin{cases}
		\sum_{k=1}^N \intg{}{\ell_k(x_k)}{P_k(x_k)} & \text{if\ }\ell(x)
		= \sum_{k=1}^N \ell_k(x_k) \\
		\prod_{k=1}^N \intg{}{\ell_k(x_k)}{P_k(x_k)} & \text{if\ }\ell(x)
		= \prod_{k=1}^N \ell_k(x_k)
	\end{cases}\,,
\end{align*}
which implies that the structured UQ problem
\begin{align*}
	\sup_{\bar{P} \in \c{W}_{\operatorname{rect}}} \intg{}{\ell(x)}{\bar{P}(x)}
\end{align*}
splits into $N$ decoupled and unstructured problems of the form \eqref{eq:droGeneral}.
For this reason we will not state the latter duality here explicitly.
Further \cite{chaouach2023structured} also consider the so-called \emph{multitransport hyperrectangles} that are defined for given $\check{P} \in \c{P}(X^N)$, costs $c_k$ and radii $\rho_k$ as\footnote{Here $\operatorname{pr}_{k,k}:X^N \times X^N \to X \times X$ is the projection onto the $k$-th coordinates in each $X^N$. See Appendix \ref{sec:additionalNotation} for additional notation.}
\begin{align*}
	\c{W}_{\operatorname{multi}}
	= \left\{\bar{P} \in \c{P}(X^N) \mid \exists \Lambda \in \Gamma(\check{P},\bar{P})\,,\;\intg{}{c_k}{\operatorname{pr}_{k,k}\#\Lambda} \leq \rho_k \text{\ for\ } k=1,\ldots,N\right\}
\end{align*}
Contrary to $\c{W}_{\operatorname{rect}}$, the set $\c{W}_{\operatorname{multi}}$ is convex. 
It is then shown that if $\check{P} = \hat{P}_1\otimes \ldots \otimes \hat{P}_N$, then $\c{W}_{\operatorname{rect}} \subseteq \c{W}_{\operatorname{multi}} \subseteq \b{B}_\rho^{c^N}(\check{P})$ for $\rho = \sum_{i=1}^N \rho_i$ and this time $c^N(x,y) = \sum_{i=1}^N c_i(x_i,y_i)$.
Additionally the following corresponding duality result is established \cite[Theorem 6.4]{chaouach2023structured}.
\begin{theorem}[\cite{chaouach2023structured}]
	Let $c_i = d_i^p$ for $p \in [1,\infty)$, some metrics $d_i$ on $X$ that metrisize $X$ and suppose that $\ell\in L^1(\hat{P})$ is upper semi-continuous.
	Then for $\c{W}_{\operatorname{multi}}$ as defined above it holds
	\begin{align}
		\label{eq:droMultitransportDuality}
		\sup_{\bar{P} \in \c{W}_{\operatorname{multi}}} \b{E}_{x \sim \bar{P}} \ell(x)
		= \inf_{\mu \in [0,\infty)^N} \mu^\top \hat{\rho} + \b{E}_{z \sim \check{P}} \hat{\phi}_{\mu}(z)\,,
	\end{align}
	where $\hat{\rho} = (\rho_1,\ldots,\rho_N)^\top$, the infimum is attained and $\hat{\phi}_{\mu}(z) = \sup_{x \in X^N} (\ell(x) - \sum_{i=1}^N \mu_i c_i(x_i,z_i))$.
\end{theorem}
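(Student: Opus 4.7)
The plan is to prove this duality by treating the $N$ transport-budget constraints as separate convex constraints and running the standard Lagrangian machinery that underlies Theorem \ref{thm:droGeneralDuality}, with the single multiplier $\mu$ there replaced by the vector $\mu \in [0,\infty)^N$ here.

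First I would reformulate the primal as an optimization over couplings on $X^N \times X^N$, replacing $\bar P \in \c{W}_{\operatorname{multi}}$ by a coupling $\Lambda \in \c{P}(X^N \times X^N)$ with $\operatorname{pr}_1 \# \Lambda = \hat P$ and $\int c_k(z_k,y_k)\, d\Lambda \leq \rho_k$ for each $k$, and the objective becomes $\int \ell(y)\, d\Lambda(z,y)$. The weak duality inequality $P^\ast \leq D^\ast$ is then a one-line Lagrangian estimate: for any feasible $\Lambda$ and any $\mu \in [0,\infty)^N$,
\begin{align*}
\int \ell(y)\, d\Lambda
&\leq \int\bigl[\ell(y) - \textstyle\sum_{k=1}^N \mu_k c_k(z_k,y_k)\bigr]\, d\Lambda + \mu^\top\hat\rho \\
&\leq \int \hat\phi_\mu(z)\, d\hat P(z) + \mu^\top\hat\rho,
\end{align*}
the second inequality obtained by disintegrating $\Lambda$ relative to $\hat P$ and bounding the inner integrand pointwise by its $y$-supremum, which is exactly $\hat\phi_\mu(z)$. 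Universal measurability of $\hat\phi_\mu$ follows as in Theorem \ref{thm:droGeneralDuality} from upper semi-continuity of $\ell$ and continuity of the $c_k$, so that the $z$-integral makes sense.

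For the reverse inequality I would mimic the proof of Theorem \ref{thm:droGeneralDuality}: form the Lagrangian $L(\Lambda,\mu) = \int[\ell - \sum_k \mu_k c_k]\, d\Lambda + \mu^\top\hat\rho$, which is linear in $\Lambda$ on the convex slice of couplings with first marginal $\hat P$ and linear (hence concave) in $\mu \in [0,\infty)^N$. Applying an infinite-dimensional Sion/Rockafellar-type minimax theorem, or equivalently the LP-duality argument used in the Gao--Kleywegt / Blanchet--Murthy proof of the single-constraint case, one swaps $\sup_\Lambda$ and $\inf_\mu$. The swap is justified because for each fixed $\mu$ the inner supremum is a classical Wasserstein DRO with cost $c_\mu(z,y) = \sum_k \mu_k c_k(z_k,y_k)$ (which inherits the weak triangle inequality from the $c_k = d_k^p$) and radius $\mu^\top\hat\rho$, and therefore admits a concrete dual-certified value; taking the infimum over $\mu$ then produces exactly $D^\ast$. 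The integrability $\ell \in L^1(\hat P)$ ensures $D^\ast < \infty$ and rules out degeneracies. Attainment of the infimum in the dual follows from lower semi-continuity of $\mu \mapsto \mu^\top\hat\rho + \b{E}_{\hat P}\hat\phi_\mu$ (the function $\hat\phi_\mu$ is decreasing and convex in $\mu$) combined with coercivity: since all $\rho_k > 0$, along any direction $\|\mu\| \to \infty$ the linear term $\mu^\top\hat\rho$ dominates the $z$-integral, confining minimizers to a compact subset of $[0,\infty)^N$.

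The main obstacle I anticipate is justifying the minimax exchange in an infinite-dimensional setting, as the constraint set (couplings with fixed first marginal and bounded transport costs) is not norm-compact. The standard workaround is to topologize $\c{P}(X^N \times X^N)$ with the weak topology, use Prokhorov-type tightness coming from $\hat P \in \c{P}_{c^N}(X^N)$ and the finite transport budgets, and verify upper semi-continuity of $\Lambda \mapsto \int \ell\, d\Lambda$ via upper semi-continuity of $\ell$ combined with appropriate uniform integrability, exactly in the spirit of the proof of Theorem \ref{thm:droGeneralDuality}. The vector-valued nature of $\mu$ is harmless since $[0,\infty)^N$ is finite-dimensional; all the functional-analytic subtleties are on the primal side and are already handled by the single-constraint arguments in the references.
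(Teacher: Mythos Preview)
The paper does not prove this theorem; it is quoted from \cite[Theorem 6.4]{chaouach2023structured} and used as a black box. So there is no ``paper's own proof'' to compare against, and your sketch is essentially the standard route one would take, matching the single-constraint argument behind Theorem \ref{thm:droGeneralDuality} with a vector multiplier.

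Your weak-duality step and the attainment argument are clean (the uniform lower bound $\hat\phi_\mu(z) \geq \ell(z)$ from the choice $x=z$, combined with $\ell \in L^1(\hat P)$, gives exactly the coercivity you need). The one passage that does not hold up is the sentence ``The swap is justified because for each fixed $\mu$ the inner supremum is a classical Wasserstein DRO with cost $c_\mu$ and radius $\mu^\top\hat\rho$.'' After you dualize, the inner supremum is over \emph{all} couplings with first marginal $\hat P$ and no transport budget at all; it is not a Wasserstein DRO, and in particular that set is not weakly compact (take $\Lambda = \hat P \otimes \delta_{y_n}$ with $y_n \to \infty$). So the Prokhorov-tightness workaround you propose in the last paragraph cannot use ``the finite transport budgets'' --- those have been relaxed away. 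A bare Sion argument therefore will not close the gap. What does work is your stated alternative: adapt the Gao--Kleywegt or Blanchet--Murthy construction directly (approximate near-optimizers of $\hat\phi_\mu$ by measurable selectors and build near-optimal primal couplings), which goes through verbatim with $\mu \in [0,\infty)^N$ in place of a scalar. If you drop the misleading ``classical Wasserstein DRO'' sentence and commit to that route, the proof is complete.
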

In the case of $\rho_i = \rho$ and $c_i = c = d^p$ and $P_i = P$ for all $i=1,\ldots,N$ we have
\begin{align*}
	\c{W} \subseteq \c{W}_{\operatorname{rect}} \subseteq \c{W}_{\operatorname{multi}}\,.
\end{align*}
Thus, a natural question is if the UQ problem over $\c{W}_{\operatorname{multi}}$ provides a better or different approximation to \eqref{eq:droStructured} than our lifting scheme developed in the previous sections.
The following result shows that this is not the case and that the quantity \eqref{eq:droMultitransportDuality} is at least as large as the quantity \eqref{eq:droSymmetric}, i.e. the our first relaxation upper bound $\operatorname{U}_N^{\operatorname{sym}}(f)$ is not more conservative than the value given by \eqref{eq:droMultitransportDuality}.
\begin{theorem}
	\label{thm:multitransportSymmetricInequality}
	When $\rho_i = \rho$,  $c_i = c = d^p$ and $\hat{P}_i = \hat{P}$ for all $i=1,\ldots,N$ as well as $\check{P} = \hat{P}^{\otimes N}$, then for any Borel measurable $\ell:X^N \to \b{R}$ we have
	\begin{align*}
		\sup_{\bar{P} \in \c{W}_{\operatorname{multi}}} \b{E}_{x \sim \bar{P}} \ell(x) 
		\geq \operatorname{U}_N^{\operatorname{sym}}(\ell) 
		= \operatorname{U}(\ell_{\operatorname{sym}})\,.
	\end{align*}
\end{theorem}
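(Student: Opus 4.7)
The plan is to exhibit, for every distribution attaining a value close to $\operatorname{U}(\ell_{\operatorname{sym}})$, a distribution in $\c{W}_{\operatorname{multi}}$ giving the same expectation, which gives the desired inequality.

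First I would rewrite $\operatorname{U}(\ell_{\operatorname{sym}})$ in a form that makes a symmetric distribution appear as the measure one integrates $\ell$ against. Using linearity of the expectation and the fact that for any $\bar{Q} \in \c{P}(X^N)$
\begin{align*}
\b{E}_{x \sim \bar{Q}} \ell_{\operatorname{sym}}(x) = \frac{1}{N!}\sum_{\pi \in \c{S}_N} \b{E}_{x \sim \pi\#\bar{Q}} \ell(x) = \b{E}_{x \sim \bar{Q}_{\operatorname{sym}}} \ell(x),
\end{align*}
one gets
\begin{align*}
\operatorname{U}(\ell_{\operatorname{sym}}) = \sup_{\bar{Q} \in \b{B}_{N\rho}^{c^N}(\hat{P}^{\otimes N})} \b{E}_{x \sim \bar{Q}_{\operatorname{sym}}} \ell(x).
\end{align*}
It therefore suffices to establish the set inclusion $\{\bar{Q}_{\operatorname{sym}} \mid \bar{Q} \in \b{B}_{N\rho}^{c^N}(\hat{P}^{\otimes N})\} \subseteq \c{W}_{\operatorname{multi}}$ in the special setting of the theorem (where the nominal is $\hat{P}^{\otimes N}$, all radii equal $\rho$, and all costs equal $c$).

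For the inclusion, the key trick is to \emph{diagonally} symmetrize a certifying coupling. Given $\bar{Q} \in \b{B}_{N\rho}^{c^N}(\hat{P}^{\otimes N})$, pick $\Lambda \in \Gamma(\hat{P}^{\otimes N},\bar{Q})$ with $\intg{}{c^N}{\Lambda} \leq N\rho$ and define
\begin{align*}
\Lambda' := \frac{1}{N!}\sum_{\pi \in \c{S}_N} (\pi,\pi)\#\Lambda,
\end{align*}
where $(\pi,\pi)$ acts by permuting both copies of $X^N$ by $\pi$. Since $\hat{P}^{\otimes N}$ is itself symmetric, the first marginal of $\Lambda'$ is $\hat{P}^{\otimes N}$; the second marginal is $\bar{Q}_{\operatorname{sym}}$ by definition. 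Hence $\Lambda' \in \Gamma(\hat{P}^{\otimes N},\bar{Q}_{\operatorname{sym}})$. The main computation is then to evaluate the per-coordinate cost: for each $k \in \{1,\ldots,N\}$,
\begin{align*}
\intg{}{c}{\operatorname{pr}_{k,k}\#\Lambda'} = \frac{1}{N!}\sum_{\pi \in \c{S}_N} \intg{}{c(x_{\pi(k)},y_{\pi(k)})}{\Lambda(x,y)} = \frac{1}{N}\sum_{j=1}^N \intg{}{c}{\operatorname{pr}_{j,j}\#\Lambda},
\end{align*}
where the last equality uses that as $\pi$ ranges over $\c{S}_N$, the value $\pi(k)$ hits each $j$ exactly $(N-1)!$ times. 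The right-hand side equals $\frac{1}{N}\intg{}{c^N}{\Lambda} \leq \rho$, so $\Lambda'$ certifies that $\bar{Q}_{\operatorname{sym}} \in \c{W}_{\operatorname{multi}}$. Taking suprema completes the proof.

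The only genuine obstacle is verifying the last calculation: one must be careful that the diagonal symmetrization of $\Lambda$ both preserves the nominal marginal (which relies on $\hat{P}^{\otimes N}$ being permutation-invariant) and equidistributes the coordinate costs, so that a bound of $N\rho$ on the total $c^N$-cost becomes a bound of $\rho$ on \emph{each} coordinate's cost. Beyond that, the argument is purely a manipulation of couplings and Theorem \ref{thm:symmetrizationUpperBound}.
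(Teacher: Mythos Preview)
Your argument is correct. The paper asserts that the proof is in the appendix, but no proof of Theorem~\ref{thm:multitransportSymmetricInequality} actually appears there, so a direct comparison is not possible. That said, your route is exactly the one suggested by the paper's own machinery: the identity $\operatorname{U}(\ell_{\operatorname{sym}}) = \sup_{\bar{Q} \in \b{B}_{N\rho}^{c^N}(\hat{P}^{\otimes N})} \b{E}_{\bar{Q}_{\operatorname{sym}}}\ell$ is the content of Theorem~\ref{thm:symmetrizationUpperBound}, and the diagonally symmetrized coupling $\Lambda' = \frac{1}{N!}\sum_{\pi}(\pi\oplus\pi)\#\Lambda$ is precisely the construction used in Lemma~\ref{lem:symmetricCoupling}. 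Your additional observation---that this symmetrization not only preserves the total cost $\intg{}{c^N}{\Lambda'}=\intg{}{c^N}{\Lambda}$ but in fact equidistributes it across coordinates, so that each $\intg{}{c}{\operatorname{pr}_{k,k}\#\Lambda'}=\frac{1}{N}\intg{}{c^N}{\Lambda}\leq\rho$---is the one extra step needed to land in $\c{W}_{\operatorname{multi}}$, and it is carried out correctly. One minor remark: for the existence of a coupling $\Lambda$ with $\intg{}{c^N}{\Lambda}\leq N\rho$ (rather than $\leq N\rho+\epsilon$) you are implicitly invoking attainment of the Wasserstein infimum, which the paper cites from \cite[Theorem~4.1]{villani2009optimal}; alternatively an $\epsilon$-argument works just as well.
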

Hence there is no purely theoretical advantage of using multitransport hyperrectangles $\c{W}_{\operatorname{multi}}$ over our relaxation approach to approximate the structured UQ problem \eqref{eq:droStructured}.
On the other hand, \eqref{eq:droMultitransportDuality} does not require the symmetrization $\ell_{\operatorname{sym}}$ of the loss $\ell$, which can be expensive (see Section \ref{sec:reformulationNumerical}).

\section{Relaxations of the distributionally robust optimization problem}
\label{sec:structuredDROOuter}
In this section we return to the problem \eqref{eq:stochasticOptimizationRobustStructured} and examine the implications of replacing its inner uncertainty quantification problem by its relaxed version, i.e. replacing \eqref{eq:stochasticOptimizationRobustStructured}, where $\Psi_{\operatorname{S}}(\theta) = \operatorname{S}(\ell(\theta,\cdot))$ in the notation of the previous section, by the problem
\begin{align}
	\label{eq:structuredDROOuterRelaxed}
	\inf_{\theta \in \Theta} \Psi_{\operatorname{U}}^M(\theta) \text{\ \ with\ \ } \Psi_{\operatorname{U}}^M(\theta) = \operatorname{U}_M^{\operatorname{sym}}(\ell(\theta, \cdot))\,,
\end{align}
where this time we explicitly see $\ell:\Theta \times X^N \to \b{R}$ as a function of two arguments $(\theta,x) \in \Theta \times X^N$.
The following result relates the solutions of the relaxed problem \eqref{eq:structuredDROOuterRelaxed} to solutions of \eqref{eq:stochasticOptimizationRobustStructured}.
\begin{theorem}
	\label{thm:structuredDROOuterTightness}
	Suppose that $X$ is a convex subset of a vector space, $c:X \times X \to [0,\infty)$ is proper and $c(x,\cdot)$ is convex for all $x \in X$.
	Further, let $\Theta \subseteq \b{R}^p$ and $\ell:\Theta \times X^N \to \b{R}$.
	If $\ell(\theta,\cdot) \in \c{G}_{c^N}^<(X^N)$ is concave and upper semi-continuous for all $\theta \in \Theta$, then it holds
	\begin{align}
		\label{eq:structuredDROOuterValueConvergence}
		\lim_{M \to \infty} \inf_{\theta \in \Theta} \Psi_{\operatorname{U}}^M(\theta) = \inf_{\theta \in \Theta} \Psi_{\operatorname{S}}(\theta)\,. 
	\end{align}
	Suppose additionally that $\Theta$ is compact, convex with $\operatorname{int}(\Theta) \neq \emptyset$ and that $\ell(\cdot,x)$ is convex and lower semi-continuous on $\Theta$ for all $x \in X^N$ and that the family $\{-\min\{\ell(\theta,\cdot),0\}\}_{\theta \in \Theta}$ is $\bar{P}$-integrable \emph{uniformly} in $\bar{P} \in \c{W}$.
	Then the infimum on the right hand side of \eqref{eq:structuredDROOuterValueConvergence} is attained and given any sequence $(\epsilon_M)_{M = N}^\infty$ with $0 < \epsilon_M \searrow 0$ for $M \to \infty$ and arbitrary $\epsilon_M$-approximate minimizers
	\begin{align*}
		\theta_M^* 
		\in \epsilon_M\text{-}\argmin_{\theta \in \Theta} \Psi_{\operatorname{U}}^M(\theta) 
		:= \left\{\theta \in \Theta \mid \Psi_{\operatorname{U}}^M(\theta) < \inf_\Theta \Psi_{\operatorname{U}}^M + \epsilon_M\right\}\,,
	\end{align*} 
	it holds that
	\begin{align*}
		\theta^* \in \argmin_{\theta \in \Theta} \Psi_{\operatorname{S}}(\theta)
	\end{align*}
	for any limit point $\theta^*$ of $(\theta_M^*)_{M=N}^\infty$.
\end{theorem}

\begin{remark}
	The uniform integrability assumption on $\{-\min\{\ell(\theta,\cdot),0\}\}_{\theta \in \Theta}$ is used to establish the lower semi-continuity of $\Psi_{\operatorname{S}}$ on $\Theta$ only and is satisfied if e.g. there exists a constant $C > 0$ and $x_0 \in X^N$ such that
	\begin{align*}
		\ell(\theta,x) \geq -C(1+c^N(x,x_0)) \text{\ \ for all\ } x \in X^N\,,\; \theta \in \Theta\,,
	\end{align*}
	i.e. $-\ell(\theta,\cdot)$ ``belong uniformly in $\theta \in \Theta$'' to the class $\c{G}_{c^N}(X^N)$.  
\end{remark}

Thus Theorem \ref{thm:structuredDROOuterTightness} provides a guarantee that (possibly suboptimal) solutions of the relaxed problem \eqref{eq:structuredDROOuterRelaxed} will approximate the true minimizers of \eqref{eq:stochasticOptimizationRobustStructured} if $\ell$ is a saddle (i.e. convex-concave) function with certain integrability and continuity properties.
Note, however, that even without any convexity or concavity assumptions on $\ell$, it can be beneficial to solve \eqref{eq:structuredDROOuterRelaxed} as a substitute to \eqref{eq:stochasticOptimizationRobustStructured}.
Indeed, suppose that we want to find a decision vector $\theta^* \in \Theta$ that satisfies a robustness margin given by a threshold $R \in \b{R}$, i.e.
\begin{align*}
	\sup_{\bar{P} \in \c{W}} \b{E}_{x \sim \bar{P}} \ell(\theta^*,x) \leq R\,.
\end{align*} 
If we now solve instead \eqref{eq:structuredDROOuterRelaxed} for some fixed $M \geq N$ to (sub)optimality and obtain a decision vector $\theta_M^* \in \Theta$ with an objective value $\Psi_{\operatorname{U}}^M(\theta_M^*) \leq R$, then after implementing $\theta_M^*$ in our original problem \eqref{eq:stochasticOptimizationRobustStructured}, we obtain
\begin{align*}
	\sup_{\bar{P} \in \c{W}} \b{E}_{x \sim \bar{P}} \ell(\theta_M^*,x)
	= \Psi_{\operatorname{S}}(\theta_M^*) 
	\leq \Psi_{\operatorname{U}}^M(\theta_M^*)
	\leq R\,,
\end{align*}
i.e. we can take $\theta^* = \theta_M^*$ to meet the desired specifications.
%The first part of Theorem \ref{thm:structuredDROOuterTightness} just that if $\ell$ is concave in the second argument, then, provided the orgi there will exist some $M \geq N$ such that we indeed will find such a decision $\theta_M^*$.
\begin{remark}
	\label{rem:nonmonotonicRelaxationOuter}
	While Theorem \ref{thm:structuredDROOuterTightness} implies that $\lim_{M \to \infty} \Psi_{\operatorname{S}}(\theta_M^*) = \inf_\Theta \Psi_{\operatorname{S}}$ whenever $\ell$ is concave in the second argument, this convergence is, in general, not monotone (see Section \ref{sec:structuredDRONumerical} for a numerical example).
	As such, after having obtained $\theta_M^* \in \epsilon_M\text{-}\argmin_\Theta \Psi_{\operatorname{U}}^M$ for $M \in \{N,\ldots,M_{\max}\}$ with some maximal $M_{\max} \geq N$, it is advisable to take $\theta_{M^*}^*$ with $M^* = \argmin_{M \in \{N,\ldots,M_{\max}\}} \Psi_{\operatorname{U}}^M(\theta_M^*)$ as the final decision vector.
\end{remark}

\section{Tractable reformulations and numerical examples}
\label{sec:reformulationNumerical}

In this section we reformulate each uncertainty quantification problem in the proposed sequence of relaxations into an explicit finite-dimensional convex program and provide some numerical examples.

\subsection{Finite-dimensional reformulations using Lagrange duality} 
\label{sec:structuredDROPolyhedralLoss}

When the nominal distribution $\hat{P}$ is discrete, i.e.
\begin{align}
	\label{eq:discreteMeasure}
	\hat{P} = \sum_{i=1}^{n_{\hat{P}}} \hat{p}_i \delta_{\hat{\xi}_i}\,,
\end{align} 
the value of $\operatorname{U}_M^{\operatorname{sym}}(\ell)$ as defined in \eqref{eq:droStructuredRelaxationSequence} can be evaluated for any $M \geq N$ by reformulating \eqref{eq:droStructuredRelaxationSequence} into a semi-infinite program via Theorem \ref{thm:droGeneralDuality} in the following way.
We first note that
\begin{align*}
	\operatorname{U}_M^{\operatorname{sym}}(\ell)
	&= \operatorname{U}_M((\ell \circ \operatorname{pr}_{1:N}^M)_{\operatorname{sym}}) \\
	&= \sup_{\bar{P} \in \b{B}_{M\rho}^{c^M}(\hat{P}^{\otimes M})} \b{E}_{x \sim \bar{P}} (\ell \circ \operatorname{pr}_{1:N}^M)_{\operatorname{sym}}(x) \\
	&= \inf_{\mu \geq 0} M\rho \mu + \sum_{\bs{i} \in \bs{\hat{I}}} \hat{p}_{\bs{i}} \sup_{x \in X^M} \left((\ell \circ \operatorname{pr}_{1:N}^M)_{\operatorname{sym}}(x) - \mu c^M(x,\hat{\xi}_{\bs{i}})\right)\,,
\end{align*}
where $\bs{\hat{I}} = \{1,\ldots,n_{\hat{P}}\}^M$, $\hat{p}_{\bs{i}} = \hat{p}_{i_1}\cdots \hat{p}_{i_M}$, $\hat{\xi}_{\bs{i}} = (\hat{\xi}_{i_1},\ldots,\hat{\xi}_{i_M})$ when $\bs{i} = (i_1,\ldots,i_M) \in \bs{\hat{I}}$ and the last equality holds due to Theorem \ref{thm:droGeneralDuality} as well as the fact that
\begin{align*}
	\hat{P}^{\otimes M}
	= \sum_{\bs{i} \in \bs{\hat{I}}} \hat{p}_{\bs{i}} \delta_{\hat{\xi}_{\bs{i}}}
\end{align*}
is also discrete.
By introducing slack variables $\sigma_{\bs{i}} \in \b{R}$ for $\bs{i} \in \hat{I}$, we obtain the semi-infinite program
\begin{align}
	\label{eq:droRelaxationReformulationSemiInfinite}
	\begin{aligned}
		\operatorname{U}_M^{\operatorname{sym}}(\ell)
		&= \inf_{\mu \geq 0} M\rho \mu + \sum_{\bs{i} \in \bs{\hat{I}}} \hat{p}_{\bs{i}} \sigma_{\bs{i}} \\
		&\quad\text{such that\ }
		\forall \bs{i} \in \bs{\hat{I}}\,,\; x \in X^M\,:\; (\ell \circ \operatorname{pr}_{1:N}^M)_{\operatorname{sym}}(x) - \mu c^M(x,\hat{\xi}_{\bs{i}}) \leq \sigma_{\bs{i}}\,.
	\end{aligned}
\end{align}
When $X = \b{R}^n$ is Euclidean and the loss $\ell$ and cost $c$ belong to some particular classes of functions, the latter constraints can be reformulated more explicitly.
In view of Theorem \ref{thm:relaxationExactConcave} we focus on concave polyhedral loss function $\ell$ and cost $c(x,y) = \norm{x-y}$ for some norm $\norm{\cdot}$ on $\b{R}^n$.

\subsubsection{Concave polyhedral loss}
Suppose that $\ell:(\b{R}^n)^N \to \b{R}$ concave and polyhedral, i.e. of the form
\begin{align}
	\label{eq:polyhedralConcaveLoss}
	\ell(x)
	= \min_{\smat{a \\ b} \in \c{H}} a^\top x + b
	= \min_{h \in \c{H}} h^\top \mat{x \\ 1}\,,
\end{align}
where $\c{H} \subseteq (\b{R}^n)^N \times \b{R}$ is a polytope.
Additionally, suppose that $c(x,y) = \norm{x-y}$ for some norm $\norm{\cdot}$ on $\b{R}^d$.
Before we state our reformulation, we need to introduce some additional notation.
By $\bs{\c{L}} = \{\bs{l} \in \{1,\ldots,M\}^N \mid \bs{l}_k \neq \bs{l}_l \text{\ for\ } k \neq l\}$ we denote the set of non-repeating $N$-tuples of integers in $\{1,\ldots,M\}$ and by $E_{\bs{l}} \in \b{R}^{nN \times nM}$ the component selection matrix such that $E_{\bs{l}} x = x_{\bs{l}} := (x_{\bs{l}_1},\ldots,x_{\bs{l}_N}) \in (\b{R}^n)^N$.
Moreover, we define an equivalence relation on $\bs{\hat{I}}$ by $\bs{i} \sim \bs{j}$ if and only if there exists $\pi \in \c{S}_M$ with $\bs{i}_k = \bs{j}_{\pi(k)}$ for all $k \in \{1,\ldots,M\}$.
Let $\bs{\hat{\c{I}}} = \bs{\hat{I}}/\sim$ be the set of equivalence classes with $\bs{\iota} = [\bs{i}]_{\sim}$ being the equivalence class of $\bs{i}$ and let $\upsilon:\bs{\hat{\c{I}}} \to \bs{\hat{I}}$ denote an arbitrary, but fixed selection such that $\upsilon(\bs{\iota}) \in \bs{\iota}$ for all $\bs{\iota} \in \bs{\hat{\c{I}}}$. 
Finally, for any norm $\norm{\cdot}$ we denote by $\norm{\cdot}_*$ its dual norm. 
The following result holds.

\begin{theorem}
	\label{thm:polyhedralConcaveLossProgram}
	Let $\rho > 0$, $\hat{P}$ be as in \eqref{eq:discreteMeasure}, the loss as in \eqref{eq:polyhedralConcaveLoss} and $c(x,y) = \norm{x-y}$ for some norm $\norm{\cdot}$ on $X = \b{R}^n$.
	Then
	\begin{align}
		\label{eq:polyhedralConcaveLossProgram}
		\operatorname{U}_M^{\operatorname{sym}}(\ell)
		= \inf \; \mu M \rho + \sum_{\bs{\iota} \in \bs{\hat{\c{I}}}} \hat{p}_{\bs{\iota}} \sigma_{\bs{\iota}} 
	\end{align}		
	where the infimum is taken w.r.t. the following set of variables
	\begin{align*}
		\mu \geq 0\,,\; \sigma_{\bs{\iota}}\,,\; b_{\bs{l},\bs{\iota}}  \in \b{R} \,, \; z_{\bs{\iota}} = (z_{\bs{\iota}}^j)_{j=1}^M \in (\b{R}^n)^M\,,\; a_{\bs{l},\bs{\iota}} \in (\b{R}^n)^N \quad \forall \bs{l} \in \bs{\c{L}}\,,\; \bs{\iota} \in \bs{\hat{\c{I}}}\,,
	\end{align*}
	under the (convex) constraints
	\begin{align*}
		0 \leq \sigma_{\bs{\iota}} + z_{\bs{\iota}}^\top \hat{\xi}_{\upsilon(\bs{\iota})} + \frac{(M-N)!}{M!} \sum_{\bs{l} \in \bs{\c{L}}} b_{\bs{l},\bs{\iota}}\,,\quad
		\norm{z_{\bs{\iota}}^j}_* \leq \mu  \\
		z_{\bs{\iota}} = \frac{(M-N)!}{M!} \sum_{\bs{l} \in \bs{\c{L}}} E_{\bs{l}}^\top a_{\bs{l},\bs{\iota}}\,, \quad
		\mat{a_{\bs{l},\bs{\iota}} \\ b_{\bs{l},\bs{\iota}}} \in -\c{H}\,,
	\end{align*}
	for $\bs{l} \in \bs{\c{L}}$, $\bs{\iota} \in \bs{\hat{\c{I}}}$ and $j=1,\ldots,M$.
	In particular, the optimal value of the right hand side of \eqref{eq:polyhedralConcaveLossProgram} is independent of the choice of the selector $\upsilon$. 
\end{theorem}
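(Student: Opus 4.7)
The plan is to derive the formula through a chain of standard reformulations of a Wasserstein DRO problem. First, observe that $\operatorname{U}_M^{\operatorname{sym}}(\ell) = \operatorname{U}_M((\ell \circ \operatorname{pr}_{1:N}^M)_{\operatorname{sym}})$ is a standard (unstructured) Wasserstein DRO problem on $X^M = (\b{R}^n)^M$, with cost $c^M(x,y) = \sum_{j=1}^M \norm{x^j - y^j}$, radius $M\rho$, and nominal $\hat{P}^{\otimes M}$, whose atoms are $\hat{\xi}_{\bs{i}} = (\hat{\xi}_{\bs{i}_1},\ldots,\hat{\xi}_{\bs{i}_M})$ with probabilities $\hat{p}_{\bs{i}} = \prod_{k=1}^M \hat{p}_{\bs{i}_k}$. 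Theorem~\ref{thm:droGeneralDuality} then yields
\[
\operatorname{U}_M^{\operatorname{sym}}(\ell) = \inf_{\mu \geq 0}\, \mu M\rho + \sum_{\bs{i}} \hat{p}_{\bs{i}}\,\phi_\mu(\hat{\xi}_{\bs{i}}),\quad
\phi_\mu(z) = \sup_{x \in (\b{R}^n)^M} \Big(\hat{\ell}_{\operatorname{sym}}(x) - \mu \sum_{j=1}^M \norm{x^j - z^j}\Big),
\]
and grouping permutations in $\c{S}_M$ by their first $N$ images identifies $\hat{\ell}_{\operatorname{sym}}(x) = \tfrac{(M-N)!}{M!}\sum_{\bs{l} \in \bs{\c{L}}} \ell(E_{\bs{l}} x)$.

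Substituting the polyhedral form $\ell(y) = \min_{(a,b)\in\c{H}} a^\top y + b$ into $\phi_\mu$ produces a $\sup_x$ of a minimum over $|\bs{\c{L}}|$ independent copies $(a_{\bs{l}}, b_{\bs{l}}) \in \c{H}$. The integrand is affine in $x$ for fixed $(a_{\bs{l}}, b_{\bs{l}})$, affine in $(a_{\bs{l}}, b_{\bs{l}})$ for fixed $x$, and $\c{H}^{|\bs{\c{L}}|}$ is compact and convex, so Sion's minimax theorem applies and the $\sup$ and $\min$ can be interchanged. After the swap, the change of variable $u = x - z$ decouples the inner supremum across coordinates as $\sum_{j=1}^M \sup_{u^j}\big((w^j)^\top u^j - \mu\norm{u^j}\big)$, where $w = \tfrac{(M-N)!}{M!}\sum_{\bs{l}} E_{\bs{l}}^\top a_{\bs{l}}$. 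By the definition of the dual norm, each summand equals $0$ when $\norm{w^j}_* \leq \mu$ and $+\infty$ otherwise, yielding
\[
\phi_\mu(z) = \min\Big\{ w^\top z + \tfrac{(M-N)!}{M!}\sum_{\bs{l}} b_{\bs{l}} \;:\; (a_{\bs{l}}, b_{\bs{l}})\in\c{H},\; w = \tfrac{(M-N)!}{M!}\sum_{\bs{l}} E_{\bs{l}}^\top a_{\bs{l}},\; \norm{w^j}_* \leq \mu\;\forall j \Big\}.
\]

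Finally, $\phi_\mu$ is invariant under the natural $\c{S}_M$-action on $z$ because $\hat{\ell}_{\operatorname{sym}}$ is permutation-symmetric and $c^M$ treats coordinates symmetrically; combined with the fact that $\hat{p}_{\bs{i}}$ depends only on the multiset of its indices, the expectation collapses into a sum over the equivalence classes $\bs{\iota} \in \bs{\c{I}}$ with weights $\hat{p}_{\bs{\iota}} = \sum_{\bs{i} \in \bs{\iota}} \hat{p}_{\bs{i}}$ evaluated at any representative $\upsilon(\bs{\iota})$, from which independence of the choice of $\upsilon$ is immediate. Introducing per-class independent variables $(a_{\bs{l},\bs{\iota}}, b_{\bs{l},\bs{\iota}})$ and $z_{\bs{\iota}} := w$, and encoding the inner minimum by an epigraphical variable $\sigma_{\bs{\iota}}$, a final substitution $(a,b) \mapsto -(a,b)$ places the program in the announced form with constraint $(a_{\bs{l},\bs{\iota}}, b_{\bs{l},\bs{\iota}}) \in -\c{H}$ and sign-flipped epigraphical inequality $\sigma_{\bs{\iota}} + z_{\bs{\iota}}^\top \hat{\xi}_{\upsilon(\bs{\iota})} + \tfrac{(M-N)!}{M!}\sum_{\bs{l}} b_{\bs{l},\bs{\iota}} \geq 0$. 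The main technical obstacle is the Sion minimax swap: one must verify its hypotheses and carefully interpret the possibility $\sup_x = +\infty$ as the implicit dual-norm constraints $\norm{w^j}_* \leq \mu$; the symmetry reduction over $\bs{i}$ and the bookkeeping of signs at the end are routine.
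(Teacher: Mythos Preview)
Your proposal is correct and follows essentially the same route as the paper: the paper splits the argument into Lemma~\ref{lem:conjugateSymmetrization} (an explicit Fenchel conjugate of $(-\ell\circ\operatorname{pr}_{1:N}^M)_{\operatorname{sym}}$) and Theorem~\ref{thm:polyhedralConcaveLoss} (the Mohajerin--Kuhn dual plus the symmetry reduction from $\bs{J}$ to $\bs{\c{I}}$), whereas you compute $\phi_\mu$ in one shot via Sion's minimax and then perform the same symmetry reduction. The only imprecision is the phrase ``affine in $x$'': the full objective in the sup--min is concave in $x$ because of the norm term, but this is exactly what Sion requires, so the argument is unaffected.
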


\begin{remark}
	The program \eqref{eq:polyhedralConcaveLossProgram} involves (for $M \to \infty$)
	\begin{align*}
		O(\abs{\bs{\c{L}}} \cdot \abs{\bs{\hat{\c{I}}}} \cdot M) = O\left(\frac{M!}{(M-N)!} \cdot \binom{M+n_{\hat{P}}-1}{M} \cdot M\right) = O(M^{N + n_{\hat{P}}})
	\end{align*}
	decision variables and constraints.
	While this is polynomial in the lifting parameter $M$, it can be solved only for small values of $N$ and $n_{\hat{P}}$.
\end{remark}
%
%\subsubsection{Convex polyhedral loss}
%\todo[inline]{Convex polyhedral}
%
%
%\subsubsection{Quadratic loss}
%\todo[inline]{Quadratic loss}

\subsection{Numerical results} 

We conclude this paper by illustrating the proposed method on synthetic examples.

\subsubsection{Structured uncertainty quantification}
\label{sec:structuredUQNumerical}

Let $X = \b{R}$, $N = 2$, $c(x,y) = \norm{x-y}_2$ and $\ell(x) = \min \{ 2 x_1 + 5x_2, -5 x_1 + 2 x_2\}$, with $x = (x_1, x_2) \sim P^\star \otimes P^\star$, where $P^\star = 0.3 \delta_{-0.9} + 0.7 \delta_{1.1}$.
We assume that $P^\star$ is not known, and that instead we have a nominal distribution $\hat{P} = 0.25 \delta_{-1} + 0.75 \delta_{1}$, which could have been obtained from e.g. expert knowledge.
In this case it holds\footnote{In real practice this value is not known and the tuning of the radius of the ambiguity set is typically done via e.g. cross-validation} $W_c(P^\star, \hat{P}) \approx 0.19$. 
To hedge against the distributional ambiguity, we the structured ambiguity set $\c{W}$ \eqref{eq:structuredWassersteinAmbiguitySet} around $\hat{P}$ with $\rho = 0.2$. 
We compute the optimal value of the relaxed problem \eqref{eq:droGeneralLifted} for different values of the lifting parameter $M$ using Theorem \ref{thm:polyhedralConcaveLossProgram}. 
For comparison, we compute the optimal value of the unstructured DRO problem that uses the ambiguity set in \eqref{lem:wassersteinBallProductInclusion} with the same radius. 
The results are shown in Figure \ref{fig:numericsUQ}.
We observe that the relaxation gap decreases monotonically as $M \to \infty$, corroborating our theoretical findings. 
The substantial gap between the structured and the unstructured DRO value at $M = 50$ when a plateau is reached confirms that \eqref{lem:wassersteinBallProductInclusion} is a loose over-approximation of the original non-convex structured ambiguity set.

\begin{figure}
	\centering
	\includegraphics[width=0.7\textwidth]{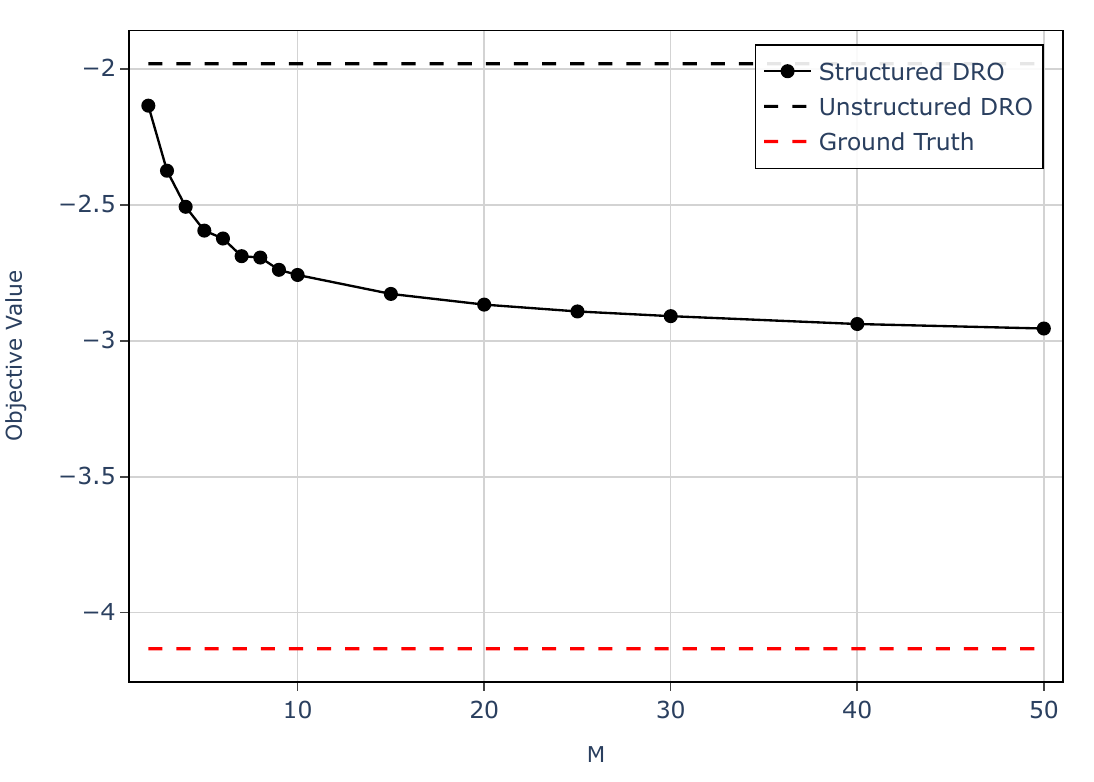}
	\caption{Relaxation gap of the structured DRO problem for different values of $M$ and $\rho$.}
	\label{fig:numericsUQ}
\end{figure}

\subsubsection{Structured distributionally robust optimization}
\label{sec:structuredDRONumerical}

Consider $X = \b{R}$, $N = 2$, $c(x,y) = \norm{x-y}_2$, $\Theta = [-3,3] \subseteq \b{R}$ as well as
\begin{align*}
	\ell(\theta,x)
	= \min_{h \in \c{H}(\theta)} h^\top \mat{x \\ 1} \text{\ \ for\ } \theta \in \Theta\,,\; x \in \b{R}^2\,,
\end{align*} 
where the polytope $\c{H}(\theta) \subseteq \b{R}^3$ is given by $\c{H}(\theta) = \{h \in \b{R}^3 \mid Wh \leq g(\theta)\}$ for 
\begin{align*}
	W = \mat{1 & 1 & -1 \\ -1 & 0 & 0 \\ 3/2 & -1/2 & -1/2 \\ 0 & 0 & 1}\,, \quad
	g(\theta) = \mat{1-\theta \\ \theta \\ 1/2-\theta \\ \theta}\,.
\end{align*}
It is easily verified that $\ell$ is a saddle function and satisfies the assumptions of Theorem \ref{thm:structuredDROOuterTightness}.
Solving the relaxed problems \eqref{eq:structuredDROOuterRelaxed} with $\hat{P} = \frac{1}{2}\delta_1 + \frac{1}{2} \delta_{-1}$ and $\rho = 1/4$ to optimality yields a sequence of decisions $(\theta_M^*)_{M \geq N} \subseteq \Theta$ and objective values $\Psi_{\operatorname{U}}^M(\theta_M^*) = \inf_\Theta \Psi_{\operatorname{U}}^M$, which are depicted in Figure \ref{fig:numericsDROObjectiveDecision}. 
We also depict the sequence of objective values $\Psi_{\operatorname{S}}(\theta_M^*)$ of the original problem \eqref{eq:stochasticOptimizationRobustStructured}\footnote{Since we cannot exactly compute $\Psi_{\operatorname{S}}$, we approximate it by $\Psi_{\operatorname{U}}^M$ for a large, fixed $M \geq N$}.
We observe that both $\Psi_{\operatorname{U}}^M(\theta_M^*)$ and $\Psi_{\operatorname{S}}(\theta_M^*)$ converge to a limiting value for $M \to \infty$, which, by Theorem \ref{thm:structuredDROOuterTightness}, is equal to $\inf_\Theta \Psi_{\operatorname{S}}$.
However, we can also see that the convergence is highly non-monotone in $M$, which motivates Remark \ref{rem:nonmonotonicRelaxationOuter}.

\begin{figure}
	\centering
	\subfigure[Relaxted objective values $\Psi_{\operatorname{U}}^M(\theta_M^*)$ and original objective values $\Psi_{\operatorname{S}}(\theta_M^*)$]{%
		\includegraphics[width=0.46\textwidth]{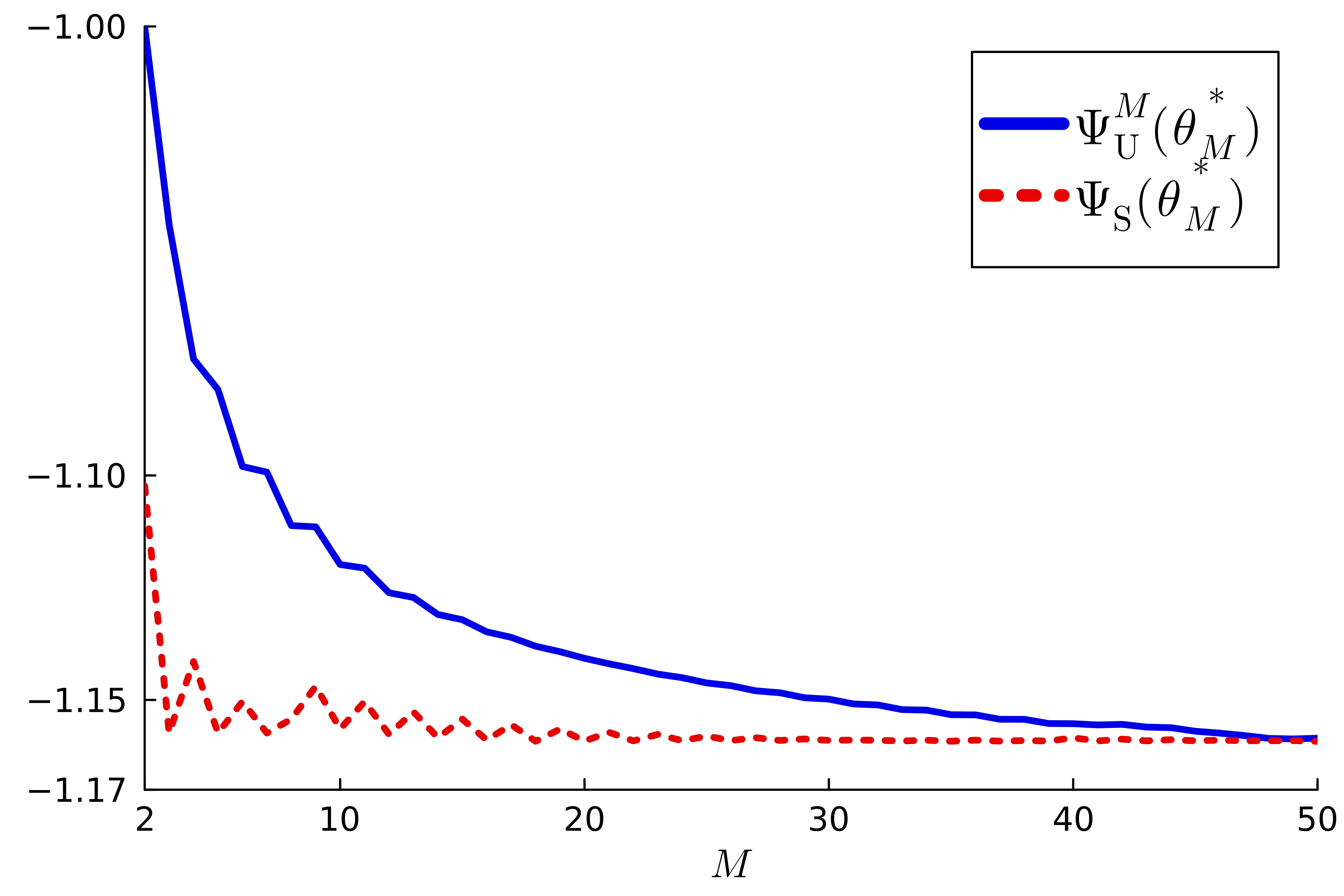}
	}
	\subfigure[Optimal decision vector $\theta_M^* \in \Theta$]{%
		\includegraphics[width=0.46\textwidth]{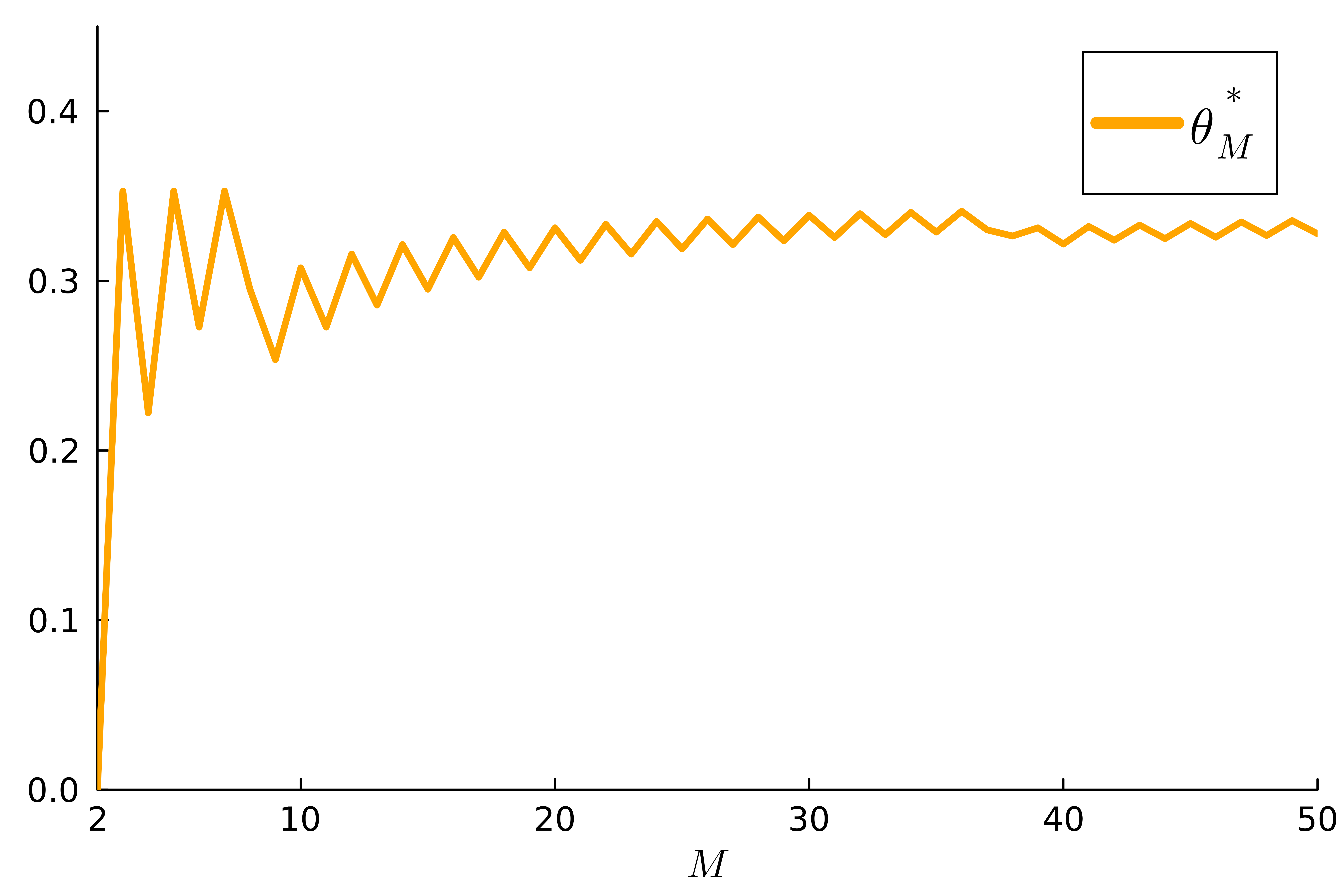}
	}
	\caption{Objective values and optimal decision vectors for the example from Section \ref{sec:structuredDRONumerical}.}
	\label{fig:numericsDROObjectiveDecision}
\end{figure}

\section{Conclusions and Outlook}
In this paper, we focused on Wasserstein DRO formulations where the uncertain vector exhibits an i.i.d. structure. 
By exploiting this structure, we construct a structured ambiguity set that only contains product distributions. 
To solve the resulting non-convex program, we devise a sequence of convex relaxations that, under mild conditions on the loss function, converge to the optimal solution of the original non-convex problem.
Our numerical results certify how structured ambiguity sets can capture uncertainty in a more effective manner than unstructured ambiguity sets, ultimately improving the overall decision-making. 
Future work includes (i) the analysis of the relaxation gap for convex loss functions, (ii) addressing the computational tractability issues of data-driven problems where the product of empirical distributions is supported on a exponential amount of points and finally (iii) extending the results to exploit invariance w.r.t. groups $G$ other than the symmetric group $\c{S}_N$.

\section*{Acknowledgments}

The first author would like to thank Ariel Neufeld (NTU Singapore), Zebang Shen (ETH Zurich) and Alexander Schell (ETH Zurich) for some helpful comments regarding different parts of the manuscript.

%\THEEndNotes
\begingroup \parindent 0pt \parskip 4ex
\def\enotesize{\normalsize} 
%\theendnotes
\endgroup

% Appendix here
% Options are (1) APPENDIX (with or without general title) or
%             (2) APPENDICES (if it has more than one unrelated sections)
% Outcomment the appropriate case if necessary
%
% \begin{APPENDIX}{<Title of the Appendix>}
% \end{APPENDIX}
%
%   or
%

\appendix

\section{Additional Notation}
\label{sec:additionalNotation}

In this appendix we use the following additional notation:
By $\operatorname{pr}_{i,j}:X^N \times X^N \to X \times X$ for $i,j=1,\ldots,N$ we denote the projection onto the $i$-th coordinate in the first factor $X^N$ and on the $j$-th coordinate in the second factor $X^N$.
The projections $\operatorname{pr}_{1:N,1:N}^M:X^M \times X^M \to X^N \times X^N$, $\operatorname{pr}_{1:N,j}^M:X^M \times X^M \to X^N \times X$ are defined similarly.
The special cases $\operatorname{pr}_{1:N,\emptyset}^M:X^M \times X^M \to X^N$ and $\operatorname{pr}_{\emptyset,1:N}^M:X^M \times X^M \to X^N$ denote the projection on the first and second factors only, respectively. 
If $M = \infty$ we drop the superscript in the previous notations.
For $P_1,\ldots,P_N \in \c{P}(X)$ we write $\Gamma(P_1,\ldots,P_N)$ for all $\Lambda \in \c{P}(X^N)$ such that $\operatorname{pr}_i \# \Lambda = P_i$ for $i=1,\ldots,N$.
The set $\Gamma(P_1,\ldots,P_N)$ is called the set of multi-marginals couplings between $P_1,\ldots,P_N$. 

\section{Background on measures in Polish spaces}

Here we collect some results on measures in Polish spaces.
First we note that if $X$ is a Polish space, then so is $\c{P}(X)$ with the topology of weak convergence of measures (i.e. convergence in distribution) \cite[15.15 Theorem]{guide2006infinite}.
%It can be shown that the set $\c{P}_p(X)$ is complete in the topology induced by the $p$-th Wasserstein metric $W_p$ \cite{villani2009optimal,panaretos2020invitation}, but that $\c{P}_p(X) \subseteq \c{P}(X)$ is in general not closed in the weak topology (e.g. there are distributions with finite $p$-th moment that converge in distribution to a probability measure without any moments, e.g. take the truncated Cauchy distribution).
While $\c{P}_c(X)$ is in general not closed as a subset of $\c{P}(X)$, it can be shown that, under the assumptions on the cost $c$ made in Section \ref{sec:preliminaries}, for $\hat{P} \in \c{P}_c(X)$ the function $\c{P}(X) \to [0,\infty]:P \mapsto W_c(P,\hat{P})$ is lower-semicontinuous in the weak topology and hence Borel measurable \cite[Paragraph right after Corollary 2.2.2]{panaretos2020invitation}.
This implies that for each $\rho > 0$ and $x_0 \in X$ the set $\c{P}_c(X,\rho) = \{P \mid W_c(\delta_{x_0},P) \leq \rho\}$ is closed as a subset of $\c{P}(X)$ (and independent of $x_0$ because of the weak triangle inequality for $c$) and hence $\c{P}_c(X) = \bigcup_{\rho \in \b{N}} \c{P}_c(X,\rho) \subseteq \c{P}(X)$ is Borel measurable as a countable union of closed sets.
Next, if $c$ is proper (i.e. metric sublevel sets are compact), then, for any $\rho > 0$, the set $\c{P}_c(X,\rho)$ is compact in the weak topology \cite{yue2022linear,panaretos2020invitation} as is the corresponding Wasserstein ball $\b{B}_\rho^c(P)$ whenever $P \in \c{P}_c(X)$ \cite{yue2022linear}.\\
Now we turn to mixtures of measures: Since $\c{P}(X)$ is Polish, we can define $\c{P}(\c{P}(X))$ as the set of Borel measures (the Borel $\sigma$-algebra on $\c{P}(X)$ is induced by the weak topology) on $\c{P}(X)$.
Now, given any Polish space $Y$ and a Borel subset $A \subseteq Y$ the map $\c{P}(Y) \to [0,1]: P \mapsto P(A) = \intg{}{\1_A}{P}$ is Borel measurable by \cite[Theorem 15.13]{guide2006infinite}.
Hence for any Borel measurable $F:\c{P}(X) \mapsto \c{P}(Y)$ the composition $\c{P}(X) \to [0,1]: P \mapsto F(P)(A)$ is Borel measurable as well. 
Then for any $\nu \in \c{P}(\c{P}(X))$ the expression $\bar{P}(A) := \intg{}{F(P)(A)}{\nu(P)}$ is well-defined and defines a map from the Borel subsets of $Y$ to $[0,1]$, which is finitely additive.
We show that $\bar{P}$ is $\sigma$-continuous from below \cite{klenke2013probability}, from which it follows that $\bar{P}$ is countably additive:
If $A, A_i \subseteq Y$ are Borel such that $A_i \nearrow A$ for $i \to \infty$, then $\intg{}{F(A_i)}{\nu(P)} \nearrow \intg{}{F(A)}{\nu(P)}$ for $i \to \infty$ by the monotone convergence theorem.
Hence $\bar{P}$ is a Borel measure and we write $\intg{}{F(P)}{\nu(P)} = \bar{P}$.
Furthermore, we often need the following identity that relates the integration w.r.t. a mixture distribution to the integration w.r.t. the mixture measure itself.
\begin{theorem}
	\label{thm:disintegrationMixture}
	For any Borel measurable $f:X \to \b{R}$, $\nu \in \c{P}(\c{P}(X))$ and $F:\c{P}(X) \to \c{P}(Y)$ the following holds: $f$ is $F(P)$-integrable for $\nu$-almost all $P \in \c{P}(X)$ iff $f$ is $\intg{}{F(P)}{\nu(P)}$-integrable and in this case 
	\begin{align*}
		\intg{}{f}{\left(\intg{}{F(P)}{\nu(P)}\right)}
		= \intg{}{\left(\intg{}{f}{(F(P))}\right)}{\nu(P)}\,.
	\end{align*}
\end{theorem}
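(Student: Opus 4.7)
The plan is to establish this as a standard measure-theoretic ``push it through the standard machine'' argument, starting from indicator functions and extending by linearity, monotone convergence, and splitting into positive/negative parts. Throughout I will treat $f$ as a Borel function on $Y$ (the codomain of $F(P)$), since as written in the statement the domain $X$ appears to be a typographical slip.

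First I would verify the identity for indicators $f = \bs{1}_A$ with $A \subseteq Y$ Borel. For these the identity
\begin{align*}
\intg{}{\bs{1}_A}{\Bigl(\intg{}{F(P)}{\nu(P)}\Bigr)} = \Bigl(\intg{}{F(P)}{\nu(P)}\Bigr)(A) = \intg{}{F(P)(A)}{\nu(P)} = \intg{}{\Bigl(\intg{}{\bs{1}_A}{F(P)}\Bigr)}{\nu(P)}
\end{align*}
is nothing but the defining equation of the mixture measure recalled in the paragraph preceding the theorem, combined with the fact that $P \mapsto F(P)(A)$ is Borel measurable (also recalled there). By linearity the identity extends immediately to nonnegative simple Borel functions on $Y$.

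Next I would handle nonnegative measurable $f$. Pick an increasing sequence of simple functions $f_n \nearrow f$ pointwise. For each $n$, the identity holds for $f_n$, and $P \mapsto \intg{}{f_n}{F(P)}$ is Borel measurable as a finite linear combination of Borel measurable maps $P \mapsto F(P)(A)$. Applying the monotone convergence theorem on both sides, first inside (with respect to $F(P)$, yielding pointwise convergence of the inner integrals to $P \mapsto \intg{}{f}{F(P)}$, which is therefore Borel measurable as a pointwise limit of Borel functions) and then outside (with respect to $\nu$ on the right, and with respect to the mixture on the left), gives the identity for $f$ with values in $[0,\infty]$. This nonnegative case simultaneously encodes the integrability equivalence: applying it to $\abs{f}$, the left-hand integral is finite iff the right-hand integrand $P \mapsto \intg{}{\abs{f}}{F(P)}$ is $\nu$-integrable, which forces $\intg{}{\abs{f}}{F(P)} < \infty$ for $\nu$-almost all $P$, i.e. $f$ is $F(P)$-integrable $\nu$-a.s., and conversely.

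Finally, given a signed Borel $f$ with $\abs{f}$ integrable against the mixture, the previous step shows $f^+$ and $f^-$ are $F(P)$-integrable for $\nu$-almost all $P$, so $\intg{}{f}{F(P)} = \intg{}{f^+}{F(P)} - \intg{}{f^-}{F(P)}$ is well-defined $\nu$-a.s.; subtracting the two identities obtained for $f^\pm$ yields the claim. The only step that is not entirely routine is the measurability of $P \mapsto \intg{}{f}{F(P)}$, which is what I expect to be the main (mild) obstacle; it is handled cleanly by the simple-function approximation above, leveraging the Borel measurability of $P \mapsto F(P)(A)$ already established in the preceding background paragraph.
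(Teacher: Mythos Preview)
Your argument is correct; it is the direct ``standard machine'' proof of a result that the paper obtains instead by a one-line citation. The paper simply observes that $\kappa:(P,A) \mapsto F(P)(A)$ is a Markov kernel from $\c{P}(X)$ to $Y$ and then invokes the Fubini-type theorem for integration with respect to a kernel (Dudley, \emph{Real Analysis and Probability}, Theorem~10.2.2), which packages exactly your indicator $\to$ simple $\to$ nonnegative $\to$ signed extension together with the measurability of $P \mapsto \intg{}{f}{F(P)}$. Your route is self-contained and makes the one nontrivial point (that measurability of the inner integral) fully explicit via the simple-function approximation; the paper's route is much shorter but externalizes all the work to the reference. One small caveat: your ``and conversely'' is too quick, since $\nu$-a.s.\ finiteness of $P \mapsto \intg{}{\abs{f}}{F(P)}$ does not by itself yield $\nu$-integrability of this map --- but this looseness is already present in the theorem as stated, and the cited textbook result carries the corresponding side condition.
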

\begin{proof}
	Let us define the map $\kappa:\c{P}(X) \times \c{B}(Y) \to [0,1]: (P,A) \mapsto F(P)(A)$, where $\c{B}(Y)$ denotes the Borel $\sigma$-algebra of $Y$. 
	Then $\kappa$ is a Markov (transition) kernel.
	Indeed, $\kappa(P,\cdot) = F(P)$ is a probability distribution for fixed $P \in \c{P}(X)$ by definition of $F$. 
	For $A \in \c{B}(Y)$ the map $\kappa(\cdot,A):\c{P}(X) \to [0,1]$ can be written as the composition $\kappa(\cdot,A) = \delta_A \circ F$, where $\delta_A:\c{P}(Y) \to [0,1]: Q \mapsto Q(A)$. 
	Since $F$ and $\delta_A$ are measurable by Lemma \ref{lem:powerMeasureWeaklyContinuous} and \cite[Theorem 15.13]{guide2006infinite}, respectively, so is $\kappa(\cdot,A)$.
	The claim then follows directly from \cite[Corollary 16]{schwartz1975lectures}.
\end{proof}

\section{Proofs of Section \ref{sec:preliminaries}}

\begin{proof}[Proof of Theorem \ref{thm:droGeneralSolvability}]
	If $\ell \in \c{G}_c$ and $c$ satisfies the weak triangle inequality, then by Lemma \ref{lem:wassersteinBallUniformlyBounded} it follows that
	\begin{align*}
		\b{E}_P \ell 
		\leq C \b{E}_P (1+c(x_0,\cdot))
		\leq C (1+\b{E}_P c(x_0,\cdot))
		\leq C \left(1+\sup_{\tilde{P} \in \b{B}_\rho^c(\hat{P})} \intg{}{c(x_0,\cdot)}{\tilde{P}} \right) < \infty
	\end{align*}
	for any $P \in \b{B}_\rho^c(\hat{P})$.
	Now suppose that $c$ is proper, $\ell \in \c{G}_c^<$ is upper semi-continuous.
	Since $c$ is proper, $\b{B}_\rho^c(\hat{P}) \subseteq \c{P}(X)$ is weakly compact by Lemma \ref{lem:properCostCompactWassersteinBall}.
	We will show that the map $P \mapsto \b{E}_P \ell$ can be uniformly approximated by weakly upper semi-continuous functions on $\b{B}_\rho^c(\hat{P})$, which will imply that its restriction to $\b{B}_\rho^c(\hat{P})$ is weakly upper semi-continuous itself and hence attains a maximum by the extreme value theorem.
	For this purpose, let $\delta \in \c{D}$, $x_0 \in X$ and $C > 0$ as in the definition of $\c{G}_c^<$.
	Define for any $r > 0$ the function $\ell_r = \min\{\ell,C(1+r)\}$.
	Clearly then for any $x \in X$ it holds that
	\begin{align*}
		\ell(x) - \ell_r(x)
		&\leq 
		\begin{cases}
			0\,, & c(x_0,x) \leq r \\
			\max\{0,\ell(x) - C(1+r)\}\,, & c(x_0,x) > r
		\end{cases}
		\\
		&\leq 
		\begin{cases}
			0\,, & c(x_0,x) \leq r \\
			C \delta(c(x_0,x))\,, & c(x_0,x) > r
		\end{cases}\,,
	\end{align*}
	where the first follows from $\ell \leq C (1+c(x_0,\cdot))$ and the second from the stronger $\ell \leq C (1+\delta(c(x_0,\cdot)))$.
	Hence if $P \in \b{B}_\rho^c(\hat{P})$ it follows for $r \to \infty$ that
	\begin{gather*}
		\abs{\b{E}_P\ell - \b{E}_P\ell_r}
		\leq \b{E}_P \abs{\ell - \ell_r}
		\leq C\intg{c(x_0,\cdot) > r}{\delta(c(x_0,\cdot))}{P} \\
		= C\intg{c(x_0,\cdot) > r}{\frac{\delta(c(x_0,\cdot))}{c(x_0,\cdot)}c(x_0,\cdot)}{P} 
		\leq C\left(\sup_{\substack{x \in X \\ c(x_0,x) > r}} \frac{\delta(c(x_0,x))}{c(x_0,x)} \right) \intg{X}{c(x_0,\cdot)}{P} \\
		\leq C \underbrace{\left(\sup_{t > r} \frac{\delta(t)}{t}\right)}_{\to 0} \underbrace{\left(\sup_{P \in \b{B}_\rho^c(\hat{P})}  \intg{X}{c(x_0,\cdot)}{P}\right)}_{< \infty}\,,
	\end{gather*}
	with the last claims following from the definition of $\c{D}$ and Lemma \ref{lem:wassersteinBallUniformlyBounded}.
	Hence the maps $P \mapsto \b{E}_P \ell$ can be uniformly approximated by maps of the form $P \mapsto \b{E}_P \ell_r$ on $\b{B}_\rho^c(\hat{P})$.
	By \cite[15.5 Theorem]{guide2006infinite} the latter maps are weakly upper semi-continuous, which finishes the proof.
\end{proof}

\section{Proof of Section \ref{sec:structuredDROExistenceFiniteness}}

\begin{proof}[Proof of Theorem \ref{thm:droStructuredSolvability}]
	The proof is similar to the proof of Theorem \ref{thm:droGeneralSolvability}.
	If $\ell \in \hat{\c{G}}_{c,N}$ and $c$ satisfies the weak triangle inequality, then by Lemma \ref{lem:wassersteinBallUniformlyBounded} it follows that
	\begin{align*}
		\b{E}_{P^{\otimes N}} \ell 
		&\leq C \b{E}_{x \sim P^{\otimes N}} \prod_{k=1}^N (1+c(x_0,x_k)) \\
		&\leq C \prod_{k=1}^N (1+\b{E}_{x_k \sim P} c(x_0,x_k)) 
		\leq C \left(1+\sup_{P \in \b{B}_\rho^c(\hat{P})} \intg{}{c(x_0,\cdot)}{P}\right)^N
		< \infty
	\end{align*}
	for any $P \in \b{B}_\rho^c(\hat{P})$.
	Now suppose that $c$ is proper, $\ell \in \hat{\c{G}}_{c,N}^<$ is upper semi-continuous.
	Then $\b{B}_\rho^c(\hat{P}) \subseteq \c{P}(X)$ is weakly compact by Lemma \ref{lem:properCostCompactWassersteinBall}.
	We will show that the map $P \mapsto \b{E}_{P^{\otimes N}} \ell$ can be uniformly approximated by weakly upper semi-continuous functions on $\b{B}_\rho^c(\hat{P})$, which will imply that its restriction to $\b{B}_\rho^c(\hat{P})$ is weakly upper semi-continuous and hence attains its maximum.
	For this purpose, let $\delta \in \c{D}$, $x_0 \in X$ and $C > 0$ as in the definition of $\hat{\c{G}}_{c,N}^<$.
	Moreover, let $\hat{C} = \sup_{P \in \b{B}_\rho^c(\hat{P})} \intg{}{c(x_0,\cdot)}{P}$, which is finite by Lemma \ref{lem:wassersteinBallUniformlyBounded}.
	Define for any $r > 0$ the function $\ell_r:X^N \to \b{R}$ by $\ell_r(x) = \min\{\ell(x),C(1+r)^N\}$ for $x \in X^N$.
	Let $\c{I} = 2^{\{1,\ldots,N\}}$ be the power set of $\{1,\ldots,N\}$ and consider for any $r > 0$ the following function
	\begin{align*}
		I_r:X^N \to \c{I}: x \mapsto \{i \in \{1,\ldots,N\} \mid c(x_0,x_i) > r\}\,.
	\end{align*}
	Let us partition $X^N$ into $X^N = \bigsqcup_{I \in \c{I}} I_r^{-1}(I)$.
	Note that each $I_r^{-1}(I)$ can be written as $I_r^{-1}(I) = X_{I,1} \times \cdots \times X_{I,N}$ with $X_{I,i} = \{x_i \in X \mid c(x_0,x_i) > r\}$ if $i \in I$ and $X_{I,i} = \{x_i \in X \mid c(x_0,x_i) \leq r\}$ if $i \notin I$.
	Now, if $x \in I_r^{-1}(I) \cap \{\ell > C(1+r)^N\}$, then we have 
	\begin{align*}
		&\ell(x) - \ell_r(x) \\
		&\leq C\prod_{k=1}^N (1+\delta(c(x_0,x_k))) - C(1+r)^N \\
		&\leq C\left[\prod_{k \in I} (1+\delta(c(x_0,x_k)))\prod_{k \notin I} (1+c(x_0,x_k)) - (1+r)^N\right] \\
		&\leq C\left[\prod_{k \in I} (1+\delta(c(x_0,x_k)))\prod_{k \notin I} (1+c(x_0,x_k)) - (1+r)^{\abs{I}}\prod_{k \notin I} (1+c(x_0,x_k))\right] \\
		&= C\left[\prod_{k \in I} (1+\delta(c(x_0,x_k))) - (1+r)^{\abs{I}}\right] \prod_{k \notin I} (1+c(x_0,x_k)) \\
		&\leq C\left[\prod_{k \in I} (1+\delta(c(x_0,x_k))) - 1\right] \prod_{k \notin I} (1+c(x_0,x_k)) \,.
	\end{align*}
	On the other hand, if $x \in \{\ell \leq C(1+r)^N\}$, then $\ell(x) - \ell_r(x) = 0$.
	Note that $I_r^{-1}(\emptyset) \subseteq \{\ell \leq C(1+r)^N\}$.
	Integrating w.r.t. $P \in \b{B}_\rho^c(\hat{P})$ yields
	\begin{align*}
		\intg{X^N}{\ell(x) - \ell_r(x)}{P^{\otimes N}(x)}
		\leq \sum_{I \in \c{I}} \intg{I_r^{-1}(I)}{\ell(x) - \ell_r(x)}{P^{\otimes N}(x)}\,.
	\end{align*}
	Let us estimate every summand as follows
	\begin{gather*}
		\intg{I_r^{-1}(I)}{\ell(x) - \ell_r(x)}{P^{\otimes N}(x)}
		= \intg{I_r^{-1}(I) \cap \{\ell > C(1+r)^N\}}{\ell(x) - \ell_r(x)}{P^{\otimes N}(x)} \\
		\leq C \intg{I_r^{-1}(I)}{\left[\prod_{k \in I} (1+\delta(c(x_0,x_k))) - 1\right] \prod_{k \notin I} (1+c(x_0,x_k))}{P^{\otimes N}(x)}	\,.	
	\end{gather*}
	Let us use the representation $I_r^{-1}(I) = X_{I,1} \times \cdots \times X_{I,N}$ to obtain (here $I^c$ is the complement of $I$)
	\begin{align*}
		&\intg{I_r^{-1}(I)}{\left[\prod_{k \in I} (1+\delta(c(x_0,x_k))) - 1\right] \prod_{k \notin I} (1+c(x_0,x_k))}{P^{\otimes N}(x)} \\
		&\leq \left(\intg{\prod_{k \in I} X_{I,k}}{\left[\prod_{k \in I} (1+\delta(c(x_0,x_k))) - 1\right]}{P^{\otimes \abs{I}}(x_I)} \right) \\
		&\qquad \cdot \left( \intg{\prod_{k \notin I} X_{I,k}}{\prod_{k \notin I} (1+c(x_0,x_k))}{P^{\otimes \abs{I^c}}(x_{I^c})}\right) \\
		&\leq \left(\intg{\prod_{k \in I} X_{I,k}}{\left[\prod_{k \in I} (1+\delta(c(x_0,x_k))) - 1\right]}{P^{\otimes \abs{I}}(x_I)} \right)(1+\hat{C})^{\abs{I^c}}\,.
	\end{align*}
	To estimate the first factor, we note that
	\begin{align*}
		\prod_{k \in I} (1+\delta(c(x_0,x_k))) - 1
		= \sum_{J \in 2^I \setminus \{\emptyset\}} \prod_{k \in J} \delta(c(x_0,x_k)) \,.
	\end{align*}
	and that for any $J \in 2^I \setminus \{\emptyset\}$ it holds that
	\begin{align*}
		\intg{\prod_{k \in I} X_{I,k}}{\prod_{k \in J} \delta(c(x_0,x_k))}{P^{\otimes \abs{I}}(x_I)} 
		= \prod_{k \in J} \intg{X_{I,k}}{\delta(c(x_0,x_k))}{P(x_k)}
	\end{align*}
	and, since $k \in J \subseteq I$, also
	\begin{align*}
		\intg{X_{I,k}}{\delta(c(x_0,x_k))}{P(x_k)}
		&= \intg{c(x_0,\hat{x}) > r}{\frac{\delta(c(x_0,\hat{x}))}{c(x_0,\hat{x})}c(x_0,\hat{x})}{P(\hat{x})} \\
		&\leq \left(\sup_{t > r} \frac{\delta(t)}{t}\right)\intg{}{c(x_0,\hat{x})}{P(\hat{x})}\\
		&\leq \hat{C}\hat{\delta}(r)\,,
	\end{align*}
	where we have defined $\hat{\delta}(r) =\sup_{t > r} \frac{\delta(t)}{t}$.
	In all we obtain uniformly for $P \in \b{B}_\rho^c(\hat{P})$
	\begin{align*}
		\abs{\b{E}_{P^{\otimes N}} \ell - \b{E}_{P^{\otimes N}} \ell_r}
		&\leq \intg{X^N}{(\ell(x) - \ell_r(x))}{P^{\otimes N}(x)} \\
		&\leq C \sum_{I \in \c{I}} \left((1+\hat{C})^{N-\abs{I}} \sum_{J \in 2^I \setminus\{\emptyset\}} \hat{C}^{\abs{J}}\hat{\delta}(r)^{\abs{J}}\right)\,,
	\end{align*}
	and the right hand side converges to $0$ as $r \to \infty$, since $\delta \in \c{D}$.
	Hence the maps $P \mapsto \b{E}_{P^{\otimes N}} \ell$ can be uniformly approximated by maps of the form $P \mapsto \b{E}_{P^{\otimes N}} \ell_r$ on $\b{B}_\rho^c(\hat{P})$.
	By \cite[15.5 Theorem]{guide2006infinite} and Theorem \ref{lem:powerMeasureWeaklyContinuous} the latter maps are weakly upper semi-continuous, which finishes the proof.
\end{proof}

\begin{proof}[Proof of Lemma \ref{lem:wassersteinBallProductInclusion}]
	Let $P \in \b{B}_\rho^c(\hat{P})$.
	Then $W_c(P,\hat{P}) \leq \rho$ and by Lemma \ref{lem:wassersteinDistanceProduct} it follows that $W_{c^N}(P^{\otimes N},\hat{P}^{\otimes N}) \leq N \rho$.
	Hence $P^{\otimes N} \in \b{B}_{N \rho}^{c^N}(\hat{P}^{\otimes N})$.
\end{proof}

\section{Proofs of Section \ref{sec:structuredDROConvexUpperBound}}

\begin{proof}[Proof of Lemma \ref{lem:convexHullClosedDRO}]
	The first equality is just a consequence of $\bar{P} \mapsto \b{E}_{x \sim \bar{P}} \ell(x)$ being linear.
	To show the second equality, we first note that $\leq$ is clearly true, since $\operatorname{conv} \c{W} \subseteq \convw \c{W}$.
	Therefore we actually only need to show $\geq$. 
	However, we will show equality directly by using Lemma \ref{lem:integralConvexHull} stated below.
	By Lemma \ref{lem:integrationFunctionalBorel} applied to the space $X^\infty$ and the Borel measurable $\ell\circ \operatorname{pr}_{1:N}:X^\infty \to \b{R}$, the function $F$ defined by $F:P \mapsto \intg{}{\ell \circ \operatorname{pr}_{1:N}}{P^{\otimes \infty}}$ whenever $\ell \circ \operatorname{pr}_{1:N}$ is $P^{\otimes \infty}$-integrable, is itself Borel measurable.
	Let $\bar{P} = \intg{}{P^{\otimes \infty}}{\nu(P)} \in \convw \c{W}$.
	Then 
	\begin{align*}
		\intg{}{\ell\circ \operatorname{pr}_{1:N}}{\bar{P}}
		= \intg{}{F(P)}{\nu(P)}\,,
	\end{align*}
	if $F$ is $\nu$-integrable.
	Then by Lemma \ref{lem:existenceFinitelySupportedMeasure} there exists a finitely supported measure $\hat{\nu} \in \c{P}(\b{B}_\rho^c(\hat{P}))$ such that
	\begin{align*}
		\intg{}{F(P)}{\nu(P)} 
		= \intg{}{F(P)}{\hat{\nu}(P)}
		= \intg{}{\ell\circ \operatorname{pr}_{1:N}}{\check{P}}\,,
	\end{align*}
	where $\check{P} = \intg{}{P^{\otimes \infty}}{\hat{\nu}(P)} \in \operatorname{conv} \c{W}$, i.e. we can always substitute a general mixing measure $\nu$ by a finitely supported one without changing the value of the expectation.
	This shows that equality in this lemma holds provided that $F$ is $\nu$-integrable, i.e. $\ell\circ \operatorname{pr}_{1:N}$ is $P^{\otimes \infty}$-integrable for $\nu$-a.a. $P \in \c{P}(X)$.
	Since $\operatorname{supp} \nu \subseteq \b{B}_\rho^c(\hat{P})$, we see that it is sufficient to require that $\ell\circ \operatorname{pr}_{1:N}$ is $P^{\otimes \infty}$-integrable for all $P \in \b{B}_\rho^c(\hat{P})$ or equivalently that $\ell$ is $P^{\otimes N}$-integrable for all $P \in \b{B}_\rho^c(\hat{P})$.
\end{proof}

\section{Proofs of Section \ref{sec:structuredDROTighterConvexUpperBound}}

\begin{proof}[Proof of Lemma \ref{lem:nonlinearDROUpperBoundOptimization}]
	First we claim that the infimum is attained.
	Indeed, for each $\bar{P} \in \b{B}_{N\rho}^{c^N}(\hat{P}^{\otimes N})$ the function $\alpha \mapsto \intg{}{F_\alpha(\ell)}{\bar{P}}$ is lower-semicontinuous (in fact, its just linear in $\alpha$).
	Since the supremum of lower semi-continuous functions is again lower semi-continuous \cite{willard2012general}, it follows that $\alpha \mapsto \operatorname{U}(F_\alpha(\ell))$ attains its minimum over the compact set $\Delta^{\c{S}_N}$.
	Now assume that the minimum is attained in some $\alpha^* \in \Delta^{\c{S}_N}$.
	%Now, if $\bar{P} \in \b{B}_{N\rho}^c(P^{\otimes N})$, then $\bar{P}_{\operatorname{sym}} := \frac{1}{N!}\sum_{\pi \in \c{S}_N} \pi\#\bar{P} \in \b{B}_{N\rho}^c(P^{\otimes N})$.
	Given any $\bar{P} \in \b{B}_{N\rho}^{c^N}(\hat{P}^{\otimes N})$, define the measure $\bar{P}_{\operatorname{sym}}$ by \eqref{eq:symmetrizationMeasure}.
	Because $\bar{P}_{\operatorname{sym}}$ is always symmetric and belongs to $\b{B}_{N\rho}^{c^N}(\hat{P}^{\otimes N})$ by Lemma \ref{lem:symmetricCoupling}, it holds that 
	\begin{align*}
		\operatorname{U}(F_{\alpha^*}(\ell)) 
		\geq \intg{}{F_{\alpha^*}(\ell)}{\bar{P}_{\operatorname{sym}}} 
		= \sum_{\pi \in \c{S}_N} \alpha_\pi^* \intg{}{\ell_\pi}{\bar{P}_{\operatorname{sym}}}
		= \sum_{\pi \in \c{S}_N} \alpha_\pi^* \intg{}{\ell}{\bar{P}_{\operatorname{sym}}}
		= \intg{}{\ell}{\bar{P}_{\operatorname{sym}}}\,.
	\end{align*}
	As such, $\sup_{\bar{P} \in \b{B}_{N\rho}^{c^N}(\hat{P}^{\otimes N})} \intg{}{\ell}{\bar{P}_{\operatorname{sym}}}$ is a lower bound for $\operatorname{U}(F_{\alpha^*}(\ell))$.
	But clearly for the particular $\alpha = (1/N!,\ldots,1/N!)$ it holds that (see \eqref{eq:symmetrizationPushforwardIdentity})
	\begin{align*}
		\operatorname{U}(F_{\alpha}(\ell)) = \sup_{\bar{P} \in \b{B}_{N\rho}^{c^N}(\hat{P}^{\otimes N})} \intg{}{\ell}{\bar{P}_{\operatorname{sym}}}	
	\end{align*}
	 i.e. thus the infimum is also attained in $\alpha$.
\end{proof}

\begin{proof}[Proof of Example \ref{ex:symmetrization}]
	For the first part we have $\ell_{\operatorname{sym}}(x_1,x_2) = -x_1^2 -2x_1 x_2 - x_2^2$ and thus
	\begin{align*}
		\operatorname{S}(\ell_{\operatorname{sym}})
		= \sup_{P \in \b{B}_\rho^c(\hat{P})} -2(\b{E}(P^2) + (\b{E}P)^2)
	\end{align*}
	is clear.
	Furthermore, by Theorem \ref{thm:droGeneralDuality} we see that
	\begin{align*}
		\sup_{P \in \b{B}_\rho^c(\hat{P})} -2\b{E}(P^2)
		= \inf_{\mu \geq 0} \rho \mu + \b{E}_{z \sim \hat{P}} \phi_\mu(z)\,,
	\end{align*}
	with $\phi_\mu(z) = \sup_{x \in \b{R}} (-2x^2 - \mu (x-z)^2)$ satisfying $\phi_\mu(z) = \phi_\mu(-z)$.
	Using $\hat{P} = \frac{1}{2}\delta_1 + \frac{1}{2}\delta_{-1}$ we also note that 
	\begin{align*}
		\b{E}_{z \sim \hat{P}} \phi_\mu(z)
		= \frac{1}{2} \phi_\mu(1) + \frac{1}{2} \phi_\mu(-1)
		= \phi_\mu(1)
		= -\frac{2\mu}{2+\mu}
	\end{align*}
	where the last equality follows by elementary computations.
	Then 
	\begin{align*}
		\inf_{\mu \geq 0} \rho \mu + \b{E}_{z \sim \hat{P}} \phi_\mu(z) 
		&= \inf_{\mu \geq 0} \rho \mu - \frac{2\mu}{2+\mu} \\
		&=
		\begin{cases}
			 -2(1-\sqrt{\rho})^2\,, & \rho \leq 1\,, \\
			 0\,, & \rho > 1\,,
		\end{cases} \\
		&= -2(1-\min(\sqrt{\rho},1))^2
	\end{align*}
	where the second equality follows again by elementary calculations.
	This establishes that $\operatorname{S}(\ell_{\operatorname{sym}}) \leq -2(1-\min(\sqrt{\rho},1))^2$.
	Now, the distribution $P = \frac{1}{2} \delta_{1-\min(\sqrt{\rho},1)} + \frac{1}{2} \delta_{-(1-\min(\sqrt{\rho},1))}$ is contained in the Wasserstein ball $\b{B}_\rho^c(\hat{P})$ and satisfies $-2\b{E}(P^2) = -2(1-\min(\sqrt{\rho},1))^2$ and $\b{E}P = 0$, which shows that actually $\operatorname{S}(\ell_{\operatorname{sym}}) = -2(1-\min(\sqrt{\rho},1))^2$.
	The calculation of $\operatorname{U}(\ell)$ and $\operatorname{U}(\ell_{\operatorname{sym}})$ follows by similar arguments and Theorem \ref{thm:droGeneralDuality}.
\end{proof}

\begin{proof}[Proof of Theorem \ref{thm:symmetrizationUpperBound}]
We have
\begin{align}
	\label{eq:symmetrizationPushforwardIdentity}
	\begin{gathered}
		\operatorname{U}(\ell_{\operatorname{sym}})
		= \sup_{\bar{P} \in \b{B}_{N\rho}^{c^N}(\hat{P}^{\otimes N})}\frac{1}{N!}\sum_{\pi \in \c{S}_N} \intg{}{\ell\circ \pi}{\bar{P}} 
		= \sup_{\bar{P} \in \b{B}_{N\rho}^{c^N}(\hat{P}^{\otimes N})}\frac{1}{N!}\sum_{\pi \in \c{S}_N} \intg{}{\ell}{\pi \# \bar{P}} \\
		= \sup_{\bar{P} \in \b{B}_{N\rho}^{c^N}(\hat{P}^{\otimes N})} \intg{}{\ell}{\bar{P}_{\operatorname{sym}}} 
		= \sup_{\bar{P} \in \b{B}_{N\rho}^{c^N}(\hat{P}^{\otimes N})_{\operatorname{sym}}} \intg{}{\ell}{\bar{P}}\,,
	\end{gathered}
\end{align}
where the first equality follows by the definition of the symmetrization of a function $\ell$ and the linearity of the integral, the second equality from the pushforward identity of the Lebesgue integral and the last equality from the linearity of the integral in the measure and the definition \eqref{eq:symmetrizationMeasure}.
Now, to see \eqref{eq:symmetricSet}, we first note that $\supseteq$ trivially holds, since for any $\bar{P} \in \b{B}_{N\rho}^{c^N}(\hat{P}^{\otimes N}) \cap \c{P}_{\operatorname{sym}}(X^N)$ we have $\bar{P}_{\operatorname{sym}} = \bar{P}$.
To see $\subseteq$, pick any $\bar{P} \in \b{B}_{N\rho}^{c^N}(\hat{P}^{\otimes N})$.
Let $\epsilon > 0$ be arbitrary and let $\Lambda \in \Gamma(\bar{P},\hat{P}^{\otimes N})$ be a coupling such that $\intg{}{c^N}{\Lambda} \leq N\rho + \epsilon$.
Then by Lemma \ref{lem:symmetricCoupling} there exists some $\Lambda_{\operatorname{sym}} \in \Gamma(\bar{P}_{\operatorname{sym}},\hat{P}^{\otimes N})$ with $\intg{}{c^N}{\Lambda_{\operatorname{sym}}} \leq N\rho + \epsilon$.
Since $\epsilon > 0$ was arbitrary, it follows that $\bar{P}_{\operatorname{sym}} \in \b{B}_{N\rho}^{c^N}(P^{\otimes N})$.
\end{proof}

\section{Proofs of Section \ref{sec:structuredDROSequenceOfConvexRelaxations}}

\begin{proof}[Proof of Theorem \ref{thm:droRelaxedSolvability}]
%	In view of Theorem \ref{thm:droGeneralSolvability} and the definition of $\operatorname{U}_M^{\operatorname{sym}}(\ell)$ it is sufficient to show the following two implications: $\ell \in \c{G}_{c^N}(X^N)$ (resp. $\in \c{G}_{c^N}^<(X^N)$) implies $(\ell \circ \operatorname{pr}_{1:N}^M)_{\operatorname{sym}} \in \c{G}_{c^M}(X^M)$ (resp. $\in \c{G}_{c^M}^<(X^M)$) for all $M \geq N$.
%	Indeed, if $c$ is proper and $\hat{P} \in \c{P}_c(X)$, then $c^M$ is proper and $\hat{P}^{\otimes M} \in \c{P}_{c^M}(X^M)$, after which one can apply Theorem \ref{thm:droGeneralSolvability} to $(\ell \circ \operatorname{pr}_{1:N}^M)_{\operatorname{sym}}$.
	We show the implication involving $\c{G}_{c^N}^<(X^N)$ only, since the implication involving $\c{G}_{c^N}(X^N)$ is shown analogously.
	Suppose that $\ell \in \c{G}_{c^N}^<(X^N)$, i.e. there exists some $\delta \in \c{D}$, $\hat{x} \in X^N$ and $C > 0$ such that $\ell(x) \leq C(1+\delta(c^N(x,\hat{x})))$ for all $x \in X^N$.
	Without the loss of generality we can assume that $\delta$ is monotone, as otherwise we can replace it by $\tilde{\delta}(r) = \sup_{t \in [0,r]} \delta(t)$, $\tilde{\delta} \in \c{D}$.
	Let $x_0 \in X$ be an arbitrary point and define $\bar{x} \in X^M$ by $\bar{x}_i = \hat{x}_i$ if $i \in \{1,\ldots,N\}$ and $\bar{x}_i = x_0$ for $i \in \{N+1,\ldots,M\}$.  
	Then
	\begin{align*}
		c^N(\operatorname{pr}_{1:N}^M(x),\hat{x})
		= \sum_{i=1}^N c(x_i,\hat{x}_i)
		\leq \sum_{i=1}^M c(x_i,\bar{x}_i)
		= c^M(x,\bar{x})\,,
	\end{align*}
	and thus
	\begin{align*}
		\ell \circ \operatorname{pr}_{1:N}^M(x) 
		\leq C(1+\delta(c^N(\operatorname{pr}_{1:N}^M(x),\hat{x})))
		\leq C(1+\delta(c^M(x,\bar{x})))
	\end{align*}
	for any $x \in X^M$, i.e. $\ell \circ \operatorname{pr}_{1:N}^M \in \c{G}_{c^M}^<(X^M)$.
	Moreover, since $c$ is proper and $\hat{P} \in \c{P}_c(X)$, the lifted cost $c^M$ is proper and $\hat{P}^{\otimes M} \in \c{P}_{c^M}(X^M)$.
	This implies, as shown in the proof of Theorem \ref{thm:droGeneralSolvability}, that $\bar{P} \mapsto \intg{}{\ell \circ \operatorname{pr}_{1:N}^M}{\bar{P}}$ is upper semi-continuous on the compact set $\b{B}_{M\rho}^{c^M}(\hat{P}^{\otimes M})$.
	Because $\b{B}_{M\rho}^{c^M}(\hat{P}^{\otimes M})_{\operatorname{sym}}$ is a closed subset, the former map attains its maximum therein, which concludes the proof.
\end{proof}

\begin{proof}[Proof of Example \ref{ex:liftedRelaxation}]
	We show the first equality, since the second inequality is obvious and the third is a straightforward one-dimensional optimization problem.
	In this proof we denote by $\1 \in \b{R}^M$ the all-ones vector.
	First we note that for the given $\ell$ we have for $x \in \b{R}^M$ that 
	\begin{align*}
		(\ell\circ \operatorname{pr}_{1:2}^M)_{\operatorname{sym}}(x)
		= \frac{(M-2)!}{M!}\sum_{\substack{l_1,l_2=1 \\ l_1 \neq l_2}}^M -x_{l_1}x_{l_2} 
		= \frac{1}{M(M-1)} x^\top (I-\1 \1^\top) x		
	\end{align*}
	and hence for any $\hat{\xi} \in \b{R}^M$
	\begin{align*}
		(\ell\circ \operatorname{pr}_{1:2}^M)_{\operatorname{sym}}(x) - \mu \norm{x-\hat{\xi}}^2
		= \mat{x \\ 1}^\top \mat{\frac{1}{M(M-1)}(I-\1\1^\top) - \mu I & \mu \hat{\xi} \\ \mu \hat{\xi}^\top & -\mu M}\mat{x \\ 1}\,.
	\end{align*}
	Now, noting that $\hat{P}^{\otimes M} = \frac{1}{2^M} \sum_{\hat{\xi} \in \{\pm 1\}^M} \delta_{\hat{\xi}}$, we obtain
	\begin{align}
		\label{eq:exampleLiftingSupremum}
		\begin{aligned}
		&\inf_{\mu \geq 0} M \rho \mu + \b{E}_{\hat{\xi} \sim \hat{P}^{\otimes M}} \phi_\mu(\hat{\xi}) \\
		&\quad= \inf_{\mu \geq 0} M \rho \mu + \frac{1}{2^M} \sum_{\hat{\xi} \in \{\pm 1\}^M} \sup_{x \in \b{R}^M} \mat{x \\ 1}^\top \mat{\frac{1}{M(M-1)}(I-\1\1^\top) - \mu I & \mu \hat{\xi} \\ \mu \hat{\xi}^\top & -\mu M}\mat{x \\ 1}
		\end{aligned}
	\end{align}
	and thus we need to evaluate the inner supremum.
	For this we use the following, elementary lemma, the proof of which we omit.
	\begin{lemma}
		Let $q: \b{R}^M \to \b{R}: x \mapsto \smat{x \\ 1}^\top \smat{Q & s \\ s^\top & r} \smat{x \\ 1}$ be a quadratic function.
		Then $q_* = \sup_{x \in \b{R}^M} q(x) < \infty$ iff $Q \leq 0$ and $\operatorname{ker} Q \subseteq \operatorname{ker} s^\top$. 
		In this case the supremum is given by $q_* = -s^\top y_* + r$ with $y_*$ being any solution to $Qy_* = s$.
		If $Q < 0$, then $q_* = -s^\top Q^{-1} s + r$
	\end{lemma}
	Thus the supremum in \eqref{eq:exampleLiftingSupremum} is finite only if $\frac{1}{M(M-1)}(I-\1\1^\top) - \mu I \leq 0$, which is equivalent to $\mu \geq \frac{1}{M(M-1)}$.
	If equality $\mu = \frac{1}{M(M-1)}$ holds, then the second condition in the lemma can only be satisfied if $\hat{\xi} \in \operatorname{ran}(\1 \1^\top)$, i.e. if $\hat{\xi} \in \{\1,-\1\}$ and thus the total sum is infinite as $M \geq 2$.
	Hence we only need to consider $\mu > \frac{1}{M(M-1)}$.
	Using the Sherman-Morrison identity we obtain 
	\begin{align*}
		\left(\tfrac{1}{M(M-1)}(I-\1\1^\top) - \mu I\right)^{-1}
 		= \left(\tfrac{1}{M(M-1)} - \mu\right)^{-1} \left(I - \tfrac{1}{(M-1)(1+M\mu)}\1 \1^\top\right)\,,
	\end{align*}
	and thus by the lemma the supremum is 
	\begin{align*}
		&\left(\mu-\tfrac{1}{M(M-1)}\right)^{-1} \mu^2 \hat{\xi}^\top \left(I - \tfrac{1}{(M-1)(1+M\mu)}\1 \1^\top\right)\hat{\xi} - \mu M \\
		&\quad=  \tfrac{M(M-1)}{M(M-1)\mu-1} \mu^2 \left(I - \tfrac{1}{(M-1)(1+M\mu)}\abs{\hat{\xi}^\top \1}^2\right) - \mu M\,.
	\end{align*}
	Thus, substituting the above quantity into \eqref{eq:exampleLiftingSupremum} yields
	\begin{align*}
		\inf_{\mu > \frac{1}{M(M-1)}} M(\rho-1) \mu +  \tfrac{M(M-1)}{M(M-1)\mu-1} \mu^2 \left(I - \tfrac{1}{(M-1)(1+M\mu)}\frac{1}{2^M} \sum_{\hat{\xi} \in \{\pm 1\}^M} \abs{\hat{\xi}^\top \1}^2\right)\,.
	\end{align*}
	Finally, we note that, there are exactly $\binom{M}{k}$ many $\hat{\xi} \in \{\pm 1\}^M$ such that $\operatorname{spin}(\hat{\xi}) = 1$, where $\operatorname{spin}(\hat{\xi}) = \abs{\{i \in \{1,\ldots,M\} \mid \hat{\xi}_i = 1\}}$, and that $\hat{\xi}^\top \1 = 2 \operatorname{spin}(\hat{\xi}) - M$.
	Thus the summation in the last expression becomes
	\begin{align*}
		\frac{1}{2^M} \sum_{\hat{\xi} \in \{\pm 1\}^M} \abs{\hat{\xi}^\top \1}^2
		&= \frac{1}{2^M} \sum_{k=0}^M \binom{M}{k} (2k-M)^2 \\
		&= \b{E}_{\zeta \sim \operatorname{Binom}_M(\tfrac{1}{2})} (2\zeta-M)^2 \\
		&= 4 \b{E}_{\zeta \sim \operatorname{Binom}_M(\tfrac{1}{2})} \zeta^2 - 4 M \b{E}_{\zeta \sim \operatorname{Binom}_M(\tfrac{1}{2})} \zeta + M^2 \\
		&= M\,.
	\end{align*}
	Substituting this into the above infimum and performing some algebraic simplifications yields the claimed equality.
\end{proof}

\section{Proofs of Section \ref{sec:structuredDRORelaxationGap}}

\begin{proof}[Proof of Lemma \ref{lem:nestedRelaxationSets}]
	(i) Since $c$ is proper, so is $c^M$ and thus $\b{B}_{M\rho}^{c^M}(\hat{P}^{\otimes M})$ is weakly compact.
	The set $\c{P}_{\operatorname{sym}}(X^M)$ is weakly closed, for it can be represented as an intersection of weakly closed sets, 
	\begin{align*}
		\c{P}_{\operatorname{sym}}(X^M) 
		= \bigcap_{\pi \in \c{S}_M} \left\{\bar{P} \in \c{P}(X^M) \mid \bar{P} = \pi \# \bar{P} \right\}\,.
	\end{align*}
	Therefore $\b{B}_{M\rho}^{c^M}(\hat{P}^{\otimes M})_{\operatorname{sym}} = \b{B}_{M\rho}^{c^M}(\hat{P}^{\otimes M}) \cap \c{P}_{\operatorname{sym}}(X^M)$ is, as an intersection of a weakly compact and weakly closed set, itself weakly compact.
	Then $\c{U}_M$, being the compact image of the latter set under the weakly continuous map $\bar{P} \mapsto \operatorname{pr}_{1:N}^M \# \bar{P}$ is weakly compact.\\
	(ii) It suffices to show that $\c{W} \subseteq \c{U}_M$ and that $\c{U}_M$ is convex.
	The latter follows from the convexity of $\b{B}_{M\rho}^{c^M}(\hat{P}^{\otimes M})_{\operatorname{sym}}$ and the former from the fact that $\operatorname{pr}_{1:N}^M \# P^{\otimes M} = P^{\otimes N}$ with $P^{\otimes M} \in \b{B}_{M\rho}^{c^M}(\hat{P}^{\otimes M})_{\operatorname{sym}}$ for all $P \in \b{B}_\rho^c(\hat{P})$ by Lemma \ref{lem:wassersteinBallProductInclusion}.\\
	(iii) Let $\tilde{P} \in \c{U}_{M+1}$.
	Then $\tilde{P} = \operatorname{pr}_{1:N}^{M+1} \# \bar{P}'$ for some $\bar{P}' \in \b{B}_{(M+1)\rho}^{c^{M+1}}(\hat{P}^{\otimes (M+1)})_{\operatorname{sym}}$.
	Since $\bar{P}', \hat{P}^{\otimes (M+1)} \in \c{P}_{\operatorname{sym}}(X^{M+1})$, by Lemma \ref{lem:symmetricCoupling} there exists a \\ $\hat{\Lambda} \in \hat{\Gamma}_{\operatorname{sym}}(\hat{P}^{\otimes (M+1)},\bar{P}')$ such that $\intg{}{c^{M+1}}{\hat{\Lambda}} = W_{c^{M+1}}(\hat{P}^{\otimes (M+1)},\bar{P}') \leq (M+1) \rho$. 
	Consider $\Lambda = \operatorname{pr}_{1:M,1:M}^{M+1,M+1} \# \hat{\Lambda} \in \Gamma(\hat{P}^{\otimes M},\bar{P})$ with $\bar{P} = \operatorname{pr}_{1:M}^{M+1}\# \bar{P}'$.
	We claim that $\intg{}{c^M}{\Lambda} \leq M\rho$.
	For this purpose we note that $\intg{}{c \circ \operatorname{pr}_{1,1}}{\hat{\Lambda}} \leq \rho$.
	Indeed, 
	\begin{align*}
		(M+1) \rho 
		\geq \intg{}{c^{M+1}}{\hat{\Lambda}}
		= \sum_{i=1}^{M+1} \intg{}{c \circ \operatorname{pr}_{i,i}}{\hat{\Lambda}}
		= (M+1) \intg{}{c \circ \operatorname{pr}_{1,1}}{\hat{\Lambda}}\,,
	\end{align*}
	where the first equality follows from $c^{M+1} = \sum_{i=1}^{M+1} c \circ \operatorname{pr}_{i,i}$ and the last equality follows from $\hat{\Lambda}$ being a symmetric coupling.
	Then
	\begin{gather*}
		\intg{}{c^M}{\Lambda}
		= \intg{}{c^M \circ \operatorname{pr}_{1:M,1:M}^{M+1,M+1}}{\hat{\Lambda}}
		= \sum_{i=1}^M \intg{}{c \circ \operatorname{pr}_{i,i}}{\hat{\Lambda}} 
		= M \intg{}{c \circ \operatorname{pr}_{1,1}}{\hat{\Lambda}} 
		\leq M \rho\,.
	\end{gather*}
	Thus we have shown that $W_{c^M}(\hat{P}^{\otimes M},\bar{P}) \leq M \rho$, or equivalently $\bar{P} \in \b{B}_{M\rho}^{c^M}(\hat{P}^{\otimes M})_{\operatorname{sym}}$.
	But then $\tilde{P} = \operatorname{pr}_{1:N}^{M+1} \# \bar{P}' = \operatorname{pr}_{1:N}^M \# \operatorname{pr}_{1:M}^{M+1} \# \bar{P}' = \operatorname{pr}_{1:N}^M \# \bar{P}$ since $\operatorname{pr}_{1:N}^{M+1} \circ \operatorname{pr}_{1:M}^M = \operatorname{pr}_{1:N}^{M+1}$ if $M \geq N$.
	Additionally, it is easily verified that $\bar{P}$ is symmetric, just because $\bar{P}'$ is.
	Hence, by the definition of $\c{U}_M$, we have $\tilde{P} \in \c{U}_M$.  
\end{proof}

\begin{proof}[Proof of Theorem \ref{thm:limitSetContinuous}]
	We have already noted that $\hat{\operatorname{U}}_\infty^{\operatorname{sym}}(\ell) \leq \operatorname{U}_\infty^{\operatorname{sym}}(\ell)$.
	Moreover, in the proof of Theorem \ref{thm:droGeneralSolvability} is it shown that if $\ell$ is upper semi-continuous and $\ell \in \c{G}_{c^N}^<(X^N)$, then $\bar{P} \mapsto \b{E}_{\bar{P}} \ell$ is weakly upper semi-continuous on $\b{B}_{N\rho}^{c^N}(\hat{P}^{\otimes N})$.
	For each $M \geq N$ there exists some $\bar{P}_M \in \c{U}_M$ such that
	\begin{align*}
		\b{E}_{x \sim \bar{P}_M} \ell(x) 
		\geq \sup_{\bar{P} \in \c{U}_M} \b{E}_{x \sim \bar{P}} \ell(x) - \frac{1}{M} 
		= \operatorname{U}_M^{\operatorname{sym}}(\ell) - \frac{1}{M}\,.
	\end{align*}
	Since $\c{U}_{M+1} \subseteq \c{U}_M$ for all $M \geq N$ by Lemma \ref{lem:nestedRelaxationSets}, it follows that $\bar{P}_M \in \c{U}_L$ for any $N \leq L \leq M$.
	Moreover, by the same Lemma each $\c{U}_M$ is weakly compact and thus there exists a subsequence $\bar{P}_{M_k}$ and $\bar{P} \in \bigcap_{M \geq N} \c{U}_M = \c{U}_\infty$ such that $\bar{P}_{M_k} \to \bar{P}$ weakly for $k \to \infty$.
	It follows that
	\begin{align*}
		\begin{gathered}
			\hat{\operatorname{U}}_\infty^{\operatorname{sym}}(\ell)
			\geq \b{E}_{x \sim \bar{P}} \ell(x) 
			\geq \limsup_{k \to \infty} \b{E}_{x \sim \bar{P}_{M_k}} \ell(x) 
			\geq \limsup_{k \to \infty} \left(\operatorname{U}_{M_k}^{\operatorname{sym}}(\ell) - \frac{1}{M_k} \right) \\
			\geq \limsup_{k \to \infty} \operatorname{U}_{M_k}^{\operatorname{sym}}(\ell) - \limsup_{k\to \infty} \frac{1}{M_k} 
			= \operatorname{U}_\infty^{\operatorname{sym}}(\ell)\,,
		\end{gathered}
	\end{align*}
	where the second inequality follows from the weak upper semi-continuity of $\bar{P} \mapsto \b{E}_{\bar{P}} \ell$ and the last inequality holds because $\limsup_k (a_k + b_k) \leq \limsup_k a_k + \limsup_k b_k$ for any two real sequences $(a_k)_{k=1}^\infty$ and $(b_k)_{k=1}^\infty$.
\end{proof}

\begin{proof}[Proof of Lemma \ref{lem:relaxationSetAlternativeDescription}]
First we show $\supseteq$.
If $\bar{P} \in \hat{\c{W}}_\infty$, then we claim that \\ $\operatorname{pr}_{1:N} \# \bar{P} \in \c{U}_M$ for all $M \geq N$.
Indeed, it holds that $\operatorname{pr}_{1:N} \# \bar{P} = \operatorname{pr}_{1:N} \# (\operatorname{pr}_{1:M} \# \bar{P})$ and $\operatorname{pr}_{1:M} \# \bar{P} \in \b{B}_{M \rho}^{c^M}(\hat{P}^{\otimes M})$ by the definition of $\hat{\c{W}}_\infty$.
Moreover, $\operatorname{pr}_{1:M} \# \bar{P}$ is symmetric, since $\bar{P}$ is symmetric.
Hence, $\operatorname{pr}_{1:N} \# \bar{P} \in \c{U}_M$ follows by the definition of $\c{U}_M$.
Now we will show that $\subseteq$ holds.
For this purpose let $\tilde{P} \in \c{U}_M$ for all $M \geq N$.
Then for each $M \geq N$ there exists some $\bar{P}_M \in \b{B}_{M \rho}^{c^M}(\hat{P}^{\otimes M})_{\operatorname{sym}}$ such that $\tilde{P} = \operatorname{pr}_{1:N} \bar{P}_M$.
We have to exhibit a single $\bar{P} \in \c{P}_{\operatorname{sym}}(X^\infty)$ such that $\tilde{P} = \operatorname{pr}_{1:N} \# \bar{P}$ and $\operatorname{pr}_{1:M} \# \bar{P} \in \b{B}_{M \rho}^{c^M}(\hat{P}^{\otimes M})$ for all $M \geq 1$.
For that purpose we notice first that if $\bar{P}_M \in \b{B}_{M \rho}^{c^M}(\hat{P}^{\otimes M})_{\operatorname{sym}}$, then for all $1 \leq L \leq M$ it holds that $\operatorname{pr}_{1:L}\#\bar{P}_M \in \b{B}_{L \rho}^{c^L}(\hat{P}^{\otimes L})_{\operatorname{sym}}$.
Indeed, symmetry is clear from the symmetry of $\bar{P}_M$.
To see why $\operatorname{pr}_{1:L}\#\bar{P}_M \in \b{B}_{L \rho}^{c^L}(\hat{P}^{\otimes L})$, let $\hat{\Lambda} \in \Gamma_{\operatorname{sym}}(\hat{P}^{\otimes M},\bar{P}_M)$ be a coupling (see Lemma \ref{lem:symmetricCoupling}) such that $\intg{}{c^M}{\hat{\Lambda}} = W_{c^M}(\hat{P}^{\otimes M},\bar{P}_M) \leq M\rho$.
Just as in the proof of Lemma \ref{lem:nestedRelaxationSets} one can show, by exploiting symmetry of $\hat{\Lambda}$, that the coupling $\Lambda = \operatorname{pr}_{1:L,1:L}^{M,M} \# \hat{\Lambda}$ satisfies $\Lambda \in \Gamma_{\operatorname{sym}}(\hat{P}^{\otimes L},\operatorname{pr}_{1:L}\#\bar{P}_M)$ and that $\intg{}{c^L}{\Lambda} \leq L\rho$, i.e. $\operatorname{pr}_{1:L}\#\bar{P}_M \in \b{B}_{L \rho}^{c^L}(\hat{P}^{\otimes L})$.
Thus we have established that any $\bar{P}_M \in \b{B}_{M \rho}^{c^M}(\hat{P}^{\otimes M})_{\operatorname{sym}}$ can be projected down to $\operatorname{pr}_{1:L}^M\#\bar{P}_M \in \b{B}_{L \rho}^{c^L}(\hat{P}^{\otimes L})_{\operatorname{sym}}$.
Let us denote $\bs{P}^M = \{\bar{P}' \in \b{B}_{M \rho}^{c^M}(\hat{P}^{\otimes M})_{\operatorname{sym}} \mid \operatorname{pr}_{1:N}\# \bar{P}' = \tilde{P}\}$ and define now the maps $\phi_{M,L}:\bs{P}^M \to \bs{P}^L:\bar{P}' \mapsto \operatorname{pr}_{1:L}^M\#\bar{P}'$ for $N \leq L \leq M$.
Then, since $\phi_{M,K} = \phi_{L,K} \circ \phi_{M,L}$ and $\phi_{M,M} = \operatorname{id}_{\bs{P}^M}$ for all $N \leq K \leq L \leq M$, the family $\{(\bs{P}^M)_{M \geq N},(\phi_{M,L})_{M \geq L \geq N}\}$ is an inverse family\footnote{Also sometimes called projective family} \cite{willard2012general}. 
Hence we can form the inverse limit
\begin{align*}
	\varprojlim_{M \geq N} \bs{P}^M
	= \{(\bar{P}_M')_{M \geq N} \in \bs{P} \mid \phi_{M,L}(\bar{P}_M') = \bar{P}_L'\} \subseteq \bs{P}\,,
\end{align*}
where $\bs{P} = \prod_{M \geq N} \bs{P}^M$ is nonempty, since $(\bar{P}_M)_{M \geq N} \in \bs{P}$.
Since each $\bs{P}^M$ is Hausdorff and weakly compact, by \cite[Theorem 29.11]{willard2012general} the set $\varprojlim_{M \geq N} \bs{P}^M$ is nonempty.
Let $(\bar{P}_M')_{M \geq N} \in \varprojlim_{M \geq N} \bs{P}^M$ be arbitrary.
By the very definition of $\varprojlim_{M\geq N} \bs{P}^M$ it follows that the family $(X^M,\bar{P}_M')_{M \geq N}$ is consistent in the sense of Kolmogorov.
By the Kolmogorov extension theorem \cite[15.27 Corollary]{guide2006infinite} there exists a single Borel probability measure $\bar{P} \in \c{P}(X^{\infty})$ such that $\operatorname{pr}_{1:M} \# \bar{P} = \bar{P}_M' \in \b{B}_{M \rho}^{c^M}(\hat{P}^{\otimes M})_{\operatorname{sym}}$ for all $M \geq N$.
Since each $\bar{P}_M'$ is symmetric, so is $\bar{P}$.
Moreover, for $N = M$ it follows from the definition of $\bs{P}^N$ that $\operatorname{pr}_{1:N} \# \bar{P} = \tilde{P}$.
Hence we have exhibited the desired distribution $\bar{P}$, which shows that $\tilde{P} \in \c{U}_\infty$.
\end{proof}

Before we proceed to the proof of Theorem \ref{thm:mainRelaxedSetRepresentation} will prove establish the following result that is of independent interest.

\begin{theorem}
	\label{thm:wasserseinProductIntegral}
	Let $c:X \times X \to [0,\infty)$ be a proper transportation cost, $\nu \in \c{P}(\c{P}(X))$ and $\hat{P} \in \c{P}_c(X)$.
	Then
	\begin{align}
		\label{eq:wasserseinProductIntegral}
		\sup_{M \in \b{N}} \frac{1}{M} W_{c^M}\left(\intg{\c{P}(X)}{P^{\otimes M}}{\nu(P)},\hat{P}^{\otimes M}\right) = \intg{}{W_c(P,\hat{P})}{\nu(P)}\,.
	\end{align} 
\end{theorem}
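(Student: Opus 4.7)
My plan is to prove the equality in three steps: (a) the universal upper bound $\tfrac{1}{M} W_{c^M}(\bar{P}_M, \hat{P}^{\otimes M}) \leq \int W_c(P, \hat{P})\, d\nu(P)$ for every $M$, where $\bar{P}_M := \int P^{\otimes M}\, d\nu(P)$; (b) monotonicity of $M \mapsto \tfrac{1}{M} W_{c^M}(\bar{P}_M, \hat{P}^{\otimes M})$, which promotes the supremum to the $M \to \infty$ limit; and (c) the matching asymptotic lower bound $\liminf_M \tfrac{1}{M} W_{c^M}(\bar{P}_M, \hat{P}^{\otimes M}) \geq \int W_c(P, \hat{P})\, d\nu(P)$. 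Step (a) yields the $\leq$ direction of \eqref{eq:wasserseinProductIntegral} immediately, while the bare inequality $\sup_M \geq \liminf_M$ together with (c) yields $\geq$; step (b) is recorded since it turns the supremum into a genuine limit.

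For (a), I would Borel-select for each $P \in \c{P}_c(X)$ an optimal coupling $\Lambda_P \in \Gamma_c(P, \hat{P})$ (standard via Kuratowski--Ryll-Nardzewski applied to the weakly compact-valued map $P \mapsto \Gamma_c(P, \hat{P})$), and define the mixture coupling $\Lambda_M := \int \Lambda_P^{\otimes M}\, d\nu(P)$ on $X^M \times X^M$. Theorem~\ref{thm:disintegrationMixture} checked on rectangles shows that $\Lambda_M \in \Gamma(\bar{P}_M, \hat{P}^{\otimes M})$, and a second application of Theorem~\ref{thm:disintegrationMixture} evaluates the total cost as $\int c^M\, d\Lambda_M = M\int W_c(P, \hat{P})\, d\nu(P)$.

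For (b), both $\bar{P}_{M'}$ and $\hat{P}^{\otimes M'}$ are symmetric on $X^{M'}$, so Lemma~\ref{lem:symmetricCoupling} yields a symmetric optimal coupling $\Lambda_{M'}^* \in \Gamma_{\operatorname{sym}}(\bar{P}_{M'}, \hat{P}^{\otimes M'})$. By symmetry, $\int c \circ \operatorname{pr}_{i,i}\, d\Lambda_{M'}^*$ is the same for every $i$ and hence equals $\tfrac{1}{M'} W_{c^{M'}}(\bar{P}_{M'}, \hat{P}^{\otimes M'})$. The coordinate projection $\operatorname{pr}_{1:M, 1:M}^{M', M'} \# \Lambda_{M'}^*$ is then a coupling of $\bar{P}_M$ and $\hat{P}^{\otimes M}$ with total cost $\tfrac{M}{M'} W_{c^{M'}}(\bar{P}_{M'}, \hat{P}^{\otimes M'})$, which is the claimed monotonicity.

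For (c), let $\Lambda_M^*$ be an optimal coupling of $\bar{P}_M$ and $\hat{P}^{\otimes M}$, and write $\mu_X^M(x) := \tfrac{1}{M} \sum_{i=1}^M \delta_{x_i}$, $\mu_Y^M(y) := \tfrac{1}{M} \sum_{i=1}^M \delta_{y_i}$. The uniform discrete coupling $\tfrac{1}{M} \sum_i \delta_{(x_i, y_i)} \in \Gamma(\mu_X^M, \mu_Y^M)$ has cost $\tfrac{1}{M} c^M(x, y)$, so $W_c(\mu_X^M(x), \mu_Y^M(y)) \leq \tfrac{1}{M} c^M(x, y)$ pointwise. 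Pushing $\Lambda_M^*$ forward along $(x, y) \mapsto (\mu_X^M(x), \mu_Y^M(y))$ gives $\tilde{\Lambda}_M^* \in \c{P}(\c{P}(X) \times \c{P}(X))$ with $\int W_c\, d\tilde{\Lambda}_M^* \leq \tfrac{1}{M} W_{c^M}(\bar{P}_M, \hat{P}^{\otimes M})$. The marginals of $\tilde{\Lambda}_M^*$ are $(\mu_X^M)_* \bar{P}_M$ and $(\mu_Y^M)_* \hat{P}^{\otimes M}$, which converge weakly to $\nu$ and $\delta_{\hat{P}}$ respectively by Varadarajan's empirical-measures theorem (applied conditionally on the de Finetti mixing component for the first marginal). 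Marginal tightness gives joint tightness, so a subsequential weak limit $\tilde{\Lambda}^*$ exists with marginals $\nu$ and $\delta_{\hat{P}}$; since the second marginal is Dirac, $\tilde{\Lambda}^* = \nu \otimes \delta_{\hat{P}}$. Joint weak lower semi-continuity of the non-negative functional $W_c$ together with the Portmanteau theorem yields $\liminf_M \int W_c\, d\tilde{\Lambda}_M^* \geq \int W_c\, d\tilde{\Lambda}^* = \int W_c(P, \hat{P})\, d\nu(P)$, which is (c). I expect the most delicate point to be this limit passage: identifying the weak limit of $\tilde{\Lambda}_M^*$ via the interplay of Varadarajan's theorem with de Finetti's representation, and carefully invoking Portmanteau for the non-negative, possibly unbounded, lsc functional $W_c$.
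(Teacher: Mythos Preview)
Your proof is correct and takes a genuinely different route from the paper's argument for the lower bound. For the upper bound (a), your measurable-selection construction and the paper's appeal to the convexity of $W_c$ in one argument (Villani, Theorem~4.8) are essentially equivalent. The monotonicity (b) is not stated in the paper as such, but the same symmetric-coupling projection device appears in Lemma~\ref{lem:nestedRelaxationSets}.

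The real divergence is in (c). The paper builds, via a projective-limit construction and the Kolmogorov extension theorem, a single symmetric coupling $\Lambda \in \Gamma_{\operatorname{sym}}(\bar{P},\hat{P}^{\otimes \infty})$ on $X^\infty \times X^\infty$ whose finite-dimensional projections all have normalized cost at most $\rho$; it then disintegrates $\Lambda$ against its first marginal, shows the conditional kernels inherit the permutation symmetry, and uses the Hewitt--Savage description of the extreme points of $\c{P}_{\operatorname{sym}}(X^\infty)$ to conclude that the second marginal of the $P$-fiber is $\hat{P}^{\otimes \infty}$ for $\nu$-almost every $P$. Projecting to one coordinate then gives, for each such $P$, a coupling of $P$ and $\hat{P}$ with cost at most $\rho$.

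Your approach instead stays at finite $M$: you push the optimal coupling $\Lambda_M^*$ forward under the empirical-measure map to $\c{P}(X)\times\c{P}(X)$, identify the marginal limits via Varadarajan's theorem (applied fiberwise through the de~Finetti mixture for $\bar{P}_M$), and pass to the limit using Portmanteau for the nonnegative lsc functional $W_c$. This is considerably lighter machinery: it sidesteps the projective limit, the Kolmogorov extension, the disintegration analysis, and the extreme-point argument entirely. What the paper's construction buys in exchange is an explicit limiting coupling on $X^\infty\times X^\infty$, which dovetails with its description of $\hat{\c{W}}_\infty$ in Lemma~\ref{lem:relaxationSetAlternativeDescription} and Theorem~\ref{thm:mainRelaxedSetRepresentation}; your method delivers the scalar identity \eqref{eq:wasserseinProductIntegral} more quickly but does not produce that object. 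One minor remark on your (c): the subsequence logic should be spelled out---first pass to a subsequence realizing $\liminf_M \tfrac{1}{M} W_{c^M}(\bar{P}_M,\hat{P}^{\otimes M})$, then extract a further weakly convergent subsequence of $\tilde{\Lambda}_M^*$---but this is routine.
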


\begin{proof}
	First we show ``$\leq$''.
	Let $M \in \b{N}$ be fixed. 
	Then, by \cite[Theorem 4.8]{villani2009optimal}, it holds that
	\begin{align*}
		W_{c^M}\left(\intg{\c{P}(X)}{P^{\otimes M}}{\nu(P)},\hat{P}^{\otimes M}\right)
		&\leq \intg{\c{P}(X)}{W_{c^M}\left(P^{\otimes M},\hat{P}^{\otimes M}\right)}{\nu(P)} \\
		&= M \intg{\c{P}(X)}{W_c(P,\hat{P})}{\nu(P)}
	\end{align*}
	where in the equality we have used the fact that $W_{c^M}\left(\tilde{P}^{\otimes M},P^{\otimes M}\right) = M W_c(\tilde{P},P)$, established in Lemma \ref{lem:wassersteinDistanceProduct}.
	Dividing by $M$ and taking $\sup_{M \in \b{N}}$ on both sides yields ``$\leq$''.
	The argument for ``$\geq$'' is a bit more involved.
	Define $\bar{P}_M := \intg{\c{P}(X)}{P^{\otimes M}}{\nu(P)}$ for $M \in \b{N}$.
	It is sufficient to show for each $\rho > 0$ that 
	\begin{align}
		\label{eq:wassersteinLimitImplication}
		\left[\forall M \in \b{N}\;:\; W_{c^M}\left(\bar{P}_M ,\hat{P}^{\otimes M}\right) \leq M \rho \right]\Longrightarrow \intg{\c{P}(X)}{W_c(P,\hat{P})}{\nu(P)} \leq \rho\,.
	\end{align}
	Indeed, if \eqref{eq:wassersteinLimitImplication} is true and $<$ holds in \eqref{eq:wasserseinProductIntegral}, then by picking $\rho$ that is larger than the left hand side in \eqref{eq:wasserseinProductIntegral} and smaller than the right hand side in \eqref{eq:wasserseinProductIntegral} and applying \eqref{eq:wassersteinLimitImplication} we get a contradiction.
	In the following we will thus show \eqref{eq:wassersteinLimitImplication}.
	First fix $\rho > 0$ and assume that the left side of \eqref{eq:wassersteinLimitImplication} holds.
	We will show that the right hand side of \eqref{eq:wassersteinLimitImplication} holds in steps.\\
%	A brief outline of the proof is as follows:
%	First, departing from the left hand side of \eqref{eq:wassersteinLimitImplication}, the goal is to construct a single symmetric coupling $\Lambda$ between $\intg{}{P^{\otimes \infty}}{\nu(P)}$ and $\hat{P}^{\otimes \infty}$, whose first $X^M$-projected marginal satisfies the inclusion being in the linearly scaled ball $\b{B}_{M\rho}^{c^M}(\hat{P}^{\otimes M})$.
%	Then we disintegrate $\Lambda$ w.r.t. its first marginal into a kernel and show that for each $P \in \c{P}(X)$ the $P^{\otimes \infty}$-integrated kernel posses an interesting property: Its first marginal is $P^{\otimes \infty}$ (by definition) and its second averaged marginal $\hat{P}^\infty$.
%	Using the additional fact that this kernel is symmetric and that the extreme points of the set of symmetric distributions, we then show, using  
%	Then we construct for each $P$
%	
	\emph{Step 1. (Construction of a single coupling $\Lambda$)}.
	Define for each $M \in \b{N}$ the following set
	\begin{align*}
		\bs{\Gamma}^M = \left\{\hat{\Lambda} \in \Gamma_{\operatorname{sym}}(\bar{P}_M,\hat{P}^{\otimes M})\; \middle|\; \intg{X^M \times X^M}{c^M}{\hat{\Lambda}} \leq M\rho \right\} \subseteq \c{P}(X^M \times X^M)\,.
	\end{align*}
	Then, by Lemma \ref{lem:symmetricCoupling}, for each $M \in \b{N}$ it holds that $\bs{\Gamma}^M \neq \emptyset$.
	Moreover, the set $\bs{\Gamma}^M$ is weakly compact, since it can be written as
	\begin{align*}
		\bs{\Gamma}^M
		= \Gamma(\bar{P}_M,P^{\otimes M}) \cap \c{P}_{\operatorname{sym}}(X^M \times X^M) \cap \left\{\hat{\Lambda} \in \c{P}(X^M \times X^M) \mid \intg{}{c^M}{\hat{\Lambda}} \leq M\rho \right\}
	\end{align*}
	with $\c{P}_{\operatorname{sym}}(X^M \times X^M)$ being the set of measures $\Lambda \in \c{P}(X^M \times X^M)$ with $(\pi \oplus \pi)\# \Lambda = \Lambda$ for all $\pi \in \c{S}_M$.
	By \cite[Lemma 4.4, Proof of Theorem 4.1]{villani2009optimal} the set $\Gamma(\bar{P}_M,P^{\otimes M})$ is weakly compact and the latter two sets are weakly closed\footnote{The closedness of the latter comes from the fact that if $\hat{\Lambda}_n \to \hat{\Lambda}$ weakly, then $\intg{}{c^M}{\hat{\Lambda}} \leq \liminf_{n \to \infty} \intg{}{c^M}{\hat{\Lambda}_n}$, which can be shown by Fatou's lemma}, from which weak compactness of $\bs{\Gamma}^M$ follows.  
	Define the (nonempty) product set
	\begin{align*}
		\bs{\Gamma} = {\prod}_{M=1}^\infty \bs{\Gamma}^M\,.
	\end{align*}
	As in the proof of Lemma \ref{lem:nestedRelaxationSets}, for each $M,L \in \b{N}$ with $L \leq M$ define the maps 
	\begin{align*}
		\phi_{M,L}:\bs{\Gamma}^M \to \bs{\Gamma}^L:\hat{\Lambda} \mapsto (\operatorname{pr}_{1:L,1:L}^{M,M})\#\hat{\Lambda}\,.
	\end{align*}
	The fact that $\phi_{M,L}$ are well-defined can be verified by the same computation as in Lemma \ref{lem:nestedRelaxationSets} and by exploiting the symmetry of each coupling in $\bs{\Gamma}^M$.
	Moreover, the family $\{(\bs{\Gamma}^M)_{M \in \b{N}}, (\phi_{M,L})_{M \geq L}\}$ is an inverse family, i.e., it holds that $\phi_{M,N} = \phi_{L,N}\circ \phi_{M,L}$ for all $N \leq L \leq M$ as well as $\phi_{M,M} = \operatorname{id}_{\bs{\Gamma}^M}$.
	Hence we can form the inverse limit
	\begin{align*}
		\varprojlim_{M}\bs{\Gamma}^M := \{(\hat{\Lambda}_M)_{M\in \b{N}} \in \bs{\Gamma} \mid \phi_{M,L}(\hat{\Lambda}_M) = \hat{\Lambda}_L \text{\ for all\ }L \leq M\,,\; M,L \in \b{N}\}\,.
	\end{align*}
	Since each $\bs{\Gamma}^M$ is compact and Hausdorff, the set $\varprojlim_{M}\bs{\Gamma}^M$ is nonempty by \cite[Theorem 29.11]{willard2012general} and we can pick some $(\Lambda_M)_{M\in \b{N}} \in \varprojlim_{M}\bs{\Gamma}^M$.
	By the very definition of $\varprojlim_{M}\bs{\Gamma}^M$ it follows that\footnote{Here we identify each $\Lambda_M$ with a probability measure on $(X\times X)^M \cong X^M \times X^M$, where the isomorphism $\cong$ is provided by the map $((x_i^1,x_i^2))_{i=1}^M \mapsto ((x_i^1)_{i=1}^M,(x_i^2)_{i=1}^M)$} the family $((X\times X)^M,\Lambda_M)_{M\in \b{N}}$ is consistent in the sense of Kolmogorov.
	By the Kolmogorov extension theorem \cite[15.27 Corollary]{guide2006infinite} there exists a single Borel probability measure $\Lambda \in \c{P}(X^\infty \times X^\infty)$ such that $\phi_M(\Lambda) := \operatorname{pr}_{1:M,1:M}\#\Lambda = \Lambda_M$ for each $M \in \b{N}$.
	Then $\Lambda$ is symmetric, since each $\Lambda_M$ is. 
	We claim that $\Lambda \in \Gamma(\bar{P},\hat{P}^{\otimes \infty})$ with $\bar{P} = \intg{}{P^{\otimes \infty}}{\nu(P)}$.
	Indeed, for each $M \in \b{N}$ it holds that $\operatorname{pr}_{1:M,\emptyset}\#\Lambda = \bar{P}_M$ and $\operatorname{pr}_{\emptyset,1:M}\#\Lambda = P^{\otimes M}$.
	Then, evaluation of the marginals $\operatorname{pr}_{1:\infty,\emptyset}\#\Lambda$ and $\operatorname{pr}_{\emptyset,1:\infty}\#\Lambda$ on the finite cylinder sets of the Borel $\sigma$-algebra on $X^\infty$ yields the claim.
	In summary we have obtained a symmetric coupling $\Lambda \in \Gamma_{\operatorname{sym}}(\bar{P},\hat{P}^{\otimes \infty})$ on the countable Cartesian product $X^\infty \times X^\infty$ such that
	\begin{align}
		\label{eq:singleCouplingBallCondition}
		\intg{X^\infty \times X^\infty}{c^M \circ \operatorname{pr}_{1:M,1:M}}{\Lambda} \leq M\rho \text{\ \ for all\ \ } M \in \b{N}\,.
	\end{align} 
	\emph{Step 2. (Construction of a family of couplings between $\hat{P}$ and $P$)}.
	Now we will construct couplings between $\hat{P}$ and $P$ for $P \in \supp \nu \subseteq \c{P}(X)$.
	For this purpose let $\Lambda(\cdot \mid \cdot)$ denote the disintegration kernel of $\Lambda$ w.r.t. the first marginal $\bar{P}$, i.e. $\Lambda(\cdot\mid \eta) \in \c{P}(X^\infty)$ for $\bar{P}$-a.a. $\eta \in X^\infty$, the map $X^\infty \to [0,1]:\eta \mapsto \Lambda(A \mid \eta)$ is Borel-measurable for each Borel set $A \subseteq X^\infty$ and
	\begin{align}
		\label{eq:disintegrationKernel}
		\Lambda(A\times B) = \intg{A}{\Lambda(B\mid \eta)}{\bar{P}(\eta)} \text{\ \ for any Borel sets $A,B \subseteq X^\infty$}\,.
	\end{align}
	Such a disintegration kernel exists according to \cite[Theorem 8.36 and 8.37]{klenke2013probability}, since $X^\infty$ is Polish.\\
	\emph{Step 2.1 (Symmetry of disintegration kernel)}.
	For this disintegration kernel the following identity holds:
	\begin{align}
		\label{eq:kernelPermutationInvariant}
		\pi\#\Lambda(\cdot \mid \eta) = \Lambda(\cdot \mid \pi(\eta)) \text{\ \ for all $\pi \in \c{S}_\infty$ and $\bar{P}$-a.a.\ } \eta \in X^\infty\,.
	\end{align}
	Indeed, let $A,B \subseteq X^\infty$ be Borel.
	Then by symmetry of $\Lambda$ and $\bar{P}$ it follows
	\begin{align*}
		&\intg{A}{(\pi \#\Lambda)(B \mid \eta)}{\bar{P}(\eta)} \\
		&\qquad= \intg{A}{\Lambda(\pi^{-1}(B)\mid \eta)}{\bar{P}(\eta)} 
		= \Lambda(A \times \pi^{-1}(B)) 
		= \Lambda(\pi(A) \times B) \\
		&\qquad= \intg{\pi(A)}{\Lambda(B\mid \eta)}{\bar{P}(\eta)}  
		= \intg{X^\infty}{\bs{1}_{A}(\pi^{-1}(\eta))\Lambda(B\mid \pi(\pi^{-1}(\eta)))}{\bar{P}(\eta)} \\
		&\qquad= \intg{A}{\Lambda(B\mid \pi(\eta))}{\bar{P}(\eta)}\,.
	\end{align*} 
	Hence \eqref{eq:kernelPermutationInvariant} is established. \\
	\emph{Step 2.2 ($P^{\infty}$-integral of disintegration kernel)}.
	Now, for each $P \in \operatorname{supp} \nu \subseteq \c{P}(X)$ define a Borel probability measure on $X^\infty \times X^\infty$ by
	\begin{align}
		\label{eq:integralDisintegrationKernelDefinition}
		\Lambda(A \times B\mid P^{\otimes \infty}) := \intg{A}{\Lambda(B\mid \eta)}{P^{\otimes \infty}(\eta)} \text{\ \ for Borel\ \ }A, B\subseteq X^\infty\,.
	\end{align}
	This is well-defined, since each $\bar{P}$-nullset is also a $P^{\otimes \infty}$-nullset whenever $P \in \operatorname{supp} \nu$.
	Now, it then holds by definition of $\Lambda(\cdot \mid P^{\otimes \infty})$ that 
	\begin{align*}
		\Lambda(A \times X^\infty \mid P^{\otimes \infty})
		= P^{\otimes \infty}(A) \text{\ \ for all Borel\ \ } A \subseteq X^\infty\,,
	\end{align*}
	and hence $\Lambda(\cdot \mid P^{\otimes \infty}) \in \Gamma(P^{\otimes \infty},\cdot)$, i.e. the first marginal of $\Lambda(\cdot \mid P^{\otimes \infty})$ is $P^{\otimes \infty}$. \\
	\emph{Step 2.3 (Second marginal of $\Lambda(\cdot \mid P^{\otimes \infty})$)}.
	Now we will show that the second marginal of $\Lambda(\cdot \mid P^{\otimes \infty})$ is actually $\hat{P}^{\otimes \infty}$ for $\nu$-a.a. $P \in \c{P}(X)$, i.e. that $\Lambda(X^\infty \times B\mid P^{\otimes \infty}) = \hat{P}^{\otimes \infty}(B)$ for all $\nu$-a.a. $P \in \c{P}(X)$ and Borel sets $B \subseteq X^\infty$.\\
	\emph{Step 2.3.1 (Averaged second marginal of $\Lambda(\cdot \mid P^{\otimes \infty})$)}
	First we note that
	\begin{align*}
		\intg{}{\Lambda(X^\infty \times B\mid P^{\otimes \infty})}{\nu(P)} 
		&= \intg{}{\intg{X^{\infty}}{\Lambda(B\mid \eta)}{P^{\otimes \infty}(\eta)}}{\nu(P)} \\
		&= \intg{X^{\infty}}{\Lambda(B\mid \eta)}{\bar{P}(\eta)} \\
		&= \Lambda(X^\infty \times B) \\
		&= \hat{P}^{\otimes \infty}(B)
	\end{align*}
	for all Borel sets $B \subseteq X^\infty$, where the second equality is due to Theorem \ref{thm:disintegrationMixture} and the definition of $\bar{P}$ and the third equality due to \eqref{eq:disintegrationKernel}. 
	This shows that the \emph{averaged} second marginal of $\Lambda(\cdot \mid P^{\otimes \infty})$ is $\hat{P}^{\otimes \infty}$.\\
	\emph{Step 2.3.2 (Symmetry of second marginal of $\Lambda(\cdot \mid P^{\otimes \infty})$)}.
	Let us first show that the measure $B \mapsto \Lambda(X^\infty \times B\mid P^{\otimes \infty}) =: \hat{\Lambda}(B\mid P^{\otimes \infty})$ (i.e. the second marginal of $\Lambda(\cdot \mid P^{\otimes \infty})$) on $X^\infty$ is symmetric for $\nu$-a.a. $P \in \c{P}(X)$. 
	To this end let $\pi \in \c{S}_\infty$ and $B \subseteq X^\infty$ Borel.
	Then it holds for $\nu$-a.a. $P$ that
	\begin{align*}
		\hat{\Lambda}(\pi^{-1}(B)\mid P^{\otimes \infty})
		&= \intg{X^\infty}{\Lambda(\pi^{-1}(B)\mid \eta)}{P^{\otimes \infty}(\eta)} \\
		&= \intg{X^\infty}{\Lambda(B\mid \pi(\eta))}{P^{\otimes \infty}(\eta)} \\
		&= \intg{X^\infty}{\Lambda(B\mid \eta)}{P^{\otimes \infty}(\eta)} \\
		&= \hat{\Lambda}(B\mid P^{\otimes \infty})\,,
	\end{align*}
	since $\Lambda(\pi^{-1}(B)\mid \eta) = \Lambda(B\mid \pi(\eta))$ holds for $\bar{P}$-a.a. $\eta \in X^\infty$ by \eqref{eq:kernelPermutationInvariant} and hence also for $P^{\otimes \infty}$-a.a. $\eta \in X^\infty$ for $\nu$-a.a. $P \in \c{P}(X)$.
	In summary we now know that $\hat{\Lambda}(\cdot \mid P^{\otimes \infty})$ is symmetric for $\nu$-a.a. $P$ and that 
	\begin{align}
		\label{eq:symmetricMeasureConvexCombination}
		\intg{}{\hat{\Lambda}(\cdot\mid P^{\otimes \infty})}{\nu(P)} = \hat{P}^{\otimes \infty}\,.
	\end{align}
	By \cite[Theorem 5.3]{hewitt1955symmetric} the extreme points of $\c{P}_{\operatorname{sym}}(X^\infty)$ are precisely the measures of the form $\tilde{P}^{\otimes \infty}$ with $\tilde{P} \in \c{P}(X)$.
	Hence from \eqref{eq:symmetricMeasureConvexCombination} and Lemma \ref{lem:integralExtremePoints} (taking there $Y = \c{P}(X)$, $Z = \c{P}(X^\infty)$, $\s{Z} = \c{P}_{\operatorname{sym}}(X^\infty)$, $F(P) = \hat{\Lambda}(\cdot\mid P^{\otimes \infty})$) we conclude that $\hat{\Lambda}(\cdot \mid P^{\otimes \infty}) = \hat{P}^{\otimes \infty}$ and thus $\Lambda(\cdot \mid P^{\otimes \infty}) \in \Gamma_{\operatorname{sym}}(P^{\otimes \infty},\hat{P}^{\otimes \infty})$ for $\nu$-a.a. $P$, i.e. the second marginal of $\Lambda(\cdot \mid P^{\otimes \infty})$ is $\hat{P}^{\otimes \infty}$.\\
	\emph{Step 2.4 (Construction of a family of couplings using $\Lambda(\cdot \mid P^{\otimes \infty})$)}.
	Finally define the family of couplings
	\begin{align}
		\label{eq:couplingFamily}
		\Lambda(\cdot \mid P) := \operatorname{pr}_{1,1}\# \Lambda(\cdot \mid P^{\otimes \infty}) \in \Gamma(P,\hat{P}) \text{\ \ for $\nu$-a.a. $P$}\,.
	\end{align}
	\emph{Step 3. (Conclusion of right hand side of of \eqref{eq:wassersteinLimitImplication})}.
	By unfolding all definitions we obtain finally
	\begin{align*}
		\intg{\c{P}(X)}{W_c(P,\hat{P})}{\nu(P)}
		&\leq \intg{\c{P}(X)}{\intg{X \times X}{c}{\Lambda(\cdot \mid P)}}{\nu(P)} \\
		&= \intg{\c{P}(X)}{\intg{X^\infty \times X^\infty}{c \circ \operatorname{pr}_{1,1}}{\Lambda(\cdot \mid P^{\otimes \infty})}}{\nu(P)} \\
		&= \intg{\c{P}(X)}{\intg{X^\infty}{\intg{X^\infty}{c \circ \operatorname{pr}_{1,1}(\eta,\xi)}{\Lambda(\xi \mid \eta)}}{P^{\otimes \infty}(\eta)}}{\nu(P)} \\
		&= \intg{X^\infty}{\intg{X^\infty}{c \circ \operatorname{pr}_{1,1}(\eta,\xi)}{\Lambda(\xi \mid \eta)}}{\bar{P}(\eta)} \\
		&= \intg{X^\infty \times X^\infty}{c \circ \operatorname{pr}_{1,1}}{\Lambda} \\
		&= \intg{X^\infty \times X^\infty}{c^1}{\Lambda}  \\
		&\leq 1\cdot \rho = \rho\,,
	\end{align*}
	which concludes the implication in \eqref{eq:wassersteinLimitImplication} and finishes the proof.
	In the above sequence of (in)equalities (i) the first inequality holds due to $\Lambda(\cdot \mid P)$ being a specific coupling as per \eqref{eq:couplingFamily}, (ii) the first equality holds due to \eqref{eq:couplingFamily}, (iii) the second equality due to \eqref{eq:integralDisintegrationKernelDefinition} and Theorem \ref{thm:disintegrationMixture}, (iv) the third equality holds due to the definition of $\bar{P}$ and Theorem \ref{thm:disintegrationMixture}, (v) the fourth equality holds due to \eqref{eq:disintegrationKernel}, (vi) the last inequality due \eqref{eq:singleCouplingBallCondition}.
\end{proof}

\begin{remark}
	During the preparation of this manuscript the authors became \\ aware of the works \cite{zaev2016ergodic,kolesnikov2017optimal}, where a general result relating the Wasserstein distance of invariant probability measures w.r.t. their ergodic decompositions is established.
	Instead of stating it here in full generality, we restrict ourselves to the special case where the invariance is given w.r.t. a group action of an amenable group.
	We first need the following additional definitions. 
	Let $G$ be an amenable group acting continuously on a Polish space $Z$, i.e. a locally compact group $G$ with a so-called F\o lner sequence $(F_n)_{n \in \b{N}}$ of Borel measurable subsets of $G$ such that
	\begin{align*}
		\lim_{n \to \infty} \frac{\lambda_G((g\cdot F_n) \triangle F_n)}{\lambda_G(F_n)} = 0\,,
	\end{align*}
	where $\lambda_G$ is the left Haar measure of $G$ \cite{rudin2017fourier}.
	Then a probability measure $\bar{P} \in \c{P}(Z)$ is said to be $G$-invariant if $g \# \bar{P} = \bar{P}$ for all $g \in G$\footnote{Here we identify an element $g \in G$ with its action mapping $Z \to Z: z \mapsto g \cdot z$} and $G$-ergodic if for any $G$-invariant $A \in \c{B}(Z)$\footnote{i.e. $g \cdot A = A$ for all $g \in G$} it follows that $\bar{P}(A) \in \{0,1\}$.
	The sets of $G$-invariant and $G$-ergodic Borel probability measures on $Z$ are denoted by $\c{P}_G(Z)$ and $\partial \c{P}_G(Z)$, respectively. 
	It is known that $\c{P}_G(Z)$ is a Dynkin simplex, i.e. that for each $\bar{P} \in \c{P}_G(Z)$ there is a unique decomposition measure $\mu \in \c{P}(\partial \c{P}_G(Z))$ with $\b{E}\mu = \intg{}{\tilde{P}}{\mu(\tilde{P})} = \bar{P}$.
	Moreover, if $\c{A}_G^Z \subseteq \c{B}(Z)$ denotes the $\sigma$-algebra of $G$-invariant sets, then there exists a Markov kernel $\kappa_G^Z:Z \times \c{B}(Z) \to [0,1]$ w.r.t. $\c{A}_G^Z$ such that $\partial \c{P}_G(Z) = \{\kappa_G^Z(z,\cdot) \mid z \in Z\}$ and $\mu(S) = \bar{P}(\{z \in Z \mid \kappa_G^Z(z,\cdot) \in S\})$ for any $S \subseteq \c{P}_G(Z)$, where $\mu$ is the decomposition measure of $\bar{P}$, i.e. $\mu = \kappa_G^Z \# \bar{P}$, where this time we see $\kappa_G^Z:Z \to \c{P}(Z)$.
	Equivalently $\bar{P} = \intg{}{\kappa_G^Z(z,\cdot)}{\bar{P}(z)}$.
	In the case of an amenable group $G$ the kernel $\kappa_G^Z$ is given by
	\begin{align*}
		\kappa_G^Z(z,A)
		= \lim_{n \to \infty} \frac{1}{\lambda_G(F_n)}\intg{F_n}{\delta_{g\cdot z}(A)}{\lambda_G(g)}\,, \quad z \in Z\,, A \in \c{B}(Z)\,.
	\end{align*}
	%Finally, if $G$ is acting on $Z$, then $G$ can be made act on $Z \times Z$ by the diagonal action $g \cdot (z_1,z_2) = (g\cdot z_1,g\cdot z_2)$ for $g \in G$ and $z_1,z_2 \in Z$.
	%In this case the notation $\c{P}_G(Z \times Z)$, $\kappa_G^{Z \times Z}$, etc. will refer to this group action.
	Moreover, let us denote for any lower semi-continuous cost $c_Z:Z \times Z \to [0,\infty]$ the $G$-constrained Wasserstein distance by
	\begin{align*}
		W_{c_Z}^G(\bar{P}_1,\bar{P}_2)
		= \inf_{\Lambda \in \Gamma_G(\bar{P}_1,\bar{P}_2)} \intg{}{c_Z}{\Lambda}\,, \quad
		\bar{P}_1, \bar{P}_2 \in \c{P}_G(Z)\,,
	\end{align*}
	where $\Gamma_G(\bar{P}_1,\bar{P}_2) = \{\Lambda \in \Gamma(\bar{P}_1,\bar{P}_2) \mid \forall g \in G\,:\; (g \oplus g)\#\Lambda = \Lambda \}$ is the set of $G$-invariant couplings.
	Now we can state the result of \cite{zaev2016ergodic}.
	\begin{theorem}[Theorem 4.8 in \cite{zaev2016ergodic}]
		\label{thm:zaevErgodic}
		Let $G$ and $Z$ be as above. 
		Then for any lower semi-continuous $c_Z:Z \times Z \to [0,\infty]$ and $\bar{P}_1, \bar{P}_2 \in \c{P}_G(Z)$ it holds that
		\begin{align*}
			W_{c_Z}^G(\bar{P}_1,\bar{P}_2)
			= W_{C_Z}(\kappa_G^Z \#\bar{P}_1,\kappa_G^Z \# \bar{P}_1)
			= W_{C_Z}(\mu_1,\mu_2)\,,
		\end{align*}
		where $\mu_1, \mu_2 \in \c{P}(\partial \c{P}_G(Z))$ are the decomposition measures of $\bar{P}_1$ and $\bar{P}_2$, respectively and 
		where
		\begin{align*}
			C_Z(\tilde{P}_1,\tilde{P}_2) = W_{c_Z}^G(\tilde{P}_1,\tilde{P}_2) \text{\ \ for\ \ } \tilde{P}_1\,, \tilde{P}_2 \in \partial \c{P}_G(Z)\,.
		\end{align*}
	\end{theorem}
	In other words: The $G$-constrained Wasserstein distance of two $G$-invariant measures on $Z$ w.r.t. a transportation cost $c$ is the Wasserstein distance between their decomposition measures on $\c{P}(Z)$ w.r.t. the transportation cost $W_c^G$. 
	To see how Theorem \ref{thm:wasserseinProductIntegral} relates to Theorem \ref{thm:zaevErgodic}, let $Z = X^\infty$ and let $G = \c{S}_{\infty}$ act on $Z$ by permutation of the coordinates.
	Note that $G$ is amenable with F\o lner sequence $F_n = \c{S}_n$ being the group of permutations acting on the first $n$ coordinates of $Z$. 
	By Theorem \ref{thm:deFinetti} we have $\c{P}_G(Z) = \c{P}_{\operatorname{sym}}(X^\infty)$ and $\partial \c{P}_{G}(Z) = \{P^{\otimes \infty} \mid P \in \c{P}(X)\}$.
	Given any lower semi-continuous transportation cost $c:X \times X \to [0,\infty)$, define $c_Z:Z \times Z \to [0,\infty)$ by
	\begin{align*}
		c_Z(x,y)
		= c(x_1,y_1) \text{\ for\ } x,y \in Z = X^\infty\,. 
	\end{align*}
	Then taking $\bar{P}_1 = \intg{}{P^{\otimes \infty}}{\nu(P)}$ and $\bar{P}_2 = \hat{P}^{\otimes \infty}$ yields $\mu_1 = T \#\nu$ with $T(P) = P^{\otimes \infty}$ and $\mu_2 = \delta_{\hat{P}^{\otimes \infty}}$ and by Theorem \ref{thm:zaevErgodic}\footnote{Here the second equality is due to the fact that for any coupling $\Lambda \in \Gamma(P,\hat{P})$, the product $\Lambda^{\otimes \infty} \in \Gamma_{\c{S}_\infty}(P^{\otimes \infty},\hat{P}^{\otimes \infty})$ is a permutation-invariant coupling.}
	\begin{align*}
		%\sup_{M \in \b{N}} \frac{1}{M} W_{c^M}(\bar{P}_1,\bar{P}_2)
		W_{c_Z}^{\c{S}_\infty}(\bar{P}_1,\bar{P}_2)
		= \intg{}{W_{c_Z}^{\c{S}_\infty}(P^{\otimes \infty},\hat{P}^{\otimes \infty})}{\nu(P)}
		= \intg{}{W_c(P,\hat{P})}{\nu(P)}\,.
	\end{align*}
	Moreover, if we consider $\c{S}_M$ for $M \in \b{N}$ as a subgroup of $\c{S}_\infty$, then clearly
	\begin{align*}
		\frac{1}{M} W_{c^M} \left(\intg{}{P^{\otimes M}}{\nu(P)},\hat{P}^{\otimes M}\right)	
	    = W_{c_Z}^{\c{S}_M}(\bar{P}_1,\bar{P}_2)
		\leq W_{c_Z}^{\c{S}_\infty}(\bar{P}_1,\bar{P}_2)\,,
	\end{align*}
	where the first equality follows by a straightforward calculation.
	Thus, in the framework of \cite{zaev2016ergodic}, Theorem \ref{thm:wasserseinProductIntegral} establishes the continuity property
	\begin{align*}
		\sup_{M \in \b{N}} W_{c_Z}^{\c{S}_M}(\bar{P}_1,\bar{P}_2)
		= W_{c_Z}^{\c{S}_\infty}(\bar{P}_1,\bar{P}_2)\,,
	\end{align*}
	relating the $\c{S}_M$-constrained Wasserstein distance for a finite $M \in \b{N}$ to the infinite case $M = \infty$.
	
%	The boundary of a subset $\s{M} \subseteq \c{P}(X)$, denoted by $\partial \s{M}$, is the set of all points $\hat{P} \in \s{M}$ such that $\mu \in \c{P}(\s{M})$ with $\b{E}\mu := \intg{\s{M}}{P}{\mu(P)} = \hat{P}$ implies $\mu(\{\hat{P}\}) = 1$.
%	Then $\s{M}$ is called a (Dynkin) simplex if $\partial \s{M} \subseteq \c{P}(X)$ is Borel measurable and for each $\hat{P} \in \s{M}$ there is a unique $\mu \in \c{P}(\s{M})$ with $\b{E}\mu = \hat{P}$ and $\mu(\partial \s{M}) = 1$.
%	Let $\c{A}\subseteq \c{B}(X)$ be a $\sigma$-algebra.
%	Then a Markov kernel $\kappa:X \times \c{B}(X) \to [0,1]$ is said to be a decomposition for the triple $(\c{B}(X),\c{A},\s{M})$ if $\kappa$ is the regular conditional probability of $\hat{P}$ w.r.t. $\c{A}$ for any $\hat{P} \in \s{M}$.
%	If such a kernel exists, then $\c{A}$ is called sufficient for $\s{M}$ if additionally $\hat{P}(\{x \in X \mid \kappa(x,\cdot) \in \s{M}\}) = 1$ for all $\hat{P} \in \s{M}$.
\end{remark}

Now we turn to the proof of Theorem \ref{thm:mainRelaxedSetRepresentation}
	
\begin{proof}[Proof of Theorem \ref{thm:mainRelaxedSetRepresentation}]
The claim about $\c{W}_\infty$ is just a restatement of Lemma \ref{lem:weakClosureIntegralRepresentation}.
Hence we come to the proof of the representation of $\hat{\c{W}}_\infty$.
By de Finetti's theorem (Theorem \ref{thm:deFinetti}) and the fact that
\begin{align*}
	\operatorname{pr}_{1:M}\#\intg{}{P^{\otimes \infty}}{\nu(P)} = \intg{}{\operatorname{pr}_{1:M}\# P^{\otimes \infty}}{\nu(P)} = \intg{}{P^{\otimes M}}{\nu(P)}\,,
\end{align*}
it follows that 
\begin{align*}
	\hat{\c{W}}_\infty
	= \left\{ \intg{}{P^{\otimes \infty}}{\nu(P)} \;\middle|\; \nu \in \hat{\c{V}}\right\}
\end{align*}
with
\begin{align*}
	\hat{\c{V}}
	:= \left\{\nu \in \c{P}(\c{P}(X)) \;\middle|\; \forall M \in \b{N}\,:\; \intg{}{P^{\otimes M}}{\nu(P)} \in \b{B}_{M\rho}^{c^M}(\hat{P}^{\otimes M}) \right\}\,.
\end{align*}
We need to show that
\begin{align}
	\label{eq:alternativeRepresentationMixture}
	\hat{\c{V}}
	= \left\{\nu \in \c{P}(\c{P}(X)) \;\middle|\; \b{E}_{P \sim \nu} W_c(\hat{P},P) = \intg{}{W_c(\hat{P},P)}{\nu(P)} \leq \rho \right\}\,.
\end{align}
To see this, denote the set on the right of \eqref{eq:alternativeRepresentationMixture} by $\bar{\c{V}}$.
Let $\nu \in \hat{\c{V}}$.
By definition this is equivalent to
\begin{align*}
	\frac{1}{M} W_{c^M}\left(\intg{}{P^{\otimes M}}{\nu(P)},\hat{P}^{\otimes M}\right) \leq \rho \text{\ \ for all\ } M \in \b{N}\,. 
\end{align*}
By Theorem \ref{thm:wasserseinProductIntegral} the latter is equivalent to
\begin{align*}
	\intg{}{W_c(\hat{P},P)}{\nu(P)} \leq \rho\,,
\end{align*}
which is the same as $\nu \in \bar{\c{V}}$.
\end{proof}

\section{Proofs of Section \ref{sec:structuredDROTightnessOfRelaxationSequence}}

\begin{proof}[Proof of Theorem \ref{thm:concaveLinearStructureExact}]
	First we prove the claims on concavity for $F_\ell$.
	Since $F_\ell(P) = F_{\ell_{\operatorname{sym}}}(P)$, we can assume that $\ell$ is symmetric already, i.e. $\ell = \ell_{\operatorname{sym}}$.
	Now, suppose that $\ell$ is c.n.d. in two variables and let $P_0, P_1 \in \c{P}_c(X)$ and $\alpha \in (0,1)$.
	Let $P_\alpha = (1-\alpha) P_0 + \alpha P_1 = Q_0 + \alpha Q_1$, where $Q_0 = P_0$ and $Q_1 := P_1 - P_0$ is now interpreted as a signed measure with zero total mass.
	We have
	\begin{align*}
		F_\ell(P_\alpha) 
		= \sum_{\epsilon \in \{0,1\}^N} \alpha^{\sum_{i=1}^N \epsilon_i} \intg{}{\ell}{Q_\epsilon}
		%= \sum_{k=0}^N \alpha^k \left(\sum_{\substack{\epsilon \in \{0,1\}^N \\ \operatorname{spin}(\epsilon) = k}} \intg{}{\ell}{Q_\epsilon} \right)
		= \sum_{k=0}^N \alpha^k \binom{N}{k} \intg{}{\ell}{(Q_1^{\otimes k} \otimes Q_0^{\otimes (N-k)})}\,,
	\end{align*}
	with $Q_\epsilon = Q_{\epsilon_1} \otimes \cdots \otimes Q_{\epsilon_N}$.
	Thus, with the identity $k(k-1)\binom{N}{k} =N(N-1) \binom{N-2}{k-2}$  we obtain
	\begin{align*}
		\frac{\operatorname{d}^2}{\operatorname{d}\alpha^2} F_\ell(P_\alpha)
		&= \sum_{k=2}^N k(k-1)\alpha^{k-2} \binom{N}{k} \intg{}{\ell}{(Q_1^{\otimes k} \otimes Q_0^{\otimes (N-k)})} \\
		&= N(N-1)\sum_{k=0}^{N-2} \alpha^k \binom{N-2}{k} \intg{}{\ell}{(Q_1^{\otimes k+2} \otimes Q_0^{\otimes (N-2-k)})} \\
		&= N(N-1) \intg{}{r(x_1,x_2)}{Q_1^{\otimes 2}(x_1,x_2)}
	\end{align*}
	with 
	\begin{align*}
		r(x_1,x_2)
		&= \sum_{k=0}^{N-2} \alpha^k \binom{N-2}{k} \intg{}{\ell(x_1,x_2,\bar{x})}{(Q_1^{\otimes k} \otimes Q_0^{\otimes (N-2-k)})(\bar{x})} \\
		&= \intg{}{\ell(x_1,x_2,\bar{x})}{P_\alpha^{\otimes (N-2)}(\bar{x})}\,.
	\end{align*}
	Now, since $\ell$ is c.n.d. in the first two variables, it holds for any $n \in \b{N}$, $x \in X^n$ and $\gamma \in \b{R}^n$ with $\1^\top \gamma = 0$ that
	\begin{align*}
		\sum_{i,j=1}^n \gamma_i \gamma_j r(x_i,x_j)
		= \intg{}{\sum_{i,j=1}^n \gamma_i \gamma_j \ell(x_i,x_j,\bar{x})}{P_\alpha^{\otimes (N-2)}(\bar{x})}
		\leq 0\,,
	\end{align*}
	i.e. $r$ is also c.n.d. 
	Now, since $\abs{\ell} \in \c{G}_{c^N}(X^N)$ and $P_\alpha \in \c{P}_c(X)$, it holds that $\abs{r} \in \c{G}_{c^2}(X^2)$, i.e. there exists some $C > 0$ and $\hat{x} = (x_0,x_0) \in X^2$ with $\abs{r(x_1,x_2)} \leq C(1+c(x_1,x_0) + c(x_2,x_0))$ for all $x_1,x_2 \in X$.
	Set $q:X \times X \to \b{R}:(x_1,x_2) \mapsto r(x_1,x_0) + r(x_2,x_0) - r(x_1,x_2) - r(x_0,x_0)$.
	Then by \cite[Lemma 2.1]{berg1984harmonic} $q$ is positive definite \cite[Definition 1.5]{berg1984harmonic} and hence has feature map representation in a Hilbert space \cite[Theorem 4.21]{christmann2008support}, i.e. there exists a Hilbert space $\s{H}$ and a (feature) map $\Phi:X \to \s{H}$ with $q(x_1,x_2) = \<\Phi(x_1),\Phi(x_2)\>_{\s{H}}$ for all $x_1,x_2 \in X$.
	Moreover, we have for any $x \in X$ that
	\begin{align*}
		\norm{\Phi(x)}_{\s{H}}
		\leq \sqrt{q(x,x)} 
		= \sqrt{2r(x,x_0) - r(x,x) - r(x_0,x_0)}
		\leq 2\sqrt{C(1+c(x,x_0))}\,.
	\end{align*} 
	As $X$ is Polish, it follows that $\s{H}$ can be picked to be separable \cite[Lemma 4.33]{christmann2008support}.
	Additionally, since $q$ is Borel measurable, the feature map $\Phi$ is measurable \cite[Lemma 4.25]{christmann2008support} and hence in the Bochner class $L^2(X,Q_1;\s{H})$ \cite[Theorem 11.44]{guide2006infinite} due to
	\begin{align*}
		\intg{}{\norm{\Phi(x)}_{\s{H}}^2}{P_\alpha(x)} \leq 4 \intg{}{C(1+c(x,x_0))}{P_\alpha(x)} < \infty \text{\ for\ } \alpha \in [0,1]\,.
	\end{align*}
	Therefore $\intg{}{\Phi(x)}{Q_1(x)} \in \s{H}$ is well-defined and
	\begin{align*}
		&\frac{1}{N(N-1)} \frac{\operatorname{d}^2}{\operatorname{d}\alpha^2} F_\ell(P_\alpha)  \\
		&\quad=\intg{}{r(x_1,x_2)}{Q_1^{\otimes 2}(x_1,x_2)}\\
		&\quad= \intg{}{\left(r(x_1,x_0) + r(x_2,x_0) - q(x_1,x_2) - r(x_0,x_0)\right)}{Q_1^{\otimes 2}(x_1,x_2)} \\
		&\quad= 2 Q_1(X) \intg{}{r(x,x_0)}{Q_1(x)} - \norm*{\intg{}{\Phi(x)}{Q_1(x)}}_{\s{H}}^2 - Q_1(X)^2 r(x_0,x_0) \\
		&\quad\leq 0\,,
	\end{align*}
	since $Q_1(X) = 0$.	
	This establishes convexity of $F_\ell$.
	To establish the second claim let $\nu \in \c{P}(\c{P}(X))$ be such that $\b{E}_\nu W_c(\hat{P},\cdot) \leq \rho$.
	By Lemma \ref{lem:existenceFinitelySupportedMeasure} applied to $F:P \mapsto \intg{}{\ell}{P^{\otimes N}}$, $G:P \mapsto W_c(\hat{P},P) - \rho$ and the probability measure $\nu$ yields a finitely supported measure $\hat{\nu} \in \c{P}(\c{P}(X))$ such that
	\begin{align*}
		\b{E}_{P \sim \hat{\nu}} W_c(\hat{P},P) \leq \rho \text{\ \ and\ \ } \intg{}{F(P)}{\hat{\nu}(P)} = \intg{}{F(P)}{\nu(P)}\,.
	\end{align*}
	Now assume that $\hat{\nu} = \sum_{i=1}^l \alpha_i \delta_{P_i}$ with $\alpha_i \geq 0$, $\sum_{i=1}^l \alpha_i = 1$.
	Since $F_\ell$ is concave, it follows that for $\tilde{P} = \sum_{i=1}^l \alpha_i P_i$ it holds that
	\begin{align*}
		\intg{}{F}{\delta_{\tilde{P}}} 
		= F(\tilde{P}) 
		\geq \intg{}{F(P)}{\hat{\nu}(P)}
	\end{align*}
	and due to \cite[Theorem 4.8]{villani2009optimal} also $W_c(\hat{P},\tilde{P}) \leq \sum_{i=1}^l \alpha_i W_c(\hat{P},P_i) \leq \rho$, i.e. $\nu \in \c{P}(\b{B}_\rho^c(\hat{P}))$.
	Hence for any $\nu$ in the supremum on the right of \eqref{eq:relaxationComparisonIntegral} we can obtain a feasible point $\delta_{\tilde{P}}$ for the supremum on the left \eqref{eq:relaxationComparisonIntegral} with at least the same objective value, which implies $\operatorname{S}(\ell)
	= \hat{\operatorname{U}}_\infty^{\operatorname{sym}}(\ell)$.
	The final statement is then a combination and Theorem \ref{thm:limitSetContinuous}.
\end{proof}

\begin{proof}[Proof of Theorem \ref{thm:relaxationExactConcave}]
As in the previous proof, let $\nu \in \c{P}(\c{P}(X))$ be such that $\b{E}_\nu W_c(\hat{P},\cdot) \leq \rho$.
Applying Lemma \ref{lem:existenceFinitelySupportedMeasure} to the functions $F:P \mapsto \intg{}{\ell}{P^{\otimes N}}$, $G:P \mapsto W_c(\hat{P},P) - \rho$ and the probability measure $\nu$ yields a finitely supported measure $\hat{\nu} \in \c{P}(\c{P}(X))$ such that
\begin{align*}
	\b{E}_{P \sim \hat{\nu}} W_c(\hat{P},P) \leq \rho \text{\ \ and\ \ } \intg{}{F(P)}{\hat{\nu}(P)} = \intg{}{F(P)}{\nu(P)}\,.
\end{align*}
Now assume that $\hat{\nu} = \sum_{i=1}^l \alpha_i \delta_{P_i}$ with $\alpha_i \geq 0$, $\sum_{i=1}^l \alpha_i = 1$ and let $\Lambda_i \in \Gamma(\hat{P},P_i)$ be an optimal transport plan.
Then by the gluing lemma \cite[Gluing lemma]{villani2009optimal} there exists some multi-marginal $\Lambda \in \Gamma(\hat{P},P_1,\ldots,P_l)$ such that\footnote{for notational convenience we see $\hat{P}$ as the $0$-th marginal of couplings in $\Gamma(\hat{P},P_1,\ldots,P_l)$} $\operatorname{pr}_{0,i}\# \Lambda = \Lambda_i$ for $i=1,\ldots,l$.
Now let $T = \sum_{i=1}^l \alpha_i \operatorname{pr}_i^{l+1}:X^{l+1} \to X$ and $\tilde{P} := T \#\Lambda \in \c{P}(X)$.
Note that we have $(T \#\Lambda)^{\otimes N} = \hat{T} \# \Lambda^{\otimes N}$, where $\hat{T} = (T \oplus \cdots \oplus T):(X^{l+1})^N \to X^N: (\bar{x}_1,\ldots,\bar{x}_N) \mapsto (T\bar{x}_1,\ldots,T\bar{x}_N)$ and that $\hat{T} = \sum_{i=1}^l \alpha_i (\operatorname{pr}_i^{l+1} \oplus \cdots \oplus \operatorname{pr}_i^{l+1})$.
Then, by the concavity of $\ell$, 
\begin{align*}
	F(\tilde{P}) 
	&= \intg{}{\ell}{(T\# \Lambda)^{\otimes N}}
	= \intg{}{\ell}{\hat{T}\#\Lambda^{\otimes N}}
	= \intg{}{\ell\circ \hat{T}}{\Lambda^{\otimes N}} \\
	&\geq \intg{}{\sum_{i=1}^l \alpha_i \ell \circ (\operatorname{pr}_i \oplus \cdots \oplus \operatorname{pr}_i)}{\Lambda^{\otimes N}} 
	= \sum_{i=1}^l \alpha_i \intg{}{\ell}{(\operatorname{pr}_i \#\Lambda)^{\otimes N}}\\
	&= \sum_{i=1}^l \alpha_i F(P_i) 
	= \intg{}{F(P)}{\hat{\nu}(P)}
	= \intg{}{F(P)}{\nu(P)}\,.
\end{align*}
Moreover, since $(\operatorname{pr}_0 \oplus T) \# \Lambda \in \Gamma(\hat{P},\tilde{P})$ and $c(x,\cdot)$ is convex for any $x \in X$, it follows that
\begin{align*}
	\begin{gathered}
	W_c(\hat{P},\tilde{P})
	\leq \intg{}{c(x,y)}{(\operatorname{pr}_0 \oplus T) \# \Lambda(x,y)} 
	= \intg{}{c\Big(x,\sum_{i=1}^l \alpha_i y_i\Big)}{\Lambda(x,y_1,\ldots,y_l)} \\
	\leq \sum_{i=1}^l \alpha_i \intg{}{c(x,y_i)}{\Lambda(x,y_1,\ldots,y_l)} 
	= \sum_{i=1}^l \alpha_i \intg{}{c(x,y_i)}{\Lambda_i(x,y_i)} \\
	= \sum_{i=1}^l \alpha_i W_c(\hat{P},P_i) 
	= \b{E}_{\hat{\nu}} W_c(\hat{P},\cdot) 
	\leq \rho\,.
	\end{gathered}
\end{align*}
Thus we have shown once again that for each feasible point $\nu$ in the supremum on the right of \eqref{eq:relaxationComparisonIntegral} we can obtain a feasible point $\delta_{\tilde{P}}$ for the supremum on the left \eqref{eq:relaxationComparisonIntegral} with at least the same objective value. 
This implies that both suprema in \eqref{eq:relaxationComparisonIntegral} are equal and that $\operatorname{S}(\ell)
= \hat{\operatorname{U}}_\infty^{\operatorname{sym}}(\ell)$. 
The second statement is then again a combination of the first statement and Theorem \ref{thm:limitSetContinuous}.
\end{proof}

\section{Proofs of Section \ref{sec:structured_sets_comparison}}

\begin{proof}[Proof of Theorem \ref{thm:multitransportSymmetricInequality}]
Let $\mu^* \in [0,\infty)$ be a point where the right hand side of \eqref{eq:droMultitransportDuality} is attained.
Then, by the very definition of $\hat{\phi}_{\mu^*}$ we have
\begin{align*}
	\ell(x) \leq \sum_{i=1}^N \mu_i^* c(x_i,z_i) + \hat{\phi}_{\mu^*}(z) \text{\ \ for all\ } x,z \in X^N\,.
\end{align*}
Let $\bar{P} \in \b{B}_{N\rho}^{c^N}(\hat{P}^{\otimes N})_{\operatorname{sym}}$.
Then by Lemma \ref{lem:symmetricCoupling} there exists some $\Lambda \in \Gamma_{\operatorname{sym}}(\hat{P}^{\otimes N},\bar{P})$ with $\intg{}{c^N}{\Lambda} \leq N\rho$.
As in the proof of Lemma \ref{lem:nestedRelaxationSets} one can show using the symmetry of $\Lambda$ that $\intg{}{c\circ (\operatorname{pr}_{i,i}^N)}{\Lambda} \leq \rho$ for all $i=1,\ldots,N$.
Then, integrating the above inequality w.r.t. $\Lambda$ and using the fact that $\check{P} = \hat{P}^{\otimes N}$, we obtain 
\begin{align*}
	\b{E}_{x \sim \bar{P}} \ell(x)
	= \intg{}{\ell(x)}{\Lambda(z,x)} 
	&\leq \sum_{i=1}^N \mu_i^* \intg{}{c(x_i,z_i)}{\Lambda(x,z)} + \intg{}{\hat{\phi}_{\mu^*}(z)}{\Lambda(x,z)} \\
	&= \sum_{i=1}^N \mu_i^* \intg{}{c\circ (\operatorname{pr}_{i,i}^N)}{\Lambda} + \intg{}{\hat{\phi}_{\mu^*}(z)}{\hat{P}^{\otimes N}(z)} \\
	&\leq \sum_{i=1}^N \mu_i^* \rho + \intg{}{\hat{\phi}_{\mu^*}(z)}{\check{P}(z)} \\
	&= \sup_{\bar{P} \in \c{W}_{\operatorname{multi}}} \b{E}_{x \sim \bar{P}} \ell(x)\,.
\end{align*}
Since $\bar{P} \in \b{B}_{N\rho}^{c^N}(\hat{P}^{\otimes N})_{\operatorname{sym}}$ was arbitrary, the proof is complete.
\end{proof}

\section{Proofs of Section \ref{sec:structuredDROOuter}}

In the following we denote for any convex $f:\b{R}^p \to \bar{\b{R}}$ its domain by $\operatorname{dom}(f) = \{x \in \b{R}^p \mid f(x) < \infty\}$.

\begin{proof}[Proof of Theorem \ref{thm:structuredDROOuterTightness}]
%	To prove \eqref{eq:structuredDROOuterValueConvergence}, we note that $\leq$ always holds, since $\Psi_{\operatorname{S}}(\theta) \leq \Psi_{\operatorname{U}}^M(\theta)$ for all $\theta \in \Theta$ and $M \geq N$.
%	Let now suppose that (i) holds and let $\epsilon > 0$ be given and pick $\bar{\theta} \in \Theta$ such that $\Psi_{\operatorname{S}}(\bar{\theta}) < \inf_\Theta \Psi_{\operatorname{S}} + \epsilon$. 
%	Then by Theorem \ref{thm:relaxationExactConcave} that $\lim_{M \to \infty} \Psi_{\operatorname{U}}^M(\bar{\theta}) = \Psi_{\operatorname{S}}(\bar{\theta})$ and hence there exists some $\bar{M} \geq N$ such that $\Psi_{\operatorname{U}}^M(\bar{\theta}) < \inf_\Theta \Psi_{\operatorname{S}} + \epsilon$ for all $M \geq \bar{M}$.
%	This implies 
%	\begin{align*}
%		\inf_\Theta \Psi_{\operatorname{U}}^M \leq \Psi_{\operatorname{U}}^M(\bar{\theta}) < \inf_\Theta \Psi_{\operatorname{S}} + \epsilon \text{\ \ for all\ } M \geq \bar{M}\,.
%	\end{align*}
%	As $\epsilon > 0$ was arbitrary, this shows $\geq$ in \eqref{eq:structuredDROOuterValueConvergence} and thus finishes the proof of the first part.
	To prove \eqref{eq:structuredDROOuterValueConvergence}, we note that
	\begin{align*}
		\lim_{M \to \infty} \inf_{\theta \in \Theta} \Psi_{\operatorname{U}}^M(\theta)
		= \inf_{M \geq N} \inf_{\theta \in \Theta} \Psi_{\operatorname{U}}^M(\theta)
		= \inf_{\theta \in \Theta} \inf_{M \geq N} \Psi_{\operatorname{U}}^M(\theta)
		= \inf_{\theta \in \Theta} \Psi_{\operatorname{S}}(\theta)\,,
	\end{align*}
	where the first equality is due to Corollary \ref{cor:decreasingRelaxationSequence} and the last due to the first set of assumptions on $\ell$ and Theorem \ref{thm:relaxationExactConcave}.
	Now suppose that both sets of assumptions on $\ell$ and $\Theta$ hold.
	First we show that $\Psi_{\operatorname{U}}^M$ for each $M \geq N$ is finite and convex on $\Theta$.
	Note that $\Psi_{\operatorname{U}}^M(\theta) = \sup_{\bar{P} \in \c{U}_M} \b{E}_{x \sim \bar{P}} \ell(\theta,x)$ and so finiteness follows from the condition $\ell(\theta,\cdot) \in \c{G}_{c^N}^<(X^N)$ for each $\theta \in \Theta$ and Theorem \ref{thm:droGeneralSolvability}.
	Convexity then follows from the convexity of $\ell$ in the first argument and the fact that $\Psi_{\operatorname{U}}^M$ is the supremum of the convex functions $\theta \mapsto \b{E}_{x \sim \bar{P}} \ell(\theta,x)$.
	Similarly, $\Psi_{\operatorname{S}}$ is finite and convex on $\Theta$.
	Furthermore, $\Psi_{\operatorname{S}}$ is lower semi-continuous on $\Theta$.
	To see this we first note that, since pointwise suprema preserve lower semi-continuity, it is sufficient to show that for each $\bar{P} \in \c{W}$ the function $\Theta \to \b{R}:\theta \mapsto \b{E}_{x \sim \bar{P}} \ell(\theta,x)$ is lower semi-continuous.
	Now, if $(\theta_n)_{n=1}^\infty \subseteq \Theta$ is any sequence with $\theta_n \to \theta \in \Theta$, then
	\begin{align*}
		\liminf_{n \to \infty} \b{E}_{x \sim \bar{P}} \ell(\theta_n,x)
		\geq \b{E}_{x \sim \bar{P}} \liminf_{n \to \infty} \ell(\theta_n,x)
		\geq \b{E}_{x \sim \bar{P}} \ell(\theta,x)\,,
	\end{align*}
	showing that $\theta \mapsto \b{E}_{x \sim \bar{P}} \ell(\theta,x)$ is lower semi-continuous.
	Here the first inequality holds by Fatou's lemma and the fact that $\ell(\theta_n,\cdot)$ is $\bar{P}$-integrable uniformly in $\bar{P} \in \c{W}$, while the second inequality holds due to the lower semi-continuity of $\ell(\cdot,x)$.
	Hence $\Psi_{\operatorname{S}}$ is lower semi-continuous and in particular the infimum on the right hand side in \eqref{eq:structuredDROOuterValueConvergence} is a minimum.
	Now, extending $\Psi_{\operatorname{S}}$ and $\Psi_{\operatorname{U}}^M$ to be $\infty$ outside of $\Theta$, we consider them as convex functions on $\b{R}^p$ with $\Theta = \operatorname{dom} \Psi_{\operatorname{S}} = \operatorname{dom} \Psi_{\operatorname{U}}^M$ for each $M \geq N$.
	Moreover, by condition (i) and Theorem \ref{thm:relaxationExactConcave} it holds that $\Psi_{\operatorname{U}}^M(\theta) \to \Psi_{\operatorname{S}}(\theta)$ pointwise on $\Theta$.
	By \cite[Corollary 7.18]{rockafellar2009variational} it follows that $\Psi_{\operatorname{U}}^M \to \Psi_{\operatorname{S}}$ uniformly on each compact subset of $\operatorname{int} \Theta$.
	Thus by \cite[Theorem 7.17]{rockafellar2009variational} the functions $\Psi_{\operatorname{U}}^M$ converge to $\Psi_{\operatorname{S}}$ in the epigraphical sense \cite[Definition 7.1]{rockafellar2009variational} for $M \to \infty$.
	Then applying \cite[Theorem 7.31 (b)]{rockafellar2009variational} yields the claim.
\end{proof}

\section{Proofs of Section \ref{sec:structuredDROPolyhedralLoss}}

Before we proceed to the proof of Theorem \ref{thm:polyhedralConcaveLossProgram}, we need to establish the following two results.
The first one concerns the Legendre-Fenchel conjugate of $(-\ell\circ \operatorname{pr}_{1:N}^M)_{\operatorname{sym}}$.

\begin{lemma}
	\label{lem:conjugateSymmetrization}
	It holds for $M \geq N$ and any $z \in (\b{R}^n)^M$ that
	\begin{align*}
		((-\ell\circ \operatorname{pr}_{1:N}^M)_{\operatorname{sym}})^*(z)
		&= -\max\; \frac{(M-N)!}{M!} \sum_{\bs{l} \in \bs{\c{L}}} b_{\bs{l}}
	\end{align*}
	where the maximum is taken over all decision variables $(a_{\bs{l}},b_{\bs{l}})_{\bs{l} \in \bs{\c{L}}} \in (\b{R}^n)^N \times \b{R}$ under the constraints
	\begin{align*}
		z = \frac{(M-N)!}{M!} \sum_{\bs{l} \in \bs{\c{L}}} E_{\bs{l}}^\top a_{\bs{l}}\,, \quad 
		\mat{a_{\bs{l}} \\ b_{\bs{l}}} \in -\c{H}  &\quad \forall \bs{l} \in \bs{\c{L}}\,,
	\end{align*}
	and where $(-\ell\circ \operatorname{pr}_{1:N}^M)_{\operatorname{sym}}^*$ is the Legendre-Fenchel conjugate of $(-\ell\circ \operatorname{pr}_{1:N}^M)_{\operatorname{sym}}$.
\end{lemma}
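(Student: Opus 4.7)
The plan is to unfold the symmetrization combinatorially, substitute the polyhedral representation of $\ell$, and compute the Legendre--Fenchel conjugate via a minimax exchange.

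First, since $(\ell \circ \operatorname{pr}_{1:N}^M)(\pi(x)) = \ell(x_{\pi(1)},\ldots,x_{\pi(N)})$ depends on $\pi \in \c{S}_M$ only through the ordered $N$-tuple $(\pi(1),\ldots,\pi(N)) \in \bs{\c{L}}$, and each such tuple is realized by exactly $(M-N)!$ permutations, collecting permutations by equivalence class gives
\[
(-\ell \circ \operatorname{pr}_{1:N}^M)_{\operatorname{sym}}(x) = -\hat{\ell}_{\operatorname{sym}}(x) = -\tfrac{(M-N)!}{M!} \sum_{\bs{l} \in \bs{\c{L}}} \ell(E_{\bs{l}} x).
\]
Substituting $\ell(y) = \min_{(a,b) \in \c{H}}(a^\top y + b)$ and pulling the (independent) minimizations outside the sum,
\[
\hat{\ell}_{\operatorname{sym}}(x) = \min_{(a_{\bs{l}}, b_{\bs{l}})_{\bs{l}} \in \c{H}^{\bs{\c{L}}}} \left[\tfrac{(M-N)!}{M!} \sum_{\bs{l}} E_{\bs{l}}^\top a_{\bs{l}}\right]^{\!\top} x + \tfrac{(M-N)!}{M!} \sum_{\bs{l}} b_{\bs{l}}.
\]

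Next I would compute $((-\ell\circ\operatorname{pr}_{1:N}^M)_{\operatorname{sym}})^*(z) = \sup_x \bigl(z^\top x + \hat{\ell}_{\operatorname{sym}}(x)\bigr)$, which after substitution equals
\[
\sup_{x \in \b{R}^{nM}} \min_{(a_{\bs{l}}, b_{\bs{l}}) \in \c{H}^{\bs{\c{L}}}} \left[\left(z + \tfrac{(M-N)!}{M!} \sum_{\bs{l}} E_{\bs{l}}^\top a_{\bs{l}}\right)^{\!\top} x + \tfrac{(M-N)!}{M!} \sum_{\bs{l}} b_{\bs{l}}\right].
\]
The integrand is bilinear in $x$ and $(a_{\bs{l}}, b_{\bs{l}})$, and $\c{H}^{\bs{\c{L}}}$ is a nonempty compact convex polytope, so Sion's minimax theorem (or linear programming strong duality) allows exchanging $\sup_x$ with $\min$. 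Once swapped, the inner supremum of an affine function of $x$ on all of $\b{R}^{nM}$ is finite iff its gradient in $x$ vanishes; this enforces the coupling $\tfrac{(M-N)!}{M!}\sum_{\bs{l}} E_{\bs{l}}^\top a_{\bs{l}} = -z$, and on the resulting feasible set the objective collapses to $\tfrac{(M-N)!}{M!} \sum_{\bs{l}} b_{\bs{l}}$.

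Finally, the change of variable $(a_{\bs{l}}, b_{\bs{l}}) \mapsto (-a_{\bs{l}}, -b_{\bs{l}})$ sends $\c{H}^{\bs{\c{L}}}$ to $(-\c{H})^{\bs{\c{L}}}$, rewrites the coupling as $z = \tfrac{(M-N)!}{M!}\sum_{\bs{l}} E_{\bs{l}}^\top a_{\bs{l}}$, and turns $\min \tfrac{(M-N)!}{M!}\sum_{\bs{l}} b_{\bs{l}}$ into $-\max \tfrac{(M-N)!}{M!}\sum_{\bs{l}} b_{\bs{l}}$, producing the claimed formula.

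The only genuinely nontrivial step is the minimax exchange: it is secured by the bilinearity of the integrand and the compactness of the polytope $\c{H}^{\bs{\c{L}}}$, so Sion's theorem applies verbatim. Everything else is the combinatorial accounting of permutations into $\bs{\c{L}}$-classes and the elementary observation that an affine function on a full linear space has finite supremum iff it is identically constant.
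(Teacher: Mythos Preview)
Your argument is correct and essentially identical to the paper's: the same combinatorial reduction from $\c{S}_M$ to $\bs{\c{L}}$, the same polyhedral rewrite, and the same conjugate computation, with the only cosmetic difference that the paper quotes the formula $\hat\ell^*(z) = -\max\{\hat b : (z,\hat b)\in\hat{\c H}\}$ for the conjugate of a polyhedral max function while you derive it via an explicit Sion swap. Note that your final change of variables lands on the constraint $(a_{\bs l},b_{\bs l})\in -\c{H}$, which matches the paper's own proof and Theorem~\ref{thm:polyhedralConcaveLossProgram}; the $\c{H}$ appearing in the lemma statement is a typo.
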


\begin{proof}
We have
\begin{align*}
	\begin{gathered}
	(-\ell\circ \operatorname{pr}_{1:N}^M)_{\operatorname{sym}}(x)
	= \frac{1}{M!}\sum_{\pi \in \c{S}_M} -\ell(x_{\pi(1)},\ldots,x_{\pi(N)}) 
	%= \frac{1}{M!} \sum_{\bs{l} \in \bs{\c{L}}} \sum_{\substack{\pi \in \c{S}_M \\ \pi(1) = \bs{l}_1,\ldots,\pi(N) = \bs{l}_N}} -\ell(x_{\pi(1)},\ldots,x_{\pi(N)}) \\
	%= \frac{1}{M!} \sum_{\bs{l} \in \bs{\c{L}}} \sum_{\substack{\pi \in \c{S}_M \\ \pi(1) = \bs{l}_1,\ldots,\pi(N) = \bs{l}_N}} -\ell(x_{\bs{l}_1},\ldots,x_{\bs{l}_N}) \\
	%&= -\frac{1}{M!} \sum_{\bs{l} \in \bs{\c{L}}} (M-N)! \ell(x_{\bs{l}_1},\ldots,x_{\bs{l}_N})\\
	= \frac{(M-N)!}{M!} \sum_{\bs{l} \in \bs{\c{L}}} -\ell(x_{\bs{l}_1},\ldots,x_{\bs{l}_N}) \\
	= \frac{(M-N)!}{M!} \sum_{\bs{l} \in \bs{\c{L}}} \max_{h \in -\c{H}} h^\top \mat{x_{\bs{l}} \\ 1} 
	= \frac{(M-N)!}{M!} \max_{\bs{h} \in (-\c{H})^{\bs{\c{L}}}} \sum_{\bs{l} \in \bs{\c{L}}} \bs{h}_{\bs{l}}^\top \mat{E_{\bs{l}} x \\ 1}  \\
	= \frac{(M-N)!}{M!} \max_{\bs{h} \in (-\c{H})^{\bs{\c{L}}}} \left(\sum_{\bs{l} \in \bs{\c{L}}} \mat{E_{\bs{l}}^\top & 0 \\ 0 & 1}\bs{h}_{\bs{l}}\right)^\top \mat{x \\ 1} 
	=\max_{\hat{h} \in \hat{\c{H}}_M} \hat{h}^\top \mat{x \\ 1}\,, 
	\end{gathered}
\end{align*}
where 
\begin{align*}
	\hat{\c{H}}_M = -\frac{(M-N)!}{M!} \sum_{\bs{l} \in \bs{\c{L}}} \mat{E_{\bs{l}}^\top & 0 \\ 0 & 1}\c{H}
\end{align*}
is understood in the Minkowski sense.
By Lemma \ref{lem:conjugatePolyhedral} we obtain
\begin{align*}
	&((-\ell \circ \operatorname{pr}_{1:N}^M)_{\operatorname{sym}})^*(z) \\
	&\quad= -\max\left\{\hat{b} \mid \mat{z \\ \hat{b}} \in \hat{\c{H}}_M\right\} \\
	&\quad= -\max\left\{\hat{b} \mid \exists (\bs{h}_{\bs{l}})_{\bs{l}} \in (-\c{H})^{\bs{\c{L}}}\,:\; \mat{z \\ \hat{b}} = \frac{(M-N)!}{M!} \sum_{\bs{l} \in \bs{\c{L}}} \mat{E_{\bs{l}}^\top & 0 \\ 0 & 1}\bs{h}_{\bs{l}} \right\} \\
	&\quad= -\max\left\{\frac{(M-N)!}{M!} \sum_{\bs{l} \in \bs{\c{L}}} b_{\bs{l}} \mid \exists (a_{\bs{l}})_{\bs{l} \in \bs{\c{L}}}\in ((\b{R}^n)^N)^{\bs{\c{L}}} \,:\; \substack{z = \frac{(M-N)!}{M!} \sum_{\bs{l} \in \bs{\c{L}}} E_{\bs{l}}^\top a_{\bs{l}}\,, \\ \forall \bs{l} \in \bs{\c{L}}\,:\; \smat{a_{\bs{l}} \\ b_{\bs{l}}} \in -\c{H}}\right\}\,.
\end{align*}
\end{proof}

\begin{theorem}
	\label{thm:polyhedralConcaveLoss}
	Let $\rho > 0$, $\hat{P} = \sum_{i=1}^{n_{\hat{P}}} \hat{p}_i \delta_{\hat{\xi}_i}$, the loss be given by \eqref{eq:polyhedralConcaveLoss} and $c(x,y) = \norm{x-y}$ for some norm $\norm{\cdot}$ on $X = \b{R}^n$.
	Then
	\begin{align*}
		\operatorname{U}_M^{\operatorname{sym}}(\ell)
		= \inf \; \mu M \rho + \sum_{\bs{\iota} \in \bs{\c{I}}} \hat{p}_{\bs{\iota}} \sigma_{\bs{\iota}}
		\text{\ \ s.t.\ }
		\begin{cases}
			f^*(z_{\bs{\iota}}) - z_{\bs{\iota}}^\top \hat{\xi}_{\upsilon(\bs{\iota})} \leq \sigma_{\bs{\iota}}\,, & \forall\, \bs{\iota} \in \bs{\c{I}}\,, \\
			\norm{z_{\bs{\iota}}^j}_* \leq \mu\quad \forall j=1,\ldots,M\,, &\forall \bs{\iota} \in \bs{\c{I}}\,, \\
			\mu \geq 0\,,\; \sigma_{\bs{\iota}} \in \b{R} \,, \; z_{\bs{\iota}} \in (\b{R}^n)^M\,, &\forall \bs{\iota} \in \bs{\c{I}}\,,
		\end{cases}
	\end{align*}	
	where $f = ((-\ell\circ \operatorname{pr}_{1:N}^M)_{\operatorname{sym}})$, $\norm{\cdot}_*$ is the dual norm, $\hat{p}_{[\bs{i}]} = \abs{[\bs{i}]} \prod_{k=1}^M \hat{p}_{\bs{i}_k}$, $\hat{\xi}_{\bs{i}} = (\hat{\xi}_{\bs{i}_1},\ldots,\hat{\xi}_{\bs{i}_M}) \in (\b{R}^n)^M$ and the optimal value is independent of the choice of the selection $\upsilon$ in the first constraint.
\end{theorem}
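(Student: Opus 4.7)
The plan is to specialize the strong-duality Theorem~\ref{thm:droGeneralDuality} to the unstructured DRO problem
\[
\operatorname{U}_M^{\operatorname{sym}}(\ell) = \operatorname{U}_M((\ell \circ \operatorname{pr}_{1:N}^M)_{\operatorname{sym}}),
\]
which is permissible because $\ell$ is polyhedral (hence of linear growth) and $\hat{P}^{\otimes M}$ is finitely supported, so the symmetrized loss sits in $L^1(X^M,\hat{P}^{\otimes M})$. Using $c^M(x,z) = \sum_{j=1}^M \norm{x^j - z^j}$, Theorem~\ref{thm:droGeneralDuality} yields
\[
\operatorname{U}_M^{\operatorname{sym}}(\ell) = \inf_{\mu\ge 0}\Bigl[\mu M\rho + \b{E}_{z\sim\hat{P}^{\otimes M}}\phi_\mu(z)\Bigr],\qquad \phi_\mu(z) = \sup_{x\in X^M}\Bigl((\ell\circ \operatorname{pr}_{1:N}^M)_{\operatorname{sym}}(x) - \mu\sum_{j=1}^M\norm{x^j - z^j}\Bigr).
\]

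The second step is to recast $\phi_\mu(z)$ as an \emph{infimum} in Fenchel-dual form. Setting $f = (-\ell\circ \operatorname{pr}_{1:N}^M)_{\operatorname{sym}}$, which is proper convex (indeed polyhedral) since $\ell$ is polyhedral concave, one observes that $\phi_\mu(z) = -(f\,\square\, h)(z)$, where $h(y) = \mu\sum_j\norm{y^j}$ and $\square$ denotes infimal convolution. Fenchel duality gives $(f\square h)^* = f^* + h^*$, and a direct computation shows $h^*(\zeta) = 0$ if $\norm{\zeta^j}_*\le \mu$ for every $j$ and $+\infty$ otherwise. Taking a further conjugate yields
\[
\phi_\mu(z) = \inf_{\zeta:\,\norm{\zeta^j}_*\le\mu\,\forall j}\bigl(f^*(\zeta) - \zeta^\top z\bigr).
\]

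The third step exploits the permutation invariance of $f$ to aggregate the expectation. Since $f$ is itself a symmetrization, so is its conjugate $f^*$, and the constraint set $\{\zeta:\norm{\zeta^j}_*\le \mu\}$ is permutation-invariant; substituting $\zeta\mapsto\pi(\zeta)$ for any $\pi\in\c{S}_M$ therefore shows $\phi_\mu(\pi(z)) = \phi_\mu(z)$. The product weights $\prod_k\hat{p}_{\bs{i}_k}$ are also permutation-invariant, so grouping $\bs{i}\in\{1,\ldots,n_{\hat{P}}\}^M$ into the equivalence classes $\bs{\iota}\in\bs{\c{I}}$ and choosing any selection $\upsilon$ gives
\[
\b{E}_{z\sim\hat{P}^{\otimes M}}\phi_\mu(z) = \sum_{\bs{\iota}\in\bs{\c{I}}}\hat{p}_{\bs{\iota}}\,\phi_\mu(\hat{\xi}_{\upsilon(\bs{\iota})}),\qquad \hat{p}_{\bs{\iota}} = \abs{\bs{\iota}}\prod_{k=1}^M\hat{p}_{\upsilon(\bs{\iota})_k},
\]
and the value is independent of the particular $\upsilon$ by the same symmetry (any other selection is reached by a permutation $\pi\in\c{S}_M$, and relabeling $z_{\bs{\iota}}\mapsto \pi(z_{\bs{\iota}})$ preserves both $f^*$ and the norm constraints). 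Introducing epigraphical slacks $\sigma_{\bs{\iota}} \ge f^*(z_{\bs{\iota}}) - z_{\bs{\iota}}^\top\hat{\xi}_{\upsilon(\bs{\iota})}$ per class and merging the outer $\inf_\mu$ with the family of inner infima over $(z_{\bs{\iota}})_{\bs{\iota}\in\bs{\c{I}}}$ produces exactly the program in the statement.

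The main technical subtlety sits in the second step: to pass from $(f^*+h^*)$ back to $f\square h$ via biconjugation one needs $f\square h$ to be proper lsc convex. This follows because $f$ is polyhedral convex (hence continuous on $\b{R}^{nM}$) and $h$ is a continuous coercive sum of norms, so $f\square h$ is itself continuous and the infimum defining it is attained. A leaner but equivalent path, which avoids biconjugation, is to rewrite $-\mu\norm{x^j-z^j} = \inf_{\norm{\lambda^j}_*\le\mu}\bigl(-(\lambda^j)^\top(x^j-z^j)\bigr)$ inside $\phi_\mu(z)$, apply Sion's minimax theorem (the $\lambda$ domain is convex and compact), and evaluate the resulting inner $\sup_x(-f(x) - \lambda^\top x) = f^*(-\lambda)$ via $f = f^{**}$, arriving at the same formula after the substitution $\zeta = -\lambda$. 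Either route closes the argument.
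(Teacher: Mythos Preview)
Your proof is correct and follows the same overall strategy as the paper's: apply strong duality to the unstructured problem $\operatorname{U}_M((\ell\circ\operatorname{pr}_{1:N}^M)_{\operatorname{sym}})$, express the inner supremum via the Fenchel conjugate $f^*$ together with dual-norm constraints, and then exploit permutation invariance to pass from the full index set $\bs{J}=\{1,\ldots,n_{\hat{P}}\}^M$ to the equivalence classes $\bs{\c{I}}$. The differences are in execution rather than substance. The paper invokes \cite[Theorem~4.2]{mohajerin2018data} as a black box to obtain the $f^*$-based program indexed by $\bs{J}$, whereas you derive it from the paper's own Theorem~\ref{thm:droGeneralDuality} by writing $\phi_\mu=-(f\,\square\,h)$ and biconjugating (or, equivalently, via Sion); this makes your argument more self-contained. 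For the symmetry reduction, the paper gives an explicit two-direction feasibility argument on the program variables (taking $\sigma_{\bs{\iota}}=\min_{\bs{j}\in\bs{\iota}}\sigma_{\bs{j}}$ in one direction, and permuting $z_{\bs{\iota}}$ in the other), while you observe directly that $\phi_\mu$ is $\c{S}_M$-invariant and collapse the expectation before introducing slacks; both are equally valid, with your route being marginally more streamlined.
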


\begin{proof}
First we note that for any $M \in \b{N}$ the cost $c^M$ is given by $c^M(x,y) = \norm{x-y}_M$, where $\norm{x}_M := \sum_{i=1}^M \norm{x_i}$ for $x \in (\b{R}^n)^M$.
Moreover, the dual norm of the latter is given by $\norm{x}_{M,*} = \max\{\norm{x_i}_* \mid i=1,\ldots,M\}$.
Additionally, if $\bs{I} = \{1,\ldots,n_{\hat{P}}\}^M$, then $\hat{P}^{\otimes M} = \sum_{\bs{i} \in \bs{J}} \hat{p}_{\bs{i}} \delta_{\hat{\xi}_{\bs{i}}}$ with $\hat{p}_{\bs{i}} = \prod_{k=1}^M \hat{p}_{\bs{i}_k}$ and $\hat{\xi}_{\bs{i}} = (\hat{\xi}_{\bs{i}_1},\ldots,\hat{\xi}_{\bs{i}_M}) \in (\b{R}^n)^M$.
Using \cite[Theorem 4.2]{mohajerin2018data} we obtain
\begin{align*}
	\operatorname{U}_M^{\operatorname{sym}}(\ell)
	&= \sup_{\bar{P} \in \b{B}_{M\rho}^{c^M}(\hat{P}^{\otimes M})} \b{E}_{\bar{P}} f \\
	&= \inf \; \mu M \rho + \sum_{\bs{i} \in \bs{J}} \hat{p}_{\bs{i}} \sigma_{\bs{i}} 
	\text{\ \ s.t.\ }
	\begin{cases}
		f^*(z_{\bs{i}}) - z_{\bs{i}}^\top \hat{\xi}_{\bs{i}} \leq \sigma_{\bs{i}}\,, & \forall\, \bs{i} \in \bs{I}\,, \\
		\norm{z_{\bs{i},j}}_* \leq \mu\quad \forall j=1,\ldots,M\,, &\forall \bs{i} \in \bs{I}\,, \\
		\mu \geq 0\,,\; \sigma_{\bs{i}} \in \b{R} \,, \; z_{\bs{i}} \in (\b{R}^n)^M\,, &\forall \bs{i} \in \bs{I}\,.
	\end{cases}
\end{align*}
Now suppose that $(\mu,(\sigma_{\bs{i}})_{\bs{i} \in \bs{I}},(z_{\bs{i}})_{\bs{i} \in \bs{I}})$ is any feasible point to the above program.
For each $\bs{\iota} \in \bs{\c{I}}$ set $\bs{j} = \argmin_{\bs{j} \in \bs{\iota}} \sigma_{\bs{j}}$ and set $\sigma_{\bs{\iota}} = \sigma_{\bs{j}}$ and $z_{\bs{\iota}} = z_{\bs{j}}$.
This yields a feasible point $(\mu,(\sigma_{\bs{\iota}})_{\bs{\iota} \in \bs{\c{I}}},(z_{\bs{\iota}})_{\bs{\iota} \in \bs{\c{I}}})$ for the program \eqref{eq:polyhedralConcaveLossProgram} with objective value
\begin{align*}
	\mu M \rho + \sum_{\bs{\iota} \in \bs{\c{I}}} \hat{p}_{\bs{\iota}} \sigma_{\bs{\iota}}
	\leq \mu M \rho + \sum_{\bs{\iota} \in \bs{\c{I}}} \sum_{\bs{i} \in \bs{\iota}} \hat{p}_{\bs{i}} \sigma_{\bs{i}} 
	= \mu M \rho + \sum_{\bs{i} \in \bs{J}} \hat{p}_{\bs{i}} \sigma_{\bs{i}} \,.
\end{align*}
%Thus the optimal value of \eqref{eq:polyhedralConcaveLossProgram} is not larger than the optimal value of the above program.
Conversely, suppose that $(\mu,(\sigma_{\bs{\iota}})_{\bs{\iota} \in \bs{\c{I}}},(z_{\bs{\iota}})_{\bs{\iota} \in \bs{\c{I}}})$ is a feasible point for \eqref{eq:polyhedralConcaveLossProgram} with first constraint containing $\hat{\xi}_{\bs{i}}$ for a fixed member $\bs{i} = \nu(\bs{\iota})$ for each class $\bs{\iota} \in \bs{\c{I}}$.
Setting $z_{\bs{i}} = z_{\bs{\iota}}$ and $\sigma_{\bs{i}} = \sigma_{\bs{\iota}}$ we obtain that 
\begin{align*}
	f^*(z_{\bs{i}}) - z_{\bs{i}}^\top \hat{\xi}_{\bs{i}} \leq \sigma_{\bs{i}} \text{\ and\ } \norm{z_{\bs{i},j}}_* \leq \mu
\end{align*}
for each such member $\bs{i}$ of $\bs{\iota}$.
For other members $\bs{j} \in \bs{\iota}$ we set $z_{\bs{j}} = (\pi \otimes I_n) z_{\bs{\iota}}$ and $\sigma_{\bs{j}} = \sigma_{\bs{i}}$, where $\pi \in \c{S}_M$ is such that $\pi(\bs{j}) = \bs{i}$.
Since $f$ is symmetric, so is $f^*$ and from $(\pi \otimes I_n) \hat{\xi}_{\bs{j}} = \hat{\xi}_{\bs{i}}$ we obtain
\begin{align*}
	\sigma_{\bs{j}}
	= \sigma_{\bs{i}}
	\geq f^*(z_{\bs{i}}) - z_{\bs{i}}^\top \hat{\xi}_{\bs{i}}
	= f^*(z_{\bs{j}}) - z_{\bs{j}}^\top \hat{\xi}_{\bs{j}}
\end{align*}
as well as $\norm{((\pi \otimes I_n)z_{\bs{j}})_j}_* = \norm{z_{\bs{j},\pi(j)}}_* \leq \mu$ for any $j=1,\ldots,M$. Thus \\
$(\mu,(\sigma_{\bs{i}})_{\bs{i} \in \bs{I}},(z_{\bs{i}})_{\bs{i} \in \bs{I}})$ is a feasible point with objective value
\begin{align*}
	\mu M \rho + \sum_{\bs{i} \in \bs{J}} \hat{p}_{\bs{i}} \sigma_{\bs{i}}
	= \mu M \rho + \sum_{\bs{\iota} \in \bs{\c{I}}} \sum_{\bs{i} \in \bs{\iota}} \hat{p}_{\bs{i}} \sigma_{\bs{i}}
	= \mu M \rho + \sum_{\bs{\iota} \in \bs{\c{I}}} \sigma_{\bs{\iota}} \sum_{\bs{i} \in \bs{\iota}} \hat{p}_{\bs{i}}
    =\mu M \rho + \sum_{\bs{\iota} \in \bs{\c{I}}} \hat{p}_{\bs{\iota}} \sigma_{\bs{\iota}}\,.
\end{align*}
\end{proof}

\begin{proof}[Proof of Theorem \ref{thm:polyhedralConcaveLossProgram}]
Combine Theorem \ref{thm:polyhedralConcaveLoss} with Lemma \ref{lem:conjugateSymmetrization}.
\end{proof}

%\section{Proofs of Section \ref{sec:structuredDROTightnessOfRelaxationSequence}}

\section{Auxiliary results}

Here we collect some results that, while not technically sophisticated or new, were difficult to locate in the existing literature in the stated form.

\begin{lemma}
	\label{lem:wassersteinWeakTriangleInequality}
	For any transportation cost $c:X \times X \to [0,\infty]$ satisfying the weak triangle inequality, the corresponding Wasserstein distance $W_c$ also satisfies the weak triangle inequality (with the same constant).
\end{lemma}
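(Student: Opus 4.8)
The plan is to transfer the pointwise weak triangle inequality $c(x,y) \le K(c(x,z) + c(z,y))$ up to the level of couplings by gluing. Fix $P_1,P_2,P_3 \in \c{P}(X)$. First I would dispose of the trivial case: if $W_c(P_1,P_2) = \infty$ or $W_c(P_2,P_3) = \infty$, then the claimed bound $W_c(P_1,P_3) \le K(W_c(P_1,P_2)+W_c(P_2,P_3))$ holds vacuously, so I may assume both are finite.

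Next, given $\epsilon > 0$, I would pick $\epsilon$-optimal couplings $\Lambda_{12} \in \Gamma(P_1,P_2)$ and $\Lambda_{23} \in \Gamma(P_2,P_3)$, i.e. $\intg{}{c}{\Lambda_{12}} \le W_c(P_1,P_2) + \epsilon$ and $\intg{}{c}{\Lambda_{23}} \le W_c(P_2,P_3) + \epsilon$ (alternatively, since the two distances are finite one may take genuine minimizers, which exist by \cite[Theorem 4.1]{villani2009optimal}, and set $\epsilon = 0$). Invoking the gluing lemma \cite[Gluing lemma]{villani2009optimal} — legitimate because $X$ is Polish, so $\Lambda_{12}$ disintegrates along its second marginal $P_2$ — I obtain a single $\Lambda \in \c{P}(X^3)$ with $\operatorname{pr}_{1,2}\#\Lambda = \Lambda_{12}$ and $\operatorname{pr}_{2,3}\#\Lambda = \Lambda_{23}$, and hence $\operatorname{pr}_i\#\Lambda = P_i$ for $i=1,2,3$.

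Then $\operatorname{pr}_{1,3}\#\Lambda$ is a coupling of $P_1$ and $P_3$, so $W_c(P_1,P_3) \le \intg{}{c(x_1,x_3)}{\Lambda(x_1,x_2,x_3)}$. Applying the weak triangle inequality for $c$ under the integral sign, splitting the integral (the two pieces are finite because $\intg{}{c}{\Lambda_{12}}, \intg{}{c}{\Lambda_{23}} < \infty$), and reading off the marginals yields
\begin{align*}
	W_c(P_1,P_3) \le K\left(\intg{}{c}{\Lambda_{12}} + \intg{}{c}{\Lambda_{23}}\right) \le K\bigl(W_c(P_1,P_2) + W_c(P_2,P_3) + 2\epsilon\bigr).
\end{align*}
Letting $\epsilon \downarrow 0$ finishes the argument, with the constant unchanged.

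The computation itself is entirely routine; the only step deserving care is the application of the gluing lemma, which requires enough structure on $X$ to disintegrate measures along a marginal — Polishness, the standing assumption of Section \ref{sec:preliminaries}, is exactly what makes this work. I do not anticipate any other obstacle.
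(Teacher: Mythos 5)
Your proposal is correct and follows essentially the same route as the paper's proof: $\epsilon$-optimal couplings, the gluing lemma to obtain a multimarginal $\Lambda \in \Gamma(P_1,P_2,P_3)$, the pointwise weak triangle inequality under the integral, and $\epsilon \downarrow 0$. The only additions — explicitly disposing of the infinite case and remarking on Polishness for gluing — are harmless refinements of the same argument.
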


\begin{proof}
	Let $P_1,P_2,P_3 \in \c{P}(X)$, $\epsilon > 0$ and let $\Lambda_{12} \in \Gamma(P_1,P_2)$, $\Lambda_{23} \in \Gamma(P_2,P_3)$ be such that $\intg{}{c}{\Lambda_{12}} \leq W_c(P_1,P_2) + \epsilon$ and $\intg{}{c}{\Lambda_{23}} \leq W_c(P_2,P_3) + \epsilon$.
	By the gluing lemma \cite[Gluing lemma]{villani2009optimal} there exists a multimarginal coupling $\Lambda \in \Gamma(P_1,P_2,P_3)$ such that $\operatorname{pr}_{1,2}\# \Lambda = \Lambda_{12}$ and $\operatorname{pr}_{2,3}\# \Lambda = \Lambda_{23}$.
	Then $\operatorname{pr}_{1,3}\# \Lambda \in \Gamma(P_1,P_3)$ and thus
	\begin{align*}
		W_c(P_1,P_3) 
		&\leq \intg{}{c}{\operatorname{pr}_{1,3}\# \Lambda} 
		= \intg{}{c(x_1,x_3)}{\Lambda(x_1,x_2,x_3)} \\
		&\leq \intg{}{K(c(x_1,x_2)+c(x_2,x_3))}{\Lambda(x_1,x_2,x_3)} \\
		&= K \left(\intg{}{c}{\Lambda_{23}} + \intg{}{c}{\Lambda_{12}}\right) \\
		&\leq K (W_c(P_1,P_2) + W_c(P_2,P_3)) + 2K\epsilon\,.
	\end{align*}
	Since $\epsilon > 0$ was arbitrary, the result follows.
\end{proof}

\begin{lemma}
	\label{lem:wassersteinBallUniformlyBounded}
	For any transportation cost $c:X \times X \to [0,\infty)$ satisfying the weak triangle inequality, $\hat{P} \in \c{P}_c(X)$ and $x_0 \in X$ it holds that
	\begin{align*}
		\sup_{P \in \b{B}_\rho^c(\hat{P})} \intg{}{c(x_0,\cdot)}{P} < \infty\,.
	\end{align*}
\end{lemma}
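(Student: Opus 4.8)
The plan is to pass to an (almost) optimal transport plan between $P$ and $\hat{P}$ and then apply the weak triangle inequality pointwise to the integrand $c(x_0,\cdot)$. First I would fix $x_0 \in X$ and record that $\intg{}{c(x_0,\cdot)}{\hat{P}} < \infty$: this is exactly the defining property of $\c{P}_c(X)$, and it is independent of the chosen base point precisely because $c$ obeys the weak triangle inequality, so the assumption $\hat{P} \in \c{P}_c(X)$ suffices.

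Next, take an arbitrary $P \in \b{B}_\rho^c(\hat{P})$, so $W_c(P,\hat{P}) \leq \rho < \infty$. Since $W_c(P,\hat{P})$ is finite, for every $\epsilon > 0$ there is a coupling $\Lambda \in \Gamma(P,\hat{P})$ with $\intg{}{c(x,y)}{\Lambda(x,y)} \leq \rho + \epsilon$ (one may equivalently invoke the optimal plan, which is attained when $W_c$ is finite). Writing $K$ for the weak-triangle-inequality constant and using the symmetry $c(y,x) = c(x,y)$, we have the pointwise bound $c(x_0,x) \leq K\bigl(c(x_0,y) + c(x,y)\bigr)$. Integrating against $\Lambda$ and using that its marginals are $P$ and $\hat{P}$ gives
\[
\intg{}{c(x_0,\cdot)}{P} = \intg{}{c(x_0,x)}{\Lambda(x,y)} \leq K\Bigl(\intg{}{c(x_0,\cdot)}{\hat{P}} + \rho + \epsilon\Bigr).
\]

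Since the right-hand side is independent of $P$ and of $\epsilon$, letting $\epsilon \downarrow 0$ and taking the supremum over $P \in \b{B}_\rho^c(\hat{P})$ yields $\sup_{P} \intg{}{c(x_0,\cdot)}{P} \leq K\bigl(\intg{}{c(x_0,\cdot)}{\hat{P}} + \rho\bigr) < \infty$, which is the claim. There is no genuine obstacle here; the only points that deserve a line of justification are the base-point independence of the finiteness condition defining $\c{P}_c(X)$ and the existence of a coupling with finite (near-optimal) cost, both of which follow immediately from the standing hypotheses on $c$ and from $W_c(P,\hat{P}) \leq \rho$.
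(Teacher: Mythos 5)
Your proof is correct. The paper reaches the same uniform bound by a slightly different route: it first establishes (Lemma \ref{lem:wassersteinWeakTriangleInequality}, proved via the gluing lemma) that $W_c$ itself inherits the weak triangle inequality with the same constant $K$, and then simply writes $\intg{}{c(x_0,\cdot)}{P} = W_c(\delta_{x_0},P) \leq K W_c(\delta_{x_0},\hat{P}) + K W_c(\hat{P},P) \leq K\bigl(\intg{}{c(x_0,\cdot)}{\hat{P}} + \rho\bigr)$. You instead integrate the pointwise inequality $c(x_0,x) \leq K\bigl(c(x_0,y)+c(x,y)\bigr)$ directly against a near-optimal coupling $\Lambda \in \Gamma(P,\hat{P})$; since the role of the ``third point'' is played by the fixed $x_0$ rather than by a genuine third marginal, no gluing is required and your argument is self-contained and more elementary (it is, in effect, the paper's argument specialized to the case where one marginal is a Dirac mass, which trivializes the gluing step). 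What the paper's route buys is a reusable auxiliary lemma that it needs elsewhere anyway; what yours buys is independence from that lemma and from the gluing construction. Both arguments produce the identical bound $K\bigl(\intg{}{c(x_0,\cdot)}{\hat{P}} + \rho\bigr)$, uniform in $P$. The two points you flag as needing justification — base-point independence of the finiteness condition defining $\c{P}_c(X)$, and the existence of couplings with cost at most $\rho+\epsilon$ — are indeed the only delicate spots, and both are handled correctly.
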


For transportation costs of the form $c = d^p$ this was shown in \cite[Lemma 1]{yue2022linear}.

\begin{proof}
	Let $K > 0$ be the constant from the weak triangle inequality. 
	It holds by Lemma \ref{lem:wassersteinWeakTriangleInequality} that
	\begin{align*}
		\intg{}{c(x_0,\cdot)}{P}
		= W_c(\delta_{x_0},P)
		\leq K W_c(\delta_{x_0},\hat{P}) + K W_c(P,\hat{P})
		\leq K W_c(\delta_{x_0},\hat{P}) + K \rho 
	\end{align*}
	for any $P \in \b{B}_\rho^c(\hat{P})$.
\end{proof}

\begin{lemma}
	\label{lem:properCostCompactWassersteinBall}
	Let $X$ be a Polish space and $c$ a proper transportation cost on $X$.
	Then for any $\rho > 0$ and $\hat{P} \in \c{P}_c(X)$ the Wasserstein ball $\b{B}_\rho^c(\hat{P})$ is weakly compact.	
\end{lemma}

In \cite[Theorem 1]{yue2022linear} this result is shown for the special case of $c = d^p$ for some metric $d$ on $X$ and $p \in [1,\infty)$.

\begin{proof}
	The proof follows the proof of \cite[Theorem 1]{yue2022linear}.
	Weak closedness follows from the fact that $\b{B}_\rho^c(\hat{P})$ is the sublevel set of the weakly lower-semicontinuous map $P \mapsto W_c(P,\hat{P})$.
	To establish compactness, it suffices to show tightness of $\b{B}_\rho^c(\hat{P})$.
	Let $x_0 \in X$ be arbitrary.
	By Lemma \ref{lem:wassersteinBallUniformlyBounded} the quantity $C = \sup_{P \in \b{B}_\rho^c(\hat{P})} \intg{}{c(x_0,\cdot)}{P}$ is finite.
	Let $\epsilon > 0$ and set $K = \{c(x_0,\cdot) > \frac{C}{\epsilon}\} \subseteq X$, which is compact, because $c$ is proper.
	Then for any $P \in \b{B}_\rho^c(\hat{P})$ it holds
	\begin{align*}
		P(\{c(x_0,\cdot) > \tfrac{C}{\epsilon}\})
		\leq \frac{\intg{}{c(x_0,\cdot)}{P}}{C/\epsilon}
		= \epsilon\,,
	\end{align*}
	which shows that $\b{B}_\rho^c(\hat{P})$ is tight.
\end{proof}

\begin{definition}
	\label{def:symmetricCoupling}
	Let $P_1,P_2 \in \c{P}(X^N)$.
	Then a coupling $\Lambda \in \Gamma(P_1,P_2)$ is called symmetric if for all permutations $\pi \in \c{S}_N$ it holds $(\pi \oplus \pi)\# \Lambda = \Lambda$.
	The set of symmetric couplings between $P_1, P_2$ is denoted by $\Gamma_{\operatorname{sym}}(P_1,P_2)$.
\end{definition}

\begin{lemma}
	\label{lem:symmetricCoupling}
	Let $c:X \times X \to [0,\infty)$ be any transportation cost and $P_1,P_2 \in \c{P}(X^N)$.
	Then for each coupling $\Lambda \in \Gamma(P_1,P_2)$ there exists a symmetric coupling $\hat{\Lambda} \in \Gamma_{\operatorname{sym}}((P_1)_{\operatorname{sym}},(P_2)_{\operatorname{sym}})$ such that $\intg{}{c^N}{\Lambda} = \intg{}{c^N}{\hat{\Lambda}}$.
\end{lemma}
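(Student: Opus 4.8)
The plan is to obtain $\hat{\Lambda}$ by symmetrizing $\Lambda$ under the diagonal action of $\c{S}_N$, namely to set
\[
\hat{\Lambda} := \frac{1}{N!}\sum_{\pi \in \c{S}_N} (\pi \oplus \pi)\#\Lambda\,,
\]
where $\pi \oplus \pi:X^N \times X^N \to X^N \times X^N$ denotes the map applying the coordinate permutation $\pi$ simultaneously to both factors. This $\hat{\Lambda}$ is manifestly a probability measure on $X^N\times X^N$, and the proof amounts to checking the three asserted properties: that $\hat{\Lambda}$ is a symmetric coupling, that its marginals are $(P_1)_{\operatorname{sym}}$ and $(P_2)_{\operatorname{sym}}$, and that it carries the same transport cost as $\Lambda$.

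First I would verify symmetry: for any $\sigma \in \c{S}_N$ we have $(\sigma\oplus\sigma)\#\hat{\Lambda} = \frac{1}{N!}\sum_{\pi\in\c{S}_N}\big((\sigma\pi)\oplus(\sigma\pi)\big)\#\Lambda = \hat{\Lambda}$, since $\pi\mapsto\sigma\pi$ is a bijection of $\c{S}_N$. Next, using the elementary identity $\operatorname{pr}_i\circ(\pi\oplus\pi) = \pi\circ\operatorname{pr}_i$ for $i=1,2$, functoriality of pushforwards gives $\operatorname{pr}_i\#\big((\pi\oplus\pi)\#\Lambda\big) = \pi\#(\operatorname{pr}_i\#\Lambda) = \pi\#P_i$, whence $\operatorname{pr}_i\#\hat{\Lambda} = \frac{1}{N!}\sum_{\pi}\pi\#P_i = (P_i)_{\operatorname{sym}}$ by the definition \eqref{eq:symmetrizationMeasure}. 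Thus $\hat{\Lambda} \in \Gamma_{\operatorname{sym}}\big((P_1)_{\operatorname{sym}},(P_2)_{\operatorname{sym}}\big)$.

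It remains to compare costs, and the key observation is that the lifted cost is invariant under the diagonal permutation: $c^N\big((\pi\oplus\pi)(x,y)\big) = \sum_{i=1}^N c(x_{\pi(i)},y_{\pi(i)}) = \sum_{j=1}^N c(x_j,y_j) = c^N(x,y)$ for every $\pi\in\c{S}_N$, simply by reindexing the sum. Therefore, by the change-of-variables formula for pushforward measures,
\[
\intg{}{c^N}{\hat{\Lambda}} = \frac{1}{N!}\sum_{\pi\in\c{S}_N}\intg{}{c^N\circ(\pi\oplus\pi)}{\Lambda} = \frac{1}{N!}\sum_{\pi\in\c{S}_N}\intg{}{c^N}{\Lambda} = \intg{}{c^N}{\Lambda}\,.
\]
I do not anticipate any genuine obstacle; the only points meriting care are keeping track of which permutation acts on which factor and noting that all integrals above are well-defined (possibly $+\infty$) because $c^N$ is nonnegative and measurable, so the manipulations require no integrability hypotheses.
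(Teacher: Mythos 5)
Your construction $\hat{\Lambda} = \frac{1}{N!}\sum_{\pi\in\c{S}_N}(\pi\oplus\pi)\#\Lambda$ is exactly the one used in the paper, and your verification of symmetry, marginals, and cost invariance (via $c^N\circ(\pi\oplus\pi)=c^N$) matches the paper's argument, only spelled out in more detail where the paper says ``clearly.'' The proof is correct and takes essentially the same approach.
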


\begin{proof}
	Set $\hat{\Lambda} = \frac{1}{N!} \sum_{\pi \in \c{S}_N} (\pi \oplus \pi)\#\Lambda$.
	Then clearly $\hat{\Lambda} \in \Gamma_{\operatorname{sym}}((P_1)_{\operatorname{sym}},(P_2)_{\operatorname{sym}})$ and 
	\begin{align*}
		\intg{}{c^N}{\hat{\Lambda}}
		= \frac{1}{N!} \sum_{\pi \in \c{S}_N} \intg{}{c^N}{(\pi \oplus \pi)\#\Lambda}
		= \frac{1}{N!} \sum_{\pi \in \c{S}_N} \intg{}{c^N}{\Lambda}
		= \intg{}{c^N}{\Lambda}\,,
	\end{align*}
	where the second equality holds because $c^N \circ (\pi \oplus \pi) = c^N$ for any $\pi \in \c{S}_N$.
\end{proof}

\begin{lemma}
	\label{lem:powerMeasureWeaklyContinuous}
	The map $\c{P}(X) \to \c{P}(X^N): P \mapsto P^{\otimes N}$ is continuous in the weak topology for $N \in \b{N} \cup \{\infty\}$.
\end{lemma}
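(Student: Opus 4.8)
The plan is as follows. Since $X$ is Polish, so are $\c{P}(X)$, the product space $X^N$ (with the product topology) and $\c{P}(X^N)$, so all the weak topologies in question are metrizable and it suffices to prove sequential continuity. Given $P_k \to P$ weakly in $\c{P}(X)$, I would establish $P_k^{\otimes N} \to P^{\otimes N}$ weakly in three steps: (i) show that $\{P_k^{\otimes N}\}_k$ is tight, hence relatively weakly compact by Prokhorov's theorem; (ii) show that every weak limit point equals $P^{\otimes N}$, using that the tensor products $f_1 \otimes \cdots \otimes f_N$ of bounded continuous functions form a measure-determining family on $X^N$; (iii) conclude by the standard ``every subsequence has a further subsequence converging to the same limit, hence the whole sequence does'' argument. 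I would first carry this out for finite $N$ and then bootstrap to $N = \infty$.

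For finite $N$: the set $\{P_k\}_k \cup \{P\}$ is weakly compact, hence tight by Prokhorov, so for each $\varepsilon > 0$ there is a compact $K \subseteq X$ with $\inf_k P_k(K) \geq 1 - \varepsilon/N$; then $K^N \subseteq X^N$ is compact with $P_k^{\otimes N}(K^N) = P_k(K)^N \geq 1 - \varepsilon$, which gives tightness. For a weak limit point $Q$, say $P_{k_j}^{\otimes N} \to Q$ weakly, and any $f_1,\dots,f_N \in C_b(X)$, the function $f_1 \otimes \cdots \otimes f_N$ lies in $C_b(X^N)$ and $\intg{}{f_1 \otimes \cdots \otimes f_N}{Q} = \lim_j \prod_{i=1}^N \intg{}{f_i}{P_{k_j}} = \prod_{i=1}^N \intg{}{f_i}{P} = \intg{}{f_1 \otimes \cdots \otimes f_N}{P^{\otimes N}}$. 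Since such products determine a Borel probability measure on $X^N$ (by a short induction on $N$ whose base case is the fact that $C_b(X)$ determines Borel probability measures on the Polish space $X$, and whose inductive step slices off one coordinate and invokes the $\pi$--$\lambda$ theorem), it follows that $Q = P^{\otimes N}$, and step (iii) finishes this case.

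For $N = \infty$: the measure $P^{\otimes \infty}$ is well defined on $X^\infty$ by the Kolmogorov extension theorem. For tightness I would invoke Tychonoff's theorem: choosing for each $j$ a compact $K_j \subseteq X$ with $\inf_k P_k(K_j) \geq 1 - \varepsilon 2^{-j}$, the set $\prod_{j} K_j$ is compact in $X^\infty$ and $P_k^{\otimes \infty}\big(\prod_j K_j\big) = \prod_j P_k(K_j) \geq \prod_j (1 - \varepsilon 2^{-j}) \geq 1 - \varepsilon$. Hence $\{P_k^{\otimes \infty}\}_k$ is relatively weakly compact; given a weak limit point $Q$ with $P_{k_j}^{\otimes \infty} \to Q$, I would push forward along the continuous projections $\operatorname{pr}_{1:M}: X^\infty \to X^M$: then $\operatorname{pr}_{1:M} \# P_{k_j}^{\otimes \infty} = P_{k_j}^{\otimes M}$ converges weakly both to $\operatorname{pr}_{1:M} \# Q$ and, by the finite case, to $P^{\otimes M}$, so $Q$ and $P^{\otimes \infty}$ share all finite-dimensional marginals. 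Since the cylinder sets form a $\pi$-system generating the product $\sigma$-algebra, the $\pi$--$\lambda$ theorem gives $Q = P^{\otimes \infty}$, and step (iii) concludes.

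The routine parts are the tightness bookkeeping and the elementary measure-determining argument in finite dimensions. The step I would treat most carefully is the $N = \infty$ case: weak convergence on the countable product $X^\infty$ is \emph{not} implied by convergence of all finite-dimensional marginals alone, so it is essential to obtain relative compactness first (via Tychonoff) and only afterwards pin the limit down by its marginals; one must also make sure $P^{\otimes \infty}$ genuinely exists and that $\prod_j K_j$ carries the announced product-measure mass, both of which are standard but must be invoked in the right order.
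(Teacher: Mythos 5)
Your proof is correct. For $N=\infty$ it follows essentially the same route as the paper: metrizability reduces to sequences, tightness of $\{P_k^{\otimes\infty}\}$ is obtained from compact sets $K_j$ with mass $\geq 1-\varepsilon 2^{-j}$ and Tychonoff, a weak limit point is extracted, and it is identified with $P^{\otimes\infty}$ through its finite-dimensional marginals and the $\pi$-system of cylinder sets. For finite $N$ you diverge from the paper: the paper disposes of this case in one line via characteristic functions and L\'evy's continuity theorem (legitimate under the paper's standing assumption $X\subseteq\b{R}^d$), whereas you run the Prokhorov-plus-measure-determining-tensor-products argument. Your version is more self-contained and works for arbitrary Polish $X$, at the cost of repeating the compactness machinery; both are fine. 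One remark in your closing paragraph is inaccurate, though it does not damage the argument: on a \emph{countable} product of separable metric spaces, weak convergence \emph{is} equivalent to weak convergence of all finite-dimensional marginals, since every bounded uniformly continuous function for the product metric $\sum_k 2^{-k}\min(d(x_k,y_k),1)$ is uniformly approximated by bounded continuous cylinder functions. So the tightness step in the $N=\infty$ case is a safe but not strictly necessary detour (the paper takes the same detour); your proof remains valid because you only use compactness to extract a limit point, not to assert that marginal convergence alone is insufficient.
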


\begin{proof}
	If $N \in \b{N}$, then continuity can be shown by using characteristic functions, Levy's continuity theorem \cite{dudley2018real} and exploiting the fact that the characteristic function of a product distribution is the product of characteristic functions.
	Now suppose that $N = \infty$.
	Since the weak topology on $\c{P}(X)$ is metrizable \cite[Theorem 15.15]{guide2006infinite}, it is enough to show that the above map is sequentially continuous. 
	Let $P_n \in \c{P}(X)$ be a sequence of measures such that $P_n \to P \in \c{P}(X)$ weakly as $n \to \infty$. 
	Then $\{P_n\}_{n \in \b{N}}$ is tight and from this one can show that $\{P_n^{\otimes \infty}\}_{n \in \b{N}}$ is tight as well.
	Indeed, let $\epsilon > 0$ be given.
	Pick for each $l \in \b{N}$ some compact $K_l \subseteq X$ such that $P_n(K_l) \geq 1-\epsilon/2^l$ for all $n \in \b{N}$.
	Then $K = \prod_{l=1}^\infty K_l \subseteq X^\infty$ is compact and $P_n^{\otimes \infty}(K) \geq \prod_{l=1}^\infty (1-\epsilon/2^l) \geq 1-\epsilon$ for any $n \in \b{N}$.
	Therefore there exist some subsequence $P_{n_k}^{\otimes \infty}$ with $P_{n_k}^{\otimes \infty} \to \bar{P} \in \c{P}(X^\infty)$ weakly as $k \to \infty$.
	For the same subsequence we also have $P_{n_k}^{\otimes N} \to P^{\otimes N}$ weakly as $k \to \infty$.
	Therefore any finite-coordinate marginal of $\bar{P}$ is of the form $P^{\otimes N}$ and since the system of all cylinder sets forms a $\pi$-system on which $\bar{P}$ and $P^{\otimes \infty}$ agree, it follows that $\bar{P} = P^{\otimes \infty}$ \cite{dudley2018real}.
	Hence $P \mapsto P^{\otimes \infty}$ is weakly continuous.
\end{proof}

\begin{lemma}
	\label{lem:weakClosureIntegralRepresentation}
	Let $c:X \times X \to [0,\infty)$ be a proper transportation cost.
	Then for any $\rho > 0$ and $\hat{P} \in \c{P}_c(X)$ and $N \in \b{N} \cup \{\infty\}$ it holds
	\begin{align*}
		\convw \c{W}
		= \left\{\intg{}{P^{\otimes \infty}}{\nu(P)} \; \middle| \; \nu \in \c{P}(\c{P}(X))\,,\; \nu(\b{B}_\rho^c(\hat{P})) = 1 \right\}\,,
	\end{align*}  
	where $\c{W} = \{P^{\otimes \infty} \mid P \in \b{B}_\rho^c(\hat{P})\}$.
\end{lemma}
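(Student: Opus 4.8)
The plan is to abbreviate $\c{B} := \b{B}_\rho^c(\hat{P})$ and to show that both $\convw\c{W}$ and the right-hand side of the asserted identity coincide with the set of \emph{barycenters}
\[
\c{N} := \bigl\{\, b(\mu) \ \big|\ \mu \in \c{P}(\c{P}(X^\infty)),\ \mu(\c{W}) = 1 \,\bigr\},
\qquad b(\mu)(A) := \intg{}{Q(A)}{\mu(Q)}\ \text{ for Borel } A \subseteq X^\infty,
\]
that is, the set of all mixtures (in the sense of the appendix, with the choice $F=\operatorname{id}$) of probability measures concentrated on $\c{W}$. Two elementary facts drive the argument: first, since $c$ is proper and $\hat{P}\in\c{P}_c(X)$, the ball $\c{B}$ is weakly compact; second, the map $\Phi:\c{P}(X)\to\c{P}(X^\infty)$, $\Phi(P)=P^{\otimes\infty}$, is weakly continuous by Lemma \ref{lem:powerMeasureWeaklyContinuous}, and the weakly continuous pushforward map $Q\mapsto\operatorname{pr}_1\#Q$ is a two-sided inverse of the restriction $\Phi|_{\c{B}}:\c{B}\to\c{W}$, because $\operatorname{pr}_1\#P^{\otimes\infty}=P$. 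Hence $\c{W}=\Phi(\c{B})$ is weakly compact, in particular Borel, so the condition $\mu(\c{W})=1$ is meaningful.

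First I would identify the right-hand side of the Lemma with $\c{N}$. Given $\nu\in\c{P}(\c{P}(X))$ with $\nu(\c{B})=1$, set $\mu:=\Phi\#\nu$; then $\mu(\c{W})\ge\nu(\c{B})=1$, and the pushforward identity for measure-valued integrals yields $b(\mu)=\intg{}{\Phi(P)}{\nu(P)}=\intg{}{P^{\otimes\infty}}{\nu(P)}$, so every element of the right-hand set is of the form $b(\mu)$ with $\mu(\c{W})=1$. Conversely, given $\mu$ with $\mu(\c{W})=1$, put $\nu:=(\operatorname{pr}_1\#)\#\mu$; since $\mu$ is concentrated on $\c{W}=\Phi(\c{B})$ and $\operatorname{pr}_1\#$ maps $\c{W}$ into $\c{B}$, we get $\nu(\c{B})=1$, and using $\Phi\circ(\operatorname{pr}_1\#)=\operatorname{id}$ on $\c{W}$ together with the change-of-variables formula, $b(\mu)=\intg{}{\Phi(\operatorname{pr}_1\#Q)}{\mu(Q)}=\intg{}{\Phi(P)}{\nu(P)}=\intg{}{P^{\otimes\infty}}{\nu(P)}$. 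Thus the right-hand side equals $\c{N}$.

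It then remains to prove $\c{N}=\convw\c{W}$. For the inclusion $\convw\c{W}\subseteq\c{N}$: the set $\c{P}(\c{W})$ is weakly compact by Prokhorov's theorem (as $\c{W}$ is compact), the barycenter map $\mu\mapsto b(\mu)$ is affine and weakly continuous, so $\c{N}=b(\c{P}(\c{W}))$ is a convex, weakly compact — hence weakly closed — subset of $\c{P}(X^\infty)$ that contains $\c{W}$ via the Dirac masses $b(\delta_Q)=Q$; it therefore contains the weakly closed convex hull $\convw\c{W}$. For the reverse inclusion $\c{N}\subseteq\convw\c{W}$: any $\mu\in\c{P}(\c{W})$ is a weak limit of finitely supported measures $\mu_n=\sum_i c_i^{(n)}\delta_{Q_i^{(n)}}$ with $Q_i^{(n)}\in\c{W}$ (finitely supported measures are weakly dense in $\c{P}(S)$ for every separable metric $S$, and $\c{W}$ is one), and $b(\mu_n)=\sum_i c_i^{(n)}Q_i^{(n)}\in\operatorname{conv}\c{W}$; weak continuity of $b$ gives $b(\mu)=\lim_n b(\mu_n)\in\convw\c{W}$. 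Combining the two inclusions with the previous paragraph completes the proof.

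The only genuinely technical point, and the one I expect to require care, is the weak continuity of the barycenter map $\mu\mapsto b(\mu)$ from $\c{P}(\c{P}(X^\infty))$ into $\c{P}(X^\infty)$: for $f\in C_b(X^\infty)$ one has $\intg{}{f}{b(\mu)}=\intg{}{\bigl(\intg{}{f}{Q}\bigr)}{\mu(Q)}$, which follows from Theorem \ref{thm:disintegrationMixture} applied to the Markov kernel $(Q,A)\mapsto Q(A)$, and since $Q\mapsto\intg{}{f}{Q}$ is bounded and weakly continuous on $\c{P}(X^\infty)$, weak convergence $\mu_n\to\mu$ forces $\intg{}{f}{b(\mu_n)}\to\intg{}{f}{b(\mu)}$, i.e. $b(\mu_n)\to b(\mu)$ weakly. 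The same identity, together with weak continuity of $\Phi$ and of $\operatorname{pr}_1\#$, legitimizes the pushforward and change-of-variables manipulations of the second paragraph; the rest is a routine assembly of Prokhorov compactness, weak density of finitely supported measures, and the elementary inverse $\operatorname{pr}_1\#$ of $\Phi|_{\c{B}}$.
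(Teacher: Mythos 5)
Your proof is correct and follows essentially the same route as the paper: both arguments rest on weak compactness of $\b{B}_\rho^c(\hat{P})$, weak continuity of $P\mapsto P^{\otimes\infty}$ (Lemma \ref{lem:powerMeasureWeaklyContinuous}), weak continuity of the mixture/barycenter map via Theorem \ref{thm:disintegrationMixture}, and weak density of finitely supported mixing measures. The only difference is cosmetic — you route the argument through barycenters of measures on $\c{P}(X^\infty)$ concentrated on $\c{W}$ and transfer back via $\operatorname{pr}_1\#$, whereas the paper works directly with $\nu\in\c{P}(\b{B}_\rho^c(\hat{P}))$ and extracts a weakly convergent subsequence to show closedness.
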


\begin{proof}
	Denote the set on the right by $\s{M}$.
	Then, by taking finitely supported mixing measures $\nu \in \c{P}(\c{P}(X))$, we see that $\operatorname{conv} \c{W} \subseteq \s{M}$.
	Moreover, $\s{M}$ is weakly closed.
	Indeed, denote $J(\nu) = \intg{}{P^{\otimes \infty}}{\nu(P)}$ for $\nu \in \c{P}(\b{B}_\rho^c(\hat{P}))$.
	Then let $J(\nu_n) \to \bar{P}$ weakly for $n \to \infty$.
	Since $\b{B}_\rho^c(\hat{P}) \subseteq \c{P}(X)$ is weakly compact \cite[Theorem 1]{yue2022linear}, the entire set $\c{P}(\b{B}_\rho^c(\hat{P}))$ is weakly compact in $\c{P}(\c{P}(X))$ \cite[Theorem 15.11]{guide2006infinite} and it follows that there exists some subsequence $\nu_{n_k}$ with $\nu_{n_k} \to \nu \in \c{P}(\b{B}_\rho^c(\hat{P}))$ weakly for $k \to \infty$.
	If $f \in C(X^\infty)$ is bounded, then
	\begin{gather*}
		\intg{}{f}{\bar{P}} 
		= \lim_{k \to \infty} \intg{}{f}{J(\nu_{n_k})}
		= \lim_{k \to \infty} \intg{}{\intg{}{f}{P^{\otimes \infty}}}{\nu_{n_k}(P)} \\
		= \intg{}{\intg{}{f}{P^{\otimes \infty}}}{\nu(P)}
		= \intg{}{f}{J(\nu)}\,,
	\end{gather*}
	where in the third inequality we have used the fact that $F:P \mapsto \intg{}{f}{P^{\otimes \infty}}$ satisfies $F \in C_b(\c{P}(X))$, since it is the composition of the continuous (in the respective topologies) maps $P \mapsto P^{\otimes \infty}$ (Lemma \ref{lem:powerMeasureWeaklyContinuous}) and $\tilde{P} \mapsto \intg{}{f}{\tilde{P}}$ (by definition). 
	Thus $\bar{P} = J(\nu)$ and $\s{M}$ is weakly closed and $\convw \c{W} \subseteq \s{M}$.
	Now take any $J(\nu) \in \s{M}$.
	Then, since the finitely supported measures are weakly dense in $\c{P}(\b{B}_\rho^c(\hat{P}))$ \cite[15.10 Density Theorem]{guide2006infinite}, it follows that there is a sequence $\nu_n$ of such measures such that $\nu_n \to \nu$ weakly.
	Clearly it holds that $J(\nu_n) \in \operatorname{conv} \c{W}$ for each $n \in \b{N}$.
	Moreover, it holds that $J(\nu_n) \to J(\nu)$ weakly in $\c{P}(X)$, which can be shown by similar arguments as above.
	Thus $J(\nu) \in \convw \c{W}$ and hence $\s{M} \subseteq \convw \c{W}$.
\end{proof}

The following result was difficult to locate in the literature, but was proven in \cite{mathoverflow164842}.
Due to the nature of the source, we include the proof here.

\begin{lemma}[\cite{mathoverflow164842}]
	\label{lem:integralConvexHull}
	Let $(X,\mu)$ be a probability space and $h:X \to \b{R}^n$ be Borel measurable and $\mu$-integrable.
	Then $\intg{}{h}{\mu} \in \operatorname{conv} h(X)$.\footnote{Note that $h(X)$ itself does not need to be Borel measurable here}
\end{lemma}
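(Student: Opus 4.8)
The plan is to argue by induction on the dimension $n$, the case $n = 0$ being trivial since then $\b{R}^0 = \{0\}$ and $\intg{}{h}{\mu} = 0 \in \operatorname{conv}\{0\}$. For the inductive step I would assume the claim in dimension $n-1$ and suppose, toward a contradiction, that $y_0 := \intg{}{h}{\mu}$ fails to lie in the convex set $C := \operatorname{conv} h(X)$. A point outside a convex subset of $\b{R}^n$ can always be separated from it by a (not necessarily strict) hyperplane: if $y_0 \notin \bar C$ one applies strict separation to $y_0$ and the closed convex set $\bar C$, while if $y_0 \in \partial \bar C$ one takes a supporting hyperplane of $\bar C$ at $y_0$. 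Either way there is a nonzero $a \in \b{R}^n$ with $\ipr{a, z} \le \ipr{a, y_0}$ for every $z \in C$, and in particular $\ipr{a, h(x)} \le \ipr{a, y_0}$ for all $x \in X$. Note that this uses only the pointwise inequality, so no measurability of $h(X)$ or of $C$ is ever needed.

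The next step is to integrate this inequality against $\mu$. Since $\mu(X) = 1$, one gets $\ipr{a, y_0} = \intg{}{\ipr{a, h}}{\mu} \le \intg{}{\ipr{a, y_0}}{\mu} = \ipr{a, y_0}$, whence $\intg{}{(\ipr{a, y_0} - \ipr{a, h})}{\mu} = 0$ with a nonnegative integrand. Therefore $\ipr{a, h(x)} = \ipr{a, y_0}$ for $\mu$-almost every $x$; let $X_0$ be the Borel set of full measure on which this equality holds. Then $h(X_0)$ is contained in the affine hyperplane $H := \{z \in \b{R}^n : \ipr{a, z} = \ipr{a, y_0}\}$, and $y_0 \in H$ as well.

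Finally, I would restrict $\mu$ to $X_0$, obtaining a probability measure $\mu_0$ (already normalized, since $\mu(X_0) = 1$) with $\intg{}{h}{\mu_0} = \intg{}{h}{\mu} = y_0$, and fix an affine isomorphism $\Phi : H \to \b{R}^{n-1}$. Then $\Phi \circ (h|_{X_0}) : X_0 \to \b{R}^{n-1}$ is Borel and $\mu_0$-integrable with integral $\Phi(y_0)$, so the inductive hypothesis yields $\Phi(y_0) \in \operatorname{conv}\big(\Phi(h(X_0))\big)$, i.e. $y_0 \in \operatorname{conv} h(X_0) \subseteq \operatorname{conv} h(X) = C$, contradicting the assumption $y_0 \notin C$. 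This completes the induction and hence the proof.

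The only genuinely delicate point — and the step I expect to be the main obstacle — is that $\operatorname{conv} h(X)$ need not be closed, so one cannot simply invoke the strict separating hyperplane theorem. The remedy is to use the weaker separation of a point from a convex set together with the observation that a nonnegative function with vanishing integral on a probability space must vanish almost everywhere; this forced equality is precisely what pushes $h$ into a lower-dimensional affine subspace and lets the dimension induction run.
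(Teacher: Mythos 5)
Your proof is correct and follows essentially the same route as the paper's: induction on the dimension, a (possibly non-strict) separating hyperplane, the observation that a nonnegative integrand with vanishing integral is zero $\mu$-a.e., and reduction to the hyperplane via an affine isomorphism. The only cosmetic differences are your cleaner base case $n=0$ and your handling of the exceptional null set by restricting $\mu$ to $X_0$ rather than redefining $h$ there.
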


\begin{proof}
	We proceed by induction on $n$. 
	For $n = 1$ this is true.
	Indeed, by the definition of the Lebesgue integral it holds $\intg{}{h}{\mu} \in \bar{\operatorname{conv}} h(X)$.
	Since $I = \bar{\operatorname{conv}} h(X)$ is a closed (possibly unbounded) interval, it suffices to verify that if $\intg{}{h}{\mu}$ is an endpoint $a \in \partial I$, then $a \in h(X)$.
	But if $\intg{}{h}{\mu} = a \in \partial I$, then either $h(X) \subseteq [a,\infty)$ or $h(X) \subseteq (-\infty,a]$.
	If, say, $h(x) > a$ for all $x \in X$, then $0 < \intg{}{h-a}{\mu} = \intg{}{h}{\mu} - a$, a contradiction. 
	Now assume that the claim is true for some $n-1 \geq 1$.
	Let $y = \intg{}{h}{\mu}$.
	If $y \notin \operatorname{conv} h(X)$, then there exists a separating hyperplane $z \in \b{R}^n \setminus \{0\}$ such that $\<z,y\> \leq \<z,h(x)\>$ for all $x \in X$.
	Now the set $E_+ = \{x \in X \mid \<z,y\> < \<z,h(x)\>\}$ is a $\mu$-nullset, since if not, then $0 > \intg{E}{\<z,y\> - \<z,h(x)\>}{\mu(x)} = 0$, a contradiction.
	Similarly $E_- = \{x \in X \mid \<z,y\> > \<z,h(x)\>\}$ is $\mu$-nullset.
	Pick an arbitrary $x_0 \in X$ and let $\tilde{h}(x) = h(x)$ for $x \notin E_+ \cup E_-$ and $\tilde{h}(x) = h(x_0)$ if $x \in E_+ \cup E_-$.
	We obtain that $\intg{}{\tilde{h}}{\mu} = y$, and $\tilde{h}(X) \subseteq h(X)$ as well as $\tilde{h}(X) \subseteq U = \{w \in \b{R}^n \mid \<w-y,z\> = 0\}$.
	Let $T:U \to \b{R}^{n-1}$ be an invertible linear transformation and set $\hat{h} = T\circ \tilde{h}:X \to \b{R}^{n-1}$.
	Since $\operatorname{dim} U = n-1$, such a transformation exists.
	By induction hypothesis $Ty = \intg{}{\hat{h}}{\mu} \in \operatorname{conv} \hat{h}(X) = T \operatorname{conv} \tilde{h}(X) \subseteq T \operatorname{conv} h(X)$ and thus $y \in \operatorname{conv} h(X)$, a contradiction.
	Hence $y \in \operatorname{conv} h(X)$ in the first place, which finishes the induction step.
\end{proof}

\begin{lemma}
	\label{lem:existenceFinitelySupportedMeasure}
	Let $X$ be a measurable space and let $f,g:X \to \b{R}$ be Borel measurable.
	If there exists some Borel probability measure $\mu \in \c{P}(X)$ such that $f,g \in L^1(X,\mu)$, then the set
	\begin{align*}
		\left\{\nu \in \c{P}(X) \; \middle| \; f,g \in L^1(X,\nu) \text{\ and\ } \intg{}{g}{\nu} = \intg{}{g}{\mu}\,,\; \intg{}{f}{\nu} = \intg{}{f}{\mu} \right\}
	\end{align*} 
	contains a finitely supported measure.
\end{lemma}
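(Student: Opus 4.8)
The plan is to reduce the statement to a finite-dimensional convexity fact, namely Carath\'eodory's theorem combined with Lemma \ref{lem:integralConvexHull}. First I would package the two functions into a single vector-valued map $h = (f,g):X \to \b{R}^2$. This map is Borel measurable, and since $f,g \in L^1(X,\mu)$ it is $\mu$-integrable; write $y := \intg{}{h}{\mu} = \big(\intg{}{f}{\mu},\intg{}{g}{\mu}\big) \in \b{R}^2$ for its barycentre.

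Next I would invoke Lemma \ref{lem:integralConvexHull} with $n=2$ to conclude that $y \in \operatorname{conv} h(X)$. Since $h(X)$ is an arbitrary subset of $\b{R}^2$, Carath\'eodory's theorem provides points $x_1,x_2,x_3 \in X$ and coefficients $\alpha_1,\alpha_2,\alpha_3 \geq 0$ with $\alpha_1+\alpha_2+\alpha_3 = 1$ such that $y = \sum_{i=1}^3 \alpha_i h(x_i)$. I would then set $\nu := \sum_{i=1}^3 \alpha_i \delta_{x_i} \in \c{P}(X)$, which is finitely supported. Because $\nu$ has finite support, every Borel function — in particular $f$ and $g$ — lies in $L^1(X,\nu)$, and reading off the two coordinates of the identity for $y$ gives $\intg{}{f}{\nu} = \sum_i \alpha_i f(x_i) = \intg{}{f}{\mu}$ and $\intg{}{g}{\nu} = \sum_i \alpha_i g(x_i) = \intg{}{g}{\mu}$. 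Hence $\nu$ belongs to the set in the statement, which is therefore nonempty and contains a finitely supported measure.

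The only genuine ingredient is Lemma \ref{lem:integralConvexHull}, which is already established; the remaining steps are Carath\'eodory's theorem plus bookkeeping. The one subtlety worth flagging is that $h(X)$ need not be Borel, but Lemma \ref{lem:integralConvexHull} is stated precisely to tolerate this, and Carath\'eodory applies to arbitrary subsets of $\b{R}^n$, so no measurability of $h(X)$ is needed. I do not foresee a serious obstacle; if anything, the ``hard part'' is merely recalling that $L^1$-membership under $\nu$ is automatic once $\nu$ is finitely supported, so the integrability clause in the target set imposes no real constraint.
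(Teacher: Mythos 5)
Your argument is correct and is essentially the paper's own proof: both package $f,g$ into $h=(f,g):X\to\b{R}^2$, apply Lemma \ref{lem:integralConvexHull} to get $\intg{}{h}{\mu}\in\operatorname{conv}h(X)$, and observe that points of $\operatorname{conv}h(X)$ are exactly barycentres of finitely supported measures. The appeal to Carath\'eodory is a harmless extra — membership in the convex hull already means a finite convex combination by definition; Carath\'eodory merely caps the support size at three.
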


\begin{proof}
	Let $h:X \mapsto \b{R}^2:x \mapsto (f(x),g(x))$.
	Then clearly $\intg{}{h}{\mu} \in \bar{\operatorname{conv}} h(X)$, while $\operatorname{conv} h(X) = \{\intg{}{h}{\nu} \mid \nu \in \c{P}(X)\,,\; \abs{\operatorname{supp} \nu} < \infty \}$.
	Thus the task is to show that actually $\intg{}{h}{\mu} \in \operatorname{conv} h(X)$, which follows from Lemma \ref{lem:integralConvexHull}.
\end{proof}

\begin{lemma}
	\label{lem:integrationFunctionalBorel}
	Let $X$ be a Polish space and $\ell:X \to \b{R}$ be Borel measurable.
	Define $F:\c{P}(X) \mapsto \bar{\b{R}}:\bar{P} \mapsto \intg{}{\ell}{\bar{P}}$, where we set $F(\bar{P}) = \infty$ if $\ell$ is not $\bar{P}$-integrable.
	Then $F$ is Borel measurable. 
\end{lemma}

\begin{proof}
	Let $\c{P}_\ell(X) = \{\bar{P} \in \c{P}(X) \mid \ell \in L^1(X,\bar{P})\} = \operatorname{dom}(F)$ and observe that $\c{P}_\ell(X)$ is convex and $F$ satisfies $F(\alpha P_1 + (1-\alpha)P_2) = \alpha F(P_1) + (1-\alpha)F(P_2)$ for all $\alpha \in [0,1]$ and $P_1,P_2 \in \c{P}_\ell(X)$.
	The set $\c{P}_\ell(X)$ is a Borel set w.r.t. the weak topology.
	This follows from the fact that $\c{P}_\ell(X) = \bigcup_{N \in \b{N}} \bigcap_{M \in \b{N}} \{\bar{P} \in \c{P}(X) \mid \intg{}{\abs{\ell}_{M}}{\bar{P}} \leq N\}$, where $\abs{\ell}_M = \min\{\abs{\ell},M\}$, and that $\bar{P} \mapsto \intg{}{\abs{\ell}_{M}}{\bar{P}}$ is Borel measurable for each $M$ by \cite[Theorem 15.13]{guide2006infinite}.
	Moreover, we also note that 
	\begin{align*}
		F(\bar{P}) = \lim_{M \to \infty} \int{\ell\mathbbm{1}_{|\ell| \leq M}}{\bar{P}} \text{\ \ for all\ } \bar{P} \in \c{P}_\ell(X)\,,
	\end{align*}
	where each $\bar{P} \mapsto \int{\ell\mathbbm{1}_{|\ell|\leq M}}{\bar{P}}$ is again Borel measurable by \cite[Theorem 15.13]{guide2006infinite}.
	The Borel measurability of $F$ follows now from the fact that pointwise limits of Borel measurable functions are again Borel measurable \cite{bogachev2007measure}.
\end{proof}

Finally we will also need the following

\begin{lemma}
	\label{lem:integralExtremePoints}
	Let $Y, Z$ be Polish spaces and let $\s{Z} \subseteq \c{P}(Z)$ be a weakly closed, convex set with its extreme points denoted by $\operatorname{ext} \s{Z}$.
	Let $\nu \in \c{P}(Y)$ and $F:Y \to \c{P}(Z)$ be measurable w.r.t. Borel-$\sigma$-algebra on $\c{P}(Z)$ such that $F(y) \in \s{Z}$ for $\nu$-almost all $y \in Y$.
	If
	\begin{align*}
		\intg{Y}{F(y)}{\nu(y)} \in \operatorname{ext} \s{Z}\,,
	\end{align*} 
	then there exists $\zeta \in \operatorname{ext} \s{Z}$ such that $F(y) = \zeta$ for $\nu$-almost all $y \in Y$.
\end{lemma}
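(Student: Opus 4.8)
The plan is to recast the statement in terms of barycenters of probability measures on $\c{P}(Z)$ and then exploit the extremality of the barycenter directly. Write $\bar{P} := \intg{Y}{F(y)}{\nu(y)}$ and set $\mu := F\#\nu \in \c{P}(\c{P}(Z))$; since $F(y) \in \s{Z}$ for $\nu$-almost all $y$, we have $\mu(\s{Z}) = 1$. For every Borel $A \subseteq Z$ the pushforward formula gives $\bar{P}(A) = \intg{Y}{F(y)(A)}{\nu(y)} = \intg{\c{P}(Z)}{Q(A)}{\mu(Q)}$, so $\bar{P}$ is the barycenter of $\mu$, which I denote $\operatorname{bar}(\mu)$; this also makes the paper's mixture integral literally equal to $\operatorname{bar}(\mu)$. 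The whole claim then reduces to showing $\mu = \delta_{\bar{P}}$: this forces $F(y) = \bar{P}$ for $\nu$-almost all $y$, and $\zeta := \bar{P} \in \operatorname{ext}\s{Z}$ finishes the proof.

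The crucial auxiliary fact I would isolate and prove first is that $\operatorname{bar}(\sigma) \in \s{Z}$ for every $\sigma \in \c{P}(\c{P}(Z))$ with $\sigma(\s{Z}) = 1$. Regard $\sigma$ as a Borel measure on the Polish space $\s{Z}$ (a weakly closed, hence closed, subset of the Polish space $\c{P}(Z)$) and approximate it weakly by finitely supported measures $\sigma_n = \sum_j a^{(n)}_j \delta_{Q^{(n)}_j}$ with $Q^{(n)}_j \in \s{Z}$, using density of finitely supported measures \cite[15.10 Density Theorem]{guide2006infinite}. Each $\operatorname{bar}(\sigma_n) = \sum_j a^{(n)}_j Q^{(n)}_j$ is a finite convex combination of points of $\s{Z}$, hence lies in $\s{Z}$ by convexity. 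Moreover $\operatorname{bar}$ is weakly continuous: for $f \in C_b(Z)$ the map $\Phi_f : Q \mapsto \intg{Z}{f}{Q}$ lies in $C_b(\c{P}(Z))$, so $\intg{Z}{f}{\operatorname{bar}(\sigma_n)} = \intg{\c{P}(Z)}{\Phi_f}{\sigma_n} \to \intg{\c{P}(Z)}{\Phi_f}{\sigma} = \intg{Z}{f}{\operatorname{bar}(\sigma)}$ (the interchange of integrals is the mixture Fubini identity, cf. Theorem \ref{thm:disintegrationMixture}). Since $\s{Z}$ is weakly closed, $\operatorname{bar}(\sigma) = \lim_n \operatorname{bar}(\sigma_n) \in \s{Z}$.

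With this in hand I would argue by contradiction. Suppose $\mu \ne \delta_{\bar{P}}$. Then the topological support $\supp\mu$ contains two distinct points $Q_1 \ne Q_2$ (a singleton support would force $\mu = \delta_{Q_1}$, hence $\mu = \delta_{\operatorname{bar}(\mu)} = \delta_{\bar{P}}$). Pick $f \in C_b(Z)$ with $\intg{Z}{f}{Q_1} < \intg{Z}{f}{Q_2}$ and choose $t$ strictly between these two values; since $\Phi_f$ is weakly continuous, $\{\Phi_f < t\}$ and $\{\Phi_f > t\}$ are open neighbourhoods of $Q_1$ and $Q_2$ respectively, so $\lambda := \mu(\{\Phi_f \le t\}) \in (0,1)$. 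Conditioning $\mu$ on $\{\Phi_f \le t\}$ and $\{\Phi_f > t\}$ yields $\mu = \lambda \mu_1 + (1-\lambda)\mu_2$ with $\mu_1,\mu_2 \in \c{P}(\c{P}(Z))$ still concentrated on $\s{Z}$. Put $R_i := \operatorname{bar}(\mu_i)$, so $R_1, R_2 \in \s{Z}$ by the auxiliary fact, $\bar{P} = \lambda R_1 + (1-\lambda) R_2$ by affinity of the barycenter in the measure, and
$\intg{Z}{f}{R_1} = \tfrac{1}{\lambda}\intg{\{\Phi_f \le t\}}{\Phi_f}{\mu} \le t < \tfrac{1}{1-\lambda}\intg{\{\Phi_f > t\}}{\Phi_f}{\mu} = \intg{Z}{f}{R_2}$,
the strict inequality holding because $\Phi_f - t > 0$ on the set $\{\Phi_f > t\}$ of positive $\mu$-measure. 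Hence $R_1 \ne R_2$, contradicting $\bar{P} \in \operatorname{ext}\s{Z}$; therefore $\mu = \delta_{\bar{P}}$.

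I expect the only genuinely delicate point to be the auxiliary barycenter fact — in particular the weak continuity of $\operatorname{bar}$ and the identification of the mixture integral with the barycenter, which require some care with the weak topology on $\c{P}(\c{P}(Z))$ and the relevant Fubini statement. The contradiction step itself is a routine Jensen/Hahn--Banach-flavoured computation, and the measurability issues (e.g.\ $Q \mapsto Q(A)$ Borel, $\mu$ well defined) are handled exactly as in the appendix.
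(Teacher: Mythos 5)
Your proof is correct, and at its core it follows the same strategy as the paper's: express the barycenter as a nontrivial convex combination of barycenters of two conditional measures and contradict extremality. The execution differs, and yours is the more complete of the two. The paper splits the base space $Y$ directly into two positive-measure pieces on which the averages of $F$ differ, and it leaves implicit both (i) that the normalized conditional averages lie in $\s{Z}$ and (ii) that the splitting can be chosen so that these normalized averages are genuinely distinct. You handle exactly these two points explicitly: you push $\nu$ forward to $\mu = F\#\nu$ on $\c{P}(Z)$, split along a level set of the weakly continuous functional $\Phi_f$ separating two points of $\supp\mu$ (which gives the strict inequality $\int f\,dR_1 \le t < \int f\,dR_2$ and hence $R_1 \ne R_2$), and you separately prove the closure fact that barycenters of measures concentrated on the weakly closed convex set $\s{Z}$ remain in $\s{Z}$. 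The price is the extra approximation argument for that closure lemma, but it parallels the proof of Lemma \ref{lem:weakClosureIntegralRepresentation} already in the paper, and the identification of the mixture integral with the barycenter of $F\#\nu$ is justified by the pushforward formula together with Theorem \ref{thm:disintegrationMixture}. No gaps.
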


\begin{proof}
	If there exists a single $\zeta \in \c{P}(Z)$ such that $F(y) = \zeta$ for $\nu$-almost all $y \in Y$, then the claim is obvious.
	Suppose now that $F$ is not constant $\nu$-almost everywhere.
	Then there exist Borel subsets $Y_i \subseteq Y$ such that $\nu(Y_i) > 0$ for $i=1,2$ and 
	\begin{align*}
		\intg{Y_1}{F(y)}{\nu(y)} \neq \intg{Y_2}{F(y)}{\nu(y)}\,.
	\end{align*}
	Then we obtain
	\begin{align*}
		\intg{Y}{F(y)}{\nu(y)} 
		= \nu(Y_1) \underbrace{\intg{Y_1}{F(y)}{\nu_1(y)}}_{\in \s{Z}} + \nu(Y_2) \underbrace{\intg{Y_2}{F(y)}{\nu_2(y)}}_{\in \s{Z}}
	\end{align*}
	with $\nu_i \in \c{P}(Y_i)$ defined by $\nu_i(A) = \frac{\nu(A)}{\nu(Y_i)}$ for $A \subseteq Y_i$ Borel and $i=1,2$.
	Since $\nu(Y_1) + \nu(Y_2) = \nu(Y) = 1$, this contradicts $\intg{Y}{F(y)}{\nu(y)} \in \operatorname{ext} \s{Z}$ and hence finishes the proof.
\end{proof}

\begin{lemma}
	\label{lem:wassersteinDistanceProduct}
	For any transportation cost $c:X \times X \to [0,\infty]$, $P_1,P_2 \in \c{P}(X)$ and $N \in \b{N}$ it holds that $W_{c^N}(P_1^{\otimes N},P_2^{\otimes N}) = N W_c(P_1,P_2)$.
\end{lemma}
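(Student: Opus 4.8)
The plan is to establish the two inequalities $W_{c^N}(P_1^{\otimes N},P_2^{\otimes N}) \ge N\, W_c(P_1,P_2)$ and $W_{c^N}(P_1^{\otimes N},P_2^{\otimes N}) \le N\, W_c(P_1,P_2)$ separately, combining the obvious product construction in one direction with a coordinate‑marginalization argument in the other.

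For the lower bound I would start from an arbitrary coupling $\bar{\Lambda}\in\Gamma(P_1^{\otimes N},P_2^{\otimes N})$ and project it onto each diagonal pair of coordinates: for $i=1,\dots,N$ set $\Lambda_i := \operatorname{pr}_{i,i}\#\bar{\Lambda}\in\c{P}(X\times X)$. Since the $i$‑th coordinate marginal of $P_j^{\otimes N}$ equals $P_j$ for $j=1,2$, each $\Lambda_i$ lies in $\Gamma(P_1,P_2)$, hence $\intg{}{c}{\Lambda_i}\ge W_c(P_1,P_2)$. Using $c^N=\sum_{i=1}^N c\circ\operatorname{pr}_{i,i}$ together with nonnegativity of $c$ and linearity of the integral (Tonelli, to be safe with possibly infinite values), one gets $\intg{}{c^N}{\bar{\Lambda}}=\sum_{i=1}^N\intg{}{c}{\Lambda_i}\ge N\,W_c(P_1,P_2)$; taking the infimum over $\bar{\Lambda}$ yields the lower bound.

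For the upper bound I would first dispose of the case $W_c(P_1,P_2)=\infty$, where equality follows immediately from the lower bound. When $W_c(P_1,P_2)<\infty$, fix $\varepsilon>0$, pick $\Lambda\in\Gamma(P_1,P_2)$ with $\intg{}{c}{\Lambda}\le W_c(P_1,P_2)+\varepsilon$, and form the $N$‑fold product $\Lambda^{\otimes N}$, which I regard as a measure on $X^N\times X^N$ after the canonical rearrangement $(X\times X)^N\cong X^N\times X^N$. Its two marginals are $(\operatorname{pr}_1\#\Lambda)^{\otimes N}=P_1^{\otimes N}$ and $(\operatorname{pr}_2\#\Lambda)^{\otimes N}=P_2^{\otimes N}$, so $\Lambda^{\otimes N}\in\Gamma(P_1^{\otimes N},P_2^{\otimes N})$; and since the $i$‑th diagonal‑coordinate marginal of $\Lambda^{\otimes N}$ is again $\Lambda$, we obtain $\intg{}{c^N}{\Lambda^{\otimes N}}=N\intg{}{c}{\Lambda}\le N\,(W_c(P_1,P_2)+\varepsilon)$. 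Hence $W_{c^N}(P_1^{\otimes N},P_2^{\otimes N})\le N\,(W_c(P_1,P_2)+\varepsilon)$, and letting $\varepsilon\downarrow 0$ completes the proof.

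The argument is essentially bookkeeping and contains no genuine obstacle; the only points that need a little care are the identification $(X\times X)^N\cong X^N\times X^N$ with the ensuing marginal computations, and — to avoid circularity — deducing the upper bound from the already‑proved lower bound in the degenerate case $W_c(P_1,P_2)=\infty$.
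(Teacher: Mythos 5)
Your proof is correct and follows essentially the same route as the paper: the upper bound via the product coupling $\Lambda^{\otimes N}$ and the lower bound via the diagonal projections $\operatorname{pr}_{i,i}\#\bar{\Lambda}$ (the paper averages these projections into a single coupling before bounding, whereas you bound each summand directly, which is an immaterial difference). Your explicit treatment of the case $W_c(P_1,P_2)=\infty$ is a small point of extra care the paper omits.
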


\begin{proof}
	First we show $\leq$.
	Let $\epsilon > 0$ be given. 
	Then we can find a coupling $\tilde{\Lambda} \in \Gamma(P_1,P_2)$ such that $\intg{}{c}{\tilde{\Lambda}} \leq W_c(P_1,P_2) + \epsilon$.
	Define the distribution $\Lambda \in \c{P}(X^N \times X^N)$ by $\Lambda = \tilde{\Lambda}^{\otimes N}$, where $(X\times X)^N$ is identified with $X^N \times X^N$.
	Clearly $\Lambda \in \Gamma(P_1^{\otimes N},P_2^{\otimes N})$ and it holds that
	\begin{align*}
		&\intg{}{c^N(x_1,\ldots,x_N,y_1,\ldots,y_N)}{\Lambda(x_1,\ldots,x_N,y_1,\ldots,y_N)} \\
		&\quad= \sum_{i=1}^N \intg{}{c(x_i,y_i)}{\tilde{\Lambda}^{\otimes N}(x_1,\ldots,x_N,y_1,\ldots,y_N)} \\
		&\quad= \sum_{i=1}^N \intg{}{c(x_i,y_i)}{\tilde{\Lambda}(x_i,y_i)} \\
		&\quad\leq N (W_c(P_1,P_2) + \epsilon)\,.
	\end{align*}
	Since $\epsilon$ was arbitrary, $\leq$ follows. 
	Now we show $\geq$.
	Let again $\epsilon > 0$ and $\Lambda \in \Gamma(P_1^{\otimes N},P_2^{\otimes N})$ be a coupling such that $\intg{}{c^N}{\Lambda} \leq W_{c^N}(P_1^{\otimes N},P_2^{\otimes N}) + \epsilon$.
	Then it is easy to see that $\tilde{\Lambda}_i := \operatorname{pr}_{i,i} \# \Lambda \in \Gamma(P_1,P_2)$ for all $i=1,\ldots,N$.
	Moreover, for $\tilde{\Lambda} = \frac{1}{N}\sum_{i=1}^N \tilde{\Lambda}_i$ it holds that $\tilde{\Lambda} \in \Gamma(P_1,P_2)$, since averages of couplings are again couplings.
	Then
	\begin{align*}
		W_c(P_1,P_2)
		\leq \intg{}{c}{\tilde{\Lambda}}
		= \frac{1}{N} \sum_{i=1}^N \intg{}{c}{\tilde{\Lambda}_i}
		= \frac{1}{N} \intg{}{c^N}{\Lambda} 
		\leq \frac{1}{N} (W_{c^N}(P_1^{\otimes N},P_2^{\otimes N}) + \epsilon)\,. 
	\end{align*}
	Since $\epsilon$ was again arbitrary, this finishes the proof.
\end{proof}

\begin{lemma}
\label{lem:conjugatePolyhedral}
	Suppose $f:\b{R}^n \to \b{R}$ is a polyhedral function given by
	\begin{align*}
		f(x) = \max_{h \in \c{H}} h^\top \mat{x \\ 1}\,, \quad x \in \b{R}^n\,, 
	\end{align*}	
	where $\c{H} \subseteq \b{R}^{n+1}$ is a polytope.
	Then the Legendre-Fenchel conjugate is given by
	\begin{align*}
		f^*(z) = -\max\left\{ b \mid \mat{z \\ b} \in \c{H}\right\}\,, \quad z \in \b{R}^n\,.
	\end{align*}
\end{lemma}

\begin{proof}
	We have 
	\begin{gather*}
		f^*(z)
		= \sup_{x \in \b{R}^n} x^\top z - \left(\max_{h \in \c{H}} h^\top \mat{x \\ 1}\right) 
		= \sup_{x \in \b{R}^n} \mat{z \\ 0}^\top \mat{x \\ 1} + \left(\min_{h \in \c{H}} -h^\top \mat{x \\ 1}\right) \\
		= \sup_{x \in \b{R}^n} \min_{h \in \c{H}} \left(\mat{z \\ 0} - h \right)^\top \mat{x \\ 1} 
		= \min_{h \in \c{H}} \sup_{x \in \b{R}^n}  \left(\mat{z \\ 0} - h \right)^\top \mat{x \\ 1} \\
		= \min\left\{-b \mid \mat{z \\ b} \in \c{H} \right\} 
		= -\max\left\{b \mid \mat{z \\ b} \in \c{H} \right\} 
	\end{gather*}
	where in the fourth equality we have used the Sions minimax theorem and in the fifth the fact that if $h = \smat{a \\ b} \in \c{H}$, then the inner supremum is finite if and only if $a = z$.
\end{proof}

\begingroup
\renewcommand*{\bibfont}{\small}
\printbibliography
\endgroup

\end{document}